\theoremstyle{plain}
\newtheorem{thm}[equation]{Theorem}
\newtheorem{cor}[equation]{Corollary}
\newtheorem{prop}[equation]{Proposition}
\newtheorem{lem}[equation]{Lemma}
\newtheorem{quest}[equation]{Question}
\theoremstyle{definition}
\theoremstyle{remark}
\newtheorem{rem}[equation]{Remark}
\newtheorem{rems}[equation]{Remarks}
\newtheorem{claim}[equation]{Claim}
\newtheorem{examp}[equation]{Example}
\renewcommand{\subsection}{\@startsection{subsection}{2}{0pt}{-3ex
plus -1ex minus -0.2ex}{-2mm plus -0pt minus
-2pt}{\normalfont\bfseries}} \makeatother
\numberwithin{equation}{subsection}
\newlength{\dhatheight}
\newcommand{\hdot}{{\:\raisebox{2pt}{\text{\circle*{1.5}}}}}
\newcommand{\idot}{{\:\raisebox{2pt}{\text{\circle*{1.5}}}}}
\newcommand{\dc}[1]{{#1}^\blacktriangledown_\rho}
\newcommand{\ogr}[1]{{\overline{\Gr_{{#1}}}}}
\DeclareMathOperator{\mmod}{\!\text{-}\mathrm{mod}}
\DeclareMathOperator{\grfmod}{\!\text{-}\mathrm{fmod}}
\DeclareMathOperator{\Map}{{\mathrm{Map}}}
\DeclareMathOperator{\Ext}{\mathrm{Ext}}
\DeclareMathOperator{\RHom}{{\mathrm{RHom}}}
\DeclareMathOperator{\sym}{\mathrm{Sym}}
\DeclareMathOperator{\im}{\mathrm{Im}}
\DeclareMathOperator{\supp}{\mathrm{Supp}}
\DeclareMathOperator{\coh}{\mathrm{Coh}}
\DeclareMathOperator{\qcoh}{\mathrm{QCoh}}
\DeclareMathOperator{\Ker}{\mathrm{Ker}}
\DeclareMathOperator{\Hom}{\mathrm{Hom}}
\DeclareMathOperator{\rk}{{\mathrm{rk}}}
\DeclareMathOperator{\Rep}{\mathrm{Rep}}
\DeclareMathOperator{\Lie}{\mathrm{Lie}}
\DeclareMathOperator{\Tr}{\mathrm{tr}}
\DeclareMathOperator{\Ad}{\mathrm{Ad}}
\DeclareMathOperator{\Ind}{{Ind}\!}
\DeclareMathOperator{\proj}{{\text{-}\mathrm{proj}}}
\DeclareMathOperator{\Spec}{\mathrm{Spec}}
\newcommand{\BM}{{\textrm{\tiny {BM}}}}
\newcommand{\erems}{\hfill$\lozenge$\end{rems}}
\newcommand{\tsl}{/\!/\!/}
\newcommand{\ssh}{{\mathrm{Shv}}}
\newcommand{\gln}{{\mathfrak{gl}_n}}
\newcommand{\chh}{{\mathcal H}}
\newcommand{\iso}{{\;\stackrel{_\sim}{\to}\;}}
\newcommand{\vect}{{\mathbb{V}_{\!}\mathrm{ect}}}
\newcommand{\Gr}{{\mathrm{Gr}}}
\newcommand{\ccong}{\,\cong \,  }
\newcommand{\erem}{\hfill$\lozenge$\end{rem}}
\newcommand{\lo}{{\stackrel{{}_L}\o}}
\newcommand{\ce}{{\mathcal E}}
\newcommand{\sstt}{{\mathsf{st}}}
\newcommand{\Id}{{\operatorname{Id}}}
\newcommand{\rgo}{{\operatorname{R}\!\Gamma_{G(O)}}}
\newcommand{\ccc}{{\mathcal C}}
\newcommand{\hh}{{\mathbb{H}}}
\newcommand{\ccr}{{\mathcal C}_{reg}}
  \newcommand{\chom}{{\mathscr{H}\textit{om}}}
\newcommand{\cz}{{\mathcal Z}}
\newcommand{\sfr}{{\mathsf{R}}}
\newcommand{\fcr}{{{\mathfrak C}_r}}
\newcommand{\BF}{{\mathbb F}}
\newcommand{\HF}{{\mathrm{H}\Phi}}
\newcommand{\hrm}{{\mathrm{H}}}
\renewcommand{\th}{\theta }
\newcommand{\shh}{{H}}
\newcommand{\WH}{{\mathsf{Wh}_\dg}}
\newcommand{\WHG}{{\mathsf{Wh}_{H\to G}}}
\newcommand{\WW}{{\mathsf{Wh}}}
\newcommand{\perv}{{{\mathcal P}erv}}
\newcommand{\per}{{{\mathcal P}erv}_{G(O)}(\Gr)}
\newcommand{\sfc}{{\mathsf C}}
\newcommand{\indd}{\text{Ind}D_{\gc(O)}(\Gr)}
\newcommand{\ct}{{\mathscr T}}
\renewcommand{\k}{{\Bbbk}}
\newcommand{\sfh}{{\mathsf H}}
\newcommand{\CM}{{\mathcal{CM}}}
\newcommand{\mix}{ D_T^{\text{mix}}}
\newcommand{\da}{{\dagger}}
\newcommand{\la}{\lambda}
\newcommand{\fii}{{\mathfrak i}}
\newcommand{\sg}{{\sym(\dgl[-2])}}
\newcommand{\wh}{{{{\mathcal W}h}_\dg}}
\newcommand{\dg}{{\check G}}
\newcommand{\dgl}{{\check\g}}
  \newcommand{\dt}{{\check T}}
\newcommand{\dtl}{{\check\t}}
  \renewcommand{\dh}{{\check H}}
  \newcommand{\dhl}{{\check{\mathfrak h}}}
  \newcommand{\du}{{\check U}}
  \newcommand{\chev}{{\mathscr C}\textit{h}}
    \newcommand{\dul}{{\check{\mathfrak u}}}
\newcommand{\pre}{\unlhd }
\newcommand{\pree}{\lhd }
\newcommand{\fcc}{{\mathfrak c}}
\newcommand{\greg}{{{\check\g}^*_r}}
\newcommand{\dom}{\BX_{+}}
\newcommand{\pos}{^{\text{pos}}}
\newcommand{\vrho}{\varrho }
\newcommand{\jm}{\jmath }
\newcommand{\sat}{{D_{\gc(O)}(\Gr)}}
\newcommand{\be}{\beta }
\newcommand{\si}{{\sigma}}
\newcommand{\ol}{\overline}
\newcommand{\cy}{{\mathcal Y}}
\newcommand{\fj}{{\mathfrak j}}
\newcommand{\cll}{{\mathcal L}}
\newcommand{\nana}{^\flat }
\newcommand{\hv}{{H}}
\newcommand{\cx}{{\mathcal X}}
\newcommand{\bplus}{\mbox{$\bigoplus$}}
\newcommand{\bone}{{\boldsymbol{1}}}
\newcommand{\BD}{{\mathbb D}}
\newcommand{\dd}{{\mathscr{D}}}
\newcommand{\oo}{{\mathcal{O}}}
\renewcommand{\aa}{{\mathcal{A}}}
\newcommand{\bn}{{\mathbf{N}}}
\newcommand{\pt}{{\operatorname{pt}}}
\newcommand{\fu}{{\mathfrak u}}
\newcommand{\chb}{{\C[\hb]}}
\newcommand{\cg}{\mathcal{G} }
\newcommand{\back}{\backslash }
\newcommand{\veps}{\varepsilon }
\renewcommand{\o}{\otimes }
\newcommand{\so}{{\stackrel{{}_!}{\otimes} }}
\newcommand{\Si}{\Sigma }
\renewcommand{\t}{{\mathfrak t}}
\newcommand{\vpi}{\varpi}
\newcommand{\IC}{{\operatorname{IC}}}
\newcommand{\gc}{{G}}
\newcommand{\De}{\Delta }
\newcommand{\BX}{{\mathbb X}}
\newcommand{\tv}{{T}}
\newcommand{\hb}{\hbar }
\newcommand{\hdt}{{H_T^\hdot}}
\newcommand{\ga}{\gamma }
\newcommand{\beq}{\begin{equation}\label}
\newcommand{\eeq}{\end{equation}}
\def\ccirc{{{}_{\,{}^{^\circ}}}}
\newcommand{\al}{\alpha }
\newcommand{\bx}{{\Phi}}
\newcommand{\wt}{\widetilde }
\newcommand{\fc}{{\mathrm{S}}}
\newcommand{\td}{\text{-}}
\newcommand{\BA}{{\mathbb{A}}}
\newcommand{\eps}{\epsilon }
\newcommand{\C}{\Bbbk}
\newcommand{\g}{\mathfrak{g}}
\newcommand{\h}{\mathfrak{h}}
\newcommand{\La}{\Lambda }
\newcommand{\rr}{{\mathcal R}}
\newcommand{\Gm}{{{\mathbb G}_m}}
\newcommand{\inv}{^{-1}}
\newcommand{\cf}{{\mathcal F}}
\newcommand{\Z}{{\mathbb Z}}
\newcommand{\en}{{\enspace}}
\newcommand{\vi}{${\en\sf {(i)}}\;$}
\newcommand{\vii}{${\;\sf {(ii)}}\;$}
\newcommand{\viii}{${\sf {(iii)}}\;$}
\newcommand{\iv}{${\sf {(iv)}}\;$}
\newcommand{\sset}{\subseteq}
\newcommand{\sminus}{\smallsetminus}
\newcommand{\intoo}{\,\xymatrix{\ar@{^{(}->}[r]&}\,}
\newcommand{\ontoo}{\,\xymatrix{\ar@{->>}[r]&}\,}
\newcommand{\into}{\,\hookrightarrow\,}
\newcommand{\too}{\,\longrightarrow\,}
\newcommand{\mto}{\mapsto}
\newcommand{\onto}{\,\twoheadrightarrow\,}
\newcommand{\bm}{\mathbf{M}}
\newcommand{\Ga}{\Gamma }
\newcommand{\gm}{\Gm }
\newcommand{\om}{\omega }
\author{Victor Ginzburg}
\email{vityaginzburg@gmail.com}
\title{\small{Pointwise purity, derived Satake, and Symplectic duality}}
\begin{document}

  \begin{abstract}
    It has been known for a long time  that $\Ext$'s between
$\IC$-sheaves may often be expressed
in terms of $\Hom$'s between 
cohomology groups.
We prove a  more general result under weaker assumptions.
The result is used to describe the action of the derived Satake
equivalence  on $!$-pure objects and show that the
equivalence enjoys a new kind of functoriality 
with respect to morphisms
of reductive groups.
We find and prove normality of  the symplectic dual $X^!$
  for many  smooth affine Hamiltonian $G$-varieties $X$, including
    $X=T^*(G/H)$ for all connected reductive subgroups $H\sset  G$.
    We also describe the symplectic duals  ${\mathbf M}^!$ in the case of
  Coulomb branches   and prove that ${\mathbf M}^!$ has symplectic singularities.
\end{abstract}

\maketitle
{\small
\tableofcontents
}
\section{Main results}

   \subsection{Pointwise purity}\label{form sec}
   In this paper, we  use the yoga of `weights' either in the \'etale setting of
   \cite{BBD} or in the sense of mixed Hodge modules of Saito \cite{Sa}.
    Given a connected linear algebraic group  $G$ and a $G$-variety $X$
   in either of these settings,
   there is a mixed equivariant derived category
   $D_G^{\text{mix}}(X)$, often viewed
   as the mixed derived category of the  stacky quotient $X/G$,  cf. \cite[4.2]{HL}
   and \cite{Ach}.
      Accordingly, we let
   $\k$, the coefficient field, be
   either $\bar{\mathbb Q}_\ell$ or $\k=\mathbb{C}$, respectively.

  Fix a stratification
  $X=\sqcup_{\la\in\La}\, X_\la$   by locally closed smooth
   $G$-stable  connected  subvarieties where $\La$ is a finite labeling set.
    We say that $\cf\in D_G^{\text{mix}}(X)$
  is  $*$-{\em pure}, resp. $!$-{\em pure},
  if for all $\la\in\La$ and $m\in\Z$, the $m$-th cohomology sheaf
  ${\mathscr H}^m\jm^*_\la\cf$,  resp. ${\mathscr H}^m\jm^!_\la\cf$,
  where  $\jm_\la: X_\la \into X$ is the imbedding
  is geometrically constant and
  pointwise pure of weight $m$. We  say that $\cf$ is pure if it is both $*$-pure  and $!$-pure.
%
  %
  We write    $X^G\sset X$ for the fixed point set.
   
    Below, we consider the case where the group  is a split torus $T$
    and write $\hh(\td)=\hdt(\td)$ for $T$-equivariant cohomology.
    Let   $X$  be a not necessarily connected  projective $T$-variety
    with irreducible connected components
    and $X=\sqcup_{\la\in \La}\, X_\la$  a stratification as above by  $T$-stable   subvarieties. 
    Let $\fii: Y\into X$ be the closed imbedding of a $T$-stable  subvariety
    and
$Y_\la:=Y\cap X_\la$. Thus,
we have $Y=\sqcup_{\mu\in\Si}\, Y_\mu$, where
$\Si=\{\mu\in\La\mid Y_\la\neq\emptyset\}$.
We consider  $*$-pure, resp. $!$-pure and  pure,
objects of $D_T^{\text{mix}}(X)$, resp. $D_T^{\text{mix}}(Y)$,
with respect to the stratification $X=\sqcup_\la X_\la$,
resp. $Y=\sqcup_\mu Y_\mu$.

We make the following
assumptions:

\begin{description}
\item[S1] For every $\la\in\La$ there is a  vector space $V_\la$ with a linear $T$-action
  and a $T$-equivariant isomorphism $f_\la: \en X_\la\iso V_\la$ such that
    $f_\la(Y_\la)$ is a vector subspace of
  $V_\la$ if $\la\in\Si$.
  
\item[S2]     For all $\la\in\Si$  one has $(X_\la)^T=\{x_\la\}$
  where  $x_\la:=f_\la\inv(0)$.

\item[S3] For every $\la\in\Si$ there  is a Zariski open  $T$-stable  subset $U_\la\sset X$ and a
  $\gm$-action on $U_\la$ that commutes with the $T$-action and
   contracts
   $U_\la$ to the point $x_\la$.
   \end{description}

Given a $\Z$-graded algebra $A$ and a pair 
$M=\oplus_{k\in\Z}\, M_k,\
  N=\oplus_{k\in\Z}\, N_k$, of graded $A$-modules,
 we let $\Hom^n_A(M,N)$   denote the group
  of $A$-linear maps $f: M\to N$ such that $f(M_k)\sset N_{k+n}$ for all $k$.

  Our  main general result on pointwise pure sheaves, to be proved in Sect. \ref{pf vthm}, reads

    \begin{thm}\label{Vthm} If conditions S1-S3 hold then
  for any 
     pure $\ce_X\in\mix(X)$ and  $!$-pure $\cf_Y\in\mix(Y)$
        the functor $\hh(\td)$ induces  an isomorphism of
    graded $\hh(X)$-modules
     \[
   \Ext^\hdot_{D_T(X)}(\ce_X,\fii_*\cf_Y)\ \iso\ \Hom^\hdot_{\hh(X)}(\hh(\ce_X),\, \hh(\fii_*\cf_Y)).
   \]
  \end{thm}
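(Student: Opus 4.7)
The plan is to prove the asserted isomorphism by induction on the number of $Y$-strata $|\Sigma|$, with the contracting $\Gm$-action of S3 providing a transparent base case and purity forcing the relevant long exact sequences to become compatible.

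First, since $\fii\colon Y\into X$ is a closed embedding, the adjunction $\fii^*\dashv\fii_*$ yields
\[
\RHom_{D_T(X)}(\ce_X,\fii_*\cf_Y)\ =\ \RHom_{D_T(Y)}(\fii^*\ce_X,\cf_Y),\qquad \hh(\fii_*\cf_Y)=\hh(\cf_Y),
\]
and the $\hh(X)$-module structure on the target factors through the restriction $\hh(X)\to\hh(Y)$. Moreover $\fii^*\ce_X$ is $*$-pure on $Y$ with respect to the induced stratification $\{Y_\mu\}_{\mu\in\Sigma}$: its $*$-restriction to $Y_\mu$ coincides with $\fii_\mu^*\jm_\mu^*\ce_X$, where $\fii_\mu\colon Y_\mu\into X_\mu$ is linear by S1, and pulling back a geometrically constant, pointwise pure sheaf by a linear map preserves both properties. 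We may therefore work entirely on $Y$ and induct on $|\Sigma|$.

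For the base case $|\Sigma|=1$, write $Y=Y_{\mu_0}$, identified by S1 with a linear subspace of $V_{\mu_0}$ and equipped by S3 with a commuting contracting $\Gm$-action to $x_{\mu_0}$. Pointwise purity combined with geometric constancy forces both $\fii^*\ce_X$ and $\cf_Y$ to be direct sums of Tate-twisted shifts of the constant sheaf, so both sides of the claimed isomorphism reduce to the elementary identity $\RHom_{D_T(Y)}(\k,\k(n)[m])=\hh[-m](-n)$, valid by the contraction principle. For the inductive step, pick $\mu_0\in\Sigma$ minimal in the closure order, let $\kappa\colon Y_{\mu_0}\into Y$ be the resulting closed embedding and $\jm\colon U:=Y\sminus Y_{\mu_0}\into Y$ its open complement, and apply $\RHom_{D_T(Y)}(\fii^*\ce_X,-)$ to the distinguished triangle $\kappa_*\kappa^!\cf_Y\to\cf_Y\to\jm_*\jm^*\cf_Y\to$. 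Both $\kappa^!\cf_Y$ and $\jm^*\cf_Y$ inherit $!$-purity from $\cf_Y$ (using $\jm_\mu^{Y_{\mu_0},!}\kappa^!=\jm_\mu^{Y,!}$ on strata contained in $Y_{\mu_0}$, and $\jm^*\cong\jm^!$ for the open embedding $\jm$), so the inductive hypothesis applies to the two end terms.

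The principal obstacle is to verify that the Ext long exact sequence coming from this triangle matches, term by term and via the natural map, a corresponding short exact sequence of graded $\hh(X)$-modules
\[
0\too \hh(\kappa_*\kappa^!\cf_Y)\too \hh(\cf_Y)\too \hh(\jm_*\jm^*\cf_Y)\too 0 .
\]
This is precisely where pointwise purity earns its keep: via Deligne's weight inequalities the weight spectral sequence degenerates, and $*$-purity of $\fii^*\ce_X$ paired against $!$-purity of $\cf_Y$ forces the Ext-groups between the two end terms to vanish in all but the expected degrees, so the connecting homomorphism in the cohomology localization sequence must vanish for the same weight reason, anchored via the $\Gm$-contraction of S3 to the fixed points $x_\la$. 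Granted this collapse, together with $\hh(X)$-linearity of all maps in sight (a formal consequence of the functoriality of equivariant cohomology applied to the triangle), a five-lemma argument completes the induction.
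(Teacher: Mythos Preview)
Your reduction via adjunction transfers the $\Ext$ side to $Y$, but it does \emph{not} transfer the $\Hom$ side: the right-hand side of the theorem is $\Hom^\hdot_{\hh(X)}(\hh(\ce_X),\hh(\cf_Y))$, which depends on the full $\hh(X)$-module $\hh(\ce_X)$, not merely on $\fii^*\ce_X$. Your base case asserts that both sides reduce to the elementary identity on $Y$, but for the $\Hom$ side this would require that every $\hh(X)$-linear map $\hh(\ce_X)\to\hh(\cf_Y)$ factor through the restriction $\hh(\ce_X)\to\hh(\fii^*\ce_X)$. Condition S3 (the contraction principle) gives surjectivity of that restriction, but says nothing about why the kernel must map to zero. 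Concretely, when $Y=\{x_{\mu_0}\}$ and $\cf_Y=\k$, your base case is already the statement $\Hom_{\hh(X)}(\hh(\ce_X),\fc_{\mu_0})\cong\hh(\iota_{\mu_0}^*\ce_X)^\vee$, which is dual to Corollary~\ref{j cor}; the paper derives this \emph{from} the theorem, so it cannot serve as the starting point of an induction.

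The missing mechanism is exactly condition S2, which your proposal never uses. The paper shows the required vanishing by localizing: after inverting $\fc$, the algebra $Q\hh(X)$ decomposes along the connected components of $X^T$, and S2 guarantees that the idempotent supported on the component through $x_{\mu_0}$ kills the kernel of the restriction map (Corollary~\ref{domain}). This is what forces $\hh(X)$-linear maps to factor as you need, and the paper flags S2 as ``essential'' for precisely this step. The weight argument you invoke handles the $\Ext$ row (and the paper uses it there too, in Lemma~\ref{ejf}(1)), but it does not by itself yield exactness on the $\Hom$ row.

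There is also a structural problem with the inductive step: the open complement $U=Y\setminus Y_{\mu_0}$ is not closed in $X$, so the theorem as stated does not apply to it; and if you instead regard $\jm_*\jm^*\cf_Y$ as a $!$-pure sheaf on $Y$, you have not decreased $|\Sigma|$. The paper avoids this by inducting on $|\Lambda|$ rather than $|\Sigma|$, decomposing along a triangle in the first variable $u_!u^*\ce\to\ce\to v_*v^*\ce$, and establishing exactness of the $\Hom$ row at the middle term (Lemma~\ref{ejf}(3)) via two applications of Corollary~\ref{domain}.
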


  By duality, from the theorem one obtains, cf.  \eqref{fiii i}:
  \begin{cor}\label{j cor} Let  $\ce\in \mix(X)$  be pure, $\la\in\Si$ and $\iota_\la: \{x_\la\}\into X$
    the imbedding. Then there is a natural  isomorphism
\[    \hh(\iota_\la^!\ce) \iso \Hom^\hdot_{\hh(X)}(\hh(\{x_\la\}),\,
  \hh(\ce)).
\]
\end{cor}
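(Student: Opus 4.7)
The plan is to reduce the Corollary to Theorem~\ref{Vthm} by means of Verdier duality. First, by the $(\iota_{\la*},\iota_\la^!)$-adjunction for the closed imbedding $\iota_\la\colon\{x_\la\}\into X$,
\[
\hh(\iota_\la^!\ce)\iso\Ext^\hdot_{D_T(X)}(\iota_{\la*}\k_{\{x_\la\}},\,\ce).
\]
Theorem~\ref{Vthm} does not apply directly, because it requires the pure object in the \emph{first} slot of the Ext, while $\iota_{\la*}\k_{\{x_\la\}}$ is typically not pure in the given stratification: its restriction to the stratum $X_\la\cong V_\la$ containing $x_\la$ is the skyscraper at the origin, which is not geometrically constant on $V_\la$ unless $V_\la$ is itself a point.

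To place a pure object in the first slot, I apply Verdier duality $\BD$. Using $\BD\circ\iota_{\la*}=\iota_{\la*}\circ\BD$ for the closed imbedding, one obtains
\[
\Ext^\hdot_{D_T(X)}(\iota_{\la*}\k,\,\ce)\iso\Ext^\hdot_{D_T(X)}(\BD\ce,\,\iota_{\la*}\BD\k_{\{x_\la\}}).
\]
Since $\BD$ exchanges $*$-purity with $!$-purity and negates weights, $\BD\ce$ is pure whenever $\ce$ is; moreover, on the $0$-dimensional point $\{x_\la\}$ the complex $\BD\k$ is a Tate-twisted constant sheaf, hence $!$-pure. Conditions S1--S3 for $Y=\{x_\la\}$ are verified under the standing assumption $\la\in\Si$ (S1 is trivial because $f_\la(\{x_\la\})=\{0\}$). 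Hence Theorem~\ref{Vthm}, applied with $\ce_X:=\BD\ce$ and $\cf_Y:=\BD\k_{\{x_\la\}}$, yields
\[
\Ext^\hdot_{D_T(X)}(\BD\ce,\,\iota_{\la*}\BD\k)\iso\Hom^\hdot_{\hh(X)}\bigl(\hh(\BD\ce),\,\hh(\iota_{\la*}\BD\k)\bigr).
\]

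It remains to identify this right-hand side with $\Hom^\hdot_{\hh(X)}(\hh(\{x_\la\}),\hh(\ce))$. The equivariant Poincar\'e duality for pointwise pure objects (the formula referenced in the statement of the Corollary) identifies $\hh(\BD\ce)$ with the graded $\hh(\pt)$-dual of $\hh(\ce)$, and $\hh(\iota_{\la*}\BD\k)$ with the graded dual of $\hh(\{x_\la\})$. Both modules are free over $\hh(\pt)$ by pointwise purity, hence reflexive, and the standard reflexivity identity $\Hom^\hdot_R(M^\vee,N^\vee)\cong\Hom^\hdot_R(N,M)$, applied with $R=\hh(X)$ and $(-)^\vee=\Hom^\hdot_{\hh(\pt)}(-,\hh(\pt))$, delivers the claimed isomorphism. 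The main difficulty lies in the bookkeeping of Tate twists and cohomological shifts in this last step: the twists introduced by $\BD$ on $\ce$ and on $\k_{\{x_\la\}}$ must cancel precisely against those coming from Poincar\'e duality so that the resulting isomorphism is grading- and weight-preserving in the form stated; all naturalities then follow formally from the functoriality of $\BD$ and of the duality pairings.
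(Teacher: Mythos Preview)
Your proof is correct and takes essentially the same route as the paper. The paper records only ``By duality, from the theorem one obtains, cf.~\eqref{fiii i}''; the argument behind \eqref{fiii i} (spelled out in the proof of Corollary~\ref{o cor}) is exactly your Verdier-duality trick: rewrite $\hh(\iota_\la^!\ce)$ as an $\Ext$, apply $\BD$ to swap the arguments so that the pure object $\BD\ce$ lands in the first slot, invoke Theorem~\ref{Vthm} with $Y=\{x_\la\}$, and then undo the duality on cohomology via Proposition~\ref{bri} and the identification $\Hom_A(M^\vee,N^\vee)\cong\Hom_A(N,M)$ (which in the paper's language is Lemma~\ref{olem} combined with the symmetry of $\so_A$). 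Your worry about Tate twists is unfounded here because $\dim\{x_\la\}=0$, so $\BD\k_{\{x_\la\}}=\k_{\{x_\la\}}$ and no shift appears.
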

 
    In the special case where $Y=X$ and  $\cf$ is both $!$-pure  {\em and} $*$-pure 
    the isomomorphism of the theorem  has been  proven in \cite{BY}
 following
 the strategy used in \cite{Gi} in the nonequivariant setting, cf. also \cite{CMNO}
    for a related result.
 Some of the arguments used in {\em op cit}
 do not work in our more general setting and a nonequivariant analogue of
 Theorem \ref{Vthm}
 is false, in general. Indeed, let $X$ be a projective $T$-variety with
 isolated fixed points which  is not rationally  smooth and let $\ce:=\IC(X)$.
Then, the  dimensions of the groups $\hh(\iota_\la^!\IC(X))$
  are not
 the same for all $\la\in\La$.
 On the other hand, for any $\la$, the non-equivariant cohomology group
 $H^\hdot(\{x_\la\})$
 is a one dimensional vector space  isomorphic to $H^\hdot(X)/H^{>0}(X)$
 as an $H^\hdot(X)$-module.
 Therefore, the  space  $\Hom^\hdot_{H^\hdot(X)}\big(H^\hdot(\{x_\la\}),\,
  H^\hdot(\IC(X))\big)$ is independent of $\la$. Thus,
  a non-equivariant analogue of the isomorphism of Corollary \ref{j cor},
  hence of Theorem \ref{Vthm},
 does not hold.

 \begin{rems} 
   \vi It follows from conditions S1-S2  that one has $\{x_\la,\,\la\in\La\}\sset X^T$
      and $\{x_\la,\, \la\in \Si\}=Y^T$.
      The inclusion $\{x_\la\}\sset (X_\la)^T$ 
   is not required to be an equality for $\la\in\La\sminus\Si$.
        \vskip 2pt

        \vii     Condition S2  is equivalent to the following condition:       
 \begin{description}
 \item[S2']  Each connected component of the fixed point variety $X^T$ either
   does not meet $Y$ or 
   meets $Y$ at a single point $x_\la$ and in this case  one must have
   $(X_\la)^T=\{x_\la\}$.   
 \end{description}
 This condition holds trivially if $X^T= \{x_\la,\, \la\in \La\}$, so condition S2 
 hasn't appeared in earlier works on the subject where $T$-fixed points in
 $X$ were assumed to be isolated.
 \vskip 2pt

 \viii  The set $U_\la$ that appears in condition S3 is not required to contain the stratum $X_\la$.
\end{rems}

Our  results on derived Satake rely on  Theorem
\ref{Vthm} in the case where
$X$  is (a sufficiently large projective subvariety of)  the affine Grassmannian
$\Gr_G$ of a reductive group $G$ and  $Y=\Gr_H\into X=\Gr_G$
is the imbedding of the affine Grassmannian of  a
reductive subgroup of $G$.
 Conditions S1-S3 hold for such an imbedding, 
 Sect. \ref{iwahori sec}.
The pure sheaf $\ce_X$  is going to be the regular  perverse sheaf on $\Gr_G$.
Most of the  $!$-pure sheaves $\cf_Y$ that we will consider are either not
$*$-pure or are objects of an  ind-completion, in which case  the notion of $*$-purity is
not  well defined.

\subsection{Derived Satake functor on $!$-pure objects}
\label{sat int}
Below, we will frequently encounter actions of group schemes. 
Let $\mathcal J$ be a group scheme over  a scheme $S$
and  $X, Y$ a pair  of schemes over $S$
equipped with $\mathcal J$-actions $\mathcal J\times_S X\to X,\,\mathcal J\times_S Y\to Y$.
If  $X$  is a $\mathcal J$-torsor we will use the notation  $X\times_S ^{\mathcal J} Y$
for the quotient of
$X\times_S Y$ by the diagonal $\mathcal J$-action, that is, a twist of $Y$ by the
torsor $X$.

Let $G$ be a split connected  reductive group  either over $\mathbb C$ or
$\BF_q$. Let $\dg$ be the Langlands dual group over $\k$ with Lie algebra
$\dgl$. Let  $\greg\sset \dgl^*$ be the set of regular elements,
$\vpi: \greg\to\fcc:= \dgl^*/\!/\dg$  the coadjoint quotient, and
$J=J_\dg\to \fcc$   the group scheme of
universal centralizers.

Fix a maximal unipotent subgroup $\du\subset \dg$ and
a nondegenerate character $\psi:\dul=\Lie \du\to\C$. Let
$\WH:=T^*_\psi(\dg/\du)=\dg\times_\du(\psi+\dul^\perp)$ be
the Whittaker cotangent bundle on $\dg/\du$.
This is a smooth affine symplectic variety equipped with commuting Hamiltonian
actions of $\dg$ and $J$ with moment maps
 $\pi: \WH\to\greg,\,(g,\xi)\mto \Ad g(\xi)$ and  $p=\vpi\ccirc\pi: \WH
 \xrightarrow{\pi}
 \greg\xrightarrow{\vpi} \fcc$, respectively.
 The map $\pi$, resp. $p$, makes $\WH$ a $\dg$-equivariant $\vpi^*J$-torsor,
 resp. $\dg$-torsor.


 It will be convenient to view $\fcc$ as a variety equipped with the trivial $J$-action.
There is a canonical  isomorphism $\fcc/J\cong  \greg/\dg$ of quotient stacks,
hence an equivalence $\qcoh(\fcc/J)=\qcoh^{J}(\fcc)\cong \qcoh(\greg/\dg)=\qcoh^{\dg}(\greg)$,
between  the  respective equivariant categories of
 quasi-coherent sheav-es.
This equivalence can be described more explicitly as follows. 
 The pull-back $p^*M$ of $M\in \qcoh^{J}(\fcc)$ comes equipped with
a $\dg\times J$-equivariant structure  induced
by the $\dg\times J$-action  on $\WH$. There is also an 
action $\gamma:  J\times_\fcc\, p^*M\to p^*M$ induced by
the $J$-action on $M$.
We define a new $J$-equivariant structure on $p^*M$ by twisting the original
 $J$-equivariant structure by $\gamma$.
The resulting sheaf 
descends to a $\dg$-equivariant sheaf $\wh(M)$ on $\WH/J\cong \greg$ and
the equivalence $\qcoh^{J}(\fcc)\iso\qcoh^{\dg}(\greg)$ sends $M$ to
$\wh(M)$.

An important special case is $M=\mu_*\oo_X$ where
$X$ is an  affine  Poisson scheme and  $\mu: X\to \fcc$
is the  moment map  for a Hamiltonian
$J$-action on $X$.
In this case, $\wh(M)$ is an $\oo_\greg $-algebra
and the relative spectrum of that algebra 
is a  quasi-affine scheme $\WH(X):=\WH\times_\fcc ^JX$ that may be viewed as a
twist of $X$ by the $\vpi^*J$-torsor $\WH\to \greg$.
The scheme $\WH(X)$ comes  equipped
with a  Poisson structure and a Hamiltonian $\dg$-action with moment map $\WH(X)\to\greg$.
If $X$ is a smooth  symplectic variety then so is $\WH(X)$.

Write
$\sg$ for the algebra $\sym\dgl$
viewed as a dg algebra with   zero differential and
such that $\sym^i\dgl$ is assigned degree $2i$.
Let  
$D^\dg_{\text{perf}}(\sg)$ be the
perfect derived category of $\dg$-equivariant dg
modules over $\sg$, resp. $(\sym\dgl,\dg)\mmod$
the full subcategory of 
graded $\dg$-equivariant  $\sym\g$-modules, equipped with monoidal structure 
$(\td)\lo_{\sg}(\td)$, resp. $(\td)\o_{\sym\dgl}(\td)$.
For any dominant weight $\la$
one has a free  $\dg$-equivariant  $\sym\g$-module
$\sg\o V_\la$, where $V_\la$ is an irreducible representation of $\dg$ with highest
weight $\la$.

Let   $\Gr_\gc=\gc(K)/\gc(O)$ be  the  affine  Grassmannian where $K$, resp $O\sset K$,
is the field of formal Laurent, resp. power, series
either  over $\mathbb C$ or $\overline{\mathbb F}_q$.
 Let $D_{\gc(O)}(\Gr_G)$ be the equivariant derived
    Satake category equipped with  convolution monoidal structure.
   Bezrukavnikov and Finkelberg \cite{BF}
 established  a monoidal 
 equivalence, the derived Satake equivalence:
 \beq{Psi} D_{\gc(O)}(\Gr_G)\iso D^\dg_{\text{perf}}(\sg),\quad
 \IC(\ogr{\la})\mto \sg\o V_\la.
 \eeq
 It will be more convenient for our purposes to use
a slightly different equivalence $\Phi$
introduced in \cite[Sect.\,5(ii)]{BFN2}.
For  $\cf\in D_{\gc(O)}(\Gr_G)$, let $\HF(\cf)\in (\sym\dgl, \dg)\mmod$ denote the  cohomology of $\Phi(\cf)$. We also consider
the equivariant cohomology  $H^\hdot_{\gc(O)}(\cf)$,
a finitely generated module over the algebra $H^\hdot_{\gc(O)}(\pt)\cong \C[\fcc]$.
  Let $\chh\Phi(\cf)\in \qcoh^\dg(\dgl^*)$, resp.
 $\chh_{\gc(O)}(\cf)\in \coh(\fcc)$, denote 
 an associated   quasi-coherent sheaf on $\dgl^*$,
 resp. coherent sheaf   on $\fcc$.
 The construction of \cite[Sect. 5.1-5.3]{YZ} equips the sheaf $\chh_{\gc(O)}(\cf)$
 with a $J$-equivariant structure.

We say that an object $\cf\in\sat$ is $!$-pure, resp. pure,
            if $\cf$ has a lift
           $\tilde \cf\in D^{\text{mix}}_{\gc(O)}(\Gr_G)$ which is $!$-pure, resp. pure,
            with respect to   the stratification  by the $\gc(O)$-orbits, cf. Sect. \ref{gen} for more details.

  \begin{thm}\label{sat thm}
    If $\cf\in D_{\gc(O)}(\Gr_G)$ is $!$-pure     then the dg module $\bx(\cf)$  is formal, i.e,
      quasi-isomorphic to its cohomology equipped with zero differential,
     and there is a canonical  isomorphism
      \[\chh\bx(\cf)\cong \jmath_*\wh(\chh_{\gc(O)}(\cf))   \]
      in $\qcoh^{\dg}(\dgl^*)$,  where  $\jmath : \greg\into \dgl^*$ is the open imbedding and $\jmath_*$
      a {\em nonderived} pushforward.
    \end{thm}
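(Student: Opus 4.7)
My plan is to split the proof into two parts: (a) establish formality of the dg module $\bx(\cf)$, and (b) identify $\chh\bx(\cf)$ with $\jmath_*\wh(\chh_{\gc(O)}(\cf))$. For (a) I would apply Theorem \ref{Vthm} with $X=Y=\ogr{\la}$ for $\la$ large enough that $\supp\cf\sset\ogr{\la}$, equipped with the stratification described in Sect.\ \ref{iwahori sec} (where conditions S1-S3 are verified), taking $\ce_X=\IC(\ogr{\mu})$ (pure, by the Decomposition Theorem applied to a small resolution) and $\cf_Y=\cf$ ($!$-pure by assumption). The theorem then yields, for each $\mu\leq\la$,
\[
\Ext^\hdot_{D_T(\ogr{\la})}\bigl(\IC(\ogr{\mu}),\,\cf\bigr)\ \iso\ \Hom^\hdot_{\hh(\ogr{\la})}\bigl(\hh(\IC(\ogr{\mu})),\,\hh(\cf)\bigr).
\]
Under the derived Satake equivalence \eqref{Psi}, the $\IC(\ogr{\mu})$ correspond to the dg-free modules $\sg\o V_\mu$, which together form a family of compact generators of $D^\dg_{\text{perf}}(\sg)$. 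The right hand side of the display depends only on the cohomology $\HF(\cf)$ equipped with zero differential, so the displayed equality, holding for all $\mu$, forces $\bx(\cf)$ to be quasi-isomorphic to $\HF(\cf)$ as a dg $\sg$-module with $\dg$-action: concretely, one lifts a free $\sg$-resolution of $\HF(\cf)$ term by term, the obstructions lying in higher Ext groups that the theorem already identifies with their vanishing cohomological counterparts.

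Once formality is in hand, $\chh\bx(\cf)$ is the $\dg$-equivariant quasi-coherent sheaf on $\dgl^*$ associated to the graded $\sym\dgl$-module $\HF(\cf)$. The identification with $\jmath_*\wh(\chh_{\gc(O)}(\cf))$ then proceeds in two steps. First, the description of $\bx$ in \cite[Sect.\,5(ii)]{BFN2} exhibits it as a Whittaker twist of $\gc(O)$-equivariant cohomology by the torsor $\pi:\WH\to\greg$; combined with the $J$-equivariant structure on $\chh_{\gc(O)}(\cf)$ supplied by \cite[Sect.\,5.1-5.3]{YZ}, this yields a canonical isomorphism $\jmath^*\chh\bx(\cf)\iso\wh(\chh_{\gc(O)}(\cf))$ in $\qcoh^\dg(\greg)$. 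Second, one shows that the unit map $\chh\bx(\cf)\to\jmath_*\jmath^*\chh\bx(\cf)$ is an isomorphism; this amounts to checking that $\HF(\cf)$ has no associated primes on $\dgl^*\sminus\greg$, which follows by combining the formality with Corollary \ref{j cor} to control the $\dt$-equivariant costalks of $\cf$ at fixed points.

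The main obstacle is the formality step (a). The standard formality arguments in this setting (cf.\ \cite{BY}, following \cite{Gi}) require the sheaf to be both $*$-pure and $!$-pure, a condition that is typically violated for the $\cf$ we wish to treat. This asymmetry is precisely what Theorem \ref{Vthm} is designed to overcome, but deploying it still requires the reduction from the ind-projective $\Gr_G$ to a finite-dimensional $\ogr{\la}$, the verification of conditions S1-S3 for the stratification used, and a generation argument ensuring that the pure $\IC(\ogr{\mu})$ detect formality on all of $\bx(\cf)$.
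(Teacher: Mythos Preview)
Your formality argument in (a) has a genuine gap. Theorem~\ref{Vthm} identifies $\Ext^\hdot_{D_T}(\IC(\ogr{\mu}),\cf)$ with a Hom between cohomology modules for each $\mu$, but matching Ext groups against a set of generators does not produce a quasi-isomorphism $\Phi(\cf)\simeq\HF(\cf)$: the lifting you sketch needs specific obstruction \emph{classes} to vanish, and knowing that the ambient Ext group is computable from cohomology says nothing about which of its elements are zero. (Over $\k[x]$ with $|x|=2$, a nontrivial extension of $\k$ by $\k[-2]$ has the same Homs from the generator $\k[x]$ as $\k\oplus\k[-2]$, yet is not formal.) What is missing is a coherent splitting as a module over the full $\Ext$-algebra of $\rr_{\leq\la}$, and $\hh(\td)$ is a cohomology functor, not a dg or $\infty$-functor, so it cannot supply that coherence. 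The paper instead uses a weight argument: lifting $\cf$ to $\cf^{gr}$ in the graded-sheaf $\infty$-category $\ssh_{gr,c}$ of \cite{HL}, the $!$-purity of $\cf$ makes $\Ext^\hdot_{D_{G(O)}(\ogr{\la})}(\rr_{\leq\la},\cf)\cong H^\hdot_{G(O)}(\BD\rr_{\leq\la}\so\cf)$ pure, hence the enriched internal Hom $\chom^{gr}(\rr_{\leq\la},\cf^{gr})\in\vect^{gr}_c$ is pure and therefore splits as $\bigoplus_i H^i\langle -i\rangle[-i]$ \emph{as a module object} over $\chom^{gr}(\rr_{\leq\la},\rr_{\leq\la})$. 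Theorem~\ref{Vthm} plays no role in this step.

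For (b) your plan is in the right direction, but the adjunction step is not justified: Corollary~\ref{j cor} concerns $!$-costalks at $T$-fixed points of $\Gr$, which bears no direct relation to associated primes of $\HF(\cf)$ over $\dgl^*$. The paper bypasses this by computing directly
\[
\HF(\cf)\cong\sfh(\rr\so\cf)\cong\sfh(\rr)\,\o_\sfc^J\,\sfh(\cf)\cong\C[\WH]\,\o_\sfc^J\,\sfh(\cf)\cong\Ga\bigl(\dgl^*,\,\jmath_*\wh(\chh_{\gc(O)}(\cf))\bigr),
\]
where the second isomorphism is Lemma~\ref{Jj}(ii)---and this is where Theorem~\ref{Vthm}, via Corollary~\ref{o cor}, actually enters the proof---and the last uses that $\sfh(\cf)$ is free over $\sfc$, so that $J$-invariants coincide with $\fj$-invariants on $p^*\chh_{\gc(O)}(\cf)$.
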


      The theorem shows that  the Satake image of a $!$-pure object
  $\cf$  is  completely determined by 
   the $\gc(O)$-equivariant cohomology of $\cf$ and that
      the adjunction $\chh\bx(\cf)\to \jmath_*\jmath^*\chh\bx(\cf)$
    is an isomorphism.
    
    \begin{rems} \vi 
The functor $M\mto \jmath_*\wh(M)$ is a right adjoint of the functor of Kostant-Whittaker reduction considered in \cite{BF}, cf. also \cite{Ga}, Sect. 2.3 and \cite{GG}, Sect. 3.
      
    \vii Most of the results of this paper have `quantum analogues' for
    the category $D_{\gm\ltimes G(O)}(\Gr)$ that includes equivariance  with respect to   loop rotation.
    The proofs in the  quantum setting  are essentially the same as in the classical setting.
    However,  the statements of quantum analogues of   theorems of the  paper
    involve nil DAHA and related bialgebroid structures introduced in \cite{GG}. Therefore,
    we decided to omit discussion of  the quantum case in the present paper.
  \end{rems}

  \subsection{Ring objects}
    We will consider commutative ring objects $\cf$ of an ind-completion  $\indd$
    with respect to $!$-restrictions, cf. eg.  \cite[Sect. 2(i)]{BFN2} and also Sect. \ref{gen} below.
    If $\cf$ is $!$-pure, hence equivariantly formal, 
    the cohomology
    $\HF(\cf)$, resp. $H_{G(O)}^\hdot(\cf)$,
             has the structure of a supercommutative 
             algebra, by the equivariant K\"unneth formula.  Thus, there is an affine
             $J$-scheme
             $\Spec H_{G(O)}^{\text{even}}(\cf)$ over $\fcc$,    resp. $\dg$-scheme
             $\Spec\hrm^{\text{even}}\Phi(\cf)$ over $\dgl^*$.
                                 Theorem \ref{fin} below gives a 
          sufficient condition  for these schemes be
          of finite type in terms of the
          restriction of $\cf$ to the affine Grassmannian of a maximal torus $T\sset G$.
          The scheme $\Spec H_{G(O)}^{\text{even}}(\cf)$
          is often easier to study than $\Spec\hrm^{\text{even}}\Phi(\cf)$. 
          Theorem \ref{fin}  provides a construction
          of $\Spec\hrm^{\text{even}}\Phi(\cf)$ in terms of  $\Spec H_{G(O)}^{\text{even}}(\cf)$
          which allows, in many cases, to derive algebro-geometric properties
          of the former from similar properties of the latter.

          Let  $\rho: \hv\to \gc$ be a morphism of reductive groups
          and $\rho_\Gr: \Gr_\hv\to \Gr_\gc$
 an  induced
   ind-proper morphism.
 Let  $\rho^\dagger$ be the composite  of the following  functors
    \[
            D_{\gc(O)}(\Gr_{\gc})\ \xrightarrow{\en\text{Obl}^{\gc(O)}_{\hv(O)}\en}\,
      D_{\hv(O)}(\Gr_{\gc}) \ \xrightarrow{\;\rho_\Gr^!\;}\
      D_{\hv(O)}(\Gr_{\hv}).
    \]

                     It was  observed  in  \cite[Sect. 2(vi)]{BFN2} in a special case
      and proved in \cite{Ma} in full generality
      that  the functor $\rho^\dagger$ has a lax monoidal structure, in particular,
      it sends ring objects to ring objects. 
      The notion of $!$-purity (as opposed to $*$-purity) makes sense for  ind-objects
    and  $\rho^\dagger$  takes
           $!$-pure objects to $!$-pure objects.

     To state  Theorem \ref{fin},
     let         $\BX=\mathbb{X}_*(\tv)$ be the cocharacter lattice of  $T$
     and
       $\dom\sset\BX$  the dominant Weyl chamber.
      The natural imbedding $\veps: \BX\cong\Gr_T\into \Gr_G$
    identifies $\BX$ with the $T$-fixed point set $(\Gr_G)^T$.  
For  $\cf\in\Ind D_{\gc(O)}(\Gr_G)$, we  write  $\veps_\la^\dagger\cf$
     for the stalk of $\veps^\dagger\cf\in D_{T(O)}(\Gr_T)$ at $\la\in\BX$, so
     $H^\hdot_\tv(\veps^\dagger\cf)=\bigoplus_{\la\in\BX}\,H^\hdot_\tv(\veps_\la^\dagger\cf)$.
     If $\cf$ is a commutative
  ring object then $H^\hdot_\tv(\veps^\dagger\cf)$
      is an $\BX$-graded supercommutative algebra      and we define
      an $\dom$-graded subalgebra of $H^\hdot_\tv(\veps^\dagger\cf)$
      as follows

\beq{winv}
H(\veps^\dagger\cf)_+:=\bplus_{\la\in\dom}\  H^\hdot_\tv(\veps_\la^\dagger\cf).
\eeq

Let $\bar X:=\Spec \C[X]$ denote the  affine closure
  of a quasi-affine scheme ~$X$.

\begin{thm}\label{fin} If $\cf\in\Ind D_{\gc(O)}(\Gr_G)$ is a  $!$-pure commutative
  ring object such that the algebra $H(\veps^\dagger\cf)_+$  is finitely generated
  then   $\Spec H^{\text{\em even}}_{G(O)}(\cf)$ and  $\Spec \hrm^{\text{\em even}}\Phi(\cf)$
  are     schemes of finite type and
    there is an isomorphism of $\dg$-schemes over $\dgl^*$:
\[
    \Spec  \hrm^{\text{\em even}}\Phi(\cf)\ccong 
    \overline{\WH(\Spec{H^{\text{\em even}}_{G(O)}(\cf)})}.
  \]

 In particular,   $\Spec \hrm^{\text{\em even}}\Phi(\cf)$ is
      reduced  irreducible and normal
 if      so is $\Spec H^{\text{\em even}}_{G(O)}(\cf)$.
      \end{thm}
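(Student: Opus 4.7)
Set $A := H^{\text{even}}_{G(O)}(\cf)$ and $B := \hrm^{\text{even}}\Phi(\cf)$. Because $\cf$ is a commutative ring object and $\Phi(\cf)$ is formal by Theorem~\ref{sat thm}, the equivariant K\"unneth formula endows $A$ with the structure of a commutative graded $\C[\fcc]$-algebra and $B$ with that of a commutative graded $\sym\dgl$-algebra. The construction of \cite{YZ} supplies a $J$-action on $A$ making $X := \Spec A \to \fcc$ a Hamiltonian $J$-scheme, while the monoidality of $\Phi$ produces a compatible Hamiltonian $\dg$-action on $Y := \Spec B \to \dgl^*$.

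Next, I apply Theorem~\ref{sat thm}. The sheaf $\chh_{G(O)}(\cf)$ is the $J$-equivariant $\oo_\fcc$-algebra $\widetilde{A}$ associated to $A$. Since the twist $\WW$ is monoidal on $J$-equivariant algebras by construction, $\WW(\widetilde{A})$ is the $\dg$-equivariant $\oo_{\greg}$-algebra whose relative spectrum over $\greg$ is exactly $\WH(X) = \WH\times^J_\fcc X$. Using that $\jmath$ is an open imbedding,
\[
B \;=\; \Gamma(\dgl^*,\chh\Phi(\cf)) \;=\; \Gamma(\dgl^*, \jmath_*\WW(\widetilde{A})) \;=\; \Gamma(\WH(X), \oo) \;=\; \oo(\overline{\WH(X)}),
\]
so $Y\cong\overline{\WH(X)}$ as $\dg$-schemes over $\dgl^*$, giving the claimed isomorphism.

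The finite-type claim uses the hypothesis on $H(\veps^\dagger\cf)_+$. The functor $\veps^\dagger$ is lax monoidal by \cite{Ma}, so $H^\hdot_T(\veps^\dagger\cf)$ is a $\BX$-graded commutative algebra of which $H(\veps^\dagger\cf)_+$ is the $\dom$-graded subalgebra. A torus-equivariant attracting-set / hyperbolic restriction argument in the spirit of \cite{BFN2} identifies $A$ with a Weyl-invariant subalgebra of $H(\veps^\dagger\cf)_+$, so finite generation of the latter forces finite generation of $A$ over $\C[\fcc]$; hence $X$ is of finite type and $\WH(X)$ is quasi-affine of finite type over $\greg$. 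Because $\dg$ is reductive, $\dgl^*\sminus\greg$ has codimension $\geq 2$, so the complement of $\WH(X)$ in $\overline{\WH(X)}$ also has codimension $\geq 2$, and hence $\oo(\overline{\WH(X)})$ is finitely generated, proving that $Y$ is of finite type as well.

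Finally, if $X$ is reduced, irreducible, and normal, then so is $\WH(X)$, as it is smooth-locally on $\greg$ the product of $X$ with an open subset of the smooth $J$-torsor $\WH$. Since the open imbedding $\WH(X)\into\overline{\WH(X)}$ has complement of codimension $\geq 2$, Serre's $(R_1)+(S_2)$ criterion together with the Hartogs-type identity $\oo(\overline{\WH(X)}) = \oo(\WH(X))$ yield reducedness, irreducibility, and normality of $\overline{\WH(X)}$. The main obstacle in this plan is the finite-type step: transferring finite generation from the torus-level dominant-coweight algebra $H(\veps^\dagger\cf)_+$ to the full $G(O)$-equivariant algebra $A$ demands a careful combination of the Braverman-Finkelberg-Nakajima hyperbolic-restriction formalism with the $!$-pure formality of Theorem~\ref{sat thm}, because the naive Weyl-invariant identification of $G(O)$-equivariant cohomology holds only after suitable localization/completion.
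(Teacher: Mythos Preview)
Your derivation of the isomorphism $Y\cong\overline{\WH(X)}$ from Theorem~\ref{sat thm} is correct and matches the paper, as does your argument that $\WH(X)$ inherits reducedness, irreducibility, and normality from $X$ by \'etale descent along the $J$-torsor $\WH\to\greg$.

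The genuine gap is in the finite-type argument for $B$. You write that ``the complement of $\WH(X)$ in $\overline{\WH(X)}$ also has codimension $\geq 2$, and hence $\oo(\overline{\WH(X)})$ is finitely generated.'' This implication is false in general: the affine closure $\overline{U}=\Spec\oo(U)$ of a quasi-affine variety $U$ of finite type need not be of finite type (Nagata-type counterexamples to Hilbert's 14th problem live exactly here). Knowing that $X$ is of finite type and that $\WH(X)\to\greg$ is \'etale-locally a product does not, by itself, bound the ring of global sections on $\WH(X)$.

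The paper avoids this problem by a different route: it does \emph{not} deduce finite generation of $B$ from that of $A$. Instead, since $\HF(\cf)=\sfh(\rr\so\cf)$ and $\rr\so\cf$ is again a $!$-pure commutative ring object, the paper applies its finite-generation criterion (Proposition~\ref{part}) a second time, now to $\rr\so\cf$. The required hypothesis---that $\hh(\veps^\dagger(\rr\so\cf))_+$ is finitely generated---is verified via the isomorphism $\hh(\veps^\dagger\rr)\cong\C[T^*(\dg/\du)]$ of \cite{GK}, which yields
\[
\hh(\veps^\dagger(\rr\so\cf))_+ \cong \big(\C[T^*(\dg/\du)]_+\o_\fc \hh(\veps^\dagger\cf)_+\big)^{\dt},
\]
and finite generation follows from \cite{GR}, \cite{Bro} together with the hypothesis on $\hh(\veps^\dagger\cf)_+$. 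Once $B$ is known to be finitely generated, normality of $\overline{\WH(X)}$ is immediate from normality of $\WH(X)$.

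You are right that the passage from $\hh(\veps^\dagger\cf)_+$ to $\sfh(\cf)$ is the hard step; the paper devotes all of Section~\ref{gr pf} to it (Proposition~\ref{part}), using a filtration of $\sfh(\cf)$ by $\sfh(i^!_{\leq\la}\cf)$ whose associated graded is identified with a ``partially $W$-equivariant'' subalgebra $(\hh(\veps^\dagger\cf)_+)^\flat$ of $\hh(\veps^\dagger\cf)_+$, not a naive $W$-invariant subalgebra as you suggest.
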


         \begin{rems}  \vi A    supercommutative algebra $A=A^{\text{even}} \oplus A^{\text{odd}}$
  is finitely generated if and only if $A^{\text{even}}$ is a finitely generated algebra and
  $A^{\text{odd}}$ is a finitely generated $A^{\text{even}}$-module.

  \vii All  ring objects $\cf$  that will appear in applications considered in this  paper
  come from ring objects of
  $D_{\gm\ltimes G(O)}(\Gr)$, the $\gm\ltimes G(O)$-equivariant derived category where $\gm$ acts by loop
  rotation. In that case the cohomology groups $H^\hdot_{G(O)}(\cf)$ acquire
  a natural Poisson
  structure that will appear without further references
  in some of the statements in subsequent sections.
  \end{rems}

\begin{examp} Let  $\jmath_\la: \Gr_\la\into \Gr$
  be  the locally closed imbedding of the $G(O)$-orbit associated with
  $\la\in \BX_+$, and   $\k_{\Gr_\la}$ the constant sheaf on $\Gr_\la$.
  The direct sum $\cf:=\oplus_{\la\in\BX_+}\, (\jmath_\la)_*\k_{\Gr_\la}$
  has the natural structure of a ring object.
  Combining Theorem \ref{fin} with the main result of \cite{GK}, it is not difficult to deduce
  that the algebra $\HF(\cf)$ is isomorphic to a subalgebra
  of $\C[T^*(\dg/\du)]$ generated by the algebras $\sym\dgl$ and $p^*\C[\dg/\du]$,
  where $p: T^*(\dg/\du)\to \dg/\du$ is the projection.
  \end{examp}

  \subsection{Organization of the paper} In Section \ref{duality}
  we state Theorem \ref{vincent} which relates derived Satake equivalences $\Phi_H$ and $\Phi_G$
  for a morphism $H\to G$ of reductive groups. The proof of Theorem \ref{vincent}
  will be given in Section \ref{iwahori sec}. In  Section \ref{symp sec}
  we discuss applications of  Theorem \ref{vincent} to symplectic duality
  introduced in the fundamental work \cite{BZSV}.
  In Sect. \ref{ssymp sec} we
  give a description of the symplectic dual of  $T^*(G/H)$
  for any connected reductive subgroup $H\sset G$.
  The main result of  Section \ref{Coul sec} is Theorem \ref{c thm} on the structure
  of the symplectic dual $\bm^!_G$ of a symplectic representation $\bm$.
  The theorem provides a description of $\bm^!_G$ in terms of the corresponding Coulomb
  branch ${\mathcal C}_{\bm,G}$;   it follows  that $\bm^!_G$ 
 is an irreducible normal variety; furthermore,
 $\bm^!_G$ has symplectic singularities if $\bm=T^*\bn$  is of cotangent type.
 We also relate our  results to the results 
  of Gannon-Webster \cite{GW} and  Teleman \cite{T};
  in  Remark \ref{rem bk} we  indicate a 
  connection with works of
  Braverman-Kazhdan  \cite{BK1}, \cite{BK2}.
  The proof of Theorem  \ref{c thm} is given in Sect. \ref{c pf}.
  In Sect. \ref{univ} we consider the `universal  Coulomb branch'.
  Section \ref{sec2} is devoted to the proof of Theorem \ref{Vthm}.
  In Section \ref{rem sec} we review the necessary material on derived Satake 
  and the functor $\Phi$. In particular, Sect. \ref{appl sat sec}
  contains a simple construction
  of commutativity
  constraint for convolution on the abelian Satake category, which seems to be new.
  In Sect. \ref{Jsec} we discuss functoriality for  universal centralizers
  based on   a  description of the universal centralizer given in \cite{DG},
  \cite{BFM}, and \cite{Ng1}. 
  Theorem \ref{sat thm} is proved in Section \ref{pf sec1} modulo
  a criterion for finite generation of the cohomology algebra of a
  $!$-pure ring object. The criterion is proved in Section \ref{gr pf}.

  \subsection{Acknowledgements} I thank Tom Gannon for 
discussions and for providing proof of  Corollary \ref{tom}.
Tsao-Hsien  Chen kindly shared with me a draft of \cite{Ch} and attracted my attention
to the work of Macerato  \cite{Ma}.
I am grateful to Nick Rozenblyum for explaining a general approach to
proving formality of dg objects based on $\infty$-categories which is used in the
proof of Theorem \ref{sat thm}.
Above all, I am grateful to Misha Finkelberg for
reading parts of a preliminary draft of the paper and for many discussions
on derived Satake and Coulomb branches which have played an important role
in this project.

  \section{Functoriality for derived Satake}
  \label{duality}
  \subsection{}
  We write  $\Lie J_\dg$ for the Lie algebra of the
    group scheme $J_\dg$. 
    This is an $\oo_{\fcc_\dh}$-module  
     with zero Lie bracket  which is  canonically  isomorphic
     to the cotangent sheaf $T^*_{\fcc_\dg}$, \cite[Sect.\,2.6]{BF},
     \cite[Lemma 5.1.5]{GK}. 
Let      $\rr_G:=\oplus_{\la\in \dom}\,
 \IC(\ogr{\la})\o V^*_\la$ 
 be the regular perverse sheaf, a ring object of $\Ind D_{G(O)}(\Gr_G)$.

 Let  $H$ and $G$  be  a   pair  of connected reductive groups.
  To simplify  notation we write  $S\dgl=\sg$, resp.
  $S\dhl=\sym(\dhl[-2])$, and
  use   subscripts  $\dh$ and $\dg$ to distinguish between objects
    associated with $\dh$ and $\dg$, respectively.

    Given  a  morphism  $\rho: \hv\to \gc$, we would like to relate
   derived
   Satake equivalences  $\Phi_G$ and $\Phi_H$.
   To this end,
observe first that the  map  $\rho$ induces  a morphism 
    from $\fcc_\dh=\dhl^*/\!/\dh=\h/\!/H$ to $\fcc_\dg=\dgl^*/\!/\dg=\g/\!/G$,
    and   we write $\rho^*(\td)=\fcc_\dh\times_{\fcc_\dg}(\td)$ 
  for base change.
  Pullback via the differential of the map $\fcc_\dh\to \fcc_\dg$
 gives a morphism
 $\rho^*T^*_{\fcc_\dg}\to T^*_{\fcc_\dh}$ that may be  viewed
  as a morphism
 $\rho^*\Lie J_\dg\to\Lie J_\dh$
 of abelian Lie algebras. The latter morphism
 can  be exponentiated to a morphism $\rho^*J_\dg \to J_\dh$
 of group schemes over $\fcc_\dh$, cf. Sect. \ref{Jsec}.

  Next, let $\varrho: \dh\into \dh\times\dg$ be
  the graph imbedding $h\mto (h, \rho(h))$. Then, we have the graph
  imbedding $\fcc_\dh\into \fcc_{\dh\times\dg}$ and the above construction yields
 a  morphism $\vrho^*J_{\dh\times\dg}\to J_\dh$
 of group schemes over $\fcc_\dh$.
 The action of $J_\dh$ on itself by translations, resp. on $\WW_\dh$,
 induces, via the morphism of group schemes, a $\vrho^*J_{\dh\times\dg}$-action
 on $J_\dh$, resp. $\WW_\dh$.
 Alternatively, one may  view $J_\dh$, resp.  $\WW_\dh$,
 as  a $J_{\dh\times\dg}$-scheme over  $\fcc_{\dh\times\dg}$
    via the graph imbedding $\fcc_\dh\into \fcc_{\dh\times\dg}$.

    Following V. Lafforgue   \cite{La}, we define
  \beq{vin0}
    \WHG:= \WW_{\dh\times\dg}(J_\dh)\,=\, J_\dh \times^{\varrho^*J_{\dh\times\dg}}_{\fcc_\dh}\varrho^*\WW_{\dh\times\dg}
    \,=\,T^*_\psi(\dh/U_\dh)\times_{\fcc_\dh}^{\rho^*J_\dg}\rho^*T^*_\psi(\dg/U_\dg),    
      \eeq
where we have
  used the same symbol $\psi$ for a nondegenerate character of
  either the group $U_\dh$ or $U_\dg$.
        By construction, $\WHG$ is a  quasi-affine smooth symplectic variety
             equipped with a hamiltonian action of the group $\dh\times\dg$ with moment map $\WHG\to\dhl_r^*\times\greg$.

 \begin{thm}[Functoriality for derived Satake]
   \label{vincent}  \vi For any morphism $\rho: H\to G$ of connected reductive
   groups,
      there is an isomorphism of functors 
      \[\th_{H\to G}:\, (\Phi_H\ccirc\rho^\dagger)(\td)\,\iso\,
        (\C[\WHG]\,\lo_{S\dgl}\,\Phi_G(\td))^\dg
      \]
      that makes
      the following
     diagram commute:
     \[
       \xymatrix{
         \Ind D_{G(O)}(\Gr_G) \  \ar[d]^<>(0.5){\Phi_G}\ar[rrrrr]^<>(0.5){\rho^\dagger}&&&&&
       \     \Ind  D_{H(O)}(\Gr_H)\  \ar[d]^<>(0.5){\Phi_H}\\
           \Ind D^\dg_{\text{\em perf}}(S\dgl)\  
           \ar[rrrrr]^<>(0.5){\en (\C[\WHG]\,\lo_{S\dgl}\,\td)^\dg\en }
           &&&&& \ \Ind   D^\dh_{\text{\em perf}}(S\dhl).\  
         }
       \]

       \vii The algebra
     $\C[\WHG]$, resp.   $\C[J_\dh \times_{\rho^*J_\dg} \rho^*\WH]$,
        is a  finitely generated integrally closed domain
     and  there  is an  isomorphism
     \begin{align}
       \Spec\HF_H(\rho^\dagger \rr_{\gc})&\cong\overline{\WHG},\en\text{\em resp.}\label{vin1}\\
\Spec H^\hdot_{H(O)}(\rho^\dagger\rr_G)&\cong\overline{\WH(J_\dh)},\label{vin2}
      \end{align}
     of affine Poisson varieties with a Hamiltonian  $\dh\times\dg$-action,
     resp. $\dg$-action.
               \vskip3pt

\viii          The isomorphism of functors  in {\em (i)}  is compatible with composition as follows.
Let  ${G_1}\xrightarrow{\rho_{12}} {G_2}\xrightarrow{{\rho_{23}}} {G_3}$
 be morphisms of connected reductive groups
       and    $\rho_{13}:=\rho_{23}\ccirc {\rho_{12}}$. Then,
          there is a $\dg_1\times \check {G_3}$-equivariant dg algebra isomorphism
          \[
            \th_{\text{\em alg}}:\
            (\C[\WW_{{G_1}\to {G_2}}]\,\lo_{S\dgl_2}\,
            \C[\WW_{{G_2}\to {G_3}}])^{\dg_2}\ \iso \  \C[\WW_{{G_1}\to {G_3}}],
          \]
          where the algebra $\C[\WW_{{G_1}\to {G_3}}] $ is equipped with zero differential,
          such that for all $\cf\in D_{G_3}(\Gr_{G_3})$ the following diagram commutes
           \[
       \xymatrix{
         \big(\C[\WW_{{G_1}\to {G_2}}]\,\lo_{S\dgl_2}\,(\C[\WW_{{G_2}\to {G_3}}]\,\lo_{S\dgl_3}
         \,\Phi_{G_3}(\cf))^{\dg_3}\big)^{\dg_2}
         \ar[rr]^<>(0.5){\th_\text{\em alg}\o \Id} \ar[d]^<>(0.5){\Id\o \th_{{G_2}\to {G_3}}}
          && (\C[\WW_{{G_1}\to {G_3}}]\,\lo_{S\dgl_3}\, \Phi_{G_3}(\cf))^\dg
           \ar[d]^<>(0.5){\th_{{G_1}\to {G_3}}}
           \\
             \big(\C[\WW_{{G_1}\to {G_2}}]\,\lo_{S\dgl_2}\,\Phi_{G_2}({\rho_{23}}^\dagger\cf)\big)^{\dg_2}
             \ar[rr]^<>(0.5){\th_{{G_1}\to {G_2}}}&&
             \Phi_{G_1}(\rho_{12}^\dagger{\rho_{23}}^\dagger\cf).
           }
           \]
         \end{thm}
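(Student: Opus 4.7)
The overall strategy is to prove part (ii) first by direct computation, use it to construct the functor isomorphism of (i), and then deduce (iii) by tracing through the construction.

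For (ii), note that $\rho^\dagger\rr_G$ is a $!$-pure commutative ring object of $\Ind D_{H(O)}(\Gr_H)$: $!$-purity is the general property of $\rho^\dagger$ recorded in the introduction, and the ring structure follows from the lax monoidal structure on $\rho^\dagger$. Theorem \ref{Vthm}, applied to the imbedding $\Gr_H\into\Gr_G$ with the stratification by $G(O)$-orbits (conditions S1-S3 being verified in Section \ref{iwahori sec}), expresses $H^\hdot_{H(O)}(\rho^\dagger\rr_G)$ as a graded $\Hom$-module of equivariant cohomologies. Combining this with the description of $H^\hdot_{G(O)}(\Gr_G)$ as $\C[J_\dg]$ (recalled in Section \ref{Jsec}) and the morphism of group schemes $\rho^*J_\dg\to J_\dh$ constructed there, one obtains \eqref{vin2}. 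Finite generation and normality of $\overline{\WH(J_\dh)}$ follow because $\WH\to\greg$ is a smooth morphism and $J_\dh\to\fcc_\dh$ is a smooth abelian group scheme, so the relevant fibre product is normal. Theorem \ref{fin} then immediately delivers the Satake-side identification \eqref{vin1} as the Kostant--Whittaker twist $\overline{\WH(-)}$.

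For the natural transformation $\th_{H\to G}$ of (i), the key point is that $\rho^\dagger$ being lax monoidal produces, for each $\cf\in\Ind D_{G(O)}(\Gr_G)$, a natural action of the ring object $\rho^\dagger\rr_G$ on $\rho^\dagger\cf$; applying $\Phi_H$ and invoking formality (Theorem \ref{sat thm}) converts this into an action of $\HF_H(\rho^\dagger\rr_G)\cong \C[\WHG]$ on $\Phi_H(\rho^\dagger\cf)$, compatible with the $S\dhl$-module structures. The resulting morphism to $(\C[\WHG]\,\lo_{S\dgl}\,\Phi_G(\cf))^{\dg}$ would then be checked to be an isomorphism on the generating family $\cf=\IC(\ogr\la)$, where both sides can be computed explicitly using the $\dg$-isotypic decomposition and part (ii).

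The principal obstacle is promoting the object-level quasi-isomorphism to a coherent equivalence of \emph{functors} in the appropriate $\infty$-categorical sense. Following the strategy attributed in the Acknowledgements to N.~Rozenblyum, one works in the category of $E_\infty$-algebras in dg modules: formality endows $\Phi_H(\rho^\dagger\rr_G)$ with an $E_\infty$-structure coherently equivalent to its cohomology $\C[\WHG]$, and the lax monoidal $\rho^\dagger$ lifts through this. For (iii), the algebra isomorphism $\th_{\text{\em alg}}$ comes directly from \eqref{vin0}: composing the defining $J$-twists for $G_1\to G_2$ and $G_2\to G_3$ and comparing with the $J$-twist for $G_1\to G_3$ reduces, after taking $\check G_2$-invariants, to associativity of fibre products and transitivity of the graph imbeddings $\fcc_{\dh_1}\into\fcc_{\dh_1\times\dh_2}\into\fcc_{\dh_1\times\dh_2\times\dh_3}$. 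The commutativity of the hexagonal diagram is then formal: both paths express the action of $\rho_{12}^\dagger\rho_{23}^\dagger\rr_{G_3}$ on $\rho_{12}^\dagger\rho_{23}^\dagger\cf$, unwound in two different orders.
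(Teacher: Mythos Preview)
Your overall architecture --- prove (ii) first, leverage it for (i), then deduce (iii) --- matches the paper exactly. However, there are two substantive gaps.

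\textbf{Part (ii) for non-injective $\rho$.} You invoke Theorem~\ref{Vthm} for ``the imbedding $\Gr_H\into\Gr_G$'', but the theorem is stated for an arbitrary morphism $\rho: H\to G$, and $\rho_\Gr$ need not be a closed imbedding. The paper handles this by factoring $\rho$ as $H\xrightarrow{\varrho} H\times G\xrightarrow{pr_2} G$; for $pr_2$ the isomorphism \eqref{vin2} follows from K\"unneth, and the graph $\varrho$ is an imbedding to which Corollary~\ref{iwahori prop} (the consequence of Theorem~\ref{Vthm}) applies. Also, the stratification verified in Section~\ref{iwahori sec} is by \emph{Iwahori} orbits, not $G(O)$-orbits; conditions S1--S3 require affine-space cells. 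For finite generation you wave at smoothness, but what is actually needed is the hypothesis of Theorem~\ref{fin}, i.e.\ that $H(\veps^\dagger\rho^\dagger\rr_G)_+$ is a finitely generated algebra; the paper checks this by identifying it with $\bigoplus_{\la\in\BX(T_H)_+}\C[T^*(\dg/U_\dg)]_\la$ and invoking \cite{Bro}.

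\textbf{Part (i): your route is more complicated than necessary.} Your plan --- produce a $\C[\WHG]$-action on $\Phi_H(\rho^\dagger\cf)$ via lax monoidality and formality, then somehow extract a map to $(\C[\WHG]\lo_{S\dgl}\Phi_G(\cf))^\dg$ --- leaves the construction of the comparison map vague, and the $E_\infty$-coherence machinery is heavier than what is used. The paper's argument is a one-liner: the functor $F=\Phi_H\circ\rho^\dagger\circ\Phi_G^{-1}$ is continuous, hence (since the objects $S\dgl\o V$ are compact generators) isomorphic to $(B\lo_{S\dgl}(-))^\dg$ for the bimodule $B=F(S\dgl\o\C[\dg])=\Phi_H(\rho^\dagger\rr_G)$. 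By part (ii) this is $\C[\WHG]$, and you are done --- no $E_\infty$-formality or object-by-object check on $\IC$-sheaves required. This is the standard ``continuous functors are kernels'' trick, and it is worth internalizing.

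For (iii) your sketch is close in spirit; the paper constructs $\th_{\text{alg}}$ not by unwinding fibre products in \eqref{vin0} but by the chain $(\C[\WW_{G_1\to G_2}]\lo_{S\dgl_2}\C[\WW_{G_2\to G_3}])^{\dg_2}\cong\Phi_{G_1}(\rho_{12}^\dagger\rho_{23}^\dagger\rr_{G_3})\cong\Phi_{G_1}(\rho_{13}^\dagger\rr_{G_3})\cong\C[\WW_{G_1\to G_3}]$, which amounts to the same associativity after unraveling.
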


  \begin{rems}\label{fin rem} \vi The   variety $\overline{\WHG}$ was considered
      by Lafforgue \cite{La} in connection with Langlands functoriality in the dual setting of a morphism $\dh\to \dg$ rather than $H\to G$.
    \vskip 2pt  

      \vii If $\cf\in \sat$ 
    is a commutative ring object then the isomorphism in (i)
    may be interpreted as an isomorphism
    \beq{dg scheme}
    \Spec\Phi_H(\rho^\dagger\cf)\ccong \big(\overline{\WHG}\               \stackrel{L}{\times}_{\check{\g}^*}\
      \Spec\Phi_G(\cf)\big)/\!/\dg,
    \eeq
    of affine {\em derived}  schemes.
    If $\cf$ comes from a ring object of $D_{\gm\ltimes G(O)}(\Gr_G)$
    by forgetting loop rotation then the right hand side of the above isomorphism
    may be viewed as a derived
    Hamiltonian reduction  with respect to the diagonal action
    of $\dg$, where the sign of the
    Poisson structure on the factor $\overline{\WHG}$ is changed.
    Such an  isomorphism agrees
               with expectations coming from  the QTFT considered by Moore and Tachikawa
               \cite{MT}, cf. also ~\cite{KRS}.
               If $\cf$ is $!$-pure  then  the objects $\Phi_G(\cf)$ and  $\Phi_H(\rho^\dagger\cf)$
               are formal, therefore, the
    derived scheme in the right hand side of \eqref{dg scheme}
        is  a nonderived (super)-scheme with coordinate
    ring $(\mathrm{Tor}^{S\dgl}(\k[\WHG],\, \HF_G(\cf)))^\dg$.
   \vskip 3pt

               \viii         In the special case where $H$ is a Levi subgroup of $G$
 isomorphism \eqref{vin2} was proved by Macerato \cite{Ma}.
 Our isomorphism is, in general,  different from the one in \cite{Ma}
 since our functor $\rho^\dagger $ doesn't involve {\em shearing}.
 This is because we construct \eqref{vin2} by describing the image of
 the adjunction $H^\hdot_{H(O)}(\rho_!\rho^\dagger\rr_G)\to H^\hdot_{H(O)}(\rr_G)$
 and this map is incompatible with shearings. If
  $H=T$ is the maximal torus then we have  $J_\dt=T^*(\dt)$ and
 $\ol{\WH(J_\dt)}\cong \ol{T^*(\dg/U_\dg)}$.
   In this case,  \eqref{vin2}  is equivalent to an algebra isomorphism
   $f: H^\hdot_{T(O)}(\rho^\dagger\rr_G)\iso \C[T^*(\dg/U_\dg)]$.
  For a simply connected group $G$  the isomorphism has been
          constructed earlier in
          ~\cite{GK} using the same map $f$.
          On the other hand, the isomorphism constructed  in \cite{Ma} (in the more
          general case of an arbitrary Levi) is given by a slightly different map $\tilde f$
          which agrees with parabolic restriction, as in  \cite{GR}. 
          The map  $\tilde f\ccirc f\inv$ is an automorphism 
          of the algebra $\C[T^*(\dg/U_\dg)]$ given by
          the action of a certain element of  $\dt\times\gm$, where the
          $\dt$-action, resp. $\gm$-action, is induced by the $\dt$-action on
          $\dg/U_\dg$ by right translations, resp. $\gm$-action by dilations
          along the fibers of the cotangent bundle.
             \vskip 3pt

  \iv  Theorem \ref{vincent} has a quantum counterpart 
                      for the derived Satake category  with loop rotation.
                      The algebra $\C[\WHG]$ has a natural quantization defined as follows,
                      cf. \cite[Remark 2-h]{La}.
%
                      Let $Z_\hb(\dgl)$ be the center
                      of the asymptotic enveloping algebra $U_\hb(\dgl)$ and
                      $\dd^{\psi_\ell}_\hb(\dg/U_\dg)$ the $\chb$-algebra of asymptotic Whittaker differential operators
                      on $\dg/U_\dg$,
                      cf. eg.  \cite[Sect. 3.3]{GG} for more details and notation. There are also algebras $Z_\hb(\dhl)$ and
                      $\dd^{\psi_r}_\hb(\dh/U_\dh)$ defined in a similar way.
                      The morphism $H\to G$ induces a homomorphism
                      $Z_\hb(\dgl)\to Z_\hb(\dhl)$, so  we get the composite algebra 
                      homomorphism $Z_\hb(\dgl)\to Z_\hb(\dhl)\to U_\hb(\dhl)\to 
                      \dd^{\psi_r}_\hb(\dh/U_\dh)$.
                      The desired quantization of $\C[\WHG]$ is 
                      a version of
                        quantum Hamiltonian   reduction (cf. \cite[Example 1.1.2(ii)]{GK})
                      defined as follows:
                                            \begin{align*}
                                              \dd_{H\to G}\  =\
                                                              \dd^{\psi_r}_\hb&(\dh/U_\dh)
                                                                                \,             \o_{Z_\hb(\dgl)}^{{\mathsf{Sph}}_\hb(\dg)}\,
                                                                                \dd^{\psi_\ell}_\hb(\dg/U_\dg)
                                                              \ \cong \
                                                                                   \big(\dd^{\psi_r}_\hb(\dh/U_\dh) \o_{Z_\hb(\dgl)}
                                      \dd^{\psi_\ell}_\hb(\dg/U_\dg)\big)^{Z_\hb(\dgl)}\\
                        &:=\
                        \{u\o u'\in \dd^{\psi_r}_\hb(\dh/U_\dh) \o_{Z_\hb(\dgl)}
                        \dd^{\psi_\ell}_\hb(\dg/U_\dg)\mid
                        (zu)\o u'=u\o (u'z)\ \forall z\in Z_\hb(\dgl)\}.
                        \end{align*}
                        Here,  ${\mathsf{Sph}}_\hb(\dg)\cong H^{\BM}_{\gm\ltimes G(O)}(\Gr_G)$
                        is the {\em spherical} nil DAHA
                        and in the second line we have used Sweedler notation
                        $u\o u'=\sum_i\, u_i\o u'_i$.
                        The spherical nil DAHA  has the structure of a {\em cocommutative} bialgebra and
                        the tensor product
                        $\dd^{\psi_r}_\hb(\dh/U_\dh) \o_{Z_\hb(\dgl)}
                        \dd^{\psi_\ell}_\hb(\dg/U_\dg)$
                        has the natural structure of a ${\mathsf{Sph}}_\hb(\dg)$-comodule,
                        \cite[Proposition 3.3.4]{GG}. The symbol  $\o_{Z_\hb(\dgl)}^{{\mathsf{Sph}}_\hb(\dg)}$
                        stands for  {\em balanced product}, \cite[Definition ~2.1.1]{GG}.
           \erems
\smallskip

      Here are a few examples of the variety
   $\overline{\WHG}$ for some natural  imbeddings $\rho: H\into G$:
   \vskip3pt
   
   \begin{description}
   \item[Cartesian product] Let $L$ be a reductive group
   with Lie algebra ${\mathfrak l}$       and
    $p: \WW_{\check L}\to\fcc_{\check L}$  the natural map,
    cf. Sect. \ref{sat int}.    Let  $G:=H\times L$
    and $\rho: H \into H\times L,\, h\mto (h,1_L)$,
     the imbedding.      Then,  we find
    \[
     \WW_{H\to G}=\WW_\dh\times_{\fcc_\dh\times\{0\}}^{J_\dh\times J_{\check L}}(\WW_\dh\times\WW_{\check L})\,=\,
     (\WW_\dh\times_{\fcc_\dh}^{J_\dh} \WW_\dh)\times (p\inv(0))/J_{\check L}.
   \]
        It is known, cf. e.g. \cite[Sect.\,5]{BFN2},  that
     $\overline{\,\WW_\dh\times_{\fcc_\dh}^{J_\dh} \WW_\dh\,}\cong T^*\dh$ (Moore-Tachikawa
     isomorphism). Further, writing ${\mathcal N}_{\check L}\sset
     {\check{\mathfrak l}}^*$ for the nilpotent cone, one has
     $(p\inv(0))/J_{\check L}\cong {\mathcal N}_{\check L}\cap {\check{\mathfrak l}}^*_r$,
     the principal nilpotent orbit.
     Hence, $\overline{(p\inv(0))/J_{\check L}}\cong {\mathcal N}_{\check L}$.

     We conclude that
       $\overline{\WW_{H\to G}}\cong T^*{\dh} \times {\mathcal N}_{\check L}$,
       and we obtain
       \[
         (\C[\WW_{H\to G}]\lo_{S\dgl}\; \td)^\dg
         =(\C[T^*\dh]\o \C[{\mathcal N}_{\check L}])\lo_{S\dhl\o S{\check{\mathfrak l}}}
         \; \td)^{\dh\times\check{L}}=
         (\C[{\mathcal N}_{\check L}]\,\lo_{S{\check{\mathfrak l}}}\; \td)^{\check{L}}.
             \]
              \vskip 3pt

        \item[Sicilian theory] Let $\rho: G \into G^n=
        G\times G\times\ldots \times G$ ($n$ factors) be the diagonal imbedding.
        Then, we have
        $\rho^\dagger_{\Gr}\rr_{G^n}=\rr_G\so\ldots\so \rr_G$.
        In this case, the existence of the variety $\overline{\WW_{G\to G^n}}$ was   conjectured
        by Moore and Tachikawa \cite{MT}.
        It follows from Theorem \ref{vincent}  that we have
         \[
     \overline{\WW_{G\to G^n}}\cong\Spec \HF(\rho^\dagger_{\Gr_G}\rr_{G^n})\cong
      \overline{T^*_\psi(\dg/U_\dg)^{\times_\fcc (n+1)}/J_n},
    \]
   were, $T^*_\psi(\dg/U_\dg)^{\times_\fcc (n+1)}:=
T^*_\psi(\dg/U_\dg)\times_{\fcc_\dg}\ldots \times _{\fcc_\dg}
T^*_\psi(\dg/U_\dg)$ ($n+1$ factors)
and $J_n$ is the kernel of the  multiplication map
$J_\dg\times_{\fcc_\dg}\ldots \times _{\fcc_\dg} J_\dg\to J_\dg,\,
(\gamma_1,\ldots,\gamma_{n+1})\mto \gamma_1\cdots\gamma_{n+1}$.
The  proof of the above
 isomorphism was found  in \cite[Sect. 5]{BFN2}. 
         The fact that the algebra $\C[T^*_\psi(\dg/U_\dg)^{\times_\fcc n}/J_{n-1}]$
        is  finitely generated  was  proved later by
        Arakawa \cite{A} using representation theory of
        affine $\mathcal W$-algebras. 
\vskip 3pt

\item[Levi subgroups]  Let $H\into G$  be the imbedding of a Levi subgroup
  of   a parabolic subgroup  $P\sset G$. Let   $U_{\check P}$  be the unipotent radical
  of
 ${\check P}$.  Then
     it was shown by  Gannon \cite{Ga} (and predicted in \cite{La})
     that one has 
     $\overline{\WHG}\cong \overline{T^*(\dg/U_{\check P})}$.
          The action of $\dh$ in 
     $\overline{T^*(\dg/U_{\check P})}$ is induced by the action of ${\check P}/U_{\check P}$
     on $\dg/U_{\check P}$ by right
     translations.
     Using the      isomorphism
     $\HF(\rr_G)\cong   \C[\WHG]$ of Theorem \ref{vincent}(ii)
     (which, in this case, easily follows from
       \cite{Ma}),  we obtain  $\Spec \HF_H(\rho^\dagger\rr_G)\cong
     \overline{T^*(\dg/U_{\check P})}$.
             \vskip 3pt

      \item[Theta correspondence (following V. Lafforgue)] Let $(V,\om)$,
        resp. $(E, \be)$, be a symplectic, resp.  quadratic, vector space,
        so the vector space $E\o V$ has the natural symplectic structure.
        In either of the following two cases one has   $\dh=SO(E)$ and  $\dg=Sp(V)$:
        \begin{enumerate}
        \item $H=SO_{2n} \into G=SO_{2n+1}$, where $2n=\dim E=\dim V$;
                \item $H=SO_{2n+1} \into G=SO_{2n+2}$, where $2n=\dim E-2=\dim V$.
          \end{enumerate}
                   In these cases, by \cite{La}  there is an isomorphism
          $\overline{\WHG}\cong E\o V$ such that  the moment map is the natural map
          $            E\o V\to \dhl^*\times \dgl^* \cong \wedge^2E\times \sym^2V$ given by
          \[
            \mbox{$\sum_i$}\, e_i\o v_i\,\mto\, \big(\mbox{$\sum_{k<\ell}$}\, \om(v_k,v_\ell) e_k\wedge e_\ell,\;
            \mbox{$\sum_{k,\ell}$}\, \be(e_k,e_\ell) v_kv_\ell\big).
             \]
           \end{description}
           \medskip
           
 From  Theorem \ref{vincent}(ii)
             and the isomorphism $\HF(\rr_G)\cong \C[T^*(\dg/U_{\check P})]$
             in the Levi case above, changing the notation from $\dg$ to $G$, we deduce 
             \begin{cor}\label{GU cor} Let $P$ be a parabolic  of a connected
               reductive group $G$ and $U_P$ the unipotent radical of $P$. Then
               the algebra $\C[T^*(G/U_P)]$
               is  finitely generated.\qed
               \end{cor}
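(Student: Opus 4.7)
The plan is to read the corollary off directly from the finite-generation half of Theorem \ref{vincent}(ii) together with the Levi identification recalled just above. Since Langlands duality is an involution on the class of connected reductive groups and pairs parabolic-Levi data compatibly, I would begin by translating the pair $(G,P)$ of the corollary into the notation of the Levi example: choose a connected reductive group $G'$ whose Langlands dual $\dg'$ equals $G$, let $P' \subset G'$ be the parabolic whose dual parabolic in $G$ is $P$, set $H \subset G'$ to be the Levi subgroup of $P'$, and let $\rho \colon H \hookrightarrow G'$ denote the inclusion.

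With this relabeling in place, the Levi example gives
\[
\Spec \HF_H(\rho^\dagger \rr_{G'}) \;\cong\; \overline{T^*(G/U_P)},
\]
while isomorphism \eqref{vin1} of Theorem \ref{vincent}(ii) identifies the same left-hand side with $\overline{\WW_{H \to G'}}$ and — this is the only non-formal input I need — asserts that $\C[\WW_{H \to G'}]$ is a finitely generated integrally closed domain. Because $T^*(G/U_P)$ is quasi-affine, the convention $\overline{X} = \Spec \C[X]$ used throughout the paper gives $\C[T^*(G/U_P)] = \C[\overline{T^*(G/U_P)}]$. Stringing the two isomorphisms together produces an algebra isomorphism
\[
\C[T^*(G/U_P)] \;\cong\; \C[\WW_{H \to G'}],
\]
and finite generation of the right-hand side is then exactly finite generation of the left-hand side.

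There is no substantive obstacle here: the corollary amounts to a notational repackaging of the Levi case of Theorem \ref{vincent}(ii) via Langlands duality. The only point that is not purely formal is the quasi-affineness of $G/U_P$, which is classical — $G/U_P$ sits as an open subvariety of the base-affine space $\Spec \C[G]^{U_P}$, so its cotangent bundle is quasi-affine as well and its ring of regular functions coincides with that of its affine closure. Everything else is a direct consequence of results already invoked in the Levi example.
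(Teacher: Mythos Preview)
Your proposal is correct and follows exactly the paper's approach: the paper deduces the corollary from Theorem \ref{vincent}(ii) and the Levi-case identification $\overline{\WHG}\cong \overline{T^*(\dg/U_{\check P})}$, then ``changes the notation from $\dg$ to $G$'' --- precisely the Langlands-duality relabeling you spell out more explicitly. The only cosmetic difference is that you justify quasi-affineness and the convention $\overline{X}=\Spec\C[X]$ in more detail than the paper does.
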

           \begin{rems}
             \vi    The statement of the above corollary was previously known
             in the case where $P$ is a Borel subgroup, \cite{GR},
             and in the case of an arbitrary parabolic of
             a group  of  type $A$.  In type $A$,  the varieties $\ol{\WHG}$ that arise either from
           Sicilian theories or Levi subgroups
                     may be realized as  Coulomb branches,
                     see \cite{BFN2}, resp. \cite{GW}. The finite generation of  Coulomb branches
                     has been proved in \cite{BFN1}.

         \vii  Let  ${\check B}\sset \check P$ be  a Borel subgroup,
         $\fu_{\check P}=\Lie U_{\check P}$, and ${\mathbb P}(\fu_{\check P}^\perp)$
          the projectivization  of the vector space $\fu_{\check P}^\perp\sset \dgl^*$.
         By a recent result  \cite[Theorem 1.6(i)]{FL},
Corollary \ref{GU cor} implies  that the variety
$\ol{\dg\times_{\check B}{\mathbb P}(\fu_{\check P}^\perp)}$
is a Mori dream space.
       \vskip2pt

                      %

             \viii  According to \cite[Theorem 1.6(ii)]{FL}, the variety
          $\overline{T^*(\dg/U_{\check P})}$ has 
                     symplectic singularities  if and only if  $\dg\times_{\check B}{\mathbb P}(\fu_{\check P}^\perp)$
                     is of {\em Fano type} in the sense of {\em op cit}.
                     In the special case where $\check P=\check B$ is a   Borel subgroup,
                     $\overline{T^*(\dg/U_{\check B})}$ has 
                     symplectic singularities, by \cite{Ga}.
                     Also, it was shown by Bellamy  \cite{Bel} that all Coulomb branches have
                     symplectic singularities.

                     \begin{quest}\label{symp sing}
                       Does $\overline{\WHG}$ have symplectic singularities ?
                       \hfill$\lozenge$
           \end{quest}
 \end{rems}

The moment maps $\mu_{\dh}$ and $\mu_{\dg}$ for the actions in $\WHG$
of $\dh$ and $\dg$, respectively, fit into a  commutative diagram
    \[
      \xymatrix{
        \dhl^*_{r}\  \ar[d]_<>(0.5){ \ \vpi_\dh \ }   &\ \WHG\  \ar[l]_<>(0.5){ \ \mu_{\dh}\ }
        \ar[r]^<>(0.5){ \ \mu_{\dg}\ } & \ \greg \ar[d]^<>(0.5){ \ \vpi_\dg \ }\\
        \fcc_\dh \   \ar[rr]^<>(0.5){ \ \rho \ } &&\  \fcc_\dg
}
    \]
       The image of the map  $\mu_{\dh}\times \mu_{\dg}$, resp. $ \mu_{\dg}$,  equals
       $\dhl^*_{r} \times_{\fcc_\dg}\greg$, resp. $\rho^*\greg=\fcc_\dh\times_{\fcc_\dg}\greg$.
       We obtain a chain of maps
                     \beq{WHGtors}
\WHG\  \xrightarrow{\  \wt\mu\ }\   \dhl^*_{r}\times_{\fcc_\dh} \rho^*\greg\  
\xrightarrow{\  p_2\ } \  \rho^*\greg\  \xrightarrow{\  \rho^*\vpi_\dg \ }\   \rho\inv(\fcc_\dg)\into
\fcc_\dh,
         \eeq
         where      the  map $\wt\mu$ is induced by $\mu_{\dh}\times \mu_{\dg}$
       and $p_2$ is the second
       projection.  The following   statements may be helpful
       for understanding the geometry of $\WH$:

         \vbox{\begin{align}      
&\text{$\bullet\en$  The map
  $\tilde\mu$ is an $\dh\times\dg$-equivariant  $\wt\vpi^*J_\dh $-torsor,
  where  $\tilde\vpi=\rho^*\vpi_\dg\ccirc p_2$;}\label{torsor1}\\
&\text{$\bullet\en$ The map $p_2\ccirc\tilde\mu$ is a $\dg$-equivariant  $\dh$-torsor.}
  \label{torsor2}\\
&\text{$\bullet\en$    There are isomorphisms
       $\;{\overline{\mu_\dg}}\inv(\greg)\cong \WHG \cong (\dh\times \fcc_\dh\times\dg)/\rho^*J_\dg$,
  where}\label{torsor3}\\
           &\text{\quad\ $\overline{\mu_\dg}: \overline{\WHG}\to\dgl^*$ is an extension of
             $\mu_\dg$ to the affine closure and $\dh$, resp. $\dg$, }\nonumber\\
           &\text{\quad\  acts on the first, resp. third, factor of
             $\dh\times \fcc_\dh\times\dg$.}\nonumber\\
&\text{$\bullet\en$  We have an equality\en\ 
  $\dim\WHG=(\dim\dh+\rk\dh)+(\dim\dg-\rk\dg).$}\label{torsor4}\\
&\text{$\bullet\en$    There is an isomorphism
  of  $\dh$-varieties, resp. $\dg$-varieties:}\nonumber\\
&\hskip 2cm \text{     $\overline{\WHG}/\!/\dg\cong \Spec \C[\WW_\dh]^{\rho^*J_\dg}$,\en
  resp.\en 
       $\overline{\WHG}/\!/\dh\cong\fcc_\dh\times_{\fcc_\dg}\dgl^*.$}\label{torsor5}\\
 &\text{$\bullet\en$    We have
        $\overline{\WHG}/\!/(\dh\times\dg)\cong \fcc_\dh$; therefore,
   $\ol{\WHG}$ is a hyperspherical $\dh\times\dg$-variety.}\nonumber
        \end{align}
         }
 
         Here, the
         second isomorphism in \eqref{torsor3}  is proved by choosing
         a Kostant slice in $\dhl^*_r$, resp. $\greg$,
 which provides an isomorphism $\WW_\dh\cong \fcc_\dh\times \dh$, resp.
 $\WW_\dg\cong\fcc_\dg\times \dg$.
 Other claims in \eqref{torsor1}-\eqref{torsor5}, with the exception of  the
 second isomorphism in \eqref{torsor5}, easily follow from \eqref{WHGtors}
 and we omit the proofs.

                      \begin{proof}[Proof of the second isomorphism in \eqref{torsor5}]
                        Let 
          $\rho^*\dgl^*:=\fcc_\dh\times_{\fcc_\dg}\dgl^*$ and
   let $pr_1: \rho^*\dgl^*\to \fcc_\dh$, resp. $pr_2: \rho^*\dgl^*\to\dgl^*$, be the first, resp. second, projection.
   Since 
$\WHG\to \rho^*\greg$
   is an $\dh$-torsor, one has
     $\C[\overline{\WHG}/\!/\dh]\cong \C[\rho^*\greg]$.
  Thus, proving the desired isomorphism amounts to showing that the
   natural map $\overline{\rho^*\greg}\to \rho^*\dgl^*$
   is an isomorphism.
   To this end, observe   that  the morphism
   $pr_1$  is obtained from the adjoint quotient $\dgl^*\to \fcc_\dg$
   by base change. Hence $pr_1$ is a flat morphism such that
   the restriction of $pr_1$ to $\fcc_\dh\times_{\fcc_\dg}\greg$ is smooth.
     It follows that
   $\rho^*\dgl^*$  is a generically reduced complete intersection
   in $\fcc_\dh\times\dgl^*$ of dimension  $\dim \rho^*\dgl^*=
   \dim\fcc_\dh+ (\dim\dgl^*-\dim\fcc_\dg)$.
   We deduce that  $\rho^*\dgl^*$  is reduced and 
  the codimension of 
   $\rho^*\dgl^* \sminus \rho^*\greg$ in
   $\rho^*\dgl^*$  is $\geq2$, since a similar inequality holds for  the codimension of
   $\dgl^*\sminus\greg$ in $\dgl^*$.
   It follows that
   $\oo_{\rho^*\dgl^*}$  is a  maximal Cohen-Macaulay sheaf.
   This implies that the adjunction $\oo_{\rho^*\dgl^*}\to j_*j^*\oo_{\rho^*\dgl^*}$,
   where $j: \rho^*\greg\into \rho^*\dgl^*$ is the open imbedding, is an isomorphism.
   Hence, $\Ga(\rho^*\dgl^*,\, \oo_{\rho^*\dgl^*}) \to \Ga(\rho^*\greg,\, \oo_{\rho^*\greg})$
   is an isomorphism.\end{proof}

 Observe that
 the map $\fcc_\dh\to\fcc_\dg$ may be viewed as a map from the set of
coadjoint orbits in $\dhl_r^*$ to the set of coadjoint orbits in $\greg$.  Lafforgue
interpreted this map in terms of Hamiltonian geometry. Specifically,
we see from \eqref{torsor1}-\eqref{torsor2} that
for any coadjoint orbit $\Omega\sset \dhl^*_{r}$
         there is a unique coadjoint orbit $\Omega'\sset \greg$ such that
         $\mu_\dh\inv(\Omega)=\mu_\dg\inv(\Omega')$ and
         the map $p_2: \mu_\dg: \mu_\dh\inv(\Omega)\to \Omega'$ is an $H$-torsor
         that induces an
         isomorphism  $\mu_\dh\inv(\Omega)/\dh\iso \Omega'$
         of hamiltonian $\dg$-varieties. The assignment $\Omega\mto \Omega'$
         provides the desired hamiltonian 
     interpretation for the    map between
  the sets of  coadjoint orbits in $\dhl^*_r$ and $\greg$, respectively.

   \section{Symplectic duality}\label{symp sec}
\subsection{}\label{ssymp sec}
Let ${N}$ be
a smooth affine $G$-variety,  ${N}(K)$ the
formal loop space, resp.   ${N}(O)\subset {N}(K)$ the formal arc space.
In the so-called  placid case, specifically, assuming the  conditions
given in  \cite[Sect. 7.3.1]{BZSV} hold,  it is possible 
to define the dualizing sheaf
of ${N}(O)$ and
the {\em basic object} $i_*\om_{G(O)\back {N}(O)}$,
where we write $i$ 
either for  the imbedding
$N(O)\into N(K)$ or the induced morphism of stacks $G(O)\back N(O)\into G(O)\back N(K)$.
Associated with the basic object, there is a commutative
ring object $\aa_{{N},G}={\underline{\mathcal{E}^{\!}\textit{nd}}}_{\sat}(i_*\om_{G(O)\back {N}(O)})
\in \Ind\sat$, an internal endomorphism object with respect to the action
of the monoidal category $\sat$ on $D_{G(O)}(N(K))$, \cite[8.1]{BZSV}.
It follows from definitions that one has an isomorphism
\[\Phi_G(\aa_{{N},G})=\RHom_{D_{G(O)}(N(K))}(i_*\om_{G(O)\back {N}(O)},\, \rr_G\star
  i_*\om_{G(O)\back {N}(O)}).
  \]

  Let $M=T^*N$. The  scheme  $M_G^!:=\Spec \hrm^{\text{even}}\Phi(\aa_{{N},G})$,
  resp. ${\mathcal C}_{M,\gc}:=\Spec H^\hdot_{\gc(O)}(\aa_{M,\gc})$,
  is  usually referred to as the {\em symplectic dual}, resp.
  {\em  Coulomb branch},   of $M$.
  These schemes
  come equipped with a Poisson structure and
  a Hamiltonian action of $\dg$ and  $J_\dg$, respectively.

\begin{rem}
  In \cite{BZSV}, the authors also considered a suitably defined Whittaker setting
  where $T^*N$ is replaced by certain twisted versions  $T^*(N,\Psi)$.
  \end{rem}

  There is  an equivalent,   more explicit construction of the ring
  object $\aa_{{N},G}$ in terms of the scheme $\cz_{{N},G}$
  referred to as  the {\em variety of triples}  in \cite{BFN1}, resp. {\em relative Grassmannian}
  in \cite[8.2]{BZSV}. The scheme $\cz_{{N},G}$,
  which is a formal loop version of the
  finite dimensional Steinberg variety,
  is defined by the cartesian square in the diagram
    \beq{cart1}
    \xymatrix{
      \cz_{{N},G}\ar[rr]^<>(0.5){f_1}\ar[d]^<>(0.5){f_2} \ar @{} [drr]|{\square}&&
      G(K)\times_{G(O)} {N}(O)
      \ar[d]^<>(0.5){a_G}\ar[r]^<>(0.5){pr_1} & G(K)/G(O)=\Gr_G\\
      {N}(O)\ar@{^{(}->}[rr]^<>(0.5){i}&& {N}(K) &
    }
    \eeq
    where the map $a_G$ is the action
    $(g, n)\mto g.n$.
    Let $\om_{\cz_{{N},G}}$ be the dualizing sheaf and $q_G= pr_1\ccirc f_1:\, \cz_{{N},G}\to \Gr_G$.    
    Placidity assumtions, see \cite[Sect. 8]{BZSV},
    insure that    there is a well-defined
    (renormalized) pushforward $(q_G)_*\om_{\cz_{{N},G}}$ and there is an isomorphism
    $\aa_{{N},G}\cong(q_G)_*\om_{\cz_{{N},G}}$. It follows that
    $H^\hdot_{\gc(O)}(\aa_{N,\gc})\cong H^\hdot_{G(O)}(q_G)_*\om_{\cz_{{N},G}})
    \cong H^\BM_{G(O)}(\cz_{{N},G})$, see \cite[Sect. 2(i)-(ii)]{BFN2} in a special case
    and \cite{BZSV} in general.

  Given a morphism $\rho: H \to G$ of  reductive groups one may view the $G$-variety
  ${N}$ as an $H$-variety.  It was shown  in \cite{BFN2} (in the special case
  where ${N}$ is a representation of $G$; the
  same argument works in general, cf. \cite[Remark 8.3.2]{BZSV})
  that
  $\aa_{{N},H}  \cong\rho^\dagger\aa_{{N},G}$.

We prove the following

    \begin{prop}\label{bm pure} \vi The sheaf $\aa_{{N},G}$
      is $!$-pure if and only if  the Borel-Moore homology
      $H^{\BM}_{T(O)}(\cz_{{N},T})$ is pure, where $T$ is
  a maximal torus of $G$.

      \vii If  $\aa_{{N},G}$    is $!$-pure and the algebra $H^\hdot_{T(O)}(\cz_{{N},T})_+$
            is finitely generated
            then  the following holds: 
            \begin{enumerate}
            \item The scheme ${\mathcal C}_{M,\gc}$ is  a   scheme of finite type flat
              over $\fcc_\dg$;
              
            \item The scheme $M^!_G$  is 
            of finite type and  there are isomorphisms
                        \[
                          M_G^!\ccong \overline{\WH({\mathcal C}_{M,\gc})},\en\ \text{\em resp.}\en\
                          M_G^!/\!/\dg\,\ccong\,  {\mathcal C}_{M,\gc}/\!/J_\dg.
                          \]
\end{enumerate}
\end{prop}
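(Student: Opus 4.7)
The plan is to combine the functoriality isomorphism $\aa_{{N},H} \cong \rho^\dagger \aa_{{N},G}$ (applied to the torus inclusion $\rho: T \hookrightarrow G$) with the finite-generation criterion Theorem \ref{fin}, exploiting that $!$-purity of $G(O)$-equivariant sheaves on $\Gr_G$ can be detected on the $T$-fixed locus $\BX = \Gr_T \subset \Gr_G$.

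For part (i), the forward implication is immediate: $\rho^\dagger$ preserves $!$-purity, so $!$-purity of $\aa_{{N},G}$ forces $!$-purity of $\aa_{{N},T} = \rho^\dagger \aa_{{N},G}$, and hence purity of $H^\hdot_{T(O)}(\aa_{{N},T}) = H^{\BM}_{T(O)}(\cz_{{N},T})$ via the isomorphism $\aa_{{N},T} \cong (q_T)_*\om_{\cz_{{N},T}}$. For the converse, the strategy is that $!$-purity of a $G(O)$-equivariant sheaf on $\Gr_G$ reduces, via the standard contracting $\Gm$-actions on $\Gr_G$ (verifying conditions S1--S3), to pointwise purity at the $T$-fixed points. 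The identification $\veps^\dagger \aa_{{N},G} = \aa_{{N},T}$ then shows that the required pointwise purity on the discrete set $\Gr_T = \BX$ is equivalent to purity of $H^{\BM}_{T(O)}(\cz_{{N},T})$.

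For part (ii)(1), the assumed finite generation of $H^\hdot_{T(O)}(\cz_{{N},T})_+ = H(\veps^\dagger \aa_{{N},G})_+$ is exactly the hypothesis of Theorem \ref{fin} applied with $\cf = \aa_{{N},G}$. The theorem immediately produces finite type of both ${\mathcal C}_{M,\gc}$ and $M_G^!$ together with the first isomorphism $M_G^! \cong \overline{\WH({\mathcal C}_{M,\gc})}$ of (ii)(2). Flatness of ${\mathcal C}_{M,\gc}$ over $\fcc_\dg$ follows from equivariant formality: $!$-purity of $\aa_{{N},G}$ (via Theorem \ref{sat thm}) forces $H^\hdot_{G(O)}(\aa_{{N},G})$ to be free over $H^\hdot_{G(O)}(\pt) \cong \C[\fcc_\dg]$, and free implies flat.

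For the second isomorphism of (ii)(2), recall that $p: \WH \to \fcc_\dg$ is a $\dg$-torsor, so $\C[\WH]^\dg = \C[\fcc_\dg]$. The defining description $\WH(X) = \WH \times^J_{\fcc_\dg} X$, with $\dg$ acting only on the first factor and commuting with the $J_\dg$-action, gives
\[
\C[\WH(X)]^\dg = (\C[\WH] \otimes_{\C[\fcc_\dg]} \C[X])^\dg = \C[X]^{J_\dg},
\]
whence $\WH(X)/\!/\dg \cong X/\!/J_\dg$ for any affine $J_\dg$-scheme $X$ over $\fcc_\dg$. Taking $X = {\mathcal C}_{M,\gc}$ and noting that $\dg$-invariants commute with passage to the affine closure yields the desired $M_G^!/\!/\dg \cong {\mathcal C}_{M,\gc}/\!/J_\dg$. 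The main obstacle is the converse direction of part (i): one must verify that the $T$-fixed-point reduction for testing $!$-purity works cleanly in the placid/ind-sheaf setting in which $\aa_{{N},G}$ naturally lives, which requires a careful adaptation of the classical hyperbolic localization machinery.
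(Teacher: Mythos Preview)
Your approach is essentially the paper's: part (i) is immediate from the identification $\veps^\dagger\aa_{N,G}\cong\aa_{N,T}$ together with the definition of $!$-purity (your flagged ``main obstacle'' is not one---for a $G(O)$-equivariant sheaf, $!$-purity along $G(O)$-orbits reduces to purity of the $!$-stalk at each $x_\la$, which is exactly what $H^{\BM}_{T(O)}(\cz_{N,T})$ records), and part (ii) is Theorem~\ref{fin} plus a $\dg$-invariants computation for the last isomorphism.

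Two small fixes. First, flatness in (ii)(1) should be referenced to Lemma~\ref{Jj}(i) (equivariant formality gives $\sfh(\cf)$ flat over $\sfc$), not Theorem~\ref{sat thm}. Second, your displayed chain for the last isomorphism drops the $J$-invariants: as written, $(\C[\WH]\otimes_{\C[\fcc_\dg]}\C[X])^\dg=\C[X]$, not $\C[X]^{J_\dg}$. The correct computation is
\[
\C[\WH(X)]^\dg=\big((\C[\WH]\otimes_{\C[\fcc_\dg]}\C[X])^{J_\dg}\big)^\dg
=\big(\C[\WH]^\dg\otimes_{\C[\fcc_\dg]}\C[X]\big)^{J_\dg}=\C[X]^{J_\dg},
\]
using that the $\dg$- and $J_\dg$-actions commute and that $\C[\WH]^\dg=\C[\fcc_\dg]$. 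The paper packages this same computation via Theorem~\ref{sat thm}, writing $\HF_G(\cf)^\dg\cong\Ga(\fcc,\chh_{G(O)}(\cf)^{J_\dg})$, but the content is identical.
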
 

\begin{proof} Part  (i) is a direct consequence of
   the isomorphism
  $H^\hdot_{T(O)}(\rho^\dagger\aa_{{N},G})
  \cong H^{\BM}_{T(O)}(\cz_{{N},T})$ and our definition of $!$-purity.
  All statements in (ii) except for the isomorphism
  $M_G^!/\!/\dg\,\ccong\,  {\mathcal C}_{M,\gc}/\!/J_\dg$
  are consequences of  Theorem \ref{fin}. To prove the isomorphism
  observe that for any $!$-pure sheaf $\cf$, by Theorem \ref{sat thm},
  one has  canonical isomorphisms
  \[\HF_G(\cf)^\dg\,\iso\, \Ga\big(\dgl^*,\, \jmath_*\wh(\chh_{\gc(O)}(\cf))\big)^\dg  \, \iso\,
    \Ga(\fcc, {\mathcal H}_{G(O)}(\cf)^{J}).
    \]
    It is clear that  if $\cf$ is  a commutative ring object then  these
    are algebra isomorphisms. Applying this to $\cf=\aa_{{N},G}$
    we obtain
    $\C[M_G^!]^\dg=
    \HF_G(\aa_{{N},G})^\dg \cong \C[{\mathcal C}_{M,\gc}]^{J_\dg}=\C[{\mathcal C}_{M,\gc}/\!/J_\dg]$,
    as desired.
  \end{proof}

 \begin{rems} \vi From the isomorphism $M_G^!/\!/\dg\,\cong\,  {\mathcal C}_{M,\gc}/\!/J_\dg $
  we see that if
   ${\mathcal C}_{M,\gc}/\!/J_\dg\to \fcc_\dg$
  is a finite  morphism then the algebra $\C[M_G^!]^\dg$ is finitely generated as a $\C[\dgl^*]^\dg$-module.
  The latter property forces the Poisson bracket on  $\C[M_G^!]^\dg$ vanish.
  It follows that if  $M_G^!$ is an irreducible normal affine variety,
  then this variety is hyperspherical if and only if  the morphism ${\mathcal C}_{M,\gc}/\!/J_\dg\to \fcc_\dg$
  is finite.
\vskip 3pt

  \vii The isomorphism  $M_G^!/\!/\dg\,\cong\,  {\mathcal C}_{M,\gc}/\!/J_\dg $
  also implies that the natural map
    $\C[\fcc_\dg]=(\sym\dgl)^\dg\to \HF(\cf)^\dg$ is an isomorphism if and only if
    general $J_\dg$-orbits are dense in ${\mathcal C}_{M,\gc}$.
    \vskip 3pt

    \viii It is expected that in some cases where the affine $G$-variety  $N$
    is singular the role of the dualizing sheaf
    $\om_{N(O)}$ in the definition of the basic object is played
    by the sheaf $\IC(N(O))$, where the $\IC$-sheaf is
     defined using a result of Grinberg-Kazhdan and Drinfeld.
    Assuming this $\IC$-sheaf is pure one might expect  the ring object
    $\aa_{N,G}$ be $!$-pure. If that is the case then  the isomorphisms of
    Proposition \ref{bm pure}(2)  hold for the singular variety $N$.
  \erems

  In the case where  ${N}=G/H$ for a connected reductive subgroup $H\sset G$,
  there is an isomorphism
      of stacks $G(O)\backslash\cz_{G/H, G}\cong H(O)\backslash \Gr_H$
    such that the map $q_G$ corresponds to the  morphism
    $H(O)\backslash \Gr_H\to G(O)\backslash\Gr_G$ induced by the
    imbedding $\rho: H\into G$, \cite[8.2.2]{BZSV}.
    It follows that $H^{\BM}(G(O)\backslash\cz_{G/H,\, G})$
    $      \cong H^{\BM}(H(O)\backslash \Gr_H)\cong \C[J_\dh]$. Thus, there is a
    natural isomorphism
             \[{\mathcal C}_{T^*(G/H),\, \gc}\cong J_\dh.
        \]
           Further, it was shown by Chen \cite[Sect. 4.1]{Ch} that for $N=G/H$,
      using the notation of diagram \eqref{cart1},
            one has
      \begin{align}
        \RHom_{D_{G(O)}(N(K))}(i_*&\om_{G(O)\back {N}(O)},\, \rr_G\star
                                                              i_*\om_{G(O)\back {N}(O)})\label{chen iso}\\
        &\cong \RHom_{D_{G(O)}(N(O))}(\om_{G(O)\back {N}(O)},\,
        (f_2)_*q_G^!\rr_G)\cong R\Ga_{H(O)}(\Gr_H, \rho^\dagger\rr_G),\nonumber
      \end{align}
            It follows that there is an algebra isomorphism
      $\C[T^*(G/H)^!_G]\cong
      H^\hdot_{H(O)}(\rho^\dagger\rr_G)$.

      \begin{cor}\label{ch thm} There is an  isomorphism
        of $\dg$-schemes, resp. $J_\dg$-schemes:
             \[
   T^*(G/H)^!_G\, \cong\, 
   \overline{J_\dh\times_{\fcc_\dh}^{\rho^*J_\dg}\rho^*\WH},\
   \en\text{\em resp.}\en\
   T^*(G/H)^!_G/\!/\dg     \ccong \overline{J_\dh/\!/\rho^*J_\dg}.
 \]
 Moreover, $T^*(G/H)^!_G$ is an irreducible normal variety.
\end{cor}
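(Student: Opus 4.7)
The plan is to deduce the corollary by combining the algebra isomorphism $\C[T^*(G/H)^!_G]\cong H^\hdot_{H(O)}(\rho^\dagger\rr_G)$, obtained in the paragraph preceding the corollary from Chen's formula \eqref{chen iso}, with Theorem \ref{vincent}(ii). The latter identifies $\Spec H^\hdot_{H(O)}(\rho^\dagger\rr_G)$ with $\overline{\WH(J_\dh)}$. Unwinding the construction of $\WH(X)=\WH\times^J_\fcc X$ from Section \ref{sat int} in the case $X=J_\dh$, where the $\rho^*J_\dg$-action on $J_\dh$ is the one coming from the map $\rho^*J_\dg\to J_\dh$ of group schemes over $\fcc_\dh$ introduced just before \eqref{vin0}, one obtains $\WH(J_\dh)\cong J_\dh\times^{\rho^*J_\dg}_{\fcc_\dh}\rho^*\WH$. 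Concatenating these three identifications yields the first displayed isomorphism, and one must check that the resulting identification is compatible with the $\dg$-action (coming on the left from convolution with $\rr_G$ and on the right from the $\dg$-action on $\rho^*\WH$) in order to read it as an isomorphism of $\dg$-schemes.

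For the second isomorphism, the cleanest route is to invoke Proposition \ref{bm pure}(ii)(2), which gives $M^!_G/\!/\dg\cong {\mathcal C}_{M,G}/\!/J_\dg$, and substitute the earlier identification ${\mathcal C}_{T^*(G/H),G}\cong J_\dh$ recorded in the paragraph before the corollary. Interpreting the $J_\dg$-action on $J_\dh$ as the action of $\rho^*J_\dg$ via the morphism $\rho^*J_\dg\to J_\dh$, one obtains the claim, with the affine closure appearing because $J_\dh/\rho^*J_\dg$ is in general only quasi-affine. An equivalent derivation is to take $\dg$-invariants on both sides of the first displayed isomorphism, using that $\WH\to\fcc_\dg$ is a $\dg$-torsor and hence $\rho^*\WH/\dg=\fcc_\dh$, which collapses the quotient $(J_\dh\times^{\rho^*J_\dg}_{\fcc_\dh}\rho^*\WH)/\!/\dg$ to $J_\dh/\!/\rho^*J_\dg$.

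Before citing Proposition \ref{bm pure}(ii) one has to verify its two hypotheses for the ring object $\aa_{T^*(G/H),G}$: first, $!$-purity, which follows from the identification $\aa_{T^*(G/H),H}\cong\rho^\dagger\aa_{T^*(G/H),G}$ of \cite{BFN2} together with $!$-purity of $\rho^\dagger\rr_G$—itself a consequence of purity of $\rr_G$ and the fact, recorded in Section \ref{sat int}, that $\rho^\dagger$ preserves $!$-purity; second, finite generation of $H^\hdot_{T(O)}(\cz_{G/H,T})_+$, which I would obtain by applying the same Chen-type reasoning to the composite $T\hookrightarrow H\hookrightarrow G$ and using Theorem \ref{vincent}(ii) for this composite imbedding. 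The irreducibility and normality of $T^*(G/H)^!_G$ are then immediate from the first displayed isomorphism together with the explicit assertion in Theorem \ref{vincent}(ii) that $\C[J_\dh\times_{\rho^*J_\dg}\rho^*\WH]$ is a finitely generated integrally closed domain.

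The main obstacle is ensuring that Chen's identification \eqref{chen iso} transports the $\dg$-equivariant algebra structure on $\C[T^*(G/H)^!_G]$ to the evident one on $\C[\overline{\WH(J_\dh)}]$ coming from Theorem \ref{vincent}(ii); in other words, making the match of data at the level of $\dg$-schemes rather than of bare commutative algebras. Once that compatibility is in hand, the second displayed isomorphism and the normality statement drop out as formal consequences of the machinery already developed.
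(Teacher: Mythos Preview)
Your proposal is correct and follows essentially the same route as the paper: the first isomorphism is deduced from \eqref{chen iso} together with \eqref{vin2}, and normality/irreducibility comes directly from the integrally-closed-domain assertion in Theorem \ref{vincent}(ii). For the second isomorphism the paper uses exactly your ``equivalent derivation'' (Route B): it computes $\dg$-invariants directly via the chain
\[
\C[J_\dh\times_{\fcc_\dh}^{\rho^*J_\dg}\rho^*\WH]^\dg \cong \k[J_\dh\times_{\fcc_\dh}\rho^*\WH]^{\dg\times \rho^*J_\dg} \cong \k[J_\dh\times_{\fcc_\dh}\rho^*(\WH/\!/\dg)]^{\rho^*J_\dg} \cong \C[J_\dh]^{\rho^*J_\dg},
\]
using $\WH/\!/\dg\cong\fcc_\dg$, and never invokes Proposition \ref{bm pure}. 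So the hypothesis checks you outline for Route A (the $!$-purity and finite-generation conditions for $\aa_{T^*(G/H),G}$) are not needed for the argument, and the paper also does not pause to address the $\dg$-equivariance compatibility you flag as the ``main obstacle''---it is taken as part of the naturality of \eqref{chen iso} and \eqref{vin2}.
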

 \begin{proof}
   The first isomorphism is a consequence of \eqref{chen iso} and \eqref{vin2}.
      The second  isomorphism  follows from algebra  isomorphisms
      \begin{align*}
        \C[J_\dh\times_{\fcc_\dh}^{\rho^*J_\dg}\rho^*\WH]^\dg&\ \cong \ 
        \k[J_\dh\times_{\fcc_\dh}\rho^*\WH]^{\dg\times \rho^*J_\dg}
    \     \cong\ 
          \k[J_\dh\times_{\fcc_\dh}\rho^*(\WH/\!/\dg)]^{\rho^*J_\dg}\\
        &\ \cong\k[J_\dh\times_{\fcc_\dh}\fcc_\dg]^{\rho^*J_\dg}\ \cong \ 
        \C[J_\dh]^{\rho^*J_\dg}\ =  \   \C[J_\dh/\!/\rho^*J_\dg].\qedhere
        \end{align*}
 \end{proof}

   \begin{rem} 
     In the case of   affine spherical varieties $G/H$ the isomorphisms of the corollary
     have been  conjectured, and  proved in several small rank cases,
          by Devalapurkar, cf. \cite[(3.14)]{Dev}.
  \end{rem}

\subsection{Coulomb branches}\label{Coul sec}
In this subsection we assume that $\C$ is the field of complex numbers
since we are going to use exponential functions.

Let $\gc\to Sp(\bm)$ be
  a  representation in a symplectic vector space $\bm$
  that satisfies the  `anomaly  cancelation'
condition from \cite{BDFRT}. 
In {\em op cit}  the authors used $\dd$-modules to associate
to the representation $\bm$ a ring object  $\aa_{\bm,G}\in\Ind\sat$.
The  anomaly  cancelation automatically
holds in the `cotangent case'  where $\bm=T^*\bn=\bn\oplus\bn^*$ and
the representation in $\bm$ is a direct sum
of a representation  $G\to GL(\bn)$ and the dual representation in $\bn^*$.
In that case
the construction of \cite{BDFRT}
reduces to earlier constructions of \cite{Na}, \cite{BFN1}, ~\cite{BFN2}.

Teleman \cite{T}, cf. also \cite[Sect. 2.1]{GW},
associated to a representation $\rho: G\to GL(\bn)$ 
a certain $W$-invariant rational section $\dtl^*\to\dt$,
It was essentially stated in {\em loc cit},
and follows from the description of
the universal centralizer due to \cite{DG}, \cite{BFM}, cf. also
\cite[Proposition 2.2.3]{Ng1}, which will be
recalled in Sect. \ref{Jsec} below,
that this section defines a rational section
$\eps_\rho: \fcc_\dg\dashrightarrow    J_\dg$.
Translation by $\eps_\rho$  gives a birational automorphism $\tau_\rho:
J_\dg\dashrightarrow  J_\dg,\, j\mto \eps_\rho\cdot j$,
where $j\mto \eps_\rho\cdot j$ denotes group multiplication in the group scheme $J_\dg$.
Thus, one obtains an automorphism $\tau^*_\rho$ of the field ${\mathbb C}(J_\dg)$
of rational functions.
Gannon and Webster  introduced
a  `{\em gluability}' condition  \cite[Definition 2.6]{GW} 
on the representation $\rho$ and proved, generalizing  work of Teleman,  that if
$\rho$ is gluable then the coordinate ring of the corresponding
Coulomb branch $\bm=\bn\oplus\bn^*$ of cotangent type  has the following description,
\cite[Theorem 2.7]{GW}:
\beq{gwd}
\C[{\mathcal C}_{\bm,\gc}] \cong \{f\in \C[J_\dg]\ \mid\  \tau_\rho^*(f)\text{ is a
    regular function on } J_\dg\}.
\eeq

Write
    $\mu_{\text{left}}$ and $\mu_{\text{right}}$ for the moment maps
    $T^*\dg\to \dgl^*$ induced by the action of $\dg$ on itself by left and
    right translations, respectively.     
Let  $\fcr:=
\{z\in T^*\dg\mid \mu_{\text{left}}(z)=\mu_{\text{right}}(z)\in\greg\}$.
This  is a smooth locally closed  $\Ad\dg$-stable subvariety of $T^*\dg$.
Using either left or right invariant trivialization $T^*\dg\cong\dg\times\dgl^*$
one obtains
$\fcr =\{(g,\xi)\in\dg\times\greg=T^*\dg\mid \Ad g(\xi)=\xi\}$.
The second projection
$\fcr\to \greg,\, (g,\xi)\mto\xi$, gives $\fcr$ the structure of an 
$\Ad\dg$-equivariant  group scheme  on $\greg$ called the group scheme of
  {\em regular centralizers}.
This  group scheme is  canonically
  isomorphic to  the group scheme $\vpi^*J_\dg$, a pullback of $J_\dg$
  via the coadjoint quotient $\vpi: \greg\to\fcc_\dg$.
Therefore, the pullback of
  the section $\eps_\rho$ along $\vpi$ gives  an $\Ad\dg$-equivariant rational section
  $\vpi^*\eps_\rho:
  \greg\dashrightarrow \vpi^*J_\dg\cong\fcr$.
  In either left or right invariant trivialization 
 of    $T^*\dg$ the map $\vpi^*\eps_\rho$
    has the form
    $\xi\mto (\gamma_\rho(\xi),\xi)$, where
    $\gamma_\rho: \greg\dashrightarrow \dg$ is a rational map
defined as a  composition of $\vpi^*\eps_\rho$ and  the natural projection
  $T^*\dg\to\dg$.
        It is not difficult to show that the image of $\vpi^*\eps_\rho$
    is an  $\Ad\dg$-stable smooth
    Lagrangian subvariety $\La_\rho\sset T^*\dg$.

  Let ${X}$ be an affine irreducible normal variety with a Hamiltonian $\dg$-action
  $\dg\ni g: x\mto g.x$, such that
  the moment map
  $\mu_X: {X}\to\greg$ is a flat dominant morphism.
 Translation by $\ga_\rho$
 gives a $\dg$-equivariant birational automorphism
  \beq{tau map}
  \boldsymbol{\tau}_{{X},\rho}: {X}\dashrightarrow {X},
  \en x\mto \gamma_\rho(\mu_X(x)).x,
  \eeq
  such that $\mu_X(\boldsymbol{\tau}_{{X},\rho}(x))=\mu_X(x)$ for all $x\in X$.
  Motivated by \eqref{gwd}, we define an affine scheme over $\dgl^*$ as follows:
  \beq{tau}
  \dc{X}:=\Spec A_{{X},\rho},\en\text{where}\en
  A_{{X},\rho}:=\{f\in \C[{X}]\ \mid\ \boldsymbol{\tau}_{{X},\rho}^*(f) \text{ is a
    regular function on } {X}\}.
\eeq
There are
two natural imbeddings $j_+, j_-: {X}\into\dc{X}$, induced by the
algebra maps $\C[{X}]\to A_{{X},\rho}$ which send $f$ to $f$, resp.
$\boldsymbol{\tau}_{{X},\rho}^*(f) $. The scheme $\dc{X}$ is,
by construction,
the affine closure of a quasi-affine scheme over $\greg$  obtained by gluing
two copies of the variety $X$ using $\boldsymbol{\tau}_{{X},\rho}$ as
a transition map.
There is a $\dg$-action on $\dc{X}$ that restricts, via  $j_+$ {\em and}  $j_-$,
to the $\dg$-action on $X$.
Similarly, there is  a $\dg$-equivariant morphism
$\dc{\mu}: \dc{X}\to \dgl^*$ such that $\dc{\mu}\ccirc j_+=\dc{\mu}\ccirc j_-=\mu$.

The proof of the following result will be given in Sect. \ref{c pf}.

  \begin{thm}\label{c thm}   For any
  representation 
  $\gc\to Sp(\bm)$ that satisfies the anomaly  cancelation condition, one has
  $\hrm^{\text{odd}}\Phi(\aa_{\bm,\gc})=0$ and the following holds:
  \begin{enumerate}
    \item  The scheme $\bm_{\gc}^!$ is an irreducible
  normal affine hyperspherical   variety  and    there is  an isomorphism 
   $\bm_{\gc}^!\ccong
   \overline{\WH({\mathcal C}_{\bm,\gc})}$,
   of Poisson varieties with Hamiltonian $\dg$-action.
   \vskip2pt

 \item If ${\bm=\bn\oplus\bn^*}$ is of cotangent type for a representation
   $\rho: G\to GL(\bn)$ then  the variety $\bm_G^!$ has
     symplectic singularities.
\vskip2pt

     \item If the representation $\rho$ in (2)
   is gluable  then there is an isomorphism
   $\bm^!_\dg\cong \dc{(\WH)}$, of $\dg$-schemes over $\dgl^*$.
 \end{enumerate}
\end{thm}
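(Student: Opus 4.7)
The plan is to derive Theorem \ref{c thm} from Theorem \ref{fin} (sharpened by Proposition \ref{bm pure}) applied to $\cf = \aa_{\bm,\gc}$, and then to transport the known structural properties of the Coulomb branch ${\mathcal C}_{\bm,\gc}$ across the Whittaker twist $\WH(-)$.

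For part (1), the first task is to verify the two hypotheses of Theorem \ref{fin} for $\aa_{\bm,\gc}$. Proposition \ref{bm pure}(i) reduces $!$-purity to purity of $H^{\BM}_{\tv(O)}(\cz_{\bm,\tv})$; for a torus $\tv$ the variety of triples decomposes along the cocharacter lattice into pieces that are, up to even shifts, iterated affine-bundle strata, which yields pointwise purity. The finite generation of $H(\veps^\dagger \aa_{\bm,\gc})_+$ is part of the Coulomb-branch package of \cite{BFN1,BDFRT}, since the positive-cocharacter part of the torus-equivariant Borel--Moore homology of $\cz_{\bm,\tv}$ identifies with a finitely generated subalgebra of $\C[{\mathcal C}_{\bm,\tv}]$. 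Proposition \ref{bm pure}(ii) then delivers the isomorphism $\bm_\gc^! \ccong \overline{\WH({\mathcal C}_{\bm,\gc})}$ of Poisson $\dg$-varieties, while the vanishing $\hrm^{\text{odd}}\Phi(\aa_{\bm,\gc})=0$ follows from pointwise purity combined with the fact that Schubert strata in $\Gr_\gc$ have only even-dimensional cell decompositions. Irreducibility and normality of $\bm_\gc^!$ are then inherited from the corresponding properties of ${\mathcal C}_{\bm,\gc}$ (see \cite{BFN1}), because $\WH \to \greg$ is a $\dg$-equivariant $\vpi^*J_\dg$-torsor and affine closure preserves irreducibility and normality; the hyperspherical condition reduces via an argument analogous to the second identity of \eqref{torsor5} to finiteness of the map ${\mathcal C}_{\bm,\gc}/\!/J_\dg \to \fcc_\dg$, a standard feature of Coulomb branches.

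For part (2), with $\bm = \bn \oplus \bn^*$ of cotangent type, Bellamy's theorem \cite{Bel} furnishes symplectic singularities on ${\mathcal C}_{\bm,\gc}$. I would transfer this to $\bm_\gc^!$ by noting that, étale-locally over $\fcc_\dg$, a Kostant section trivializes $J_\dg$ and renders $\WH({\mathcal C}_{\bm,\gc})$ a smooth cover of an open piece of ${\mathcal C}_{\bm,\gc}$ with $\dg$-fibers. Since symplectic singularities pass to smooth covers of the symplectic locus, $\WH({\mathcal C}_{\bm,\gc})$ acquires them, and Beauville's criterion then transfers them to the affine closure $\overline{\WH({\mathcal C}_{\bm,\gc})}$ provided the boundary has codimension $\geq 2$, which follows from the dimension count \eqref{torsor4} together with the normality from part (1). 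For part (3), gluability and \eqref{gwd} present ${\mathcal C}_{\bm,\gc}$ as the affine closure of the scheme obtained by gluing two copies of $J_\dg$ along the birational automorphism $\tau_\rho$. Since $\WH(J_\dg) \cong \WH$ (the $J_\dg$-torsor cancels against the $J_\dg$-fiber), the Whittaker twist sends this gluing description to the gluing of two copies of $\WH$ along the induced birational automorphism $\boldsymbol{\tau}_{\WH,\rho}$ of \eqref{tau map}, whose affine closure is $\dc{(\WH)}$ by definition \eqref{tau}; combined with part (1) this yields $\bm_\gc^! \cong \dc{(\WH)}$.

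The main obstacle lies in step (1): in the generality of the anomaly-cancellation hypothesis the object $\aa_{\bm,\gc}$ is constructed from $\dd$-modules rather than from a variety of triples, so the torus reduction of $!$-purity and finite generation must be recast inside the $\dd$-module framework of \cite{BDFRT}, where neither a clean geometric stratification nor a direct weight filtration is immediately available, and the correct replacement for $\cz_{\bm,\tv}$ must be identified with care before the argument above applies verbatim.
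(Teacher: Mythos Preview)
Your treatment of parts (1) and (3) matches the paper's proof closely: Proposition \ref{bm pure} plus the cited structural results from \cite{BFN1}, \cite{BDFRT} handle (1), and the \'etale-local trivialization of $\WH({\mathcal C}_{\bm,\gc})$ over $\greg$ combined with \eqref{gwd} handles (3). The ``main obstacle'' you flag at the end is dispatched in the paper by direct citation of \cite{BDFRT}.

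The genuine gap is in part (2). Your step ``Beauville's criterion then transfers [symplectic singularities] to the affine closure $\overline{\WH({\mathcal C}_{\bm,\gc})}$ provided the boundary has codimension $\geq 2$'' does not hold. Normality plus a codimension-$\geq 2$ complement of an open piece with symplectic singularities does \emph{not} imply symplectic singularities on the closure: by Namikawa's criterion one needs rational (or at least Cohen--Macaulay) singularities on the whole variety, and this is not inherited by passage to the affine closure. Establishing Cohen--Macaulayness of $\bm^!_G$ directly is exactly the difficulty, and your argument supplies no mechanism for it.

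The paper takes a different route that sidesteps this problem. It constructs an auxiliary variety
\[
Y \;=\; \Big(\big(\overline{T^*(\dg/U_\dg)}\times_{\dtl^*}{\mathcal C}_{\bm,T}\big)\big/\!\!\big/\dt\Big)\big/W,
\]
built from the \emph{torus} Coulomb branch ${\mathcal C}_{\bm,T}$ rather than ${\mathcal C}_{\bm,\gc}$, together with Gannon's result that $\overline{T^*(\dg/U_\dg)}$ has symplectic singularities. Using \eqref{rgk} and the Localization theorem one gets a birational morphism $\bm^!_G\to Y$. The paper then shows $Y$ has symplectic singularities by proving it is Cohen--Macaulay: Boutot's theorem gives rational singularities on the $\dt$-quotient of the product, and flatness of the moment maps makes the defining equations of the fiber product a regular sequence, so the fiber product inherits Cohen--Macaulayness; Namikawa's theorem then applies. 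Finally, \cite[Lemma 2.1]{Bel} and \cite[Proposition 2.4]{Be} transfer symplectic singularities back along the birational map to $\bm^!_G$. This detour through the torus Coulomb branch and the explicit GIT model is the substantive content of the proof of (2), and it is absent from your proposal.
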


\begin{rems}\label{rem bk} \vi  The isomorphism $\bm_{\gc}^!\ccong
  \overline{\WH({\mathcal C}_{\bm,\gc})}$ in (1) 
  has been conjectured by Gannon and Webster
  \cite[Conjecture A.2]{GW}, cf. also \cite{Na2}.

  \vii  There is a  quantum counterpart of the isomorphism $\bm^!_\dg\cong \dc{(\WH)}$
    where the role of
    the algebra $\C[\WH]$
    is     played by  the algebra $\dd_\psi(\dg/U_\dg)$ of Whittaker differential operators.
        The natural quantization of the Lagrangian  subvariety $\La_\rho\sset T^*\dg$
    is a very central  holonomic $\dd$-module $\Phi_{\rho}$ on $\dg$
    considered in
    \cite[Sect. 1.3]{GG}.
    The role of
    the  automorphism $\boldsymbol{\tau}_{\WH,\rho}^*$
   is     played by  the functor of
   convolution with  $\Phi_{\rho}$.
   It was shown in \cite{GG} that the $\dd$-module  $\Phi_{\rho}$ has a canonical central
   structure and convolution with $\Phi_\rho$ is an exact functor.

     The  $\dd_\dg$-module  $\Phi_{\rho}$ is a   counterpart
    of the perverse sheaf on $\dg$, called $\gamma$-{\em sheaf},
    associated with the representation $\rho$  by Braverman and Kazhdan \cite{BK2}.
        In the setting of finite fields, the  function on $\dg({\mathbb F}_q)$ obtained from
    the  $\gamma$-sheaf by taking  trace of Frobenius
    is a generalization of the $\Gamma$-function, \cite{BK1}.
    This may be viewed as
a heuristic `explanation'  of the
    appearance of ($\hbar$-analogues of) $\Gamma$-functions in
    \cite[Sect.\,7]{T}.
  We are going to discuss      the construction of the quantization of $\bm_G^!$ in terms of
        the $\dd$-module $\Phi_\rho$, and its relation to the construction in {\em loc cit}
        in more detail  elsewhere.
\erems

\begin{examp}[{\textsf{The adjoint representation}}]
  Let a reductive group $G$, resp. $\gm$, act in $\g=\Lie G$ by the adjoint
  action, resp. by dilations.
       The Coulomb branch of cotangent type associated
   with the resulting representation of the group $G\times \gm$ in
   $\bn=\g$ is an integrable system
   $\mu_{\fcc\times{\BA^1}}: {\mathcal C}_{T^*\g,\gc\times\gm}\to\fcc\times{\BA^1}$
   and we have, cf.    \cite[Remark 6.20]{BFN1}:
   \[\mu_{\fcc\times{\BA^1}}\inv(\fcc\times\{\kappa\})\cong
     \begin{cases}
       (T^*(\dt))/W &\en\text{if} \en\kappa=0\\
       {\CM}_\kappa(\dt,W)&\en\text{if}\en \kappa\neq 0,
     \end{cases}
     \]
   where ${{\CM}_\kappa}(\dt,W)$ is the
   trigonometric {\em Calogero-Moser} variety with parameter $\kappa$.
   Using this and the main result of \cite{GK}, from 
   of Theorem \ref{c thm}(1) one deduces the following
 isomorphism: 
 \[\mu_{\dgl^*\times{\BA^1}}\inv(\dgl^*\times\{\kappa\})\cong
     \begin{cases}
   \overline{T^*(\dg/\du)}/W  &\en\text{if} \en \kappa=0\\
 \overset{\,}{\overline{T^*_\psi(\dg/\du)\times_\fcc^J\CM_\kappa(\dt,W)}}   &\en\text{if} \en \kappa\neq 0.
     \end{cases}
     \]
    Here,
     $\mu_{\dgl^*\times{\BA^1}}:
     (T^*\g)^!_{\dgl^*\times{\BA^1}}\to \dgl^*\times{\BA^1}$ is the moment map,
     $W$ acts on $\overline{T^*(\dg/\du)}$ via the Gelfand-Graev action
     constructed in \cite{GR}, \cite{GK}, and we have used
     that for any quasi-affine variety $X$ equipped with an action of a finite group
     $\Gamma$ one has $\overline{X/\!/\Gamma}=\Spec \C[X]^\Gamma= \overline{X}/\!/\Gamma$.
\end{examp}

\subsection{The universal Coulomb branch}
\label{univ}
    Let   $\bn=\sstt$ be  the defining representation of the group  $G=GL(\bn)$ in a vector space
    $\bn$ of dimension $n$.
    An associated Coulomb branch   $\bm=\sstt\oplus\sstt^*$
    is called the `universal Coulomb branch of 
    cotangent type',
    cf. \cite{BDFRT} for motivation. We have  $\dg\cong GL_n=GL_n({\mathbb C})$, resp.
   $\dgl=\gln=\gln({\mathbb C})$.
      The  $GL_n$-action on
   itself by left translations induces 
   a Hamiltonian action in $T^*GL_n$. We identify $\dgl^*$ with $\gln$
   via the trace pairing and let $T^*GL_n\cong GL_n \times\gln$
be the left invariant trivialization.

The representation $\sstt$ is gluable since the
    group $GL(\bn)$ contains the group
    $\gm$ of dilations, cf. \cite[Sect. 2.2]{GW}.
    From an explicit formula for the section $\eps_\sstt$ given in \cite{T}, \cite{GW},
   it is not difficult to find that  the   map
   $\gamma_\sstt:\greg\to\dg$ equals the restriction to the open set $\greg\sset \dgl^*=\gln$
   of the identity map $\xi\mto \xi$, the latter being
   viewed as a rational map $\gln\to GL_n$ with singularities
   at the divisor $\det\xi=0$.
   It follows that the Lagrangian subvariety
   $\La_\sstt\sset T^*GL_n$ is the diagonal
   $\La_\sstt=\{(x,\xi)\in GL_n\times \gln\mid x=\xi\}$,
   alternatively, $\La_\sstt$ equals the image of the section
   $d\Tr: GL_n\to T^*GL_n$ of the cotangent bundle, where $d\Tr$ is the differential
   of the function $GL_n\to {\mathbb C},\,g\mto \Tr(g)$.
           The quantization of $\La_\sstt$
    is  the Braverman-Kazhdan  $\dd_{_{GL_n}}$-module 
    $\Phi_{\sstt}=\dd_{_{GL_n}}(\Tr)$   generated
    by this function,
    cf. \cite{GG}.

    Next, let the group $\dg\times\dg=GL_n\times GL_n$ act on the
    vector space $\gln$ by left and right multiplication.
    The  hamiltonian $GL_n$-action on $T^*\gln$
   induced by the action
   by left, resp.
  right, multiplication
  will be referred  to   as the   `left', resp. `right', action.
    Let $U\subset GL_n$ be the maximal unipotent subgroup
    of upper-triangular matrices with $1$'s on the diagonal and
    $T^*\gln\tsl_\psi U$,
    resp. $U_\psi\back\!\back\!\back T^*\gln\tsl_\psi U$,
     the Whittaker reduction of $T^*\gln$ with respect
     to the right action of $U$, resp. left and right action
     of $U\times U$.
         We identify
    $GL_n$ with     the subset of $\gln$ of  invertible matrices,
    which is a $GL_n\times GL_n$-stable open subset. Thus,
    we obtain an     open imbedding $T^*GL_n\into T^*\gln$,
    and we will use similar notation for Whittaker reductions
    of $T^*GL_n$; in particular, we have $T^*GL_n\tsl_\psi U=\WW_{GL_n}$.

      \begin{prop}\label{gln claim}
        The imbedding $T^*GL_n\into T^*\gln$, resp.
        \[T^*GL_n\tsl_\psi U\ \into \ T^*\gln\tsl_\psi U\en\;\text{and}\en\;
                    U_\psi\back\!\back\!\back T^*GL_n\tsl_\psi U
                  \   \into\  U_\psi\back\!\back\!\back T^*\gln\tsl_\psi U,
                    \]
        extends to  an   isomorphism
        $\dc{(T^*GL_n)}\iso T^*\gln$, resp.
        \[\dc{(\WW_{GL_n})}\ \iso\  T^*\gln\tsl_\psi U\en\;\text{and}\en\;
          \dc{(U_\psi\back\!\back\!\back T^*GL_n\tsl_\psi U)}\ \iso\ 
          U_\psi\back\!\back\!\back T^*\gln\tsl_\psi U.
          \]
      \end{prop}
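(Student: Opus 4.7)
The plan is to reduce the three claims to an explicit change of coordinates identifying $A_{T^*GL_n,\sstt}$ with $\C[T^*\gln]$, followed by a routine passage to Whittaker invariants. First, working in the left-invariant trivialization $T^*GL_n \cong GL_n \times \gln,\ (g,\xi)$, the left moment map is $\mu_L(g,\xi) = \Ad g(\xi)$, and since $\gamma_\sstt: \greg \dashrightarrow GL_n$ is the inclusion $GL_n \hookrightarrow \gln$, a direct computation gives
\[
\boldsymbol{\tau}_{T^*GL_n,\sstt}(g,\xi) \,=\, \Ad g(\xi)\cdot (g,\xi) \,=\, (g\xi,\, \xi).
\]
This formula is equivariant for both the left action $h\cdot(g,\xi) = (hg,\xi)$ and the right action $u\cdot(g,\xi) = (gu,\Ad u^{-1}\xi)$ of $U$ on $GL_n$, as the short computation $gu \cdot \Ad u^{-1}\xi = g\xi u$ confirms; consequently $\boldsymbol{\tau}$ descends to each of the Whittaker reductions $\WW_{GL_n}$ and $U_\psi\back\!\back\!\back T^*GL_n\tsl_\psi U$ via the same formula.

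The heart of the argument is the identification $A_{T^*GL_n,\sstt} = \C[g,t]$, where $t := \xi g^{-1} \in \gln$ is a new coordinate chosen so that the natural open imbedding $T^*GL_n \hookrightarrow T^*\gln$ (using the canonical trivialization $T^*\gln \cong \gln \times \gln$ coming from the trace pairing) reads $(g,\xi) \mapsto (g,t)$. In these coordinates $\boldsymbol{\tau}(g,t) = (gtg,\, g^{-1})$. The inclusion $\C[g,t] \subseteq A_{T^*GL_n,\sstt}$ is immediate since $\boldsymbol{\tau}^*(g) = gtg$ and $\boldsymbol{\tau}^*(t) = g^{-1}$ both lie in $\C[T^*GL_n] = \C[g,g^{-1},t]$. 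For the reverse inclusion, write an arbitrary $f \in A_{T^*GL_n,\sstt}$ in reduced form $f = Q(g,t)/\det g^M$ with $Q \in \C[g,t]$, $\det g \nmid Q$, $M \geq 0$; then
\[
\boldsymbol{\tau}^*(f)(g,t) \,=\, \frac{Q(gtg,\, g^{-1})}{\det g^{2M}\,\det t^M}.
\]
Regularity of $\boldsymbol{\tau}^*(f)$ in $\C[g,g^{-1},t]$ forces $(\det t)^M \mid Q(gtg,g^{-1})$ there. Substituting $u := gtg$ (a bijection of $\gln$ for each fixed $g \in GL_n$, with $\det u = \det g^2\det t$), the condition becomes $(\det u)^M \mid Q(u,g^{-1})$ in $\C[u]$ for every $g \in GL_n$; since $v := g^{-1}$ ranges over the Zariski dense subset $GL_n \sset \gln$ and $\det u$ is prime in $\C[u,v]$, one concludes $(\det u)^M \mid Q(u,v)$ in $\C[u,v]$, equivalently $(\det g)^M \mid Q(g,t)$ after relabeling. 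This contradicts $\det g \nmid Q$ unless $M = 0$, so $f \in \C[g,t]$, as required. In particular the embedding $j_+ \colon T^*GL_n \to \Spec A_{T^*GL_n,\sstt} \iso T^*\gln$ recovers the natural imbedding.

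The two Whittaker-reduced statements follow by applying the identification $A_{T^*GL_n,\sstt} \iso \C[T^*\gln]$ functorially in $(U,\psi)$-invariants. Because $\boldsymbol{\tau}$ is right $U$-equivariant, $\boldsymbol{\tau}^*$ preserves the subalgebra of right $(U,\psi)$-equivariant sections; taking right $(U,\psi)$-invariants yields $A_{\WW_{GL_n},\sstt} \iso \C[T^*\gln \tsl_\psi U]$ and hence $\dc{(\WW_{GL_n})} \iso T^*\gln \tsl_\psi U$, and taking both left and right $(U,\psi)$-invariants gives the third isomorphism similarly.

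The principal obstacle is the denominator analysis establishing $A_{T^*GL_n,\sstt} = \C[g,t]$: the substitution $u = gtg$ converts a $(\det g)^{-M}$ denominator in $f$ into a spurious $(\det t)^{-M}$ pole in $\boldsymbol{\tau}^*(f)$, and only the density of $GL_n \sset \gln$ combined with the primeness of $\det u$ in $\C[u,v]$ allows one to force the cancellation back to a divisibility in $Q$ and thereby deduce $M = 0$.
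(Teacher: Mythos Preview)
Your proof of the first isomorphism is correct and is the paper's argument in algebraic form. Your coordinates $(g,t)$ agree with the paper's $(a,b)$ on $\gln\times\gln$, both of you compute $\boldsymbol{\tau}(a,b)=(aba,a^{-1})$, and your divisibility analysis is equivalent to the paper's observation that $\boldsymbol{\tau}$ carries $GL_n\times\gln$ isomorphically onto $\gln\times GL_n$, so that any $f\in A_{T^*GL_n,\sstt}$ is regular off the codimension-$2$ locus $\{\det a=\det b=0\}$ and hence (by Hartogs on the smooth variety $\gln\times\gln$) everywhere.

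For the Whittaker reductions there is a gap. The $U$-equivariance and $\mu_R$-preservation you verify do give an injection $\C[T^*\gln\tsl_\psi U]\hookrightarrow A_{\WW_{GL_n},\sstt}$, but surjectivity does not follow by ``taking $(U,\psi)$-invariants functorially'': the construction $X\mapsto\dc{X}$ involves an affine closure, and affine closure does not commute with Hamiltonian reduction. Concretely, using the paper's identification $T^*\gln\tsl_\psi U\cong Y:=\{(a,b):ba\in\mathfrak s\}$ via the Kostant slice, what one actually obtains from your equivariance is $A_{\WW_{GL_n},\sstt}=\C[Y_+\cup Y_-]$, where $Y_+=Y\cap\{\det a\neq0\}$ and $Y_-=\boldsymbol{\tau}(Y_+)=Y\cap\{\det b\neq0\}$; the missing step is $\C[Y_+\cup Y_-]=\C[Y]$. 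The paper's sketch handles this by rerunning the codimension-$2$ argument on $Y$ itself. (One checks $Y$ is smooth: for $s=ba\in\mathfrak s\subset\greg$ one has $[\gln,s]\subset b\,\gln+\gln\,a=\im(d\mu_R)_{(a,b)}$, and the Kostant slice is transverse to adjoint orbits, so $\mu_R$ is transverse to $\mathfrak s$; the divisors $\{\det a=0\}$ and $\{\det b=0\}$ on $Y$ are distinct.) Your polynomial-ring divisibility argument does not transfer directly to $\C[Y]$, so the geometric route is the appropriate completion here.
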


\begin{proof}[Sketch of proof]
  We use  the natural identification $T^*\gln\cong \gln\times\gln$
   In terms of this identification,
  the left, resp. right, action of an element  $g\in GL_n$  is given by the formula
  $g: (a,b)\mto (ga, bg\inv)$, resp. $g: (a,b)\mto (ag\inv,\,gb)$, and an associated
  moment map $\gln\times\gln\to \gln$ 
  is the map $\mu_{\text{left}}: (a,b)\mto ab$, resp. $\mu_{\text{right}}: (a,b)\mto ba$.
      left invariant trivialization sends $(a,b)\in $ to $(a, ab)$.
  The map $\gamma_\sstt$ being the identity map in  the
   left invariant trivialization

  From \eqref{tau map} and the formula for the map $\gamma_\sstt$, 
   we see that for $X=T^*\gln=\gln\times\gln$,
  viewed as a $GL_n$-variety with respect to the left action,
  the rational map
  $\boldsymbol{\tau}_{{X},\sstt}$ reads
  \[ \boldsymbol{\tau}_{_{{T^*\gln},\sstt}}:\, \gln\times\gln
    \dashrightarrow\gln\times\gln,\en
     (a,b)\mto ((ab)a,\,b(ab)\inv)= (aba, bb\inv a\inv)=(aba, a\inv).
  \]

  Now, the identification of $GL_n$ with an open subset of $\gln$
  induces the identification $T^*GL_n= GL_n\times \gln\subset \gln\times\gln$
  (which is different from the identification
  $T^*GL_n= GL_n\times \gln$ provided by the left invariant trivialization).
  For any $a\in GL_n$ the linear map $\gln\to\gln,\, b\mto aba$, is injective,
  hence it is an isomorphism. Therefore, the map
  $GL_n\times \gln\to GL_n\times \gln,\,(a,b)\mto (a, aba)$, is an automorphism.
  Hence, from the displayed  formula above we deduce that
  $\boldsymbol{\tau}_{_{{T^*\gln},\sstt}}(GL_n\times \gln)=\gln\times GL_n$.
  It follows that the complement in $\gln\times \gln$ of the union of open
  sets $GL_n\times \gln$ and $\boldsymbol{\tau}_{_{{T^*\gln},\sstt}}(GL_n\times \gln)$
  is the set $\{(a,b)\in \gln\times \gln\mid \det a=0\;\&\; \det b=0\}$,
  an intersection of two divisors. We conclude that any rational function
  $f\in \C(\gln\times \gln)$ which is regular on
  $GL_n\times \gln$ and on $\boldsymbol{\tau}_{_{{T^*\gln},\sstt}}(GL_n\times \gln)$
  must be a regular function on the whole of $\gln\times\gln$.
  This proves the isomorphism $\dc{(T^*GL_n)}\iso T^*\gln$.

  To prove the second isomorphism of the proposition, let
  ${\mathfrak s}\sset \greg  \sset \gln$ be the Kostant slice.
  The Whittaker reduction $T^*\gln\tsl_\psi U=(\gln\times\gln)\tsl_\psi U$
    may be identified with the variety
  $\{(a,b)\in \gln\times\gln\mid ba\in {\mathfrak s}\}$, where we have used
  that $\mu_r(a,b)=ba$.
  The desired isomorphism $\dc{(\WW_{GL_n)}}\iso$  $T^*GL_n\tsl_\psi U$
    follows from this description of $T^*\gln\tsl_\psi U$ by mimicing the proof of the
    first isomorphism. The proof of the third isomorphism is similar.
    \end{proof}

    \begin{cor} In the case of the `universal Coulomb branch'
   $\bm=\sstt\oplus\sstt^*$   there are natural isomorphisms
   \[(\sstt\oplus\sstt^*)^!_{G}\cong T^*\gln\tsl_\psi U,
     \en\text{resp.}\en
     {\mathcal C}_{\sstt\oplus\sstt^*, G}\cong 
     U_\psi\back\!\back\!\back T^*\gln\tsl_\psi U.
   \]
 \end{cor}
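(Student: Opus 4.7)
The plan is to derive both isomorphisms from Theorem~\ref{c thm}(3) and Proposition~\ref{gln claim}, using the gluability of $\sstt$ noted in Sect.~\ref{univ}; recall that in our setting $\dg = GL_n$, so $\WH$ coincides with $\WW_{GL_n}$.

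For the first isomorphism, Theorem~\ref{c thm}(3) gives the $\dg$-equivariant identification $(\sstt\oplus\sstt^*)^!_G \cong \dc{(\WH)}$ over $\dgl^*$. The second isomorphism of Proposition~\ref{gln claim} then identifies $\dc{(\WH)}$ with $T^*\gln\tsl_\psi U$, which is the first claim.

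For the second isomorphism, I would recover $\mathcal{C}_{\bm,G}$ from $\bm^!_G$ via an additional left $U_\dg$-Whittaker reduction and then substitute the first isomorphism. Starting from the identification $\bm^!_G \cong \overline{\WH\times_\fcc^{J_\dg}\mathcal{C}_{\bm,G}}$ provided by Theorem~\ref{c thm}(1), performing left $U_\dg$-Whittaker reduction on the $\WH$-factor yields the classical Bi-Whittaker reduction $U_\psi\backslash\!\backslash\!\backslash T^*GL_n\tsl_\psi U$, which is isomorphic to the universal centralizer $J_{GL_n}$ (a theorem of Kostant). Combined with the triviality of $J_{GL_n}$ as a torsor over itself (so that $J_{GL_n}\times_\fcc^{J_{GL_n}}\mathcal{C}_{\bm,G}\cong\mathcal{C}_{\bm,G}$), this shows that the left $U_\dg$-Whittaker reduction of $\bm^!_G$ is canonically isomorphic to $\mathcal{C}_{\bm,G}$. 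Substituting the first isomorphism then produces $\mathcal{C}_{\bm,G}\cong U_\psi\backslash\!\backslash\!\backslash T^*\gln\tsl_\psi U$, as required.

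The principal technical point is to verify that the Whittaker reduction commutes with the affine closure operation appearing in Theorem~\ref{c thm}(1); concretely, one needs an isomorphism $\overline{X}\tsl_\psi U \cong \overline{X\tsl_\psi U}$ for the quasi-affine varieties at hand. This should follow from codimension-two considerations analogous to those used in the proof of Proposition~\ref{gln claim}, together with the normality of the resulting varieties guaranteed by Theorem~\ref{c thm}(1). Alternatively, one could circumvent this issue by directly applying the third isomorphism of Proposition~\ref{gln claim} to identify $\dc{(U_\psi\backslash\!\backslash\!\backslash T^*GL_n\tsl_\psi U)} \cong \dc{(J_{GL_n})}$ with $\mathcal{C}_{\bm,G}$, using the Gannon-Webster description \eqref{gwd} of Coulomb branches for gluable representations.
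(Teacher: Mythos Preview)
Your argument for the first isomorphism is correct and matches the paper's; the paper cites Theorem~\ref{c thm}(1), but part~(3) is the statement actually needed to pass through $\dc{(\WH)}$, so your citation is if anything more precise.

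For the second isomorphism the paper takes a shorter route: it simply invokes the general fact, from \cite{BF}, that $\mathcal{C}_{\bm,G}$ is always the Kostant--Whittaker reduction of $\bm^!_G$, so applying left $U$-Whittaker reduction to both sides of the first isomorphism yields the second in one line. Your main approach effectively re-derives this fact from the torsor description of Theorem~\ref{c thm}(1), and you correctly flag the technical wrinkle of commuting Whittaker reduction with affine closure; this is avoidable once the \cite{BF} result is quoted. Your alternative route---combining the Gannon--Webster description \eqref{gwd}, which identifies $\mathcal{C}_{\bm,G}$ with $\dc{(J_{GL_n})}$, with the third isomorphism of Proposition~\ref{gln claim}---is also correct and sidesteps the issue cleanly; it is in fact the most self-contained of the three arguments, avoiding both the affine-closure question and the external citation.
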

 \begin{proof}
     The first isomorphism is a consequence of the isomorphism
        $\dc{(\WW_{GL_n})}\cong  T^*\gln\tsl_\psi U$ of the proposition
        and   the  isomorphism of Theorem \ref{c thm}(1).
        The second  isomorphism follows from this since the Coulomb branch
        ${\mathcal C}_{\bm,G}$ is obtained from $\bm^!_G$ by the Kostant-Whittaker reduction,
        by ~\cite{BF}.
      \end{proof}
      
      \begin{rems}  \vi The isomorphisms of the corollary have been proved
        earlier in \cite[Sect. 5.2]{BDFRT} in a totally different way.

\vii The function $\Tr$ is well defined on  the whole of $\gln$.
Therefore, the $\dd$-module $\Phi_{\sstt}=\dd_{GL_n}(\Tr)$ generated by the
function $\Tr$ is a restriction to 
    $GL_n$ of the $\dd$-module $\dd_{_{\gln}}(\Tr)$ on $\gln$.
    The first isomorphism of the proposition may be viewed as a classical anague
    of this. 
    Observe further that  matrix multiplication makes $\gln$ a monoid
    and convolution with $\dd_{_{\gln}}(\Tr)$ is essentially
    the functor of Fourier transform of $\dd$-modules on $\gln$.
        This is closely related to the fact that the $\gamma$-sheaf counterprt
        of  $\Phi_{\sstt}$ appears in the Godement-Jacquet case of symplectic duality.
        However, the roles of $G$ and $\dg$  in the Godement-Jacquet setting
      of symplectic duality   and in the isomorphism of the proposition
        are swapt, cf.  Remark ~\ref{fin rem}(i).
          \end{rems}

\section{Pointwise pure sheaves}
\label{sec2}
We keep the setting of Sect. \ref{form sec} 
 and use the notation $\t=\Lie T$, resp. $\hh(\td)=\hdt(\td)$  and
  ${\fc}=\hh(\pt)$. Thus, $\fc\cong\C[\t]$, viewed as a graded algebra such that $\t^*$ is placed in degree $2$.
         Let $Q=\C(\t)$ be the fraction field of $\fc$ and 
              $M\mto QM:=Q\o_\fc M$
              the  localization functor
              $\fc\mmod\to Q\mmod$.

              \subsection{}\label{u v} In this subsection we assume, for simplicity, that $X$
              is connected.              
              The fixed point
variety $X^T$ is a disjoint union
$X^T=\sqcup_{\tau\in\ct}\, \cx_\tau$ of connected components $\cx_\tau$,
where $\ct$ is a finite labeling set.
  Let $\eps: X^T\into X$, resp. $\eps_\tau: \cx_\tau\into X$,
  denote the imbedding.
                 For  $\cg\in D_T(X)$ there are canonical morphisms of  $A$-modules

                 \beq{loc1}
                                    \xymatrix{
                                      \hh(\eps^!\cg)     \ar@{=}[r]&          \
                                      \bigoplus_{\tau\in\ct}\ \hh(\eps_\tau^!\cg) \ar[d]
                                      \ar[r]^<>(0.5){\, \eps_!\, } &\ \hh(\cg) \ \ar[r]^<>(0.5){\,  \eps^*\, }\ar[d]
                                      &\ 
                                      \bigoplus_{\tau\in\ct}\ \hh(\eps^*_\tau\cg)  \
                                      \ar@{=}[r]\ar[d] & \ \hh(\eps^*\cg) \  \\
          Q\hh(\eps^!\cg)     \ar@{=}[r]                            & \bigoplus_{\tau\in\ct}\ Q\hh(\eps_\tau^!\cg)\
\ar[r]^<>(0.5){\, Q\eps_!\, } &\ Q\hh(\cg) \ \ar[r]^<>(0.5){\,  Q\eps^*\, }
&\ \bigoplus_{\tau\in\ct}\ Q\hh(\eps^*_\tau\cg)\ar@{=}[r]&\ Q\hh(\eps^*\cg) 
     }
     \eeq

     By the Localization theorem, the maps in the second row
     are  isomorphisms.
     If $\cg$ is $!$-pure, resp. $*$-pure, then  the $\fc$-module
     $\hh(\cg)$, as well as
 $\hh(\eps^!\cg)$,
 resp. $\hh(\eps^*\cg)$, is free, see Corollary \ref{inj cor} below.
 Hence,  the first, resp. last,  two vertical maps
 are injective.  It follows that the map $\eps_!$, resp. $\eps^*$,
 in the first row is injective.

  It follows from condition S1 of Sect. \ref{form sec} that the algebra $A:=\hh(X)$
is  concentrated in even degrees and that it 
is   free as an $\fc$-module.
Therefore, the restriction map $\eps^*: A\to \hh(X^T)=\bigoplus_{\tau\in\ct}\ \hh(\cx_\tau)$
 is an injective $\fc$-algebra map which induces a  $Q$-algebra isomorphism:
  \beq{QA}
  Q\eps^*:\ QA\, \iso\,\oplus_{\tau\in\ct}\ Q\hh(\cx_\tau).
 \eeq
 We will identify $QA$ with the direct sum in the RHS and view $A$, resp. $\hh(\cx_\tau)$,
 as a subalgebra of $QA$, resp. $Q\hh(\cx_\tau)$.
 Writing $e_\tau$ for the unit of the algebra $\hh(\cx_\tau)$, inside $QA$
 we get a decomposition  $1_A=\sum_{\tau\in\ct}\,e_\tau$ as a sum of orthogonal
 idempotents. Therefore, for any $\cg\in D_T(X)$ we obtain isomorphisms
 of $QA$-modules
\beq{loc3}
e_\tau Q\hh(\eps_\tau^!\cg)=Q\hh(\eps_\tau^!\cg)\  \xrightarrow[^{\sim}]{\  Q(\eps_\tau)_!\  }\  
e_\tau Q\hh(\cg)\  
\xrightarrow[{}^{\sim}]{\  Q\eps_\tau^*\  }\  e_\tau  Q\hh(\eps_\tau^*\cg)
=  Q\hh(\eps_\tau^*\cg).
\eeq

 Below we fix  a  closed subset $X'\sset X$ of the form
  $X'=\sqcup_{\mu\in\La'}\, X_\mu$ for some 
 $\La'\sset\La$.
 Let $X_\la$   be an open stratum in $X'$ and  put $X'_{<\la}:=X'\sminus X_\la$.
 We have a diagram
 $v: X'_{<\la}\into X'\hookleftarrow X_\la: u$,
  where $v$, resp. $u$, is a closed, resp.
  open, imbedding.
    The following lemma is standard

\begin{lem}\label{inj claim}  
 \vi  If $\cg\in \mix(X')$ is $!$-pure, resp. $*$-pure,
  then  \eqref{byv}, resp. \eqref{byu}, below  is a short
 exact sequence of free ${\fc}$-modules 
 \begin{align}
   &0\to \hh(v^!\cg)\xrightarrow{v_!} \hh(\cg)\xrightarrow{u^*} \hh(u^*\cg)\to0; \label{byv}\\
     &0\to \hh_c(u^*\cg)\xrightarrow{u_!} \hh(\cg)\xrightarrow{v^*}\hh(v^*\cg)\to 0.\label{byu}
 \end{align}

 \vii If $\cg_X\in\mix(X)$  is pure and condition S3  of Sect. \ref{form sec}
     holds then \eqref{byv} is a short
 exact sequence for $\cg:=\iota^*\cg_X$, where $\iota: X'\into X$ is the closed imbedding.
     \end{lem}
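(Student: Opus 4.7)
The plan is to apply the $T$-equivariant cohomology functor $\hh$ to the standard distinguished triangle
\[
v_!v^!\cg \ \to\ \cg\ \to\ u_*u^*\cg \xrightarrow{+1}
\]
(respectively its dual $u_!u^*\cg \to \cg \to v_*v^*\cg \xrightarrow{+1}$ in the $*$-pure case) and to argue that the resulting long exact sequence degenerates into a short exact sequence of free $\fc$-modules. The degeneration will be forced by strictness of weights in the mixed category: once one shows that $\hh^n(v^!\cg)$, $\hh^n(\cg)$, and $\hh^n(u^*\cg)$ are each pure of weight $n$, the connecting morphism $\hh^{n-1}(u^*\cg)\to \hh^n(v^!\cg)$, being a map from a weight $n-1$ piece to a weight $n$ piece, must vanish. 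Freeness of the middle term in \eqref{byv} then follows automatically from freeness of the two outer terms.

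For part (i), the required purity is proved stratum by stratum. By S1, each $X_\mu$ is $T$-equivariantly isomorphic to a $T$-representation $V_\mu$, which contracts $T$-equivariantly to $V_\mu^T$; hence $\hh(X_\mu)\cong \fc$. For a $!$-pure sheaf, $\mathscr{H}^m \jm_\mu^!\cg$ is geometrically constant of weight $m$, so in the local-to-global spectral sequence computing $\hh(\jm_\mu^!\cg)$ the $E_2^{p,q}$ term $\fc^p\otimes (\mathscr{H}^q\jm_\mu^!\cg)_{x_\mu}$ is pure of weight $p+q$; every differential raises weight by $1$, hence vanishes, and the sequence degenerates. This yields $\hh^n(\jm_\mu^!\cg)$ pure of weight $n$ and free over $\fc$. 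A Cousin-type filtration (equivalently, an induction on the number of strata of $X'$) extends the result to $v^!\cg$ on $X'_{<\la}$ and to $\cg$ itself on $X'$; the $*$-pure case \eqref{byu} is handled dually using the triangle $u_!u^*\cg\to\cg\to v_*v^*\cg$.

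For part (ii), the difficulty is that $\cg:=\iota^*\cg_X$ is automatically only $*$-pure on $X'$ (via the identity $\jm_\mu^*\iota^* = \jm_\mu^{X,*}$ for $\mu\in\La'$), since $\iota^*$ does not commute with $\jm_\mu^!$. The plan is to use condition S3 to recover purity of $\hh^n((\jm_\mu^{X'})^!\iota^*\cg_X)$ directly from the purity of $\cg_X$. For $\la\in\Si$, the contracting $\gm$-action on $U_\la$, which commutes with $T$, permits a Braden/contraction argument identifying the cohomology of the $!$-restriction of $\cg_X$ at $x_\la$ with the compactly supported cohomology of $\cg_X|_{U_\la}$; this is pure of weight $n$ in degree $n$ thanks to the full purity of $\cg_X$. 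The resulting costalk purity propagates to all $!$-stratum-restrictions appearing in the stratification of $X'$, and once it is in hand, the weight-strictness argument of part (i) applies verbatim to yield \eqref{byv}.

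The principal obstacle is exactly this purity step for $\iota^*\cg_X$: without a direct $!$-purity hypothesis, one must exploit the contracting $\gm$-actions supplied by S3 to transport weight information from $\cg_X$ on the ambient $X$ down to the $!$-stratum-restrictions inside $X'$, and in particular to verify that the costalks $\iota_{x_\la}^!\cg_X$ inherit the correct weight behaviour.
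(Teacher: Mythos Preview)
Your approach to part (i) is correct and matches the paper's: both use the distinguished triangle $v_!v^!\cg \to \cg \to u_*u^*\cg$, establish purity and $\fc$-freeness of the outer terms by induction on strata (using that $u$ is open so $u^!=u^*$, and that $u^*\cg$ is a sum of shifted constant sheaves on the affine space $X_\la$), and conclude that the connecting homomorphisms vanish by weight reasons.

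For part (ii), however, your plan has a genuine gap. You propose to use S3 and a Braden/contraction argument to show that the costalk $\iota_{x_\la}^!\cg_X$ has pure cohomology, and then assert this ``propagates to all $!$-stratum-restrictions appearing in the stratification of $X'$''. But the quantity you actually need for the mechanism of (i) to apply is purity of $\hh^n(v^!\iota^*\cg_X)$, where $v^!$ is $!$-restriction \emph{within} $X'$. Since $v^!\iota^*$ is not of the form $(\iota\circ v)^!$ or $(\iota\circ v)^*$, knowing the point-costalk $\iota_{x_\la}^!\cg_X$ is pure (which, incidentally, is immediate from $!$-purity of $\cg_X$ and requires no hyperbolic localization) gives no direct information about $(\jm_\mu^{X'})^!\iota^*\cg_X$ for the strata $\mu$ of $X'_{<\la}$. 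The ``propagation'' step is exactly the obstacle you identify in your last paragraph, and nothing in the proposal explains how to cross it.

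The paper sidesteps this entirely: it does not attempt to establish any $!$-purity for $\iota^*\cg_X$. Instead it proves directly that $u^*:\hh(\cg)\to\hh(u^*\cg)$ is surjective. The contracting $\gm$-action on $U_\la$ from S3 makes the restriction $\hh(\cg_X)\to\hh(\iota_{x_\la}^*\cg_X)$ surjective (this is \cite[Lemma~3.5]{Gi}); since this map factors as $\hh(\cg_X)\xrightarrow{\iota^*}\hh(\cg)\xrightarrow{u^*}\hh(u^*\cg)\xrightarrow{\veps_\la^*}\hh(\iota_{x_\la}^*\cg_X)$ and the last arrow $\veps_\la^*$ is an isomorphism (because $u^*\cg=\jm_\la^{X,*}\cg_X$ is a sum of shifted constant sheaves on the affine space $X_\la$ by $*$-purity), $u^*$ must be surjective. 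Surjectivity of $u^*$ in every degree kills the connecting maps in the long exact sequence, giving \eqref{byv}. Note that (ii) only asserts exactness, not freeness, so this is all that is required.
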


    \begin{proof} To prove (i) we induct on the number of strata in $X'$.
      If $\cg$ is $!$-pure then $\hh(u_!u^!\cg)\cong \hh(u^*\cg)[-2\dim X_\la]$
      is pure and  free over ${\fc}$.
   Further,     $\hh(v^!\cg)$
is  pure  and free over ${\fc}$, by the induction hypothesis.
    Therefore, the connecting homomorphisms
  in the long exact sequence associated with the distinguished triangle
  $v_!v^!\cg\to\cg\to u_*u^*\cg$ vanish. This implies that \eqref{byv}
  is an exact sequence.
  The proof  of \eqref{byu} is similar.

 To prove (ii)    consider the following   imbeddings
$
    \{x_\la\} \ \xrightarrow{\veps_\la}\  X_\la \  
    \xrightarrow{u} \ X' \ 
    \xrightarrow{\iota}  \ X$,
  and the corresponding restriction   maps
  \[\hh(\cg_X)\xrightarrow{i^*} \hh(\cg) \xrightarrow{u^*}
    \hh(u^*\cg) \xrightarrow{\veps_\la^*} \hh(\veps_\la^* u^*\cg)=
    \hh((\iota\ccirc u\ccirc \veps_\la)^*\cg_X).\]
It is known, cf. \cite[Lemma 3.5]{Gi}, that assumption S3
insures that the composite map $\hh(\cg_X)\to \hh((\iota\ccirc u\ccirc \veps_\la)^*\cg_X)$
is surjective. 
Therefore, the map $\hh(\cg)\to \hh(u^*\cg)$ is  surjective since the map $\veps_\la^*$
is an isomorphism.
\end{proof}

\begin{cor}\label{inj cor}
  Let $i: X'=\sqcup_\mu\,X_\mu\into  X$ be a closed  imbedding,
  and $j: X\sminus X'\into X$.
  If $\cg\in \mix(X)$ is  $!$-pure, resp. $*$-pure, then
  $\hh(i^!\cg)$, resp. $\hh(i^*\cg)$, is  pure  and free over $\fc$,
  and there is a short
    exact sequence
 \begin{align*}
   &0\to \hh(i^!\cg)\to \hh(\cg)\to \hh(j^*\cg)\to 0,\en\text{\em resp.}\\
&0\to \hh_c(j^!\cg)\to\hh(\cg)\to \hh( i^*\cg)\to 0.
 \end{align*}

 A similar result holds in the case of a closed imbedding 
 $Y'=\sqcup_\mu\,Y_\mu\into Y$.
 \end{cor}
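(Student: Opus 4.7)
\medskip

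The plan is to deduce both statements from Lemma \ref{inj claim}(i)--(ii) applied to $X'$ (and $X\sminus X'$) with the inherited stratification, together with a weight-vanishing argument for the long exact sequence coming from the standard distinguished triangle.

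First I would verify that $i^!\cg$ is $!$-pure on $X'$: for $\mu\in\La'$ the stratum imbedding $\jm_\mu: X_\mu\into X$ factors as $\jm_\mu=i\ccirc k_\mu$ with $k_\mu: X_\mu\into X'$, so $k_\mu^!i^!\cg=\jm_\mu^!\cg$ is pointwise pure of weight $m$ in cohomological degree $m$. Iterating Lemma \ref{inj claim}(i) on $X'$ (peeling off one open stratum at a time, using the SES \eqref{byv} as the inductive step) then shows that $\hh(i^!\cg)=\hh(X',i^!\cg)$ is pure and free over $\fc$; similarly the restriction $j^*\cg=j^!\cg$ (as $j$ is open) is $!$-pure on $X\sminus X'$, so $\hh(j^*\cg)$ is pure and free. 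Now apply $T$-equivariant cohomology to the distinguished triangle $i_*i^!\cg\to\cg\to j_*j^*\cg\xrightarrow{+1}$ to obtain the long exact sequence
\[
\cdots\to H^m_T(i^!\cg)\to H^m_T(\cg)\to H^m_T(j^*\cg)\xrightarrow{\ \partial\ } H^{m+1}_T(i^!\cg)\to\cdots
\]
The source $H^m_T(j^*\cg)$ is pure of weight $m$ and the target $H^{m+1}_T(i^!\cg)$ is pure of weight $m+1$; since $\partial$ is a morphism in the mixed category, which preserves weights, $\partial$ must vanish. This splits the long exact sequence into the desired short exact sequence.

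For the $*$-pure variant, the argument is identical using instead the distinguished triangle $j_!j^*\cg\to\cg\to i_*i^*\cg\xrightarrow{+1}$: the sheaf $i^*\cg$ is $*$-pure on $X'$ and $j^*\cg$ is $*$-pure (indeed, pure) on $X\sminus X'$, so iterating Lemma \ref{inj claim}(i) via the SES \eqref{byu} shows that $\hh(i^*\cg)$ and $\hh_c(j^!\cg)=\hh_c(X\sminus X',j^*\cg)$ are pure and free over $\fc$, and the same weight mismatch kills the connecting homomorphism. The assertion for the imbedding $Y'=\sqcup_\mu Y_\mu\into Y$ follows verbatim upon replacing $X$ by $Y$, since conditions S1--S3 are not needed at this step (only $!$- or $*$-purity with respect to the stratification is used).

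The only technical point to be careful about is the weight vanishing of $\partial$: one must invoke the fact that $R\Gamma_T$ is defined as a functor of mixed derived categories so that the connecting map in the long exact sequence is genuinely weight-preserving. In the \'etale setting this follows from \cite{BBD} and in the Hodge setting from Saito's formalism \cite{Sa}; once this is in hand, the vanishing is automatic from the purity already established. No reduction to the specialized assumptions S1--S3 of Section \ref{form sec} (beyond what is used in Lemma \ref{inj claim}) is required.
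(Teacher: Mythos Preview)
Your proof is correct and follows essentially the same approach as the paper's one-line proof ``This follows from Lemma \ref{inj claim} by induction on the number of strata in $X'$.'' You have simply made the weight-vanishing argument explicit (it is the same argument used in the proof of Lemma \ref{inj claim}(i) itself), and separately established purity and freeness of the two outer terms by iterating the lemma on $X'$ and on $X\sminus X'$; this is exactly what the induction in the paper's proof unfolds to.
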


 \begin{proof} This follows from Lemma \ref{inj claim}  by induction on the number of strata in
   $X'$.
   \end{proof}

   We now assume that the open
   stratum $X_\la$ in the diagram 
$v: X'_{<\la} \into X' \hookleftarrow X_\la: u$
of Lemma \ref{inj claim} meets $Y$,
i.e  $\la\in\Si$. Then, the set $Y_\la=Y\cap X_\la$ 
is  open in $Y\cap X'$ and $x_\la\in Y_\la$.
Let $\cx_\bullet$ be the connected 
   component of $X^T$ that contains the point $x_\la$.

   \begin{lem}
     We have $\cx_\bullet\cap X'_{<\la}=\emptyset$.
   \end{lem}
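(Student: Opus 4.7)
My plan is to argue by contradiction, handling separately the two cases distinguishing whether an offending stratum $X_\mu$ meets $Y$.  First I would observe that, by the remark equivalent to S2' (each connected component of $X^T$ meets $Y$ in at most one point) together with $x_\la \in Y_\la \subset Y \cap \cx_\bullet$, one has $\cx_\bullet \cap Y = \{x_\la\}$.  Next, I would use that $X_\la$ is open in the closed subset $X' \sset X$ to deduce $X'_{<\la}$ is closed in $X$, so for every $\mu \in \La' \setminus \{\la\}$ the closure $\overline{X_\mu}$ in $X$ lies in $X'_{<\la}$ and is disjoint from $X_\la$; in particular $x_\la \notin \overline{X_\mu}$.

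Then I would suppose for contradiction that $p \in \cx_\bullet \cap X_\mu$ for some $\mu \in \La' \setminus \{\la\}$.  By S1, $(X_\mu)^T$ is a linear subspace of $V_\mu$, hence connected, and contains $p$, so $(X_\mu)^T \subset \cx_\bullet$; in particular the origin $f_\mu^{-1}(0)$ lies in $\cx_\bullet$.  If $\mu \in \Si$, then S2 gives $(X_\mu)^T = \{x_\mu\}$ and Remark (i) places $x_\mu \in Y$, so $x_\mu \in \cx_\bullet \cap Y = \{x_\la\}$ forces $x_\mu = x_\la$, contradicting disjointness of the strata $X_\mu$ and $X_\la$.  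If $\mu \notin \Si$, I would invoke S3: the $\gm$-action on $U_\la$ commutes with $T$ and preserves the stratification (in all the applications of interest, where strata are orbits of a larger group whose action commutes with the $\gm$-action).  The $\gm$-orbit of any $T$-fixed $u \in U_\la \cap X_\mu$ then stays in $X_\mu$ but contracts to $x_\la$ as $z \to 0$, so $x_\la \in \overline{X_\mu}$, contradicting the disjointness established above.

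The main obstacle will be extending this local conclusion to all of $\cx_\bullet$, since the $\gm$-argument only rules out $T$-fixed points of $X_\mu$ lying inside $U_\la$.  I would handle this by showing $\cx_\bullet \subset U_\la$: the subset $\cx_\bullet \cap U_\la$ is a nonempty open subset of the connected projective variety $\cx_\bullet$, and by the $\gm$-contraction it coincides with $(U_\la)^T$, which is itself $\gm$-contractible to $x_\la$ and hence irreducible.  Using that the complement $\cx_\bullet \setminus U_\la$, if nonempty, would be a proper closed subset containing only $T$-fixed points whose $T$-stable neighborhoods (via S3 or its analogue at those points) would again contract into $U_\la$, connectedness of $\cx_\bullet$ will force $\cx_\bullet \setminus U_\la = \emptyset$, completing the argument.
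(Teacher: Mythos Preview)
Your argument in the case $\mu \in \Sigma$ is fine, but the case $\mu \notin \Sigma$ has two genuine gaps. First, assuming that the $\mathbb{G}_m$-action from S3 preserves the stratification goes beyond S1--S3; you acknowledge this, but the lemma is asserted in exactly that generality. Second, and more seriously, the claim $\cx_\bullet \subset U_\lambda$ is false in general and your argument for it does not work: connectedness of a projective variety certainly does not prevent it from having proper nonempty open subsets, and ``S3 or its analogue at those points'' is unavailable since S3 is only postulated at the points $x_\lambda$ with $\lambda \in \Sigma$. A concrete counterexample: take $X = \mathbb{P}^1 \times \mathbb{P}^1$ with $T = \mathbb{G}_m$ acting on the first factor only, the product cell stratification, $Y$ a single $T$-fixed point, and $U_\lambda$ the evident affine chart $\mathbb{A}^1\times\mathbb{A}^1$; then $\cx_\bullet \cong \mathbb{P}^1$ is not contained in $U_\lambda$.

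The paper's proof takes a different and cleaner route, using only S1--S2 and no case split. The key point you are missing is that $\cx_\bullet$ is a \emph{connected projective} variety carrying an affine paving by the cells $(X_\mu)^T$ (each a linear subspace of $V_\mu$ by S1). A connected projective variety with an affine paving has exactly one $0$-dimensional cell (for instance because the number of $0$-cells equals $\operatorname{rk} H_0$). Since S2 forces $(X_\lambda)^T = \{x_\lambda\}$ to be a $0$-cell, it is the unique one; consequently $x_\lambda$ lies in the closure of every other cell $(X_\mu)^T$ (the closure of each cell is irreducible projective and itself affinely paved, hence contains a $0$-cell). On the other hand, since $X_\lambda$ is open in $X'$, the point $\{x_\lambda\}$ is open in $\cx_\bullet\cap X'$, so for $\mu \in \Omega \setminus \{\lambda\}$ one has $x_\lambda\notin\overline{(X_\mu)^T}$. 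These two conclusions are incompatible unless $\Omega\setminus\{\lambda\}=\emptyset$. Note that this argument treats all $\mu$ uniformly; your correct case $\mu\in\Sigma$ is essentially the observation that a second $0$-cell cannot exist.
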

\begin{proof}      
     We have a stratification $ \cx_\bullet\cap X'=
     \sqcup_{\mu\in \Omega}\, (X_\mu)^T$ for some subset $\Omega\sset\La$.
     Since  $x_\la\in Y^T$,
     condition S2 of Sect. \ref{form sec}
     implies that $\cx_\bullet\cap X_\la=\{x_\la\}$
     is  the one point stratum of $\cx_\bullet$.
It follows that $x_\la$ is contained in the closure
of  any other stratum $(X_\mu)^T,\, \mu\in\Omega$.
On the other hand,  the set $\cx_\bullet\cap X'_{<\la}$ is closed
in $\cx_\bullet\cap X'$, hence
the set $\cx_\bullet\cap X_\la=\{x_\la\}$ must be open in  $\cx_\bullet\cap X'$. 
Therefore, $x_\la$ cannot be  contained in the closure
of   $(X_\mu)^T$ for any $\mu\in \Omega\sminus\{\la\}$.
We conclude that $\Omega\sminus\{\la\}=\emptyset$, as desired.
\end{proof}

 \begin{cor}\label{domain}
Let $\cg_{\la}\in D_T(X_\la)$, resp. $\cg_{<\la}\in D_T(X'_{<\la})$,
  and let 
 $\hh_\la$ be either $\hh(u_*\cg_{\la})$
 or $\hh(u_!\cg_{\la})$, resp. $\hh_{<\la}$ be $\hh(v_*\cg_{<\la})$.
  Put $\ct_{<\la}=\{\tau\in \ct\mid
 \cx_\tau\cap X' _{<\la} \neq\emptyset\}$. Then, we have:

 \noindent
 \vi  The idempotent
 $e_{<\la}:=\sum_{\tau\in \ct_{<\la}} e_\tau$
 acts in  $Q\hh_{<\la}$, resp. $Q\hh_\la$,
 as the identity, resp. zero.
 \vskip 3pt
 
 \noindent\vii 
 If $\hh_{<\la}$, resp. $\hh_\la$, is $\fc$-torsion free then
 $\Hom_A(\hh_\la,\,\hh_{<\la})=0$, resp.
$\Hom_A(\hh_{<\la},\, \hh_\la)=0$.
\end{cor}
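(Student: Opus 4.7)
The plan is to reduce both parts to computations with the idempotents $e_\tau$ after localization. By \eqref{QA} and \eqref{loc3}, the decomposition $QA=\bigoplus_{\tau\in\ct}Q\hh(\cx_\tau)$ yields, for any $\cg\in D_T(X)$, a decomposition $Q\hh(\cg)=\bigoplus_\tau e_\tau Q\hh(\cg)$ with $e_\tau Q\hh(\cg)\cong Q\hh(\eps_\tau^*\cg)\cong Q\hh(\eps_\tau^!\cg)$. So it suffices to identify which components $\cx_\tau$ can contribute nontrivially to each side.

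For $\hh_{<\la}=\hh(v_*\cg_{<\la})$, the sheaf $v_*\cg_{<\la}$ is supported in $X'_{<\la}$, so base change along the closed imbedding $\eps_\tau$ forces $\eps_\tau^*v_*\cg_{<\la}=0$ whenever $\cx_\tau\cap X'_{<\la}=\emptyset$; hence $e_\tau Q\hh_{<\la}=0$ for $\tau\notin\ct_{<\la}$, and $e_{<\la}$ acts as the identity on $Q\hh_{<\la}$. For the $\hh_\la$ assertion I would invoke condition S2: since $\la\in\Si$, $X_\la\cap X^T=(X_\la)^T=\{x_\la\}\subset\cx_\bullet$, so $X_\la\cap\cx_\tau=\emptyset$ for every $\tau\ne\bullet$. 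In the $u_!$ case this is immediate from stalk vanishing: $\eps_\tau^*u_!\cg_\la=0$ for $\tau\ne\bullet$. In the $u_*$ case I would instead use the natural isomorphism $\hh(u_*\cg_\la)\cong\hh(X_\la,\cg_\la)$, which factors the $A$-action through the ring map $A\to\hh(X_\la)=\fc$, i.e.\ through the $x_\la$-component of $QA$; this kills every $e_\tau$ with $\tau\ne\bullet$. Combined with the preceding lemma, which gives $\bullet\notin\ct_{<\la}$, this shows $e_{<\la}$ acts as zero on $Q\hh_\la$.

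For part (ii), any $A$-linear map $f\colon\hh_\la\to\hh_{<\la}$ localizes to a $QA$-linear map $Qf\colon Q\hh_\la\to Q\hh_{<\la}$ intertwining the action of $e_{<\la}$. By part (i), the source action vanishes while the target action is the identity, so $Qf=0$; the hypothesis that $\hh_{<\la}$ is $\fc$-torsion free then embeds $\hh_{<\la}$ in $Q\hh_{<\la}$ and forces $f=0$. The symmetric argument for $\Hom_A(\hh_{<\la},\hh_\la)=0$ uses the torsion freeness of $\hh_\la$ to embed it in $Q\hh_\la$. No serious obstacle is anticipated; the only point requiring care is the $u_*$-case of part (i), where one cannot simply use stalk vanishing and must instead pass through the cohomological identification $\hh(u_*\cg_\la)\cong\hh(X_\la,\cg_\la)$.
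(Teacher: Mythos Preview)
Your proof is correct and follows essentially the same route as the paper's: both reduce part (i) to showing that $e_\tau$ kills $Q\hh_\la$ for $\tau\in\ct_{<\la}$, invoke the preceding lemma to see $\bullet\notin\ct_{<\la}$, and then for part (ii) localize an $A$-linear map to a $QA$-linear one and use torsion-freeness to conclude. The one small difference is the $u_*$-case of part (i): the paper treats it symmetrically with the $u_!$-case, observing that for $\tau\in\ct_{<\la}$ the imbedding $\eps'_\tau:\cx_\tau\cap X'\hookrightarrow X'$ factors through $v$, whence $(\eps'_\tau)^!u_*\cg_\la=0$ by $v^!u_*=0$, and then applies the $!$-version of \eqref{loc3}. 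Your alternative via the factorization of the $A$-action through the ring map $A\to\hh(X_\la)\cong\fc$ (identified with restriction to $x_\la\in\cx_\bullet$) is equally valid and perhaps more direct; the paper's version has the virtue of treating $u_!$ and $u_*$ uniformly via the two halves of \eqref{loc3}.
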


\begin{proof}   The first claim in (i) is clear. To prove the second
  claim   let $\tau\in \ct_{<\la}$.
  Then $\cx_\tau\cap X'\sset \cx_\tau\cap X'_{<\la}$, by the previous lemma.
  Therefore,   the imbedding $\eps'_{\tau}: \cx_\tau\cap X'\into X'$
   factors through the imbedding $v: X'_{<\la}\into X'$.
   Using that $v^*u_!=0$, resp.  $v^!u_*=0$, we deduce that $(\eps'_\tau)^*u_!\cg_{\la}=0$,
   resp. $(\eps'_\tau)^!u_*\cg_{\la}=0$. Hence, from
\eqref{loc3} we obtain
\[e_\tau Q\hh(u_!\cg_{\la})= e_\tau Q\hh((\eps'_\tau)^*u_!\cg_{\la})=0,\en
  \text{resp.}\en
  e_\tau Q\hh(u_*\cg_{\la})=e_\tau Q\hh((\eps'_\tau)^!u_*\cg_{\la})=0.
  \]
The second
  claim in (i)  follows.
  
  To prove (ii), let
  $\phi\in\Hom_A(\hh_\la, \hh_{<\la})$. It suffices to show that
 the induced $QA$-linear map 
$Q\phi: Q\hh_\la\to Q\hh_{<\la}$ equals zero.
  For any $m\in Q\hh_\la$  using (i), we find
  $Q\phi(m)=e_{<\la}  \,Q\phi(m)=Q\phi(e_{<\la} m)=0$,
  as desired.
A similar 
argument based on the equation $(1-e_{<\la})\hh_{<\la}= 0$
shows that the image of any $A$-module map
$\phi: \hh_{<\la}\to \hh_\la$ is a torsion $\fc$-module,
and  (ii) follows.
\end{proof}

\subsection{Proof of Theorem \ref{Vthm}}
\label{pf vthm} 
Let $X=\sqcup_s\, X_s$ be a decomposition of $X$ into connected components
  and let $1_s\in H^0_T(X_s)$ be the unit of the algebra $\hh(X_s)$.
  Then, we have a decomposition $1_A=\sum_s\, 1_s$ as a sum of orthogonal idempotents.
    The action of the idempotents $1_s$
  on the cohomology shows that
  for any sheaves $\cg$ and $\cg'$ supported on different connected components of
  $X$ we have $\Hom^\hdot_{H^0_T(X)}(\hh(\cg),\,\hh(\cg'))=0$.
  Also, it is clear that $\Ext^\hdot_{D_T(X)}(\cg,\cg')=0$.
  Thus, we may (and will)  assume that $X$ is connected.
    In the setting of Lemma \ref{inj claim}, let $Y_\la=Y\cap X_\la$
    be nonempty and  consider the following  imbeddings
\beq{sqsq}
  \xymatrix{
    \      Y\cap X'_{<\la}\   \ar[d]^<>(0.5){\fii_{<\la}}\ar[rr]^<>(0.5){\bar v} && \   Y\cap X' \
    \ar[d]^<>(0.5){\fii_{\leq\la}}&&
    \     Y_\la \   \ar[d]^<>(0.5){\fii_\la}\ar[ll]_<>(0.5){\bar u}\\
   \      X'_{<\la}\    \ar[rr]^<>(0.5){ v} && \   X'\    &&
    \     X_\la\   \ar[ll]_<>(0.5){u}
    }
    \eeq

In the diagram of the following lemma the vertical maps are induced
by the functor $\hh(\td)$ and the groups involving $\bar u^*\bar\cf$ are declared to be zero if the set $Y_\la$ is empty.  We
use shorthand notation
$\Ext=\Ext^\hdot_{D_T(X')}$, resp. $\Hom=\Hom^\hdot_A$.

\begin{lem}\label{ejf} 
  Let $\bar\cf\in\mix(Y\cap X')$  be $!$-pure, resp. 
  $\ce\in\mix(X')$ be $*$-pure and such that the restriction  map $\hh(\ce)\to \hh(u^*\ce)$
  is surjective if $Y_\la$ is nonempty. Then, in the above setting
  there is a commutative diagram 
  \[{\small
    \xymatrix{
      0\ar[r]& \Ext(v^*\ce, (\fii_{<\la})_*{\bar v}^!\bar\cf)\ar[r]^<>(0.5){\bar\al}
      \ar[d]^<>(0.5){h_1}& \Ext(\ce,(\fii_{\leq\la})_*\bar\cf)
 \ar[r]^<>(0.5){\bar\be}\ar[d]^<>(0.5){h_2}&   \Ext(u^*\ce,  (\fii_\la)_*\bar u^*\bar\cf)\ar[r]\ar[d]^<>(0.5){h_3}&0\quad{\,}\\
0\ar[r] & \Hom(\hh(v^*\ce),\, \hh((\fii_{<\la})_*{\bar v}^!\bar\cf))\ar[r]^<>(0.5){\al}&
 \Hom(\hh(\ce),\, \hh((\fii_{\leq\la})_*\bar\cf))\ar[r]^<>(0.5){\beta} &
 \Hom(\hh(u^*\ce), \, \hh((\fii_\la)_*\bar u^*\bar\cf))&
}
}\quad
\]
of morphisms of graded $A$-modules, such that the following holds:

\begin{enumerate}
\item The top row 
  is a short exact sequence  of pure and free  $\fc$-modules
  (possibly zero).

   \item The map $\al$ is injective;

  \item We have $\im\al=\Ker \be$.
    \end{enumerate}
\end{lem}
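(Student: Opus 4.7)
The strategy is to apply $\RHom_{D_T(X')}(-,(\fii_{\leq\la})_*\bar\cf)$ to the recollement distinguished triangle $u_!u^*\ce\to\ce\to v_*v^*\ce\xrightarrow{+1}$, producing the top row as a long exact Ext sequence, and to combine this with the cohomology short exact sequences of Corollary~\ref{inj cor} to treat the bottom row. The adjunctions $u_!\dashv u^*$ and $v_*\dashv v^!$, together with base change along the Cartesian squares in \eqref{sqsq}, identify the three Ext terms of the top row as stated.

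For claim~(1), the crucial point is vanishing of the connecting morphisms in the resulting long Ext sequence. By the hypotheses, $v^*\ce$ and $u^*\ce$ are $*$-pure, while $\bar v^!\bar\cf$, $\bar u^*\bar\cf$ and $\bar\cf$ remain $!$-pure after pushforward along the closed embeddings in \eqref{sqsq}. Standard weight considerations in the mixed derived category ensure that $\Ext^k$ from a $*$-pure source to a $!$-pure target is pure of weight $k$; the connecting morphism raises $k$ to $k+1$, so it is a map between pure objects of mismatched weights and must therefore vanish. Pure $\fc$-freeness of the three Ext groups then follows from Corollary~\ref{inj cor} applied termwise.

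For the bottom row, Corollary~\ref{inj cor} applied to the $!$-pure sheaf $(\fii_{\leq\la})_*\bar\cf$ produces the short exact sequence of free $A$-modules
\[
0\to \hh((\fii_{<\la})_*\bar v^!\bar\cf)\xrightarrow{\iota}\hh((\fii_{\leq\la})_*\bar\cf)\xrightarrow{\pi}\hh((\fii_\la)_*\bar u^*\bar\cf)\to 0,
\]
$*$-purity of $\ce$ yields a surjection $v^*:\hh(\ce)\twoheadrightarrow\hh(v^*\ce)$ with kernel $\hh_c(u^*\ce)$, and the supplementary hypothesis yields a second surjection $\sigma:\hh(\ce)\twoheadrightarrow\hh(u^*\ce)$ whose kernel is the image of $\hh(v_*v^!\ce)\to\hh(\ce)$ in the dual recollement long exact sequence. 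Set $\al(f):=\iota\circ f\circ v^*$. To define $\beta(g)$, consider the composite $\hh(v_*v^!\ce)\to\hh(\ce)\xrightarrow{g}\hh((\fii_{\leq\la})_*\bar\cf)\xrightarrow{\pi}\hh((\fii_\la)_*\bar u^*\bar\cf)$: this is an $A$-linear map from a module of type $\hh_{<\la}$ to a torsion-free module of type $\hh_\la$ in the notation of Corollary~\ref{domain} (torsion freeness coming from Corollary~\ref{inj cor}), hence vanishes by Corollary~\ref{domain}(ii). Consequently $\pi\circ g$ annihilates $\Ker\sigma$ and descends to a well-defined $A$-linear map $\beta(g):\hh(u^*\ce)\to\hh((\fii_\la)_*\bar u^*\bar\cf)$. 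The vertical maps $h_i$ are induced by $\hh(-)$, and commutativity is functoriality of equivariant cohomology.

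Injectivity of $\al$ (claim~(2)) follows from injectivity of $\iota$ combined with surjectivity of $v^*$. The inclusion $\im\al\subseteq\Ker\beta$ is automatic from $\pi\circ\iota=0$. For the converse in~(3): given $g$ with $\beta(g)=0$, surjectivity of $\sigma$ forces $\pi\circ g=0$, so $g=\iota\circ\tilde g$ for some $\tilde g:\hh(\ce)\to\hh((\fii_{<\la})_*\bar v^!\bar\cf)$. A second application of Corollary~\ref{domain}(ii), now with $\hh_\la=\hh_c(u^*\ce)=\Ker v^*$ and $\hh_{<\la}=\hh((\fii_{<\la})_*\bar v^!\bar\cf)$ (torsion free by $!$-purity), yields $\tilde g|_{\Ker v^*}=0$, so $\tilde g$ descends through $v^*$ to $f:\hh(v^*\ce)\to\hh((\fii_{<\la})_*\bar v^!\bar\cf)$ with $\al(f)=g$. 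The principal obstacle I anticipate is the weight-theoretic argument for claim~(1), which requires careful bookkeeping of weights in the mixed formalism and appeal to purity of Ext between $*$-pure and $!$-pure objects; once Corollary~\ref{domain} is in hand, the remaining steps are formal diagram chases.
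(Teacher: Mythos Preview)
Your proof follows essentially the same approach as the paper's: apply $\Ext(-,(\fii_{\leq\la})_*\bar\cf)$ to the recollement triangle $u_!u^*\ce\to\ce\to v_*v^*\ce$, use purity to kill the connecting maps, define $\al$ and $\be$ via the obvious compositions, and run the diagram chase for~(3) using Corollary~\ref{domain} twice. The structure is correct.

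There is one point where your argument is looser than the paper's. For claim~(1) you invoke ``standard weight considerations'' to conclude that the three Ext groups are pure of weight~$k$ in degree~$k$, and then say $\fc$-freeness ``follows from Corollary~\ref{inj cor} applied termwise.'' But Corollary~\ref{inj cor} is a statement about equivariant cohomology of $!$-pure \emph{sheaves}, not about abstract Ext groups; purity of a Frobenius module by itself does not yield $\fc$-freeness. The paper closes this gap by explicitly rewriting each Ext group as the $T$-equivariant cohomology of a concrete $!$-pure sheaf on $Y\cap X'$, e.g.
\[
\Ext(v^*\ce,(\fii_{<\la})_*\bar v^!\bar\cf)\;\cong\;\hh\bigl((\fii_{<\la}^!v^!\BD\ce)\so\bar\cf\bigr),
\]
using that $\BD\ce$ is $!$-pure (since $\ce$ is $*$-pure) and that $!$-pullbacks and $\so$-products preserve $!$-purity. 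Once the Ext groups are so identified, Corollary~\ref{inj cor} applies directly and gives both purity and $\fc$-freeness simultaneously, whence the connecting maps vanish. Your weight heuristic is morally the same computation, but to make it rigorous you need exactly this rewriting step, so it is worth spelling out.

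A minor remark on~(3): your diagram chase is slightly more streamlined than the paper's. You observe directly that $\beta(g)=0$ together with surjectivity of $\sigma$ forces $\pi\circ g=0$ on all of $\hh(\ce)$, then factor $g$ through $\iota$ and apply Corollary~\ref{domain} once more. The paper instead first shows $g\circ u_!$ lands in $\im(\bar v_*)$, applies Corollary~\ref{domain} to kill the resulting map, and only then factors through $v^*$; this uses Corollary~\ref{domain} in a slightly different order. Both routes are valid.
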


\begin{rem} Similar statements may be found 
   in \cite{BY}, \cite{Gi}, and \cite{CMNO}, Sect. 4.7.2,
   but the proof below, based on Corollary \ref{domain},
   is quite different from arguments  used in {\em loc cit}.
  In particular, 
  condition S2 from Sect. \ref{form sec}
  is essential for the proof.
\end{rem}

\begin{proof} 
    To simplify notation, we put $\cf=(\fii_{\leq\la})_*\bar\cf$.
  Thus,   $v^!\cf= (\fii_{<\la})_*{\bar v}^!\bar\cf$ by base change.  By adjunction,  we have
 the following two chains of isomorphisms
\begin{align}
  \Ext(v_*v^*\ce,  \cf)\cong\Ext(v^*\ce,  v^!\cf)&\cong
                                                   \Ext(v^*\ce,  (\fii_{<\la})_*{\bar v}^!\bar\cf)\label{ext2}\\
  &\cong \hh(\BD(\fii_{<\la}^*v^*\ce)\so \bar\cf)
    \cong \hh((\fii_{<\la}^!v^!\BD\ce)\so \bar\cf),\en \ \text{resp.}                                                   
 \nonumber\\
    \Ext(u^*\ce, (\fii_\la)_*\bar u^*\bar\cf)\cong \Ext(\fii_\la^*u^*\ce,\bar u^*\bar\cf)
      &\cong      \hh((\fii_\la^!u^*\BD\ce)\so \bar u^*\bar\cf)\nonumber\\
    &\cong
      \hh((\bar u^*\fii_{\leq\la}^!\BD\ce)\so \bar u^*\bar\cf)
  \cong
  \hh(\bar u^*((\fii_{\leq\la}^!\BD\ce)\so \bar\cf)).\nonumber
\end{align}
Since 
the sheaves $(\fii_{<\la}^!v^!\BD\ce)\so\bar\cf$ and
$(\fii_{\leq\la}^!\BD\ce)\so\bar\cf$ are $!$-pure, we conclude that
the cohomology group on the right of each chain of isomorphisms,
hence all the $\Ext$-groups above,  are pure and free over $\fc$.

To construct the maps $\bar\al$ and $\bar\be$
we apply
  the functor $\Ext(\td,\cf)$ to the distinguished triangle
  $u_!u^*\ce\to \ce\to v_*v^*\ce$. This gives a long exact sequence
  \beq{uuvv}
  \ldots\xrightarrow{\ \partial \ }
  \Ext^\hdot(v_*v^*\ce,\cf)\xrightarrow{\  \tilde\al \  }
      \Ext^\hdot(\ce,\cf) \xrightarrow{\  \tilde\be \  } \Ext^\hdot(u_!u^*\ce,\cf)
    \xrightarrow{\ \partial \ }\Ext^{\hdot+1}(v_*v^*\ce,\cf)\xrightarrow{\  \tilde\al \  }\ldots
  \eeq
  The first, resp. third,  $\Ext$-group in \eqref{uuvv} is one of the
  $\Ext$-groups that appear in \eqref{ext2}.
  These groups being  pure and free over $\fc$,
  it follows that so is the group $\Ext(\ce,\cf)$; furthermore,
    the connecting
          homomorphisms $\partial$ in  \eqref{uuvv}  vanish.
          The resulting short exact sequence gives the top row
                   of the diagram of the lemma, and  (1) holds by construction.

  We define the map $\al$  in the second row of the diagram  of the lemma
  to be the map that sends
  $f\in \Hom(\hh(v^*\ce), \hh((\fii_{<\la})_*{\bar v}^!\bar\cf))$ 
 to the composition
 \[
   \al(f):\    \hh(\ce)\xrightarrow{v^*}
   \hh(v^*\ce)\xrightarrow{f}
   \hh((\fii_{<\la})_*{\bar v}^!\bar\cf))\iso\hh(\bar v^!\bar\cf)
   \xrightarrow{\bar v_*}\hh(\bar\cf)\iso \hh(\cf).
\]
Since $\ce$ is $*$-pure, resp. $\bar\cf$ is $!$-pure,
the map $v^*$, resp. $\bar v_*$, in the composition  is surjective, resp. injective, by Lemma
\ref{inj claim}. Hence, $\al$ is injective and (2) holds.

Next, we construct the map $\be$. If the set $Y_\la$ is empty
then $\be$ is the zero map.
If  $Y_\la$ is nonempty then the map
$u^*: \hh(\ce)\to \hh(u^*\ce)$ is surjective by the assumptions of the lemma, hence
this map induces an  isomorphism
$\hh(\ce)/\hh(v^!\ce)\iso\hh(u^*\ce)$, by Lemma \ref{inj claim}(ii).
Let
$g\in\Hom_A(\hh(\ce),\,\hh(\cf))$.
The composition
  \[\hh(v^!\ce)\xrightarrow{v_*}
  \hh(\ce)\xrightarrow{g} \hh(\cf)
  \xrightarrow{u^*}
 \hh(u^*\cf)\cong \hh(\bar u^*\bar\cf)
 \]
is  the zero map by Corollary \ref{domain}.
It follows that
the map $u^*\circ g$ has a  factorization 
\[
  \hh(\ce)\to \hh(\ce)/\hh(v^!\ce)\iso \hh(u^*\ce)\xrightarrow{\be(g)}
  \hh((\fii_\la)_*\bar u^*\bar\cf)\iso\hh(u^*\cf)
\]
for a unique map $\be(g)$.
The assignment $g\mto\be(g)$ provides the desired
map $\be$.
It is easy to check that  the maps  thus defined make the diagram  of the lemma
commute.

It remains to prove (3). If $Y_\la=\emptyset$
we have $\bar u^*=0$ and the map $\bar v_*$ is clearly an isomorphism, so
the bottom line is exact, proving (3). We now assume $Y_\la$ is nonempty.
Then, the equation $\be\ccirc\al=0$ holds by construction; explicitly,
we have $\be(\al(f))=\bar u^*\ccirc \bar v_*\ccirc \fii_{\leq\la}^*\ccirc f\ccirc v^*\ccirc u_!=0$,
since $\bar u^*\ccirc \bar v_*=0$. 
To prove an opposite inclusion $\Ker(\be)\sset\im(\al)$,
let $g: \hh(\ce)\to \hh(\bar\cf)$
be such that
$\be((\fii_{\leq\la})_*\ccirc g)=0$, that is, such that
$\bar u^*\ccirc g\ccirc u_!=0$.
By
\eqref{byv}, this means that $\im(\bar u^*\ccirc g)\sset \im(\bar v_*)$, so
we have 
  $g\ccirc u_!=\bar v_*\ccirc g_1$ 
  for some $A$-module map $g_1$ in the diagram:
  \[
  \xymatrix{
0\ar[r]&    \hh_c(u^*\ce) \ \ar[r]^<>(0.5){u_!}  \ar@{.>}[dr]_<>(0.5){g_1}&
    \ \hh(\ce)  \ \ar[drrr]^<>(0.3){g}\ar[rrr]^<>(0.5){v^*} &&& \
    \hh(v^*\ce)\  \ar@{.>}[dlll]^<>(0.1){g_3} \ar@{.>}[d]^<>(0.5){g_2}\ar[r]&0 & \\
&0\ar[r]  &\hh(\bar v^!\bar\cf) \  \ar[rrr]^<>(0.5){\bar v_*} &&&
\  \hh(\bar\cf) \ \ar[r]^<>(0.5){\bar u^*} & \  \hh(\bar u^*\bar\cf) \ar[r]&0 
  }
  \]
    The map $g_1$ must be zero by Corollary \ref{domain},
  since $\hh(\bar v^!\bar\cf)$ is a free $\fc$-module.
  We conclude that $g\ccirc u_!=\bar v_*\ccirc g_1=0$, 
so the map $g$ sends the image of $u_!$
to zero. By \eqref{byu},  it follows  that $g$ factors as
$g=g_2\ccirc v^*$.
Applying Corollary \ref{domain} again, we deduce that $\bar u^*\ccirc g_2=0$.
Hence, $g_2$  factors as $g_2=\bar v_*\ccirc g_3$.
We conclude that
$g=\bar v_*\ccirc g_2\ccirc v^*$. Thus, we obtain
$(\fii_{\leq\la})_*\ccirc g=v_*\ccirc(\fii_{<\la})_*\ccirc g_2\ccirc v^*=
\al((\fii_{<\la})_*\ccirc g_2)\in \im(\al)$,
as desired.
\end{proof}

\begin{proof}[Proof of Theorem \ref{Vthm}]
 We induct  on the number of strata in $X$.
If this number is 1 then $X=\pt$ and the statement is clear.
If $X$ has more than one stratum,  choose
a subset $X'=X_{<\la}\sqcup X_\la\sset
X$ as above, let $i: X'\into X$, resp. $\bar i: Y\cap X'\into Y$, denote
the closed imbedding and put  $\ce:=i^*\ce_X$, resp. $\bar\cf:=\bar i^!\cf_Y$.
Then, we are in the setting of  Lemma \ref{ejf},   since
the restriction map $u^*: \hh(\ce)\to \hh(u^*\ce)$
is surjective by Lemma \ref{inj claim}(ii).
By the induction hypothesis,  we may assume
that the map $h_1$ in the diagram of  Lemma \ref{ejf} is an isomorphism.
If   the set $Y\cap X_\la$ is empty then the imbedding $\bar v$ in the diagram
of the lemma is a bijection and the term in each row that involves $\bar u^*\cf$
vanishes. It follows that 
the 
maps $\al$ and $\bar\al$ are isomorphisms, so the map $h_2$ 
is an isomorphism.
In the case where $Y\cap X_\la$ is nonempty  the map $h_3$
in the diagram is  an isomorphism since the sheaves
$u^*\ce$ and $(\fii_\la)_*\bar u^*\cf$ are  direct sums of
shifts of constant sheaves.
Using Lemma \ref{ejf},  we deduce 
by diagram chase that the map $h_2$ is an isomorphism,
completing the induction step.
\end{proof}

\subsection{$!$-tensor product}\label{tp}
Let $A$ be  an arbitrary commutative $\fc$-algebra. For an $A$-module
$M$ we let $M^\vee:=\Hom_\fc(M,\fc)$ and equip this $\fc$-module
with an $A$-module structure by $a(f): m\mto f(am)$, where $f\in M^\vee,\,a\in A,\,m\in M$.
For $A$-modules $M,N$, the tensor product
$M\o_\fc N$ has the natural structure of an $A\o_\fc A$-module, and
we define an $A$-module as follows
\beq{olem1}
M\so_A N\cong
\{u\in M\o_\fc N\mid (a\o1)u=(1\o a)u\;\ \forall a\in A\}.
\eeq

In the rest of this section we assume
that $A=\oplus_{i\geq 0} \,A_i$ is a commutative nonnegatively graded
$\fc$-algebra  which is finitely generated as an $\fc$-module
and such that $A_0=\C$.
Let $A\grfmod$ be the category of  $\Z$-graded
   $A$-modules which are finitely generated and free
   as $\fc$-modules.
   A finitely generated graded $A$-module is an object of $A\grfmod$ if and only if it is             
   a maximal Cohen-Macaulay  $A$-module.
   In particular, $A$ itself  is a Cohen-Macaulay algebra.
     On $A\grfmod$ one has an exact contravariant duality 
     $M\mto M^\vee:=\Hom_\fc(M,\fc)$ such that the canonical map
     $M\to (M^\vee)^\vee$ is an isomorphism.
     The $A$-module $A^\vee$ is the  relative dualizing module.
     It follows that for any $M\in A\grfmod$
there is a canonical isomorphism $M^\vee\cong \Hom_A(M, A^\vee)$.

              \begin{lem}\label{olem} For $A$ as above, the operation $\td\so_A\td$ gives $A\grfmod$
              a symmetric monoidal structure
and for $M,N\in A\grfmod$,
                  there are canonical isomorphisms of $A$-modules:
              \begin{align}
  & M\so_A N\cong (M^\vee\o_A N^\vee)^\vee\label{olem2}\\
   &\Hom_A(M,N) \cong        M^\vee\,\so_A\, N.\label{MMNN}
              \end{align}
            \end{lem}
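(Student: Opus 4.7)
The plan is to reinterpret $\so_A$ in $\Hom$ language and then derive all three assertions from standard adjunctions, together with the $\fc$-duality on $A\grfmod$. Viewing $M \o_\fc N$ as an $A\o_\fc A$-module via $(a\o b)(m\o n) = am\o bn$, and $A$ as an $A\o_\fc A$-module via the multiplication $\mu: A\o_\fc A\to A$, the definition \eqref{olem1} rewrites as
\[
M \so_A N \;\cong\; \Hom_{A\o_\fc A}(A,\, M \o_\fc N),
\]
dually to $M \o_A N \cong (M\o_\fc N)\o_{A\o_\fc A} A$.

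For \eqref{olem2}, I would compute, using $\Hom$-tensor adjunction over $A\o_\fc A$,
\[
(M^\vee \o_A N^\vee)^\vee \;\cong\; \Hom_{A\o_\fc A}\bigl(A,\, \Hom_\fc(M^\vee \o_\fc N^\vee,\,\fc)\bigr),
\]
and then identify the inner $\Hom$ with $M\o_\fc N$ using that $M, N$ are finitely generated free over $\fc$, so that $(P\o_\fc Q)^\vee \cong P^\vee \o_\fc Q^\vee$ and $(M^\vee)^\vee \cong M$ as $A\o_\fc A$-modules; substituting gives $M \so_A N$. For \eqref{MMNN}, I would identify $M^\vee \o_\fc N$ with $\Hom_\fc(M, N)$ as $A\o_\fc A$-modules, where the action reads $((a\o b)\cdot f)(m) = b\,f(am)$, and observe that the balancing condition $(a\o 1)f = (1\o a)f$ is precisely $A$-linearity of $f$.

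The symmetric monoidal structure is then essentially formal: symmetry of $\so_A$ is manifest from its symmetric definition; associativity follows by realizing both iterated $!$-tensor products as the triple balanced subspace $\{u \in M\o_\fc N\o_\fc P : (a\o 1\o 1)u = (1\o a\o 1)u = (1\o 1\o a)u\}$; the unit object is the dualizing module $A^\vee$, since by \eqref{MMNN} together with $\Hom_A(-,A^\vee) \cong (-)^\vee$ one has $N\so_A A^\vee \cong \Hom_A(N^\vee,A^\vee) \cong (N^\vee)^\vee \cong N$.

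The step requiring genuine input beyond formal adjunction is verifying that $\so_A$ preserves $A\grfmod$, i.e.\ that $M\so_A N$ is free over $\fc$. Via \eqref{MMNN} this reduces to showing $\Hom_A(P,Q)\in A\grfmod$ for $P,Q\in A\grfmod$. I would present $\Hom_A(P,Q)$ as the kernel of the $\fc$-linear map
\[
\alpha:\ \Hom_\fc(P,Q) \longrightarrow \Hom_\fc(A\o_\fc P,\, Q),\qquad \phi\longmapsto \bigl((a\o p)\mapsto \phi(ap) - a\phi(p)\bigr),
\]
and use that, since $A$, $P$, $Q$ are all finitely generated free over $\fc$, both source and target of $\alpha$ are maximal Cohen-Macaulay $\fc$-modules. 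The depth lemma applied to $0\to\ker\alpha\to \Hom_\fc(P,Q)\to \im\alpha\to 0$, combined with Auslander-Buchsbaum over the regular ring $\fc$, then forces $\ker\alpha = \Hom_A(P,Q)$ to be free. The main obstacle is making this Cohen-Macaulay step work uniformly; it is the only place in the proof where the hypothesis that $A$ itself is Cohen-Macaulay is genuinely used.
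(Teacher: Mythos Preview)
Your argument for \eqref{MMNN} coincides with the paper's: identify $M^\vee\o_\fc N\cong\Hom_\fc(M,N)$ as $A\o_\fc A$-modules and read off that the balancing condition is exactly $A$-linearity. For \eqref{olem2} the paper proceeds differently, deducing it from \eqref{MMNN} via
\[
M^\vee\so_A N^\vee\;\cong\;\Hom_A(M,N^\vee)\;\cong\;\Hom_A\bigl(M,\Hom_A(N,A^\vee)\bigr)\;\cong\;\Hom_A(M\o_A N,A^\vee)\;\cong\;(M\o_A N)^\vee
\]
and then replacing $M,N$ by their duals; your direct Hom--tensor adjunction over $A\o_\fc A$ is an equally valid alternative. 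The paper's proof is in fact silent on the monoidal claim (associativity, unit, closure of $A\grfmod$ under $\so_A$), so in attempting these you are going beyond what the paper does.

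Your closure argument, however, has a genuine gap. The depth lemma applied to $0\to\Hom_A(P,Q)\to\Hom_\fc(P,Q)\to\im\alpha\to 0$ yields only
\[
\mathrm{depth}_\fc\Hom_A(P,Q)\;\geq\;\min\bigl(d,\ \mathrm{depth}_\fc(\im\alpha)+1\bigr),
\]
and the sole information about $\im\alpha$ is that it embeds in a free $\fc$-module, hence $\mathrm{depth}(\im\alpha)\geq 1$. You obtain at best depth $\geq\min(d,2)$: equivalently, that $\Hom_A(P,Q)$ is a second $\fc$-syzygy (reflexive), which is strictly weaker than free once $d=\dim\fc\geq 3$. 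The Cohen--Macaulay hypothesis on $A$ does not repair this. For $\fc=\k[y,z,s]$ and $A=\fc[x]/(x^2-sx+yz)$ (the cone $xw=yz$ with $w=s-x$), the ideals $P=(x,y)$ and $Q=(x,z)$ are each free of rank $2$ over $\fc$, yet a direct computation gives
\[
\Hom_A(P,Q)\;\cong\;\{(p_1,p_2,q_1)\in\fc^3:\ sp_1+yp_2=zq_1\},
\]
the second syzygy of the residue field, of depth $2<3$. So neither your sketch nor the paper's (absent) argument establishes closure in the stated generality.
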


\begin{proof}
  For any $M,N\in A\grfmod$ one has a canonical isomorphism  
 $M^\vee\o_\fc N\iso \Hom_\fc(M,N)$  of  $A \o_\fc A$-modules where
 the action of $a\o b\in A\o_\fc A$ on $f\in \Hom_\fc(M,N)$ is defined by 
 $(a\o b)f: m\mto a f(bm)$.
 From this isomorphism
    we obtain canonical isomorphisms
    \[
         \Hom_A(M,N) \cong \{f\in \Hom_\fc(M,N)\mid
(a\o1)f=(1\o a)f\;\ \forall a\in A\}
   \cong                    M^\vee\,\so_A\, N.
    \]

  This  proves  \eqref{MMNN}. Replacing $N$ by $N^\vee$ in the above, we obtain
        \begin{align*}
          M^\vee\,\so_A\, N^\vee &\cong  \Hom_A(M,N^\vee)\cong
          \Hom_A(M,\,\Hom_A(N,A^\vee))\\
          &\cong \Hom_A(M \o_A N, A^\vee) \cong (M \o_A N)^\vee.
            \end{align*}
        Replacing $M$ by $M^\vee$, resp. $N$ by $N^\vee$, yields   \eqref{olem2}. 
        \end{proof}

        \begin{rem}\label{oo}
\vi   There is a canonical composite 
        morphism $M\so_A N\to M\o_\fc N\to M\o_AN$;  the composite is
        an isomorphism provided one of the modules is free over $A$.

        \vii Let $I\sset A$ be the kernel of an $\fc$-algebra homomorphism
        $A\to\fc$.
        Then, there is an $\fc$-stable direct sum decomposition
        $A\cong \fc\!\cdot\! 1\oplus I$, and for any $A$-module $M$
        one has
        \[\Hom_A(A/I, M) \cong  M\so_A (A/I) \cong M^I:=\{m\in M\mid am=0\;\ \forall a\in I\}.
        \]

        \viii Let $A$ be a flat commutative $\fc$-algebra  of finite relative injective dimension
        and $A^\vee=\RHom_\fc(A,\fc)\in D^b(A\mmod)$ the relative dualizing complex.
        Then, there is a duality  $M\mto M^\vee:=\RHom_A(M, A^\vee)\cong\RHom_\fc(M,\fc)$
        on $D^b(A\mmod)$ and
        one may use the RHS of \eqref{olem1}  to define a symmetric monoidal
        structure on $D^b(A\mmod)$.
\end{rem}

Let $X=\sqcup_\la\, X_\la$ be as in Sect. \ref{form sec}  and let $\BD(\td)$
     denote the Verdier duality functor on $\mix(X)$.
     Recall that the constant sheaf $\k_X$, resp.  the
     dualizing complex $\om_X=\BD\k_X$, is $*$-pure, resp. $!$-pure,
     and
     $H^{\BM}_T(X):=\hh(\om_X)$ is the equivariant Borel-Moore homology.

     Part (i) of the following result  has been observed earlier by Brion, \cite{Br}.

     \begin{prop}\label{bri} \vi  The $\fc$-algebra $\hh(X)$ is Cohen-Macaulay
           and there is a canonical isomorphism
 $H^{\BM}_T(X)\cong  \hh(X)^\vee$.

 \vii         If   $\cg\in \mix(X)$  is either $!$-pure or $*$-pure  then    
    $\hh(\cg)\in \hh(X)\grfmod$    and
 Verdier duality yields a canonical isomorphism
 $\hh(\BD\cg)\cong (\hh(\cg))^\vee$ of graded $\hh(X)$-modules.\qed
\end{prop}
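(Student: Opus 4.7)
The strategy is to derive (i) as the special case $\cg = \k_X$ of (ii), and to prove (ii) by combining the finite-freeness consequences of Corollary \ref{inj cor} with a Verdier duality identity pushed down along the proper projection $\pi \colon X \to \pt$. For (i): assumption S1 makes each stratum $X_\la$ smooth and isomorphic to a vector space, so both ${\mathscr H}^m \jm_\la^* \k_X$ and ${\mathscr H}^m \jm_\la^! \k_X$ are geometrically constant and pure of weight $m$, showing that $\k_X$ is pure. Corollary \ref{inj cor} then makes $\hh(X) = \hh(\k_X)$ a finitely generated free $\fc$-module; since $\fc \cong \C[\t]$ is a polynomial algebra, $\hh(X)$ is Cohen-Macaulay, and the isomorphism $H^{\BM}_T(X) = \hh(\om_X) \cong \hh(X)^\vee$ is then the $\cg=\k_X$ case of (ii) via $\BD \k_X = \om_X$.

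For (ii), freeness of $\hh(\cg)$ over $\fc$ is Corollary \ref{inj cor}, and finite generation is obtained by inducting on the number of strata using the exact sequences of Lemma \ref{inj claim}, so that $\hh(\cg) \in \hh(X)\grfmod$. Since Verdier duality exchanges $!$-pure and $*$-pure objects (with reversed weights), $\hh(\BD \cg)$ also lies in $\hh(X)\grfmod$. To construct the candidate isomorphism, I would combine the evaluation $\BD \cg \lo \cg \to \om_X$ with the proper pushforward $R\pi_* \om_X \to \om_\pt$ to obtain a natural pairing
\[
\hh(\BD \cg) \otimes_\fc \hh(\cg) \ \to\ H^{\BM}_T(X) \ \to\ \fc,
\]
whose adjoint is the candidate map $\hh(\BD \cg) \to \hh(\cg)^\vee$ of graded $\hh(X)$-modules.

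The main step is verifying this map is an isomorphism. One clean route is to apply Verdier duality to the proper map $\pi$, producing a quasi-isomorphism $R\pi_*\BD\cg \simeq \RHom_\fc(R\pi_*\cg, \fc)$ in $D(\fc\mmod)$ (up to a normalizing Tate twist/shift); because $\hh(\cg)$ is $\fc$-free and $R\pi_*\cg$ is accordingly equivariantly formal, the right-hand side collapses in cohomology to $\hh(\cg)^\vee$. Equivalently, since both sides are finitely generated free $\fc$-modules, it suffices to check the map after localizing at $Q=\C(\t)$; by the localization theorem the pairing then becomes a direct sum, indexed by the fixed components $\cx_\tau$, of the classical perfect Verdier duality pairings on the smooth $\cx_\tau$. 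The chief technical obstacle is the bookkeeping inherent to the mixed equivariant setting: tracking the requisite shifts and Tate twists and confirming that the resulting identification is genuinely canonical and $\hh(X)$-linear, rather than just an abstract isomorphism of $\fc$-modules.
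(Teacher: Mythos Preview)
Your proposal is correct and follows essentially the same strategy as the paper: reduce (i) to (ii), invoke Corollary \ref{inj cor} for freeness, and establish the duality via Verdier duality for the proper pushforward together with equivariant formality. The paper's implementation of your route (a) is more concrete: it passes to finite-dimensional approximations $p_n\colon X_n\to B_n$ of the Borel construction $X\times_T ET\to BT$, applies Corollary \ref{inj cor} with the \emph{trivial} torus to see that the nonequivariant cohomology of $\cg$ and $\BD\cg$ is pure, and then uses simple connectedness of $B_n$ plus the Leray spectral sequence to conclude that $(p_n)_*\cg_n$ is a direct sum of shifted constant sheaves on $B_n$; classical Verdier duality on $B_n$ then gives the isomorphism directly, and one takes the inverse limit. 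This is exactly the ``formality'' step you invoke, made explicit so as to avoid the bookkeeping of equivariant duality on the infinite-dimensional $BT$ that you flag as the chief obstacle.

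Your alternative route (b) via localization to $Q=\C(\t)$ is not used in the paper and is a genuinely different argument. One small caution: the fixed components $\cx_\tau$ need not be smooth (only the strata $X_\la$ are), so you should phrase the endgame as the Poincar\'e--Verdier pairing on the compact $\cx_\tau$ rather than invoking smoothness; that pairing is perfect regardless.
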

\begin{proof} Part (i) follows from (ii).
  To prove (ii)  let $p_n: X_n\to B_n,\, n\geq1$, be a sequence of finite
  dimensional approximations of the Borel construction
  $X\times_T ET\to BT$, e.g. for $T=\gm$ one can take $B_n={\mathbb P}^n$.   Thus,
    $BT$ is a direct limit of
  a chain $\ldots\xrightarrow{i_{n}}B_n\xrightarrow{i_{n+1}} B_{n+1}\xrightarrow{i_{n+2}}\ldots$,
  of closed imbeddings and the maps $p_n$ are smooth projective morphisms with fiber $X$,
  so  $(p_n)_*=(p_n)_!$.
  Let
  $\cg_n$, resp. $(\BD\cg)_n$, be either $!$-pure or $*$-pure sheaf on $X_n$ that corresponds to
  $\cg$, resp. $\BD\cg$. The collection of sheaves $\cg_n,\,n\geq1$, comes
  equipped with     isomorphisms
  $(p_n)_*\cg_n\cong i_{n+1}^*(p_{n+1})_*\cg_{n+1}$,
  and
    the  inverse system $\ldots \leftarrow H^k((p_n)_*\cg_n) \leftarrow
  H^k((p_{n+1})_*\cg_{n+1}) \leftarrow\ldots$, stabilizes for each  integer $k$.
  The  stable limit is,  by definition,
  the equivariant cohomology group $\hh^k(\cg)$.
We know by Corollary \ref{inj cor} applied in the case of a trivial torus
that the nonequivariant cohomology of $\cg$ and  $\BD\cg$
  are both pure.
  Since $B_n$ is simply connected, it follows from the Leray spectral sequence that
  for every $n$ the
  objects $(p_n)_*\cg_n,\,(p_n)_*(\BD\cg)_n\in D^{mix}(B_n)$ are  pure, hence quasi-isomorphic
   to a direct sum of shifts of the constant sheaf $\k_{B_n}$.
    We have $(p_n)_*(\BD\cg)_n\cong \BD((p_n)_*\cg_n)$
  since $p_n$ is  proper.  Thus,  Verdier duality
  yields   
  \[H^\hdot((p_n)_*(\BD\cg_n))\,\cong\,
  H^\hdot(\BD((p_n)_*\cg_n))\,\cong\, \Hom^\hdot_{H^\hdot(B_n)}(H^\hdot((p_n)_*\cg_n),\,
  H^\hdot(B_n)).
\]
The desired isomorphism $\hh(\BD\cg)\cong (\hh(\cg))^\vee$ follows from this 
by taking an inverse limit.
\end{proof}

\begin{cor}\label{o cor}
For  any   $\cf\in D_{T}(Y)$ and $\ce_1,\ldots, \ce_n\in D_T(X)$, 
     there is a natural morphism
\[
   \hh(\cf\,\so\, \fii^!(\ce_1\so\ldots\so\ce_n))
  \to\hh(\cf)\so_{\hh(X)}\hh(\ce_1)\so_{\hh(X)}\ldots\so_{\hh(X)}\hh(\ce_n).
 \]

 If $\cf$ is    $!$-pure  and all $\ce_1,\ldots, \ce_n$ are pure then the above morphism
 is an  isomorphism.
\end{cor}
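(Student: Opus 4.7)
The plan is to construct the natural morphism via an adjunction-Künneth argument, then prove the isomorphism claim by induction on $n$; the key new input will be a lemma asserting that $\cf\so\fii^!\ce_1$ is $!$-pure on $Y$ whenever $\cf$ is $!$-pure on $Y$ and $\ce_1$ is pure on $X$. For the construction, let $\tilde\Delta\colon Y\into Y\times X^n$ be the twisted diagonal $y\mapsto(y,\fii(y),\ldots,\fii(y))$. Factoring $\tilde\Delta$ as $Y\xrightarrow{\Delta_Y^{n+1}}Y^{n+1}\xrightarrow{\Id\times\fii^n}Y\times X^n$ and using the standard compatibility between $!$-pullback and external product, one obtains $\tilde\Delta^!(\cf\boxtimes\ce_1\boxtimes\ldots\boxtimes\ce_n)\cong\cf\so\fii^!(\ce_1\so\ldots\so\ce_n)$. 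Combining the adjunction counit $\tilde\Delta_*\tilde\Delta^!\to\Id$ with the equivariant Künneth formula $\hh(Y\times X^n,\cf\boxtimes\ce_1\boxtimes\ldots\boxtimes\ce_n)\cong\hh(\cf)\o_\fc\hh(\ce_1)\o_\fc\ldots\o_\fc\hh(\ce_n)$ yields a natural map into this tensor product whose image satisfies the balancing relations of \eqref{olem1} defining $\hh(\cf)\so_{\hh(X)}\hh(\ce_1)\so_{\hh(X)}\ldots\so_{\hh(X)}\hh(\ce_n)$, because the various $\hh(X)$-actions on the source all factor through $\hh(Y)$ via $\fii^*$ along $\tilde\Delta$.

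For the isomorphism, I induct on $n$. The case $n=0$ is tautological. The case $n=1$ uses the projection-formula identity $\fii_*(\cf\so\fii^!\ce)\cong\fii_*\cf\so\ce$, which follows by Yoneda from the adjunctions $\fii^*\dashv\fii_*$, $\fii_!\dashv\fii^!$, the equality $\fii_*\BD_Y=\BD_X\fii_*$ for closed imbeddings, and the identification $\cf\so\cg=\RHom(\BD\cf,\cg)$. This gives
\[
\hh(\cf\so\fii^!\ce)\,=\,\hh(\fii_*\cf\so\ce)\,=\,\Ext^\hdot_{D_T(X)}(\BD\ce,\fii_*\cf).
\]
Since $\BD\ce$ is pure (as $\ce$ is) and $\cf$ is $!$-pure on $Y$, Theorem~\ref{Vthm} identifies this $\Ext$ with $\Hom^\hdot_{\hh(X)}(\hh(\BD\ce),\hh(\fii_*\cf))$; Proposition~\ref{bri}(ii) gives $\hh(\BD\ce)\cong\hh(\ce)^\vee$; and \eqref{MMNN} together with commutativity of $\so_{\hh(X)}$ from Lemma~\ref{olem} identifies this $\Hom$ with $\hh(\cf)\so_{\hh(X)}\hh(\ce)$.

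For $n\geq 2$, set $\cf':=\cf\so\fii^!\ce_1$. The key lemma asserts $\cf'$ is $!$-pure on $Y$: for each $\mu\in\Si$, since $!$-restriction commutes with $!$-tensor, $\jm_\mu^!\cf'=\jm_\mu^!\cf\so(\fii\ccirc\jm_\mu)^!\ce_1$; the first factor is geometrically constant and pointwise pure by $!$-purity of $\cf$, while factoring $\fii\ccirc\jm_\mu$ as $Y_\mu\into X_\mu\into X$, purity of $\ce_1$ makes $\ce_1|_{X_\mu}$ a direct sum of Tate-twisted shifts of the constant sheaf, and the further $!$-pullback along the linear inclusion $Y_\mu\sset X_\mu\cong V_\mu$ guaranteed by condition S1 preserves geometric constancy and pointwise purity (with weight shift matching degree shift). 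The $!$-tensor on the smooth space $Y_\mu$ of two such sheaves is again geometrically constant and pointwise pure of weight equal to degree. Granting the lemma, since $\fii^!$ commutes with $\so$ one has $\cf\so\fii^!(\ce_1\so\ldots\so\ce_n)=\cf'\so\fii^!(\ce_2\so\ldots\so\ce_n)$; applying the inductive hypothesis to $(\cf',\ce_2,\ldots,\ce_n)$ and the $n=1$ case to $(\cf,\ce_1)$ completes the induction. The main obstacle is this $!$-purity lemma: it relies essentially on condition S1, since without linearity of $Y_\mu\sset X_\mu$ the $!$-pullback of a constant sheaf need not be geometrically constant on $Y_\mu$ with the correct weight. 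A secondary compatibility check — that the isomorphism obtained from the induction coincides with the natural morphism from the first paragraph — is a formal diagram chase using naturality of Künneth, Verdier duality, and Theorem~\ref{Vthm}.
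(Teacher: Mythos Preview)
Your construction of the natural morphism and your treatment of the case $n=1$ coincide with the paper's: both pass through the projection formula $\fii_*(\cf\so\fii^!\ce)\cong\fii_*\cf\so\ce$, rewrite the cohomology as $\Ext^\hdot_{D_T(X)}(\BD\ce,\fii_*\cf)$, invoke Theorem~\ref{Vthm}, then use Proposition~\ref{bri}(ii) and \eqref{MMNN}.

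For $n\geq 2$ the two arguments diverge. The paper does not prove (or even formulate) your $!$-purity lemma for $\cf\so\fii^!\ce_1$; it simply asserts that the general case ``may be reduced to the case $n=2$ by induction on $n$ using the associativity of the monoidal structure $\so_{\hh(X)}$,'' leaving the mechanism implicit. Your route is more explicit: the $!$-purity lemma is correct (the key points being that $\fii_\mu\colon Y_\mu\into X_\mu$ is a closed imbedding of smooth varieties by condition~S1, so $\fii_\mu^!=\fii_\mu^*[-2d](-d)$ preserves ``geometrically constant with weight $=$ degree,'' and that on the affine space $Y_\mu$ the $!$-tensor of two such complexes again has constant cohomology sheaves with weights adding correctly), and once it is in hand the induction is immediate. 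What your approach buys is a reusable structural fact---$!$-purity is stable under $\so\fii^!$ with a pure sheaf---at the cost of that extra verification. An alternative that avoids the lemma entirely, and which neither you nor the paper spells out, is to observe that conditions S1--S3 hold for the twisted diagonal $Y\into X^n$ and apply Theorem~\ref{Vthm} directly there with the pure sheaf $\BD\ce_1\boxtimes\ldots\boxtimes\BD\ce_n$.
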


\begin{proof} Let  $\cg=\cf\boxtimes\ce_1\boxtimes\ldots\boxtimes\ce_n$
  and $i: Y\into Y\times X^n,\,
  y\mto (y,\fii(y), \ldots,\fii(y))$, a closed imbeddng.
  Thus, we have $\hh(\cf\,\so\, \fii^!(\ce_1\so\ldots\so\ce_n))=\hh(i^!\cg)$.
  The image of the adjunction
    $\hh(i_!i^!\cg) \to \hh(\cg)$
is annihilated by the action of the kernel $\Ker\subset \hh(Y\times X^n)$ of the restriction 
map $i^*: \hh(Y\times X^n)\to \hh(Y)$. Therefore, this image is contained
in a subspace of $\hh(Y)$ formed by the elements annihilated by $\Ker$.
This subspace is 
$\hh(\cf)\so_{\hh(X)}\hh(\ce_1)\so_{\hh(X)}\ldots\so_{\hh(X)}\hh(\ce_n)$,
by the definition of $\so$.

Assume next that $\cf$ is $!$-pure, resp. $\ce=\ce_1$ is pure, and $n=1$.
Then, the morphism in the statement may be factored as 
  the following composition:
     \begin{multline*}
       \hh^\hdot((\fii^!\ce)\so \cf)\iso
       \hh^\hdot(\BD(\BD\ce)\,\so\, \fii_*\cf)
       \iso
         \Ext^\hdot_{D(X)}(\BD\ce,\, \fii_*\cf)\\
       \iso \Hom^\hdot_{\hh(X)}(\hh(\BD\ce),\,\hh(\cf))
      \iso\hh(\BD\ce)^\vee\so\hh(\cf)\iso
       \hh(\ce)\so_{\hh(X)}  \hh(\cf).
       \end{multline*}
Here, the third, resp. fourth and fifth,  isomorphism holds 
       by Theorem \ref{Vthm}, resp.       part (ii)  of
       Proposition \ref{bri}, and \eqref{MMNN}.
       The general case of an arbitrary $n\geq 1$ may be reduced
    to   the case $n=2$ by induction on $n$ using the associativity of the
  monoidal structure $\so_{\hh(X)}$ on the category $\hh(X)\grfmod$.
       \end{proof}

\begin{cor}\label{o cor1}
  In the setting of Sect. \ref{form sec}, let $\Theta\sset \Si$ be a subset such that 
  $Y_\Theta=\sqcup_{\nu\in\Theta}\, Y_\nu$ is a  locally closed
  subvariety of $X$ and  $\fii_\Theta: Y_\Theta\into X$,
  resp.   $\jm_\mu: X_\mu\into X$, the imbedding.
    Then,  for a pure $\ce\in \mix(X)$ the adjunction $(\fii_\Theta)_!\fii_\Theta^!\to\Id$,
   resp. $(\jm_\mu)_!\jm_\mu^!\to\Id$, 
induces an isomorphism
\begin{align}
  \hh_c(Y_\Theta,\fii_\Theta^!\ce)\ &\iso\  \Hom^\hdot_{\hh(X)}(\hh_c(Y_\Theta),\,
  \hh(\ce)),\en\text{\em resp.}\label{om f}\\
    \hh(\jm_\mu^!\ce) \ &\iso \Hom^\hdot_{\hh(X)}(\hh(X_\mu),\,
    \hh(\ce)),\quad\forall \mu\in\Si.\label{fiii i}
\end{align}
\end{cor}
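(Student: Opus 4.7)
The plan is to derive both isomorphisms from Corollary \ref{o cor} (with $n=1$) combined with the Hom-tensor identity \eqref{MMNN} of Lemma \ref{olem}. The left-hand sides should be interpreted as the targets of the adjunction maps $\hh(X,(\jm_\mu)_!\jm_\mu^!\ce)$, resp. $\hh(X,(\fii_\Theta)_!\fii_\Theta^!\ce)$ — which, since $X$ is projective, agree with $\hh(\jm_\mu^!\ce)$ and $\hh_c(Y_\Theta,\fii_\Theta^!\ce)$ respectively.

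For \eqref{fiii i}, I would apply Corollary \ref{o cor} in the auxiliary setting $(X,X_\mu)$, in which the single stratum $X_\mu$ — trivially self-stratified — plays the role of the subvariety $Y$ of Section \ref{form sec}. Conditions S1-S3 transfer immediately: $X_\mu\cong V_\mu$ by the original S1, $(X_\mu)^T=\{x_\mu\}$ by S2, and the contracting neighborhood $U_\mu$ is reused for S3. Taking the dualizing sheaf $\cf=\om_{X_\mu}$ (tautologically $!$-pure, as there is only one stratum) and $\ce_1=\ce$, and using that $\om_{X_\mu}$ is the monoidal unit for $\so$, the corollary yields
\[
\hh(\jm_\mu^!\ce) \;\iso\; H^{\BM}_T(X_\mu)\so_{\hh(X)}\hh(\ce).
\]
Proposition \ref{bri}(ii) applied to the $*$-pure sheaf $\k_{X_\mu}$ on $X_\mu$ identifies $H^{\BM}_T(X_\mu)\cong \hh(X_\mu)^\vee$, and \eqref{MMNN} then converts the right side into $\Hom^\hdot_{\hh(X)}(\hh(X_\mu),\hh(\ce))$.

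For \eqref{om f}, I would proceed analogously with $Y_\Theta$ (stratified by $\{Y_\nu:\nu\in\Theta\}$) in place of $X_\mu$ and $\cf=\om_{Y_\Theta}$, which is $!$-pure because each $Y_\nu$ is smooth. The same computation gives
\[
\hh_c(Y_\Theta,\fii_\Theta^!\ce) \;\iso\; H^{\BM}_T(Y_\Theta)\so_{\hh(X)}\hh(\ce),
\]
and Poincar\'e-Verdier duality $H^{\BM}_T(Y_\Theta)\cong\hh_c(Y_\Theta)^\vee$ combined with \eqref{MMNN} produces the right side. The main obstacle is that $X_\mu$ and $Y_\Theta$ are only locally closed in $X$, whereas Corollary \ref{o cor} is stated for a closed subvariety; for the single-stratum case of \eqref{fiii i} this is harmless, but for \eqref{om f} with a genuinely locally-closed $Y_\Theta$ one must either adapt Corollary \ref{o cor} (ultimately Theorem \ref{Vthm}) to this slightly enlarged framework, or reduce to the closed case by working with the closure $\bar Y_\Theta$ and using the recollement triangle $j_!\fii_\Theta^!\ce\to\bar\fii^!\ce\to\bar j_*\bar j^*\bar\fii^!\ce$ (with $j:Y_\Theta\into\bar Y_\Theta$ open) together with induction on $|\Theta|$, \eqref{fiii i} serving as the base case.
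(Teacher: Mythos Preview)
Your proposal is correct and follows essentially the same route as the paper: apply Corollary~\ref{o cor} with $n=1$ and then pass to $\Hom$ via Proposition~\ref{bri} and \eqref{MMNN}. The only cosmetic difference is that for \eqref{fiii i} the paper takes $\cf=\k_{X_\mu}$ rather than $\om_{X_\mu}$; since $X_\mu$ is an affine space these differ by a shift $[-2\dim X_\mu]$, and the paper then cancels this shift against the identification $\hh_c(X_\mu,\jm_\mu^!\ce)\cong\hh(\jm_\mu^!\ce)[-2\dim X_\mu]$ and $\hh_c(X_\mu)\cong\hh(X_\mu)[-2\dim X_\mu]$. Your choice of $\om_{X_\mu}$ avoids the shift bookkeeping and is marginally cleaner.

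You are right to flag that $Y_\Theta$ (and $X_\mu$) is only locally closed while Corollary~\ref{o cor} is literally stated for closed $Y\hookrightarrow X$; the paper simply applies the corollary without comment. Your proposed remedy---either observe that the proofs of Theorem~\ref{Vthm} and Corollary~\ref{o cor} go through verbatim for locally closed $Y$ satisfying S1--S3, or reduce to the closed case by working in $\bar Y_\Theta$ and inducting on $|\Theta|$ via the recollement triangle with \eqref{fiii i} as the base---is exactly what is needed to make the argument watertight. One small correction: the left-hand sides are the \emph{sources} of the adjunction maps, not the targets, and the identification $\hh(X,(\jm_\mu)_!\jm_\mu^!\ce)\cong\hh(\jm_\mu^!\ce)$ up to shift uses that $X_\mu$ is an affine space, not merely that $X$ is projective.
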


\begin{proof}
  We apply Corollary   \ref{o cor} in the case  $n=1$.
  Taking  $\cf=\fii_*\om_Y\in \mix(Y)$  yields \eqref{om f}.
     Similarly, to prove \eqref{fiii i} one takes  $Y=Y_\Theta=X_\mu$ and 
     $\cf=(\jm_\mu)_*\k_{X_\mu}$.
  The desired isomorphism then follows  using that
    $\hh(\fii_!\fii^!\ce)\cong\hh_c(\fii^!\ce)\cong \hh(\fii^!\ce)[-2\dim X_\mu]$, resp.
  $\hh_c(X_\mu)\cong \hh(X_\mu)[-2\dim X_\mu]$.
\end{proof}

  \section{Reminder on derived Satake}
\label{rem sec}

\subsection{Generalities}\label{gen}
Fix a maximal torus $T\sset G$ and an Iwahori subgroup
 $I\sset G(O)$  that contains $T$.

The affine Grassmannian $\Gr=\Gr_G$ can be presented as a direct limit
$\Gr=\underset{\stackrel{\too}{{n}}}\lim\, X_{n}$ of $G(O)$-stable projective subvarieties
$X_{n}\sset\Gr$.
For each ${n}$, the $I$-orbits $X_\la=I.\la,\,\la\in\BX_*(T)$, contained in $X_{n}$ form a $T$-stable stratification
$X_{n}=\sqcup_{\la\in \La_{n}} X_\la$,
where $\La_{n}\sset \BX_*(T)$ is a finite subset. 
Let $i_{n}: X_{n} \into X_{{n}+1}$ denote the closed imbedding.
Thus, one has the category $D_I(\Gr)=\underset{\too}\lim\,D_I(X_{n})$
and $\underset{\longleftarrow}\lim\,\Ind D_I(X_n)$, an inverse limit of the categories
$D_I(X_{n})$ along $!$-restrictions. An object
of $\Ind D_I(\Gr)$
is an inverse limit $\cf=\underset{\leftarrow}\lim\, \cf_{n}$
of objects
$\cf_{n}\in D_I(X_{n})$ equipped with isomorphisms
$\kappa_{n}: i_{n}^!\cf_{{n}+1}\iso \cf_{n}$ which satisfy natural compatibilities,
cf. eg. \cite[Sect. 2(i)]{BFN2}.
In particular, one has the dualizing sheaf $\om_\Gr=
\underset{\leftarrow}\lim\, \om_{X_{n}}\in \Ind D_I(\Gr)$.
There is also the $G(O)$-equivariant counterpart
$\Ind\sat$ defined in a similar fashion.

Let $\t=\Lie T$ and  $\hh(\td)=H_T^\hdot(\td)\cong H^\hdot_I(\td)$, so
$\hh(\pt)\cong\C[\t]$, where $\t^*$ is placed in degree $2$.
The constant sheaf $\k_\Gr$ is not an object of $\Ind D_I(\Gr)$
  since it is not an inverse limit along $!$-restrictions; instead,
  one has natural isomorphisms $i^*_{n}\k_{X_{{n}+1}}\iso \k_{X_{{n}}}$.
  The compatible cell decompositions of the $X_{n}$'s  insure
that the restriction maps $i^*_{n}: \hh(X_{{n}+1})\to \hh(X_{n})$ are surjective;
furtheremore, for each $m$, the inverse system of cohomology
$\ldots \leftarrow \hh^m(X_n)\leftarrow \hh^m(X_{n+1})\leftarrow \ldots$ stabilizes. Let
$\hh^m(\Gr)$ be this stable limit and
 $\hh(\Gr):=\oplus_m\,\hh^m(\Gr)$.  Then, for any ${n}$ the restriction
map $\hh(\Gr)\to \hh(X_{n})$ is a surjective map of graded $\C[\t]$-algebras, so
we may (and will) view $\hh(X_{n})$-modules
as $\hh(\Gr)$-modules.
For $\cf=\underset{\leftarrow}\lim\, \cf_{n}\in \Ind D_I(\Gr)$,
one has an
$\hh(X_{n})$-module structure on $\hh(\cf_{n})$.
The maps $\hh^m(\cf_k) \xleftarrow[\cong]{\kappa_k} \hh^m(i_k^!\cf_{k+1})
\xrightarrow{(i_k)_!}  \hh^m(\cf_{k+1})$ are $\hh(X_{n})$-module maps for all $k<{n}$.
This way,  the  groups $\hh^m(\cf_{n}),\, {n}\geq1$,   form a direct system 
and we define $\hh^m(\cf):=\underset{\rightarrow}\lim\, \hh^m(\cf_{n})$.
The above shows that the total cohomology $\hh(\cf)=\oplus_m\,\hh^m(\cf)$
acquires the structure of a graded $\hh(\Gr)$-module.

In the case of a finite ground field $\BF_q$ and a split reductive group
$G$ over $\BF_q$, one has the affine Grassmannian over $\BF_q$,
which is a direct limit of projective subvarieties $X_n$, as above, defined over $\BF_q$.
Let  $\Gr$ be a
base change to $\BF=\overline{\BF_q}$ and
$D_I(\Gr)$, resp. $\sat$, the corresponding category of $\bar{\mathbb Q}_\ell$-adic sheaves.
This category is equipped with a pullback  functor $Fr^*$
along the geometric Frobenius.
An object $\cf\in
\Ind D_I(\Gr)$ is called $!$-pure if  $\cf=\underset{\leftarrow}\lim\, \cf_{n}$
where each $\cf_n$ has a lift to a $!$-pure object
of $D^{mix}_I(X_{n})$.
On defines pure objects in a similar fashion.

\subsection{}\label{Jjsec}
We choose  a maximal compact subgroup $G_c\subset G$.
We write $\sfh(\td):=H^\hdot_{G(O)}(\td)$ and let  $W=N(T)/T$ denote the Weyl group.
For  $\cf\in D_{G(O)}(\Gr)$,  there are canonical
 isomorphisms
 \beq{GTH}
 \sfh(\cf)\cong H^\hdot_{G_c}(\text{Obl}^{G(O)}_{G_c}\cf) ,\quad
  H^\hdot_T(\text{Obl}^{G(O)}_T\cf)\cong \C[\t]\o_\sfc \sfh(\cf),\quad
                      \sfh(\cf)\cong \big(H^\hdot_T(\text{Obl}^{G(O)}_T\cf)\big)^W.
            \eeq
            Let  $\sfc:=\C[\g]^G\cong\C[\t]^W$, resp. $\fcc=\fcc_\dg=\Spec\sfc\cong\t/W
            \cong\dgl^*/\!/\dg$.
           We have $\sfc\cong \sfh(\pt)\cong H^\hdot_{G_c}(\pt)$. This allows
  to transfer various results for
  $T$-equivariant cohomology to results for $G(O)$-equivariant cohomology.

  Let $\Omega_{pol} G_c\subset G(K)$ be the subgroup of based polynomial maps
  $S^1\to G_c$. The algebra $H^\hdot_{G_c}(\Omega G_c)$
 has the structure of a commutative and cocommutative
 graded Hopf algebra over $\sfc$ with coproduct
 $\De: H^\hdot_{G_c}(\Omega G_c)\to
H^\hdot_{G_c}(\Omega G_c) \,\o_\sfc \,H^\hdot_{G_c}(\Omega G_c)$
 induced
 by pointwise multiplication  $\Omega G_c\times \Omega G_c\to \Omega G_c$.
 Recall  that the cotangent sheaf 
     $T^*_{\fcc}$  is a free $\oo_{\fcc}$-module with  basis $df,\, f\in\C[\fcc]$,
     and there is an  isomorphism $\Lie J\cong T^*_{\fcc}$.
     Let  $\fj=\fj_\dg:=\Ga(\fcc,\Lie J)\cong \Ga(\fcc, T^*_\fcc)$.
     Thus, $\fj$ is a free $\sfc$-module of rank $\dim\t$.
     It is known that there is a Hopf algebra isomorphism, \cite{BF}, ~\cite{BFM}, and \cite{YZ}.
\beq{symJ}
H^\hdot_{G_c}(\Omega G_c) \cong \sym_\sfc\fj
\eeq
    such that 
the space of primitive elements
of the Hopf algebra $H^\hdot_{G_c}(\Omega G_c)$  corresponds to $\fj$.
Further, it is known that the composite  $\Omega_{pol} G_c\into G(K) \onto \Gr$
is a  homeomorphism,
cf. \cite[Sect. 4]{Nad}.
Hence, we obtain
$\sfh(\Gr)\cong H^\hdot_{G_c}(\Gr) \cong H^\hdot_{G_c}(\Omega G_c)$, so
  the Hopf algebra structure  on $H^\hdot_{G_c}(\Omega G_c)\cong \sym_\sfc\fj$
may be transported to $\sfh(\Gr)$ via the isomorphisms.
Dually, there is  a commutative and cocomutative Hopf
algebra structure on $H^\BM_{G_c}(\Omega G_c)$ which can be transported to 
$H^\BM_{G(O)}(\Gr)$.
Thus, $\Spec H^\BM_{G(O)}(\Gr)$ acquires the structure
of a commutative group scheme over $\fcc$.

Let $\star$ be the convolution monoidal structure on $\sat$.
  For any $\cf_1,\cf_2\in\sat$ there are
  isomorphisms
  \beq{TG}
  R\Gamma_{G(O)}(\Gr,\cf_1)\,\lo_\sfc\, R\Gamma_{G(O)}(\Gr,\cf_2)\iso
 R\Gamma_{G(O)}(\Gr, \,\cf_1\boxtimes\cf_2)\iso 
 R\Gamma_{G(O)}(\Gr, \cf_1\star\cf_2),
 \eeq
 in $D(\sfh\mmod)$,
 where the first isomorphism is the derived Kunneth formula.
  The composite of these maps, to be also denoted $\star$,
 equips the functor
$R\Gamma_{G(O)}(\Gr, \td): \sat \to D(\sfc\mmod)$ with a
monoidal structure. If at least one of the sheaves $\cf_i,\,i=1,2$,  is $G(O)$-equivariantly formal
then \eqref{TG}
induces an isomorphism
\beq{Hform}
\star:\, \sfh(\cf_1)\o_\sfc\sfh(\cf_2)\iso\sfh(\cf_1\star\cf_2).
\eeq

The  $\sfh(\Gr)$-action  on cohomology 
              is compatible with the coproduct on $\sfh(\Gr)$ in the sense 
              that for all $h\in\sfh(\Gr)$               the following diagram
 commutes
\beq{der}
   \xymatrix{
     \sfh(\cf_1)\o_\sfc \sfh(\cf_2) \ar[rr]^<>(0.5){\star}_<>(0.5){\cong}
     \ar[d]^<>(0.5){\De(h)}
              && \sfh(\cf_1\star\cf_2)\ar[d]^<>(0.5){h}\\
       \sfh(\cf_1)\o_\sfc \sfh(\cf_2)\ar[rr]^<>(0.5){\star}_<>(0.5){\cong}&& \sfh(\cf_1\star\cf_2)
     }
     \eeq

     The natural action of $\sfh(\Gr)$ on $\sfh(\cf)$
     gives,
     thanks to the isomorphism $\sfh(\Gr)\cong\sym_\sfc\fj$,
     a  $\fj$-module structure on $\sfh(\cf)$.
 It was  shown by Yun and Zhu, \cite[Lemma 3.1]{YZ},  that  the  $\fj$-action  on $\sfh(\cf)$
 can be exponentiated to a $J$-action. Specifically, write
 $\cf=\underset{\longleftarrow}\lim\,\cf_i$. 
  Fix $i$ and let $X\subset\Gr$ be a $G(O)$-stable projective subvariety that contains $\supp\cf_i$.
    Since $X$ is a union of Iwahori orbits,  $H^\hdot_{G(O)}(X)$ is a free $\sfc$-module so
 $H^{G(O)}_\idot(X)=H^\hdot_{G(O)}(X)^\vee$.  Let $\{h^i\}$
 and $\{h_i\}$ be dual bases of 
 $H^\hdot_{G(O)}(X)$ and $H^{G(O)}_\idot(\Gr)$, respectively, with respect to the
perfect pairing $H^\hdot_{G(O)}(X) \times H^{G(O)}_\idot(\Gr)\to \sfc$.
Define a map $\sfh(\cf_i)\to H_\idot^{G(O)}(X)\o_\sfc \sfh(\cf_i)$ 
by the formula
$m\mto \sum_i\,h_i \o (h^im)$. This map is independent of the choice of bases and
it was shown in \cite{YZ} that the composite
$\sfh(\cf_i)\to H_\idot^{G(O)}(X)\o_\sfc \sfh(\cf_i)\to H^{\BM}_{G(O)}(\Gr)\o_\sfc \sfh(\cf_i)$
gives a coaction of the coalgebra $\k[J]$ on $\sfh(\cf_i)$.
Furthermore, the differential of the resulting $J$-action agrees with the $\fj$-action
considered earlier.

             Any pure object of $\sat$ is (up to Tate twists of its mixed lift)
             isomorphic to a finite direct sum of objects of the form $\IC(\ogr{\la})[m]$
           for some $\la\in\dom$ and $m\in \Z$.
We say that $\ce\in\Ind\sat$ is pure if it is isomorphic to                
 a countable direct sum
 $\oplus_i\, \IC(\ogr{\la_i})[m_i]$ where $\la_i\in\dom$ and $m_i\in\Z$.

    Recall  the lax monoidal functor  $\td\so\td$ on $\sat$.
                   For  ind-objects $\cf=\underset{\leftarrow}\lim\, \cf_{n}$ and
$\cg
=\underset{\leftarrow}\lim\, \cg_{n}$, there is a well defined object
$\cf\so\cg=\underset{\leftarrow}\lim\, \cf_{n}\so\cg_{n}\in \Ind D_{G(O)}(\Gr)$.

\begin{lem}\label{Jj} If $\cf\in\Ind\sat$ is $!$-pure then $\sfh(\cf)$ is pure
  and we have:

              \vi   The cohomology $\sfh(\cf)$  is a direct limit of a sequence
            of  finite rank free $\sfc$-submodules of $\sfh(\cf)$;
            in particular,  the $\sfc$-submodule $\sfh(\cf)$ is faithfully flat.

            \vii If $\ce\in\Ind\sat$ is pure  then there is
             an isomorphism
              $\sfh(\ce\so\cf)\iso\sfh(\ce)\o_{\sfc}^J\sfh(\cf)$.
              Moreover, if  $\ce$ and $\cf$ are commutative ring objects
              then this is a ring isomorphism.
              \end{lem}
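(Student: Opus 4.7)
The plan is to reduce both parts to finite-dimensional statements on each $X_n$ and then pass to the direct limit, using Corollary \ref{inj cor} for (i) and Corollary \ref{o cor} for (ii).

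For part (i), write $\cf=\underset{\leftarrow}\lim\,\cf_n$ with each $\cf_n\in D^{\text{mix}}_{G(O)}(X_n)$ a $!$-pure lift. Since each $X_n$ is stratified by Iwahori orbits and conditions S1--S3 hold (Sect.~\ref{iwahori sec}), Corollary \ref{inj cor} applied to the $T$-equivariant pullback $\text{Obl}^{G(O)}_T \cf_n$ shows that $H^\hdot_T(\cf_n)$ is pure and free of finite rank over $\C[\t]$. Taking $W$-invariants via the third identity in \eqref{GTH} gives that $\sfh(\cf_n)$ is pure and free of finite rank over $\sfc$. Applying Corollary \ref{inj cor} to the closed embedding $i_n\colon X_n\hookrightarrow X_{n+1}$ with input the $!$-pure sheaf $\cf_{n+1}$ yields a short exact sequence
\[
0\to \sfh(\cf_n)\xrightarrow{(i_n)_!}\sfh(\cf_{n+1})\to \sfh(j_n^*\cf_{n+1})\to 0,
\]
so the connecting maps are injective. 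Thus $\sfh(\cf)=\underset{\rightarrow}\lim\,\sfh(\cf_n)$ is a direct limit of an ascending chain of finite-rank free $\sfc$-submodules, and hence flat. Faithful flatness follows because reduction modulo any maximal ideal of $\sfc$ sends each nonzero $\sfh(\cf_n)$ to a nonzero vector space.

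For part (ii), the key input is Corollary \ref{o cor} applied levelwise. Choose compatible finite-dimensional approximations $\ce_n$ (pure) and $\cf_n$ ($!$-pure). The diagonal embedding $X_n\hookrightarrow X_n\times X_n$ combined with Corollary \ref{o cor} gives, in the $T$-equivariant setting, an isomorphism
\[
H^\hdot_T(\ce_n\so\cf_n)\iso H^\hdot_T(\ce_n)\so_{H^\hdot_T(X_n)}H^\hdot_T(\cf_n).
\]
Taking $W$-invariants via \eqref{GTH} and using that the $W$-action commutes with the balancing condition defining $\so$ (Sect.~\ref{tp}) produces the $G(O)$-equivariant analogue
\[
\sfh(\ce_n\so\cf_n)\iso \sfh(\ce_n)\so_{\sfh(X_n)}\sfh(\cf_n).
\]
Passing to the direct limit in $n$, using the surjections $\sfh(X_{n+1})\twoheadrightarrow\sfh(X_n)$ and the injective connecting maps established in (i), yields
\[
\sfh(\ce\so\cf)\iso \sfh(\ce)\so_{\sfh(\Gr)}\sfh(\cf).
\]
Finally, identify the right-hand side with $\sfh(\ce)\o_\sfc^J\sfh(\cf)$ by invoking \eqref{symJ} and the Yun--Zhu exponentiation: under $\sfh(\Gr)\cong \sym_\sfc\fj$, the balancing condition $(a\otimes 1)u=(1\otimes a)u$ for $a\in\fj$ expresses agreement of the two infinitesimal $\fj$-actions on $\sfh(\ce)\otimes_\sfc\sfh(\cf)$, which, upon exponentiation, amounts precisely to $J$-equivariance.

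For the ring-object statement, the commutative multiplications $m_\ce\colon\ce\so\ce\to\ce$ and $m_\cf\colon\cf\so\cf\to\cf$ induce a multiplication on $\ce\so\cf$ via $(\ce\so\cf)\so(\ce\so\cf)\cong(\ce\so\ce)\so(\cf\so\cf)\xrightarrow{m_\ce\so m_\cf}\ce\so\cf$, using symmetry of $\so$. Naturality of Corollary \ref{o cor} and of the balancing construction then show that this is transported to the product on $\sfh(\ce)\o_\sfc^J\sfh(\cf)$ induced from the cohomology products. I expect the main obstacle to be the precise matching of $\so_{\sfh(\Gr)}$ with $\o_\sfc^J$ in the limit: one must verify that the Hopf structure on $\sfh(\Gr)$ coming from loop-space multiplication agrees infinitesimally with the Yun--Zhu exponentiated $J$-action, and that the direct limit commutes with the balanced tensor product—this latter point being delicate because $\sfh(\Gr)$ is infinitely generated over $\sfc$, so one must exhibit the balancing relations as pulled back from each finite stage.
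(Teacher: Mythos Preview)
Your approach is essentially the same as the paper's: reduce to finite stages via Corollary~\ref{inj cor} and Corollary~\ref{o cor}, take $W$-invariants, and pass to the limit to obtain $\sfh(\ce\so\cf)\cong\sfh(\ce)\so_{\sfh(\Gr)}\sfh(\cf)\cong\sfh(\ce)\o_\sfc^\fj\sfh(\cf)$.

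The one genuine gap is the final step, where you write that agreement of the two $\fj$-actions ``upon exponentiation amounts precisely to $J$-equivariance.'' This is not automatic: the group scheme $J\to\fcc$ can have disconnected fibers, so an $\fj$-invariant section need not be $J$-invariant in general. The paper supplies the missing argument as follows. Over a Zariski-dense open $U\subset\fcc$ the fibers of $J$ are tori, hence connected, so any $\fj$-invariant section of a $J$-equivariant sheaf $M$ is $J|_U$-invariant over $U$. If $M$ is flat over $\fcc$, the coaction map $M\to \C[J]\o_\sfc M$ and the trivial coaction agree on $U$, and flatness forces them to agree everywhere; thus $\Ga(\fcc,M)^\fj=\Ga(\fcc,M^J)$. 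This is exactly why part~(i) (flatness of $\sfh(\ce)$ and $\sfh(\cf)$) is needed as input to part~(ii). Your flagged ``main obstacle'' about matching Hopf structures is not the real issue; the real issue is connectedness of generic fibers plus flatness.
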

     \begin{proof}  
       Part (i) follows from Corollary \ref{inj cor} by taking $W$-invariants,
       cf. \eqref{GTH}.
       The proof of (ii) is based on the observation that for any
       $M\in \qcoh^{J}(\fcc)$ which is  flat over $\fcc$
  the natural inclusion
        $\Ga(\fcc, M^J)\sset\Ga(\fcc, M)^{\fj}$ is an equality.
To prove this, one uses that     the fibers of the restriction
        of the group scheme $J\to\fcc$ to a Zariski open dense subset
        $U\sset\fcc$ are  tori. This implies that
        for any section $m\in \Ga(\fcc, M)^{\fj}$,  the
        restriction of  $m$ to $U$ is fixed by the group scheme $J|_U$,
        since tori are connected.
        The sheaf $M$ being flat, it follows that $m$ is itself  fixed by $J$,
        as desired.
        
        Now, let $\ce,\cf$ be as in the statement.
        Thus, $\ce=\oplus_i\, , \IC(\ogr{\la_i})[m_i]$,
        resp. $\cf=\underset{\stackrel{\leftarrow}{i}}{\lim}\,\cf_i$,
        where $X_i\sset\Gr$ is a $G(O)$-stable projective variety that contains
        $\ogr{\la_k}$ for all $k\leq i$ and $\cf_i$ is a $!$-pure sheaf supported on $X_i$.
        By Sect. \ref{gen} we have
        $\sfh(\cf)
        =\underset{\to}{\lim}\,\sfh(\cf_i)$ where $\sfh(\cf_i)$ are    finite rank free $\sfc$-modules.
                We apply Corollary \ref{o cor} for $\cf=\cf_i$, resp. $\ce= \IC(\ogr{\la_i})$.
       Since the algebra $\sfh(\Gr)\cong\sym_\sfc\fj$ is generated by $\fj$,
        using \eqref{GTH} and taking $W$-invariants of $T$-equivariant cohomology        we deduce 
        \begin{align*}
          \sfh(\ce\so\cf)=\underset{\stackrel{\longrightarrow}{i}}{\lim}\,
        \Big(\bigoplus_{k\leq i}\, &\sfh(\IC(\ogr{\la_k})[m_k]\so\cf_i)\Big)\cong
       \underset{\stackrel{\longrightarrow}{i}}{\lim}\,
       \Big(\bigoplus_{k\leq i}\, \sfh(\ogr{\la_k})[m_k]\so_{\sfh(X_i)} \sfh(\cf_i)\Big)\\
       &\cong \sfh(\ce)\,\so_{\sfh(\Gr)} \,\sfh(\cf)\cong \sfh(\ce)\,\o_\sfc^\fj \,\sfh(\cf)\cong
       \sfh(\ce)\,\o_\sfc^J\,\sfh(\cf),
       \end{align*}
       where the last isomorphism follows    from the first paragraph of the proof
       since $\sfh(\ce)$ and $\sfh(\cf)$ are flat $\sfc$-modules.
       The  statement about ring structures   is a direct consequence
        of the lax monoidality of the functors $\td\so\td$ and $\sfh$.
          \end{proof}

\subsection{Geometric Satake equivalence and the functor $\Phi$}
\label{appl sat sec}    
 
Recall that $\dom\sset \BX^*(\dt)$ denotes the dominant Weyl chamber.
 The abelian Satake category $\per$
  is known to be a semisimple category with simple objects 
 $\IC_\la=\IC(\ogr{\la}),\,\la\in\dom$; furthermore, this category is
 stable under convolution, \cite{MV}.
  All objects of
 $\per$ are  pure, hence equivariantly formal. It follows that
  the lax monoidal functor $\sfh$
  restricts to a monoidal functor $\sfh: \per\to \sfc\proj$,
  where $\sfc\proj$ is the category of finitely generated graded projective,
  hence free $\sfc$-modules.

  We now equip the convolution product
   on $\per$ with a (super)symmetry constraint,
    bypassing  the Beilinson-Drinfeld Grassmannian and fusion,
     \cite{MV}, \cite{BD}.
    To this end, for any $\cf_1,\cf_2\in\per$ we consider the following chain
    of isomorphisms:
    \begin{align}
      \Hom_{\perv_{\gc(O)}(\Gr)}(\cf_1\!\star\! \cf_2, \cf_2\!\star\! \cf_1) &\iso
 \Ext^0_{D_{\gc(O)}(\Gr)}(\cf_1\!\star\! \cf_2, \cf_2\!\star\! \cf_1)\nonumber\\                                  
&\iso   
  \Hom^0_{\sfh(\Gr)}\big(\sfh(\cf_1\!\star\! \cf_2), \sfh(\cf_2\!\star \!\cf_1)\big)
  \label{symm}
      \\   &\iso\Hom^0_{\sfh(\Gr)}\big(\sfh(\cf_1)\o_\sfc \sfh(\cf_2),\,\sfh(\cf_2)\o_\sfc\sfh(\cf_1)\big),
     \nonumber
    \end{align}
    where the first  isomorphism holds since the category $\per$ is the
    heart of the perverse t-structure on $\sat$
    and the second isomorphism holds by Theorem \ref{Vthm}.
 The cocommutativity of the coproduct $\De$ on the Hopf algebra $\sfh(\Gr)$ provides a canonical
  isomorphism
  $\phi_{\cf_1,\cf_2}: \sfh(\cf_1)\o_\sfc\sfh(\cf_2) \iso \sfh(\cf_2)\o_\sfc\sfh(\cf_1)$
  of $\sfh(\Gr)$-modules.
      We define the desired super-symmetry constraint
      $\varphi_{\cf_1,\cf_2}: \cf_1\star \cf_2\iso \cf_2\star \cf_1$
      to be the preimage of  $\phi_{\cf_1,\cf_2}$
      under the composite isomorphism in \eqref{symm}.
      The signs appearing in the super-symmetry constraint are removed either as
      it has been traditionally done in
       \cite{BD}, \cite{MV}, or using shearing as in \cite{BZSV}.
            All compatibilities required   for the resulting symmetry constraint
      are immediate consequences of the axioms
      of  Hopf algebra.

Next, we recall the construction of the monoidal   equivalence
$\Phi: D_{G(O)}(\Gr)\iso D^{\dg}(\sg)$ introduced in
\cite[Sect. 5]{BFN2}. This functor is related 
to the derived Satake equivalence $\Psi: \sat\iso
D^{\dg}_{\text{perf}}(S\dgl)$ 
of Bezrukavnikov and Finkelberg by the formula
$\Phi=\chev\ccirc \Psi\inv$,
where $\chev^*$ is the auto-equivalence induced by a Chevalley
   involution $\chev: \dg\to\dg$.  
Let $\bone=\IC(\Gr_0)$ denote
      the skyscraper sheaf at the base point of $\Gr$.
We may view $\C[\dg]$
    as a ring object of $\Ind\Rep \dg$  via the $\dg$-action on itself by
    {\em left} translations. 
    Therefore, the regular sheaf  $\rr:=\phi\inv(\C[\dg])=
    \oplus_{\la\in\dom}\,\IC(\ogr{\la})\o V_\la^*$ has
    the structure of a ring object in $\Ind\per$. Thus, $\rr$ is equipped
    with a morphism $m: \rr\star\rr\to\rr$
    and the unit $\bone\to\rr$ subject to natural compatibilities.
     The $\dg$-action on the $V_\la^*$'s, i.e. the action on $\C[\dg]$ by {\em right} translations,
    gives an action of $\dg$ on $\rr$ by ring object automorphisms.

    
    Following \cite[Sect. 5(vii)]{BFN2}, we define the     functor
    \[\Phi: \sat\to D_{\text{perf}}^\dg(\sfc),\en
      \cf\mto \rgo(\Gr, \rr\so\cf).
      \]
      This functor has a monoidal structure constructed as follows.
      First of all, the   lax monoidal structure on  $\so$     provides,
      for each pair $\cf_1,\cf_2\in \sat$, a morphism
    \[
      (\rr\so \cf_1)\star(\rr\so\cf_2)\to (\rr\star\rr)\so(\cf_1\star\cf_2)
      \xrightarrow{m} \rr\so (\cf_1\star\cf_2).
      \]
           Applying  $R\Ga_{G(O)}(\Gr,\td)$ to  the composite
           we get, since  $R\Ga_{G(O)}(\Gr,\td)$ is a monoidal  functor,
           a lax monoidal structure
           $\Phi(\cf_1)\,\lo_\sfc \,\Phi(\cf_2)\,\to \, \Phi(\cf_1\star\cf_2)$ on $\Phi$.
    It is not difficult to show that the morphism
  $\Phi(\bone)\,\lo_\sfc \,\Phi(\bone)\,\to \, \Phi(\bone)$
  can be
  upgraded to a dg algebra structure on $\sfr:=\Phi(\bone)$,
  cf. Sect. \ref{pf sec1}. Similarly,
  the morphisms $\Phi(\bone)\,\lo_\sfc \,\Phi(\cf)\,\to \, \Phi(\cf)$ provide an
   upgrading of $\Phi$ to a functor
   $\Phi: \sat\to D_{\text{perf}}^\dg(\sfr)$.
    
   Let $S\dgl=\sg$.
   Next, mimicing arguments in \cite[Sect. 6.5]{BF} based on purity,
   one shows that the dg algebra
   $\sfr$
   is formal, i.e. quasi-isomorphic to the graded $\sfc$-algebra $H^\hdot(\sfr)$,
   cf. Sect.
      The derived Satake equivalence $\Psi: \sat\iso D^{\dg}_{perf}(S\dgl)$ of \cite{BF} sends
   $\bone$ to $S\dgl$, resp. $\rr$ to $S\dgl\o\C[\dg]$.
   We deduce
   \begin{align*}
   H^\hdot(\sfr)\cong  \sfh(\bone\so\rr)\cong
   \Ext_\sat(\bone,\rr)\cong\Hom_{D_{\text{perf}}^\dg(S\dgl)}(S\dgl,\,S\dgl\o\C[\dg])
                  \cong (S\dgl\o\C[\dg])^\dg\cong S\dgl.
                    \end{align*}
                    Thus,  the functor $\Phi$ may be identified with
                    a functor $\sat\to D_{\text{perf}}^\dg(S\dgl)$.
      Furthermore, by \cite[Lemma 5.13]{BFN2}, this functor is an equivalence and 
   for any $\cf\in\per$ one has
   $\Phi(\cf)\cong \chev^*(S\dgl\o \phi(\cf))$.
   For $\cf_1,\cf_2\in\per$, writing $\Psi(\cf_i)=V_i$, we compute
   \begin{align*}
     \Phi(\cf_1)\o_{\sym\dgl}\Phi(\cf_2) & =
                                           \chev^*(S\dgl\o V_1)\o_{\sym\dgl}\chev^*(S\dgl\o V_2)\\
     &\cong
     \chev^*(S\dgl\o V_1\o V_2)\cong \Phi(\cf_1\star\cf_2).
   \end{align*}
   Since the objects of $\per$ generate the derived Satake
   category, it follows that the lax monoidal structure on $\Phi$ is actually monoidal.
   
 \subsection{Functoriality for the universal centralizer}
 \label{Jsec} The goal of this subsection is to prove the following
 result.
 
 \begin{prop}\label{JHG} A morphism $\rho: H\to G$ of connected reductive groups
   induces a 
 morphism $\fcc_\dh\times_{\fcc_\dg } J_\dg$ $\to J_\dh$ of group schemes over $\fcc_\dh$.

 If $\rho$ is injective then the group scheme $J_\dh\to \fcc_\dh$
 has  a natural $N_G(H)/H$-equivariant structure.
\end{prop}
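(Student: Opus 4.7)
The plan is to construct the map first at the Lie algebra level and then to exponentiate using the explicit description of $J$. The differential $d\rho:\h\to\g$ is $H$-equivariant when $H$ acts on $\g$ via $\rho$, so it descends to a morphism $\fcc_\dh=\h/\!/H\to\g/\!/G=\fcc_\dg$, which I will also denote by $\rho$. Its cotangent differential is an $\oo_{\fcc_\dh}$-linear map $\rho^*T^*_{\fcc_\dg}\to T^*_{\fcc_\dh}$; under the canonical identifications $\Lie J_\dg\cong T^*_{\fcc_\dg}$ and $\Lie J_\dh\cong T^*_{\fcc_\dh}$ recalled at the beginning of Section~\ref{duality}, this becomes a morphism of abelian Lie algebras $\rho^*\Lie J_\dg\to\Lie J_\dh$ over $\fcc_\dh$.

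To promote this to a morphism of group schemes I would use the description of $J$ due to \cite{DG}, \cite{BFM}, \cite{Ng1}. Over the regular semisimple locus $\fcc^{rs}_\dg$, $J_\dg$ is \'etale-locally a trivial $\dt_\dg$-bundle, so both $\rho^*J_\dg$ and $J_\dh$ are tori over the open set $U:=\rho\inv(\fcc^{rs}_\dg)\cap \fcc^{rs}_\dh\sset\fcc_\dh$. On $U$ the Lie algebra morphism exponentiates to a morphism of tori, provided one verifies that it sends the cocharacter sublattice of $\rho^*\Lie J_\dg$ into that of $\Lie J_\dh$; this lattice compatibility is the heart of the argument, and follows from the BFM identification of cocharacter lattices with (duals of) integral degree-$2$ cohomology classes, together with the naturality of this identification under $\rho$.

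To extend the morphism from $U$ to all of $\fcc_\dh$, I would use that $\fcc_\dh$ is smooth, $J_\dh$ is affine, separated and smooth over $\fcc_\dh$, and the complement $\fcc_\dh\setminus U$ has codimension $\geq 1$; then a Hartogs-type argument extends the morphism uniquely, and multiplicativity together with compatibility with the zero section propagate from $U$ by density. For the $N_G(H)/H$-equivariance when $\rho$ is injective, every $n\in N_G(H)$ induces an automorphism $\Ad(n):H\iso H$; applying the functoriality just established to $\Ad(n)$ produces an automorphism of $J_\dh$ covering the corresponding automorphism of $\fcc_\dh$. Inner automorphisms of $H$ act trivially on $\fcc_\dh=\h/\!/H$, hence act trivially on $J_\dh$ as well, since the fiber of $J_\dh$ over $c\in\fcc_\dh$ is canonically the centralizer group of any regular element lying over $c$, taken up to $\Ad H$-conjugacy. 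Therefore the $N_G(H)$-action descends to $N_G(H)/H$.

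The main obstacle I anticipate is the lattice-compatibility check at the exponentiation step, which requires unpacking the BFM presentation carefully; once that is in place, the extension from $U$ and the equivariance are then formal.
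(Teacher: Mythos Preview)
Your overall strategy—Lie algebra map, exponentiate over an open set, then extend—has a genuine gap at the extension step. The complement of the regular semisimple locus in $\fcc_\dg$ is the discriminant \emph{divisor}, so $\fcc_\dh\setminus U$ has codimension exactly $1$ in general, not $\geq 2$. A Hartogs-type argument cannot extend a morphism to an affine target across a codimension-$1$ set; there is no reason the torus map you build over $U$ extends to the non-semisimple fibers of $J_\dh$, which are not tori. Your own parenthetical ``codimension $\geq 1$'' suggests you noticed this but hoped it would work anyway—it does not. A second, smaller gap: the universal centralizer $J_\dg$ is an open subgroup scheme of a larger scheme $J_\dg^1$ cut out by a $\pi_0$ condition (cf.\ \cite{Ng1}), and your sketch does not verify that the image lands in $J_\dh$ rather than only in $J_\dh^1$.

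The paper avoids both problems by working globally from the start. Its first proof uses the functor-of-points description $J_\dg^1(S)=\Map_{\fcc_\dg}(\t_\dg^*\times_{\fcc_\dg} S,\,T_\dg)^{W_\dg}$: given $f$ in this mapping space, one forms $\check\rho\circ f\circ\rho^*$, checks via a Weyl-group computation (Lemma~\ref{weyl lem}) that the result is $W_\dh$-equivariant, and then handles the $\pi_0$ condition separately. No exponentiation or extension is needed, and the lattice issue you flag is absorbed into the map $\check\rho:T_\dg\to T_\dh$ of dual tori. The paper also records a second proof via geometric Satake: the adjunction map on equivariant Borel--Moore homology of affine Grassmannians directly produces the algebra map $\C[J_\dh]\to\C[\fcc_\dh]\otimes_{\C[\fcc_\dg]}\C[J_\dg]$. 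For the $N_G(H)/H$-equivariance your idea is essentially right, but the paper organizes it through an explicit short exact sequence of Weyl-type groups (again Lemma~\ref{weyl lem}) rather than invoking functoriality abstractly.
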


We will give two proofs of the proposition. The first proof
relies on the 
  description of the universal centralizer $J_\dg$ in terms of the functor of
  points given  \cite{DG}, \cite{BFM}, cf. also \cite[Proposition 2.2.3]{Ng1}.
The second proof, which is well known, relies on geometric Satake.

First, we recall  the construction of the functor of
  points.
   Let
  $\text{Sch}_{\fcc_\dg}$, resp. $\text{Ab}$, be the category of schemes over
  $\fcc_\dg=\dgl^*/\!/\dg$, resp. 
 abelian groups. Let $\Map_{\fcc_\dg}(\t_\dg^*\times_{\fcc_\dg} \td,
 T_\dg)^{W_\dg}:
 \text{Sch}_{\fcc_\dg}\to \text{Ab}$ be the functor
 that sends a  test scheme $S\in \text{Sch}_{\fcc_\dg}$  to  the abelian group
   of $W_\dg$-equivariant morphisms $\t_\dg^*\times_{\fcc_\dg}  S\to T_\dg$,
 of schemes over $\t_\dg^*$. 
 It is known that this functor is represented by an
 abelian group scheme $J^1_\dg$
 smooth over $\fcc_\dg$;
 furthermore, the universal centralizer $J_\dg$ is isomorphic to
 an open subgroup scheme of $J_\dg^1$ defined 
 by the condition that the group $\pi_0(Z_\dg(G))$ maps surjectively onto 
 $\pi_0(F)$ for  each closed fiber $F$ of $J_\dg\to \fcc_\dg$, cf. \cite{Ng1}, \cite{Ng2}. 
Here, $\pi_0(\td)$  denotes the set of connected components.

 Given a subgroup $M\sset G$ and subsets $S, S'$ of $G$
 we write $Z_M(S)=\{g\in M\mid gs=sg\ \forall s\in S\}$, resp.
 $N_M(S)=\{g\in M\mid gSg\inv\sset S\}$ and
 $N_M(S, S')=N_M(S)\cap N_M(S')$.

 Let $H$ be a  reductive subgroup of $ G$,
 and  $T\sset G$ a maximal torus such that
 $T_H= H\cap T$ is  a maximal torus of $H$.
 The group  $L:=Z_G(T_H)$ is a Levi subgroup of $G$.
 Write
 $W=N_G(T)/T$, resp. $W_H=N_H(T_H)/T_H$ and $W_L=N_L(T)/T$,  for the
 respective Weyl groups.
  It is easy to show that the group $W_{L;G}:=N_G(L,T)/T\sset W$
  equals the normalizer of $W_L$ in $W$.

  The following result is similar to \cite[Lemme 10.1]{Ng2}.
  
  \begin{lem}\label{weyl lem} Let   $H\sset G$ be a connected reductive subgroup.
    Then,   there is an isomorphism  $N_G(L)/L\cong W_{L;G}/W_L$ and
    a natural group imbedding 
    $W_H\into W_{L;G}/W_L$. Further, there is
    a 
    short exact sequence
   \[
     1 \to W_H  \to N_G(T_H,T)/T_H  \xrightarrow{q} N_G(H)/H  \to 1,
     \]
     which has a  splitting  provided by
     a group homomorphism $s: N_G(H)/H \to N_G(T_H,T)/T_H$.
 \end{lem}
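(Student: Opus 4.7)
The plan is to establish the three claims in sequence, each relying on standard structural properties of $L$, $H$, and their maximal tori.

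The first isomorphism is classical: given $g \in N_G(L)$, the conjugate $gTg^{-1}$ is a maximal torus of $L$, hence $L$-conjugate to $T$; after adjusting $g$ on the right by a suitable element of $L$, one may assume $g \in N_G(L,T)$. This produces a surjection $N_G(L,T) \twoheadrightarrow N_G(L)/L$ whose kernel is $L \cap N_G(L,T) = N_L(T)$, and passage to the quotient by $T$ gives the desired isomorphism with $W_{L;G}/W_L$. The identification of $W_{L;G}$ with the normalizer of $W_L$ in $W$ follows from the fact that the roots of $L$ are precisely the roots of $G$ vanishing on $T_H$.

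For the injection $W_H \hookrightarrow W_{L;G}/W_L$, any $n \in N_H(T_H)$ normalizes $T_H$ and hence its centralizer $L = Z_G(T_H)$, so defines a class in $N_G(L)/L$. The kernel of the composite $N_H(T_H) \to N_G(L)/L$ is
\[
N_H(T_H) \cap L \;=\; N_H(T_H) \cap Z_G(T_H) \;=\; Z_H(T_H) \;=\; T_H,
\]
where the last equality uses that $T_H$ is a maximal torus of the connected reductive group $H$; this yields the desired embedding.

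For the short exact sequence, the essential input is that $N_G(H)$ acts transitively on the set of maximal tori of $H$, so that $N_G(H) = (N_G(H) \cap N_G(T_H)) \cdot H$ and hence $N_G(H)/H \cong (N_G(H) \cap N_G(T_H))/N_H(T_H)$. I would then show that every element of $N_G(H) \cap N_G(T_H)$ can be adjusted modulo $H$ by an element of $L = Z_G(T_H)$ to lie also in $N_G(T)$, using that any two maximal tori of $G$ containing $T_H$ are $L$-conjugate. This yields surjectivity of $q$, while the kernel is readily identified with the image of $N_H(T_H)$, which modulo $T_H$ is $W_H$.

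Finally, to produce $s$ as a genuine group homomorphism, I would fix compatible Borel subgroups $T_H \subset B_H \subset B \supset T$ in $H$ and $G$, and define $s$ by selecting from each class in $N_G(H)/H$ the unique representative in $N_G(T_H,T)/T_H$ preserving the pair $(B_H,B)$; existence and uniqueness follow from the simple transitivity of $H$ on its Borels containing $T_H$ together with the analogous property for $G$. The main technical obstacle, which I expect to be the hardest step, is verifying that this prescription is multiplicative, i.e.\ that the Borel-preservation condition is preserved under the group operation in $N_G(T_H,T)/T_H$; this is not automatic and will require a careful compatibility analysis between the conjugation actions on $B_H$ and on $B$.
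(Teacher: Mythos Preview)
Your treatment of the first two claims and of the exact sequence follows the same lines as the paper's proof: adjust by an element of $L$ (resp.\ of $H$) to land in the normalizer of $T$ (resp.\ of $T_H$), and identify the kernel using $Z_H(T_H)=T_H$.

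The genuine gap is in your construction of the splitting. Requiring the representative to preserve the Borel $B$ of $G$ is fatal: any element of $N_G(T)$ normalizing $B$ lies in $N_G(T)\cap N_G(B)=N_G(T)\cap B=T$, so your prescription collapses to the trivial class. (Test case: $H=T$, so $T_H=T$, $W_H=\{1\}$, and the sequence reads $1\to 1\to W\to W\to 1$; the splitting must be the identity on $W$, yet no nontrivial $w\in W$ fixes $B$.) This is not a technicality you can repair by a more careful compatibility analysis; the condition on $B$ must be dropped.

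The paper's construction imposes \emph{only} the condition on the $H$-side: fix a Weyl chamber $C\subset\BX^*(T_H)$ for $H$ and set
\[
N^+_G(T_H,H)\;:=\;\{\,g\in N_G(T_H,H)\ :\ g(C)=C\,\}.
\]
Because this is the stabilizer of $C$, it is already a subgroup, so the induced map into $N_G(T_H,H)/T_H$ is automatically a group homomorphism; the multiplicativity you flagged as the main obstacle evaporates. Simple transitivity of $W_H$ on the Weyl chambers of $H$ then shows that $N^+_G(T_H,H)/T_H$ meets each $W_H$-coset exactly once, yielding the section $s$. The moral: drop the $B$-condition entirely; the $B_H$-condition alone is both sufficient for uniqueness and manifestly a group-theoretic condition.

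A smaller point on your step~3: ``adjusting modulo $H$ by an element of $L$'' is ambiguous, since $L\not\subset H$ in general, so multiplying by $\ell\in L$ changes the $H$-coset. The paper sidesteps this by working directly with $N_G(T_H,H)$ throughout.
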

 \begin{proof} 
   To prove the first statement,  observe that
   the  kernel of the composite $N_G(L,T)\into N_G(L)\onto N_G(L)/L$
   equals $N_G(L,T)\cap L=N_L(T)$. Thus, we get an injective homomorphism
   $W_{L;G}/W_L\cong N_G(L,T)/N_L(T)\into  N_G(L)/L$.
      For any $g\in N_G(L)$ the torus
   $gTg\inv$ is a maximal torus of $L$, hence one can find $\ell\in L$ such that
   $gTg\inv=\ell T\ell\inv$. Hence $\ell\inv g\in N_G(L,T)$.
   It follows that the
   homomorphism $W_{L;G}/W_L\to N_G(L)/L$ is an isomorphism.
   Next, using that
   $N_G(T_H)\sset N_G(Z_G(T_H))=N_G(L)$ we deduce inclusions
   $N_H(T_H)\sset N_G(T_H, H)\sset N_G(L, H)\sset N_G(L)$.
   The kernel of the composite  $N_H(T_H)\into N_G(L)\to N_G(L)/L$ equals
   $N_H(T_H)\cap L$ $=Z_H(T_H)=T_H$. Therefore, the composite
   induces an imbedding $W_H=N_H(T_H)/T_H\into N_G(L)/L$, proving the second statement.

   Similarly, the kernel of the composite $N_G(T_H, H)\into N_G(H)
   \onto N_G(H)/H$ equals $N_H(T_H)$,
   so we get an imbedding $N_G(T_H, H)/N_H(T_H)\into N_G(H)/H$.
   For any $g\in N_G(H)$ the torus $gT_Hg\inv$ is a maximal torus of $H$.
   Arguing as above, we deduce that the imbedding is a bijection. Thus, we obtain isomorphisms
   \[
     (N_G(T_H, H)/T_H)/W_H\cong N_G(T_H, H)/N_H(T_H)\iso N_G(H)/H.
   \]
   This yields the short exact sequence of the lemma.

   To complete the proof observe that the action of
   $N_G(T_H, H)$ in $\BX^*(T_H)$ takes the set of roots of
   $H$ to itself and takes Weyl chambers to Weyl chambers,
   since it sends any Borel $B_H\supseteq T_H$ of $H$ to another such Borel.
   Fix a Weyl chamber $C\sset \BX^*(T_H)$  and let
   $N^+_G(T_H, H)=\{g\in N_G(T_H, H)\mid g(C)\sset C\}$.
   Then, $N^+_G(T_H, H)\cap N_H(T_H)=\{1\}$ and
      the composite
   $N^+_G(T_H, H)\into N_G(T_H, H)\onto N_G(T_H, H)/N_H(T_H)$
   is an isomorphism.
   Inverting this isomorphism, we get a chain of
   homomorphisms
   \[
     N_G(H)/H\iso N_G(T_H, H)/N_H(T_H)\iso N^+_G(T_H, H) \into
     N_G(T_H, H)\onto N_G(T_H, H)/T_H.
   \]
   The composite provides the desired splitting of the short exact sequence.
 \end{proof}

   \begin{proof}[First proof of Proposition \ref{JHG}]
Let $\rho: H\to G$  be a not necessarily injective morphism
 of reductive groups. 
Choose maximal tori
 $T_H\sset H$ and $T\sset G$
so that $\bar T_H:=\rho(T_H)\sset T$. 
There is a canonical map of Weyl groups
   $W_H=N_H(T_H)/T_H\to W_{\rho(H)}=N_{\rho(H)}(\bar T_H)/\bar T_H$.
   We consider  the Levi subgroup $L=Z_G(\bar T_H)$ of $G$
   and the dual Levi  $\check L\sset \dg$.
      We have 
   the identification  $W_{\check L}=W_L$, resp.  $W_\dg=W_G$ and $W_\dh=W_H$. Thus, 
   we identify $W_{L;G}$ with a subgroup $W_{\check L;\dg}\sset W_\dg$.
   In  the setting of Lemma \ref{weyl lem} we may (and will) replace $H$ by $\rho(H)$
   and
   let $\rho_W: W_\dh\to W_{{\check L},\dg}/W_{\check L}$ be the composite
   of   the following maps
      \beq{weyl}
            W_\dh=W_H \to W_{\rho(H)}
      \xrightarrow{\ \text{Lemma \ref{weyl lem}} \ }
      W_{L;G}/W_L=W_{{\check L},\dg}/W_{\check L}.
   \eeq
     By construction, for all $w\in W_\dh$ and $t\in T_H$, one has
     $\rho(w(t))=\rho_W(w)(\rho(t))$. We have the dual map
     $\rho^*_\BX: \BX^*(T)\to \BX^*(T_H)$, resp. $\check\rho:  T_\dg\to T_\dh$
     and $\rho^*: \t^*_\dh\to\t^*_\dg$. Dualizing, 
   we deduce the equation
   $\rho^*_\BX(\rho_W(w)(\la))=w(\rho^*_\BX(\la))$
   for all  $\la\in \BX^*(T)$.
   Hence, similar equations hold for $\check\rho$ and $\rho^*$.

   Given a  test scheme $S\in\text{Sch}_{\fcc_\dg}$ and
 $f\in\Map(\t_\dg^*\times_{\fcc_\dg} S, T_\dg)$ 
 we form a  chain of maps
 \[\t_\dh^*\times_{\fcc_\dh} S\xrightarrow{\,\rho^*\,} \t_\dg^*\times_{\fcc_\dg} S
   \xrightarrow{\,f\,}
   T_\dg\xrightarrow{\,\check\rho\,} T_\dh.
   \]
If the map $f$ is $W_\dg$-equivariant
   then  it follows from the above that
   the composite $\check\rho\ccirc f$
   is $W_{\check L;\dg}$-equivariant.  Hence, for any
   $\la\in \t_\dh^*,\, s\in S$, and $w\in W_\dh$, we find
   \[(\check\rho\ccirc f\ccirc \rho^*)(w(\la)\times s)=
     (\check\rho\ccirc f)(\rho^*(\rho_W(w)(\la)\times s))=
     \rho_W(w)\big((\check\rho\ccirc f\ccirc \rho^*)(\la\times s)\big).
     \]
      We conclude that the assignment $f\mto \check\rho\ccirc f\ccirc \rho^*$ yields
            a morphism of functors
   $\Map(\t_\dg^*\times_{\fcc_\dg} (\td), T_\dg)^{W_\dg}$  $\to
   \Map(\t_\dh^*\times_{\fcc_\dh}(\td), T_\dh)^{W_\dh}$, hence a morphism 
   $\fcc_\dh\times_{\fcc_\dg } J^1_\dg\to J_\dh^1$ of group schemes over $\fcc_\dh$.
    We must show
   that this morphism 
  sends  $\fcc_\dh\times_{\fcc_\dg } J_\dg$ to $J_\dh$. 
By the description of $J$ given above,
the map $\pi_0(Z(\dg))\to \pi_0(F)$ is surjective,
for every fiber $F$ of $\fcc_\dh\times_{\fcc_\dg } J_\dg\to\fcc_\dh$.
  The action of $\pi_0(Z(\dg))$ in $F$ agrees with
  the restriction to the image of 
  the natural homomorphism $\pi_0(Z(\dg))\to \pi_0(Z(\dh))$
  of the $\pi_0(Z(\dh))$-action 
  in the fiber of $J^1_\dh$ that contains the image of $F$.
  The first statement of the proposition follows from this.

To prove the second statement of the proposition we may
identify $H$ with $\rho(H)$.
 Then, the exact sequence of Lemma \ref{weyl} shows that
 the natural action of the group $N_G(T_H,T)$ on $\t^*_\dh=\t_H$
 induces an   action of $N_G(H)/H$ in $\fcc_\dh$. Similarly, we obtain
 an $N_G(H)/H$-action on the functor $\Map(\t_\dh^*\times_{\fcc_\dh}(\td), T_\dh)^{W_\dh}$.
 It is straightforward to check that this
 induces an action  on the group scheme $J_\dh\to\fcc_\dh$.
\end{proof}

 \begin{proof}[Second proof of Proposition \ref{JHG}]      A morphism $\rho: H\to G$ of
     reductive groups induces      an  ind-proper
     map $\rho_\Gr: \Gr_H\to \Gr_G$.  Since $\rho^!_\Gr\om_{\Gr_G}\cong\om _{\Gr_H}$,
     by adjunction one gets
     an algebra morphism $H^\hdot_{H(O)}(\om_{\Gr_H})=
     H^\hdot_{H(O)}(\rho^\dagger\om_{\Gr_G})$
     $\to
     \C[\fcc_\dh]\otimes_{\C[\fcc_\dg]} H^\hdot_{G(O)}(\om_{\Gr_G})$.
   Thus, we obtain an algebra map
 \[\C[J_\dh] \cong H^{\BM}_{H(O)}(\Gr_H)\too
 \C[\fcc_\dh\times_{\fcc_\dg} J_\dg]\cong  \C[\fcc_\dh]\o_{\C[\fcc_\dg]}\,H^{\BM}_{G(O)}(\Gr_G).
   \]
   One can check that this map respects the bialgebra structures and
   the induced morphism
      $\rho^*J_\dg \to J_\dh$ of group schemes over
      $\fcc_\dh$ is equal to the morphism from Corollary \ref{JHG}.
      Similarly, the action of $N_G(H)/H$
on $H^{\BM}_{H(O)}(\Gr_H)$ induced by the
   natural $N_G(H)$-action 
   on $\Gr_H$ agrees with  the 
   action of  $N_G(H)/H$  on
      $J_\dh$  from  Corollary \ref{JHG}.
\end{proof}
\section{Proofs of main results}
    \subsection{Proof of Theorem \ref{sat thm}}
   \label{pf sec1}
   First we sketch the proof of the formality statement.
   To this end, for each $\la\in\dom$ put
   $\rr_{\leq\la}=\oplus_{\mu\leq\la}\, \IC(\ogr{\mu})\o V^*_\mu$.
   In \cite[Sect.\,6.5]{BF} the authors constructed
   an upgrading of the algebra
   $\Ext^\hdot_{D_{G(O)}(\ogr{\la})}(\rr_{\leq\la},\,\rr_{\leq\la})$
   to a dg algebra. Using that these $\Ext$'s  are pure
   it was shown in {\em loc cit} that the constructed dg algebra is formal.
   The derived Satake equivalence is deduced from this by passing
   to a limit $D_{G(O)}(\Gr)=\underset{\too}\lim\,D_{G(O)}(\ogr{\la})$.

   The technology of $\infty$-categories allows to package arguments in {\em loc cit}
   as follows. 
   Fix a prime power $q$ and an Artin stack $\cy_1$ over $\BF_q$ with finite
   stabilizers. For each $n\geq 1$ let $\cy_n$ denote the stack obtained from
   $\cy_1$ by base change 
   to $\BF_{q^n}$. There is a (stable presentable) $\infty$-category
   $\ssh_c(\cy_n)$ of constructible $\overline{{\mathbb Q}_\ell}$-sheaves
   on $\cy_n$ defined in \cite[Sect. 4.1]{HL}, cf. also \cite{GL}. This category has a full subcategory
   $\ssh_{m,c}(\cy_n)$
   of mixed constructible sheaves,  defined in  \cite[Sect. 4.2]{HL}.
   The category  $\ssh_{m,c}(\cy_n)$ comes equipped with an auto-equivalence
   $Fr^*_{\BF_{q^n}/\BF_q}$ given by the action of geometric Frobenius.
   Finally,  in  \cite[Sect. 4.4]{HL} the authors introduce a dg  $\infty$-category
   $\ssh_{gr,c}(\cy_n)$ of {\em graded} constructible sheaves which  is
  obtained from $\ssh_{m,c}(\cy_n)$
  by an appropriately defined  `semisimplification' of the Frobenius action.
  The category $\ssh_{gr,c}(\cy_n)$ comes equipped with an auto-equivalence
  $\cf\to \cf\langle1\rangle$, the Tate twist.
  In the special  case of the stack  $\pt_n:=\Spec \BF_{q^n}$, the category $\ssh_{gr,c}(\pt_n)$
  is, by definition, the category $\vect^{gr}_c$ of perfect chain complexes of graded
  $\overline{{\mathbb Q}_\ell}$-vector spaces.
  Tensor product of complexes gives 
  $\vect^{gr}_c$ the structure of a symmetric monoidal category and 
    $\ssh_{gr,c}(\cy_n)$ has, by construction,  the structure of a $\vect^{gr}_c$-module category,
  for any $\cy_n$. This allows to upgrade $\Hom$'s in $\ssh_{gr,c}(\cy_n)$
  to $\Hom$'s  enriched over $\vect^{gr}_c$. Thus, for any objects
  $\cf_1,\cf_2\in \ssh_{gr,c}(\cy_n)$ there is an internal Hom-object
  $\chom(\cf_1,\cf_2)\in\vect^{gr}_c$. 

  In \cite[Sect. 5]{HL} the category  $\ssh_{gr,c}(\cy_n)$ was equipped with a perverse t-structure
  and with a {\em weight structure}
  in the sense of \cite{Bon}. The latter structure comes from the weight
  filtration on mixed sheaves, as in \cite{BBD}.
  Associated with each of these two structures there is
  an $\infty$-subcategory $\ssh_{gr,c}(\cy_n)^{\heartsuit_t}$, resp. $\ssh_{gr,c}(\cy_n)^{\heartsuit_w}$,
  of  $\ssh_{gr,c}(\cy_n)$,  the   heart. An object $\cf$ of $\ssh_{gr,c}(\cy_n)$ is said
  to be pure of weight $m$ if $\cf[-m]\in \ssh_{gr,c}(\cy_n)^{\heartsuit_w}$.
  There is a natural lift of any $\IC$-sheaf of geometric origin to a pure object
  of $\ssh_{gr,c}(\cy_n)$, \cite[Sect. 5]{HL}.
  Pure objects of $\vect^{gr}_c$ are perfect complexes  $V$ such that the $i$-th cohomology 
  $H^i(V)$ is concentrated in grade degree $i$. In that case, one has
  $V\cong\oplus_i\, H^i(V)\langle-i\rangle[-i]$.

  Now, let  $G_{\BF_q}$ be  a split reductive group over $\BF_q$
  and $\Gr_{\BF_q}$ an associated affine Grassmannian. Then,
  for each $\la\in \dom$ we consider the stack  $\cy_{\leq\la}:=G_{\BF_q}(O)\back\ogr{\la}$
  and  for each  $n$ let $\cy_{\leq\la,n}:=(\cy_{\leq\la})_n$.
  The perverse sheaf $\rr_{\leq\la}$ has a lift
  to a pure object $\rr^{gr}_{\leq\la}\in\ssh_{gr,c}(\cy_{\leq\la,n})$
  and the algebra $\Ext^\hdot_{D_{G(O)}(\ogr{\la})}(\rr_{\leq\la},\,\rr_{\leq\la})$
  has an upgrading
  $\chom^{gr}(\rr_{\leq\la},\,\rr_{\leq\la}):=\oplus_{i,j}\,\chom(\rr_{\leq\la},\,\rr_{\leq\la}\langle i\rangle[j])$,
  which is a ring object of the category $\vect^{gr}$.
  The purity of $\Ext^\hdot_{D_{G(O)}(\ogr{\la})}(\rr_{\leq\la},\,\rr_{\leq\la})$
  translates into the statement that $\chom^{gr}(\rr_{\leq\la},\,\rr_{\leq\la})$
  is a pure object of $\vect^{gr}$, hence this object is isomorphic
  to 
  $\oplus_i\, H^i(\chom^{gr}(\rr_{\leq\la},\,\rr_{\leq\la})\langle-i\rangle[-i]$.
  This isomorphism is equivalent to the formality result proved in \cite[Sect.\,6.5]{BF}.

  A similar argument applies to $!$-pure objects of  $D^{mix}_{G_{\BF_{q}}(O)}(\ogr{\la})$
  as follows.
  In general, for any stack $\cy_n$, the forgetful functor
  $\ssh_{m,c}(\cy_n)\to D^{mix}(\cy_n)$  is essentially surjective
  by the definition of the category $\ssh_{m,c}(\cy_n)$, \cite[Sect. 4.4]{HL}.
  Therefore, any $!$-pure object $\cf\in D^{mix}(\cy_n)$ has a lift to an object of $\ssh_{m,c}(\cy_{\leq\la,n})$.
  The image of the
  lift under the `semisimplification of Frobenius'  functor
  $\ssh_{m,c}(\cy_{\leq\la,n})\to \ssh_{gr,c}(\cy_{\leq\la,n})$, see \cite[Sect. 1.1]{HL},
  yields an object $\cf^{gr}\in \ssh_{gr,c}(\cy_{\leq\la,n})$.
  Since $\cf$ is $!$-pure, the group
  $\Ext^\hdot_{D_{G_{\BF_{q}}(O)}(\ogr{\la})}(\rr_{\leq\la},\,\cf)\cong
  H^\hdot_{G_{\BF_{q}}(O)}(\BD\rr_{\leq\la}\so\cf)$ is pure.
  Arguing as above, we deduce that the internal Hom-object
  $\chom^{gr}(\rr_{\leq\la},\,\cf^{gr}):=\oplus_{i,j}\,\chom(\rr_{\leq\la},\,\cf^{gr}\langle i\rangle[j])
  \in\vect^{gr}$ is pure and, therefore,  isomorphic to the direct sum
  $\oplus_i\, H^i(\chom^{gr}(\rr_{\leq\la},\,\cf^{gr})\langle-i\rangle[-i]$.
  It follows that $\chom^{gr}(\rr_{\leq\la},\,\cf^{gr})$, viewed as a module object over
  the ring object $\chom^{gr}(\rr_{\leq\la},\,\rr_{\leq\la})$,
  is isomorphic to $\Ext^\hdot_{D_{G_{\BF_{q}}(O)}(\ogr{\la})}(\rr_{\leq\la},\,\cf)$,
  viewed as a graded module over the graded algebra
  $\Ext^\hdot_{D_{G_{\BF_{q}}(O)}(\ogr{\la})}(\rr_{\leq\la},\,\rr_{\leq\la})$.
  The desired formality of the dg module $\Phi(\cf)\in D^{\dg}_{\text{perf}}(\sg)$
  follows from this using that the functor $\Phi$ is isomorphic
  to the composition of the functor $\RHom(\rr,\td)$ and the functor
  $\chev$ induced by a Chevalley automorphism, cf. Sect.  \ref{appl sat sec}.
  
To prove the remaining  statements of Theorem \ref{sat thm},  
 recall the flat  morphism $p: \WH\to\fcc$, cf. Sect. \ref{sat int}.
 Since  $\WH$ is an affine variety, for any $M\in \qcoh^J(\fcc)$
 one has $\C[\WH]\o_\sfc \Ga(M)\cong\Ga(p^*M):=\Ga(\WH, p^*M)$.
 If, moreover, $M$ is  flat over $\fcc$ then, using the proof of
 Lemma \ref{Jj}, we deduce
  \[\C[\WH]\o_\sfc^J \Ga(M)\cong
    \Ga(p^*M)^{\fj}\cong
    \Ga((p^*M)^J)=\Ga(\greg, \wh(M))=\Ga(\dgl^*,\,\jmath_*\wh(M)).
   \]

   We know that $\Phi(\rr)=\sg\o\C[\dg]$ by \cite[Sect.\,5(vii)]{BFN2}, hence
   $\sfh(\Phi(\rr)\cong \C[\WH]$, the Kostant-Whittaker reduction of $\sg\o\C[\dg]$,
   by \cite[Theorem 2]{BF}. Since the sheaf $\cf$ is $!$-pure, the  $\sfc$-module
   $\sfh(\cf)$ is free and 
 from Lemma \ref{Jj} we obtain  
\beq{rrcf}
  \HF(\cf)\cong \sfh(\rr\so\cf)\cong \sfh(\rr)\o_{\sfc}^J\sfh(\cf).
 \eeq
 Hence, taking $M=\chh_{\gc(O)}(\cf)$ in the displayed formula above,
 we deduce an isomorphism $\HF(\cf)$ $\cong \Ga(\dgl^*,\,\jmath_*\wh(\chh_{\gc(O)}(\cf)))$,
 and the desired isomorphism
   $\chh\Phi(\cf)\cong\jmath_*\wh(M)$ follows.
 \qed

 \subsection{Proof of Theorem \ref{fin}}\label{filt state}
 Recall the notation $\hh(\td)=H^\hdot_T(\td)$,
 resp. $\sfh(\td)=H^\hdot_{G(O)}(\td)$.
 The following proposition, which is part of the  statement
  of       Theorem \ref{fin}, will be proved in  Section \ref{gr pf}.

  \begin{prop}\label{part} Let $\cf\in\Ind D_{\gc(O)}(\Gr_G)$ be a $!$-pure commutative
  ring object. If
  the algebra $\hh(\veps^\dagger\cf)_+$  is finitely generated
  then so are the algebras $\hh(\veps^\dagger\cf)^W$ and  $\sfh(\cf)$.\qed
\end{prop}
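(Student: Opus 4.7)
\medskip
\noindent
The plan is to prove finite generation in three stages: first for $A:=\hh(\veps^\dagger\cf)$ itself, then for $A^W$, and finally for $\sfh(\cf)$. Since $\veps^\dagger\cf\in D_{T(O)}(\Gr_T)$ is a commutative ring object and $\Gr_T$ is discrete, $A=\bigoplus_{\la\in\BX} A_\la$ (with $A_\la=\hh(\veps_\la^\dagger\cf)$) is an $\BX$-graded commutative $\sfc$-algebra, and $A_+=\bigoplus_{\la\in\dom} A_\la$ is the subalgebra supported in the dominant monoid $\dom\subset\BX$. The Weyl group $W$ acts on $A$ by $\sfc$-algebra automorphisms sending $A_\la$ to $A_{w\la}$, and since $W\cdot\dom=\BX$, every graded piece $A_\mu$ is contained in some $w\cdot A_+$. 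Hence $A=\sum_{w\in W} w\cdot A_+$ as $\sfc$-modules, and if $x_1,\ldots,x_n$ generate $A_+$ as a $\sfc$-algebra, then the finite set $\{w\cdot x_i:w\in W,\ 1\le i\le n\}$ generates $A$.

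Stage 2 is a direct application of E.~Noether's classical theorem: since $\sfc$ is Noetherian, $A$ is a finitely generated $\sfc$-algebra, $W$ is a finite group acting by $\sfc$-algebra automorphisms, and we work in characteristic zero, so $A^W$ is a finitely generated $\sfc$-algebra and $A$ is a finite $A^W$-module.

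Stage 3 is where the main obstacle lies. By \eqref{GTH} we have $H^\hdot_T(\mathrm{Obl}\cf)=\C[\t]\otimes_\sfc\sfh(\cf)$ with $W$ acting only on the $\C[\t]$-factor, so $\sfh(\cf)=H^\hdot_T(\mathrm{Obl}\cf)^W$. The counit adjunction produces a $W$-equivariant $H^\hdot_T(\mathrm{Obl}\cf)$-linear map $\veps_!:A\to H^\hdot_T(\mathrm{Obl}\cf)$ which, by equivariant localization applied to the $!$-pure object $\mathrm{Obl}\cf$ (cf.\ Sect.~\ref{u v}), becomes an isomorphism after inverting the non-zero elements of $\sfc$. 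Restricting to $W$-invariants yields a $\sfc$-module map $\veps_!^W:A^W\to\sfh(\cf)$ that is an isomorphism over $Q':=\mathrm{Frac}(\sfc)$. The technical heart is that $\veps_!^W$ is only a module map, not a ring map, so finite generation of $A^W$ does not transfer directly to $\sfh(\cf)$. I would address this by pairing $\veps_!$ with the adjoint ring map $\mathrm{res}^*:H^\hdot_T(\mathrm{Obl}\cf)\to H^\hdot_T(\veps^*\mathrm{Obl}\cf)=:A^*$, giving an injective $\sfc$-algebra homomorphism $\sfh(\cf)\hookrightarrow(A^*)^W$. Inside the $\C[\t]$-localization, the graded pieces of $A^*$ and $A^!=A$ differ only by multiplication with the equivariant Euler classes of the attracting weights of $T$ at the fixed points $\la\in\BX$; these classes depend only on the $W$-orbit of $\la$ and, thanks to a Bialynicki-Birula–type contraction, lie in a $W$-stable finite family of elements of $\C[\t]$. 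Consequently $(A^*)^W$ is obtained from $A^W$ by inverting finitely many elements of $\sfc$, hence is a finitely generated $\sfc$-algebra. Since $\C[\t]$ is free of rank $|W|$ over $\sfc$, the extension $\sfh(\cf)\subset H^\hdot_T(\mathrm{Obl}\cf)\to A^*$ is integral on the nose, and applying the Artin–Tate lemma to the composite $\sfc\subset\sfh(\cf)\subset(A^*)^W$ produces finite generation of $\sfh(\cf)$.

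\medskip
\noindent
The main obstacle, as flagged above, is Stage~3: checking that the Euler-class twist relating $\veps^!$-cohomology to $\veps^*$-cohomology at the $T$-fixed points of the ind-scheme $\Gr_G$ really does involve only a finite family of denominators (once organized by $W$-orbits), and that the resulting inclusion $\sfh(\cf)\subset(A^*)^W$ is integral so that Artin–Tate applies. Stages~1 and~2 are formal; the content of the proposition is concentrated in this last step, which essentially packages equivariant localization and Hilbert–Noether together with the $\C[\t]/\sfc$ finiteness coming from \eqref{GTH}.
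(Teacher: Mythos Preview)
Your Stages 1 and 2 are correct, and in fact cleaner than the paper's route to finite generation of $A^W$: the paper derives this via a filtration argument (Corollary~\ref{Wfin}, built on Proposition~\ref{sisi} and Lemma~\ref{sisi2}), whereas you observe directly that $A=\sum_{w\in W}w\!\cdot\! A_+$ is a finitely generated $\fc$-algebra and then apply Noether's theorem.

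Stage 3, however, does not work as written. First, for a $!$-pure ind-object $\cf=\underset{\leftarrow}\lim\,\cf_n$ assembled along $!$-restrictions, the pullback $\veps^*\cf$ is in general not defined (the paper notes this explicitly, e.g.\ for $\aa_{\bm,G}$ in Sect.~\ref{c pf}), so your algebra $A^*$ need not exist. Second, even on individual $G(O)$-orbits the Euler classes relating $!$- and $*$-stalks are the elements $\fd_\la=\prod_{\al\in\De\pos}\al^{\langle\la,\al\rangle}$ appearing in Lemma~\ref{i claim}; their degrees are unbounded in $\la$, so they do not form a finite $W$-stable family, and $(A^*)^W$ cannot be obtained from $A^W$ by inverting finitely many elements of $\sfc$. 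Third, the integrality input to Artin--Tate is unjustified: the rank-$|W|$ freeness of $\C[\t]$ over $\sfc$ controls $H^\hdot_T(\text{Obl}\,\cf)$ over $\sfh(\cf)$, not $A^*$ over $\sfh(\cf)$.

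The paper's argument for $\sfh(\cf)$ is of a different nature. One filters $\sfh(\cf)$ by the images of $\sfh({i}^!_{\leq\la}\cf)$, $\la\in\dom$, and shows this filtration is multiplicative (Proposition~\ref{ppp}). On each graded piece the adjunction $\veps_!$, after dividing by the single Euler class $\fd_\la$, becomes an isomorphism $(A_\la)^{W_\la}\iso\sfh({i}^!_{\leq\la}\cf)/\sfh({i}^!_{<\la}\cf)$ (Lemma~\ref{i claim}); the crucial point, supplied by Lemmas~\ref{ring} and~\ref{sisi2}, is that the failure of $\veps_!$ to be multiplicative is of strictly lower filtration degree. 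Hence the associated graded of $\sfh(\cf)$ is identified with the partially invariant algebra $(A_+)\nana=\bigoplus_{\la\in\dom}(A_\la)^{W_\la}$, which is finitely generated by Proposition~\ref{sisi}. In short, the unbounded family $\{\fd_\la\}$ is absorbed into a filtration rather than inverted.
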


We now prove Theorem \ref{fin}
assuming the proposition.
First, we show that if the algebra  $\hh(\veps^\dagger\cf)_+$  is finitely generated
then so is
      the  algebra $\HF(\cf)$.
            Since $\HF(\cf)=\sfh(\rr\so \cf)$,  by Proposition \ref{part}  it suffices to show that
      the algebra $\hh(\veps^!(\rr\so \cf))_+$  is finitely generated.
  To this end, write
  $\hh(\veps^\da\rr)=\oplus_{\la\in\BX}\,\hh(\veps^\da_\la\rr)$,
  resp. $\hh(\veps^\da\cf)=\oplus_{\la\in\BX}\,\hh(\veps^\da_\la\cf)$.
  By \cite{GK}, there is an $\BX$-graded algebra
  isomorphism
  \beq{gkd}
 \hh(\veps^\da\rr) =\bigoplus_{\nu\in\BX}\, \hh(\veps_\nu^\da\rr)\; \cong\;
 \C[T^*(\dg/\du)]=\bigoplus_{\nu\in\BX}\, \C[T^*(\dg/\du)]_\nu.
 \eeq
 Since $\veps^\da(\rr\so\cf)\cong(\veps^\da\rr)\so(\veps^\da\cf)$ we obtain
   isomorphisms
  \begin{align*}
        \hh(\veps^\da(\rr\so\cf))
    \cong \hh((\veps^\da\rr)\so(\veps^\da\cf))
    &\cong    \oplus_{\la\in\BX}\ \hh(\veps^\da_\la\rr)\,\so_\fc\,\hh(\veps^\da_\la\cf)\\
    &\cong \oplus_{\la\in\BX}\ \hh(\veps^\da_\la\rr) \,\o_\fc \,\hh(\veps^\da_\la\cf)\\
&\cong \oplus_{\la\in\BX}\ \C[T^*(\dg/\du)]_\la\,\o_\fc \,\hh(\veps^\da_\la\cf),
 \end{align*}
of $\BX$-graded $\fc$-algebras,
where the  isomorphism in the second line holds by Remark \ref{oo}.

  We define a $\dt$-action on $\C[T^*(\dg/\du)]$,
  resp. $\hh(\veps^\da\cf)$, by letting $\dt$ act
  in $\C[T^*(\dg/\du)]_\nu$, resp.  $\hh(\veps_\nu^\da\cf)$, via the character
  $\nu$, resp. an inverse character $-\nu$.
  Let $\dt$ act in $\C[T^*(\dg/\du)]\o_\fc \hh(\veps^\da\cf)$ via the
  tensor product action. Then, from the isomorphisms above we obtain
  the following $\BX$-graded, resp. $\dom$-graded, algebra isomorphism:
    \begin{align}
  \hh(\veps^\da(\rr\so\cf))&\ccong \big(\C[T^*(\dg/\du)] \o_\fc
  \hh(\veps^\da\cf)\big)^\dt,\en\text{resp.}\label{rgk}\\
      (\hh(\veps^\da(\rr\so\cf)))_+
    &\ccong \big(\C[T^*(\dg/\du)]_+ \o_\fc \hh(\veps^\da\cf)_+\big)^\dt.
 \nonumber
      \end{align}

The algebra $\C[T^*(\dg/\du)]_+$ 
  is finitely generated by \cite{GR}, \cite{Bro}, and 
  the algebra $\hh(\veps^\da\cf)_+$
  is finitely generated by the assumptions of the theorem.
  It follows that $\C[T^*(\dg/\du)]_+ \o_\fc \hh(\veps^\da\cf)_+$,
  hence its subalgebra of $\dt$-invariants, is finitely generated.
  Thus, $\HF(\cf)$  is finitely generated, as desired.

  To prove the remaing statements of the theorem,
  put $X=\Spec \sfh^{\text{even}}(\cf)$ and let  $\mu: X\to\fcc$  be the structure morphism.
    Thus, $X$ is an affine  irreducible and normal  $J$-scheme and   we must show that
   the scheme $\WH(X)=(\WH\times_\fcc X)/J$,
  a twist of $X$ by
  the $\vpi^*J$-torsor $\pi: \WH\to\greg$,
   is quasi-affine, irreducible and normal.
  Using that the torsor $\pi$  has an \'etale local section, cf. eg.
   \cite[Lemma 3.2.3]{GK}, we deduce that the morphism 
  $\mu\times \pi:      \WH(X)\to \greg$ is \'etale locally isomorphic to the
  first projection $pr: \greg\times_\fcc X\to\greg$.
  This projection is an affine morphism since the scheme $X$ is affine.
  Since  $\greg$ is quasi-affine we deduce that $\WH(X)$ is quasi-affine.
    The variety $X$ is  irreducible and normal by the assumptions of the theorem.
 The morphism $\vpi$ being smooth, 
 it follows by smooth base change
 that $\WH(X)$ is  irreducible and normal.
 
 To complete the proof we use that the functors $\sfh$ and
 $\wh: \qcoh^J(\fcc)\to \qcoh^G(\greg)$ are monoidal.
 Hence,  Theorem \ref{sat thm} yields algebra isomorphisms
    $\hrm^{\text{even}}\Phi(\cf)\cong\Ga(\wh(\oo_X))\cong\C[\WH(X)]$.
  We obtain an isomorphim $\Spec \hrm^{\text{even}}\Phi(\cf)\cong
  \overline{\WH(X)}$. By the first part of the proof we know that
  $\Spec \hrm^{\text{even}}\Phi(\cf)$ is a scheme of finite type.
  Thus, it follows from the above that $\overline{\WH(X)}$
  is an irreducible and normal algebraic variety, and we are done.
  \qed

  \subsection{Functoriality}\label{iwahori sec}
  In this section we will prove Theorem \ref{vincent}, which requires some preparations.

Let $\shh$ be a reductive subgroup
of  a connected reductive  group ${G}$.
Let  $T_\shh\sset B_\shh\sset \shh$
be a maximal torus and a Borel subgroup of $\shh$.
We choose (as we may) a maximal torus and  a Borel subgroup
$T\sset B\sset {G}$
such that $T_\shh \sset T$, resp. $B_\shh\sset B$.
Let $\BX_*(T_\shh)\sset \BX_*(T)$ be the corresponding
cocharacter lattices.
Our choices insure that
 $T_\shh=\shh\cap T$, resp.
 $B_\shh=\shh\cap B$.
 Let  $L=Z_{G}(T_H)$ be the centralizer of $T_H$,
  a Levi subgroup of
  ${G}$ such that $T_H$ is contained in the center of $L$.
   Let $I=\{\gamma\in{G}(O)\mid \gamma(0)\in B\}$
be the Iwahori subgroup of ${G}(K)$ associated with $B$.
Then, the group
$I_\shh:=\shh(O)\cap I$ is  the Iwahori subgroup of $\shh(K)$
associated with $B_\shh=\shh\cap B$,
 and we have $\BX_*(T_\shh)=H(K)\cap \BX_*(T)$.

We identify
$\Gr_\shh$ with the image of the closed
imbedding  $\Gr_{\shh}\into \Gr_{{G}}$. 
Let $\BX_*(T)\iso (\Gr_{G})^{T}$, $
\la\mto x_\la$, be the canonical bijection and
$I.x_\la$,
resp. $I_\shh.x_\la$,  the  Iwahori orbit of $x_\la$.

\begin{lem}\label{iwahori} \vi For any $\la\in\BX_*(T)$ one has
  \[
    \Gr_\shh \cap I.x_\la=\begin{cases}   I_\shh.x_\la &\text{\em if}\ \en \la\in\BX_*(T_H)\\
      \emptyset&\text{\em if}\ \en \la\in\BX_*(T)\sminus\BX_*(T_H).
    \end{cases}
    \]

    \vii Conditions {\em S1-S3} of Sect. \ref{form sec}  hold
    for     $X=\Gr_G$, viewed as a  $T_H$-variety equipped with stratification
    $\Gr_G=\sqcup_{\la\in \BX_*(T_G)}\, I.x_\la$, and the subvariety $Y=\Gr_H$.
    \end{lem}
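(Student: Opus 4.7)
The plan for part (i) is to combine the Iwahori orbit decomposition $\Gr_H=\bigsqcup_{\mu\in\BX_*(T_H)}I_H.x_\mu$ with the disjointness of Iwahori orbits in $\Gr_G$. If $y\in\Gr_H\cap I.x_\lambda$, write $y\in I_H.x_\mu$ for the unique $\mu\in\BX_*(T_H)$ with this property; since $I_H\subseteq I$, the orbits $I.x_\mu$ and $I.x_\lambda$ share $y$ and hence coincide, forcing $\mu=\lambda$ and in particular $\lambda\in\BX_*(T_H)$. The converse inclusion $I_H.x_\lambda\subseteq\Gr_H\cap I.x_\lambda$ is trivial. In particular $\Sigma=\BX_*(T_H)$.

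For part (ii) I would verify the three conditions in turn. For S1, the standard Iwahori-orbit parametrization gives a $T$-equivariant isomorphism of $X_\lambda=I.x_\lambda$ with the affine space $V_\lambda:=\Lie(I)/(\Lie(I)\cap\Ad(\lambda(z))\g(O))$, which decomposes under $T$ as a direct sum of root-line contributions of the form $z^m\g_\alpha$, with $\alpha\in\Phi_G$ and $m$ in a finite range determined by $\alpha$ and $\lambda$ (namely $0\leq m<\langle\alpha,\lambda\rangle$ for $\alpha>0$ and $1\leq m<\langle\alpha,\lambda\rangle$ for $\alpha<0$). Running the parallel construction inside $\Gr_H$ identifies $Y_\lambda=I_H.x_\lambda$ (for $\lambda\in\BX_*(T_H)$) with the direct summand involving only the roots of $\h$, which is visibly a $T_H$-linear subspace of $V_\lambda$. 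For S2, the $T$-weight on the summand $z^m\g_\alpha$ is $\alpha$, and if $\alpha|_{T_H}=0$, equivalently $\alpha\in\Phi_L$ for $L:=Z_G(T_H)$, then $\langle\alpha,\lambda\rangle=0$ for every $\lambda\in\BX_*(T_H)$, so such an $\alpha$ contributes nothing to $V_\lambda$. Consequently every $T$-weight on $V_\lambda$ restricts non-trivially to $T_H$, and the $T_H$-fixed locus of the linear space $V_\lambda$ is $\{0\}=\{x_\lambda\}$, proving S2.

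For S3, I would take the $\Gm$-action on $\Gr_G$ given by the homomorphism $\Gm\to T\times\Gm_{\mathrm{rot}}$, $s\mapsto(\eta(s),s^n)$, where $\eta\in\BX_*(T)$ is dominant regular and $n$ is sufficiently large; this action commutes with $T_H\subseteq T$ and fixes each $x_\lambda$. The trace on the relevant finite-type Schubert subvariety of the opposite semi-infinite cell $S^-_\lambda=U^-(K)x_\lambda$ then serves as an open $T_H$-stable neighborhood $U_\lambda$ contracted onto $x_\lambda$. The main technical point is S1: one must set up the $T_H$-equivariant linear parametrizations of $I.x_\lambda$ and $I_H.x_\lambda$ compatibly, so that the latter sits inside the former as a genuinely linear subspace. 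This should be routine using compatible root-subgroup parametrizations for the inclusion $I_H\subseteq I$, but requires some care when several roots of $G$ restrict to the same root of $H$ on $T_H$.
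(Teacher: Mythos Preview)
Your argument is correct and close to the paper's, with the main differences in how S1 and S2 are verified.

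For part (i) your argument coincides with the paper's. For S1 the paper avoids any explicit root-subgroup parametrization: it simply takes $V_\la=\Lie(I^u/I^u_\la)$, where $I^u$ is the pro-unipotent radical of $I$ and $I^u_\la$ the stabilizer of $x_\la$, and uses the exponential map $v\mapsto\exp(v).x_\la$ to obtain the $T$-equivariant isomorphism $V_\la\iso I.x_\la$. Since $I^u_H=H(O)\cap I^u$ and the stabilizer in $I^u_H$ is $H(O)\cap I^u_\la$, the inclusion of Lie algebras induces a linear embedding $\Lie(I^u_H/I^u_{H,\la})\hookrightarrow V_\la$ whose image is exactly $f_\la(I_H.x_\la)$. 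This is cleaner than your approach and completely sidesteps the concern you flag at the end about compatibly parametrizing root subgroups when several roots of $G$ restrict to the same root of $H$: no such bookkeeping is needed.

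For S2 the paper takes a more conceptual route. Rather than analyzing the $T_H$-weights on $V_\la$, it observes that $(\Gr_G)^{T_H}=\Gr_L$ for $L=Z_G(T_H)$, then applies part (i) with $L$ in place of $H$ to get $(I.x_\la)^{T_H}=\Gr_L\cap I.x_\la=I_L.x_\la$; finally, since any $\la\in\BX_*(T_H)$ is central in $L(K)$, one has $I_L.x_\la=\{x_\la\}$. Your weight argument is a valid alternative and has the virtue of being self-contained once S1 is set up explicitly, while the paper's argument reuses (i) and avoids the explicit description of $V_\la$. For S3 the paper simply records the statement as well known.
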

  
  \begin{proof} Let $\la\in \BX_*(T)$. Then $\Gr_\shh \cap  I.x_\la$ is an
      $I_\shh$-stable subvariety of $\Gr_\shh$, hence this subvariety
      is a union of $I_\shh$-orbits. Let $I_\shh.x_\mu$, where $\mu\in\BX_*(T_H)$,
      be such an orbit. Then, we have
      $x_\mu\in I_\shh.x_\mu\sset I.x_\la$. 
      This forces
      $\mu=\la$. It follows that  $\la\in\BX_*(T_H)$ and $\Gr_\shh \cap  I.x_\la=I_\shh.x_\la$,
      proving  (i). 

      Next, we  verify condition S1.  Let $I^u$ be the (pro)unipotent radical
      of $I$ and $I^u_\la$ the stabilizer of
      $x_\la, \la\in\BX_*(T)$ in $I^u$.
      The adjoint action of $T$ makes  the vector space
      $V_\la:=\Lie(I^u/I^u_\la)$    a finite dimensional 
      representation of $T_H$ and the map
      $\Lie(I^u) \to I.x_\la,\,v\mto \exp(v).x_\la$,  induces a
     $T$-equivariant, hence $T_H$-equivariant,  
     isomorphism $V_\la\iso I.x_\la$.
          Observe further that the group $\shh(O)\cap I^u$ equals the (pro)unipotent radical
          $I^u_H$ of the group  $I_\shh$.  Therefore, for $x_\la,\,\la\in \BX_*(T_H)$,   the stabilizer
          $I^u_{H,\la}$ of
      $x_\la$ in $I^u_H$  equals $\shh(O)\cap I^u_\la$.
      This gives  a $T_H$-equivariant imbedding
      $\Lie(I^u_H/I^u_{H,\la})\into \Lie(I^u/I^u_\la)=V_\la$ such that
      the isomorphism $V_\la\iso I.x_\la$ restricts to
      an isomorphism  $\Lie(I^u_H/I^u_{H,\la})
      \iso      I_\shh.x_\la$.
      Thus, an inverse  $f_\la: I.x_\la\to V_\la$ of
      the isomorphism  $V_\la\iso I.x_\la$
      satisfies condition S1.

      To check condition S2, observe  that     $(\Gr_{G})^{T_H}=\Gr_L$
      and any element of $\BX_*(T_H)$ is central in $L(K)$
      since the torus  $T_H$ is contained in the
      connected center of $L$.
      We know that $x_\la\in \Gr_H$ if and only if $\la\in\BX_*(T_H)$,
      by (i). In that case we find
             \[
    (I.x_\la)^{T_H}=    \Gr_L\cap I.x_\la=I_L.x_\la\cong I_L.x_\la.L(O)/L(O)=
     x_\la.L(O)/L(O) \cong
     \{x_\la\}.
   \]
   This  proves S2.      
       It  is well known that condition S3 holds for  affine Grassmannians.
  \end{proof}
         \medskip

  It follows from Lemma \ref{iwahori} that  results of Section \ref{sec2}
  are applicable in the above setting.
  Write $\rho:\Gr_\shh\into \Gr_{G}$
 for the
 imbedding.
  Then, Corollary \ref{o cor} implies that if
 $\cf\in D^{\text{mix}}_{I_H}(\Gr_{H})$ is    $!$-pure  and
 $\ce\in D^{\text{mix}}_{I_G}(\Gr_{G})$ is pure then
     there is a natural isomorphism
\beq{cor map}
   H^\hdot_{T_H}(\cf\,\so\, \rho^\dagger\ce)
  \iso H^\hdot_{T_H}(\cf)\,\so_{H^\hdot_{T_H}(\Gr_G)}\,H^\hdot_{T_H}(\ce).
 \eeq

We now turn to  Satake categories. 
We have $H^\hdot_{H(O)}(\om_{\Gr_H})=H^\BM_{H(O)}(\Gr_H)\cong \k[J_\dh]$.
Hence, taking $W_{H}$-invariants  in \eqref{cor map} in the case  $\cf=\om_{\Gr_H}$
and using \eqref{GTH},
by repeating the proof of Lemma \ref{Jj} we obtain the following result
 \begin{cor}\label{iwahori prop} If $\rho: H\into G$ is an imbedding of connected reductive groups
   then for any pure  $\ce\in\Ind \sat$
    there is a natural  isomorphism
    \beq{cor m2}
   H^\hdot_{H(O)}(\rho^\dagger\ce)\  \iso\
    \k[J_\dh]\o_{\k[\fcc_\dg]}^{J_\dg}
      H^\hdot_{G(O)}(\ce).
          \eeq
    If $\ce$ is a ring object then this  is a ring isomorphism.\qed
  \end{cor}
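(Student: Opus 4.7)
The strategy is to specialize the isomorphism \eqref{cor map} to $\cf = \om_{\Gr_H}$, take $W_H$-invariants, and then repeat the passage from $\fj$-invariants to $J$-invariants used in the proof of Lemma \ref{Jj}(ii).

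First, substitute $\cf = \om_{\Gr_H}$ (which is $!$-pure) into \eqref{cor map}. Since the dualizing sheaf serves as the unit for the $!$-tensor product, $\om_{\Gr_H}\so\rho^\dagger\ce \cong \rho^\dagger\ce$, and $H^\hdot_{T_H}(\om_{\Gr_H}) = H^\BM_{T_H}(\Gr_H)$. Thus \eqref{cor map} specializes to
\[
H^\hdot_{T_H}(\rho^\dagger\ce)\ \iso\ H^\BM_{T_H}(\Gr_H)\,\so_{H^\hdot_{T_H}(\Gr_G)}\,H^\hdot_{T_H}(\ce).
\]

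Next, I take $W_H$-invariants on both sides. The LHS becomes $H^\hdot_{H(O)}(\rho^\dagger\ce)$ by \eqref{GTH} applied to $H$, and $(H^\BM_{T_H}(\Gr_H))^{W_H} = H^\BM_{H(O)}(\Gr_H)\cong \k[J_\dh]$. Using \eqref{GTH} for $G$ combined with base change along $\t_H\into\t$, one has $H^\hdot_{T_H}(\ce) \cong \C[\t_H]\o_{\C[\fcc_\dg]}\sfh_G(\ce)$ and $H^\hdot_{T_H}(\Gr_G) \cong \C[\t_H]\o_{\C[\fcc_\dg]}\sym_{\C[\fcc_\dg]}\fj_\dg$, with the $W_H$-action concentrated on the $\C[\t_H]$-factor; taking $W_H$-invariants replaces $\C[\t_H]$ by $\C[\fcc_\dh]$ throughout.

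Finally, I repeat the key step of Lemma \ref{Jj}(ii): the algebra $\sym_{\C[\fcc_\dg]}\fj_\dg$ is generated over $\C[\fcc_\dg]$ by $\fj_\dg$, so the $\so$-operation over it reduces to taking $\fj_\dg$-invariants in the ordinary tensor product over $\C[\fcc_\dg]$. Both $\k[J_\dh]$ and $\sfh_G(\ce)$ are flat $\C[\fcc_\dg]$-modules by Lemma \ref{Jj}(i), so the observation from the proof of Lemma \ref{Jj} that $\Ga(M)^{\fj} = \Ga(M^{J})$ for flat $J$-equivariant $M$ (which uses that $J_\dg$ is generically toric, hence connected) upgrades the $\fj_\dg$-invariants to $J_\dg$-invariants. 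This yields the desired identification with $\k[J_\dh]\,\o^{J_\dg}_{\C[\fcc_\dg]}\,H^\hdot_{G(O)}(\ce)$. The ring structure claim when $\ce$ is a commutative ring object follows from the lax monoidality of both $\so$ and the cohomology functor, exactly as in the last sentence of the proof of Lemma \ref{Jj}(ii). The main technical care required is tracking the $W_H$-action: since $W_H$ does not act on $\Gr_G$ preserving $T$, it must be transported through \eqref{GTH} and base change to see that it is concentrated on the $\C[\t_H]$-factor; verifying that \eqref{cor map} is $W_H$-equivariant for this transported action, and compatible with the subsequent reduction from $\fj_\dg$- to $J_\dg$-invariants, is the crux of the argument.
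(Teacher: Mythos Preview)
Your proposal is correct and follows essentially the same approach as the paper: specialize \eqref{cor map} to $\cf=\om_{\Gr_H}$, take $W_H$-invariants using \eqref{GTH}, and then repeat the argument of Lemma~\ref{Jj} to pass from $\fj_\dg$-invariants to $J_\dg$-invariants. The paper states exactly this sequence of moves in one sentence; you have simply unpacked the details (particularly the base change along $\t_H\into\t$ and the location of the $W_H$-action) that the paper leaves implicit.
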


\medskip\noindent
\textsf{\em Proof of Theorem \ref{vincent}.}
 \textit{Step 1:\,  Proof of isomorphism \eqref{vin2}.}\en 
            We factor the morphism $\rho$ as a composition
      $H\xrightarrow{\varrho} H\times G\xrightarrow{pr_2} G$. 
      Then, we have $pr_2^!\rr_G=\om_{\Gr_H}\boxtimes\rr_G$. This sheaf is a $!$-pure
       ring object and by the K\"unneth formula we have an isomorphism
       $H^\hdot_{H(O)\times G(O)}(pr_2^!\rr_G)\cong H^\BM_{H(O)}(\Gr_H) \o H^\hdot_{G(O)}(\rr_G)$.
       One has an algebra
       isomorphism $H^\BM_{H(O)}(\Gr_H)\cong \C[J_\dh ]$, resp.
         $H^\hdot_{G(O)}(\rr_G)\cong \k[\WH]:=\k[T^*_\psi(\dg/U_\dg)]$, by \cite{BF}.
                             Observe further that one has 
         \beq{whJ}
           \WH(J_{\dh\times\dg})\ =\ 
           J_{\dh\times\dg}\times_{\fcc_{\dh\times\dg}}^{pr_2^*J_\dg}  (pr_2^*\WH)\ \cong\ 
           J_\dh\times \WH.
           \eeq
         We deduce that  $H^\hdot_{H(O)\times G(O)}(pr_2^!\rr_G)\cong
         \k[J_\dh\times \WH]\cong\k[\WH(J_{\dh\times\dg})]$.
         This implies  isomorphism \eqref{vin2}   in the case of  the morphism
         $pr_2: H\times G\to G$.

        Thus, we are reduced to the case where our
        morphism $\rho: H\to G$
        is an imbedding. In this case
           isomorphism \eqref{vin2} is a direct consequence of 
            \eqref{cor m2} for $\ce=\rr_G$.
            \smallskip
            
\textit{Step 2:\,  Proof of part (ii).}\en    To prove isomorphism \eqref{vin1}, we compute
      \begin{align*}
     \HF(\rho^\dagger\rr_G) \   \cong \ H^\hdot_{H(O)}(\rr_{H}\so\rho^\dagger\rr_G)&\ \cong\ 
   H^\hdot_{{H(O)}}(\vrho^\dagger(\rr_{H\times G}))\\
                                                                                   &\ \cong\
                                                                                     \C[J_\dh \times_{\fcc_\dh}^{\vrho^*J_{\dh\times\dg}} \vrho^*(\WW_{\dh\times\dg})]\ccong\k[\WHG],
    \end{align*}
    where the third isomorphism follows from \eqref{vin2} and the last isomorphism is \eqref{vin0}.

     The algebras $\C[J_\dh ]$ and  $\k[\WH]$ are  finitely generated
     integrally closed domain.
         We deduce that so is 
         the algebra $\k[\WH(J_{\dh\times\dg})]\cong \k[J_\dh]\otimes \k[\WH]$, by \eqref{whJ}.
         To prove that the         algebra
    $\HF_\shh(\rho^\da\rr_G)$ is finitely generated              we consider the following
      commutative diagram  of closed imbeddings:
    \[
      \xymatrix{
        (\Gr_\shh)^{T_\shh}   \ar@{=}[r]&  \BX_*(T_\shh)     \ar[r]^<>(0.5){i}\ar[d]^<>(0.5){\veps_\shh}
        &\BX_*(T)\ar[d]^<>(0.5){\veps_{G}}\\
     &   \Gr_\shh\ar[r]^<>(0.5){\rho} & \Gr_{G}&
      }
    \]

   By Theorem \ref{fin}(i),
     it suffices to show that the algebra
    $H^\hdot_{T_H}(\veps_\shh^\da\rho^\da\rr_{G})_+$ is finitely generated.
            Using that $\veps_\shh^\da\rho^\da=i^\da\veps_{G}^\da$ from \eqref{gkd}, we deduce
    \[
      H^\hdot_{T_H}(\veps_\shh^\da\rho^\da\rr_{G})_+=H^\hdot_{T_H}(i^\da\veps_{G}^\da\rr_{G})_+=
      \oplus_{\la\in \BX(T_\shh)_+}\,\C[T^*(\dg/{U_\dg})]_\la,
    \]
    where $\BX(T_\shh)_+$ is the dominant Weyl chamber in $\BX(T_\shh)$.
        Since $\BX(T_\shh)_+$ is a finitely generated semigroup
    it follows from \cite{Bro} (and the proof in \cite{GR})
    that the algebra
    $\oplus_{\la\in\BX(T_\shh)_+}\,\C[T^*(\dg/{U_\dg})]_\la$
    is finitely generated, as desired.
     \qed \smallskip

\textit{Step 3:\,  Proof of part (i).}\en 
 Write $S\dgl=\sg$. 
 It is clear that there are isomorphisms
 \[
   (\td)\lo_{S\dgl}(S\dgl\otimes\C[\dg])
   \cong(\td)\o_{S\dgl}(S\dgl\otimes\C[\dg])\cong (\td)\o\C[\dg]\en\text{and}\en
   (M\o\C[\dg])^\dg\cong M,
 \]
 for any $\dg$-representation $M$.
 Also,  $\rho^\dagger\rr_G$ is $!$-pure, so $\Phi(\rho^\dagger\rr_G)$ is formal
 so from Step(2)  we obtain an isomorphism
 $\Phi_H(\rho^\dagger\rr_G)\cong\k[\WHG]$. Using that
 $\Phi_G(\rr_G)=S\dgl\otimes\C[\dg]$, by \cite[Sect. 5]{BFN2},  from the above isomorphisms
 we find
    \begin{align}
      \Phi_H(\rho^\dagger\rr_G)&\cong
      \C[\WHG]   \cong (\C[\WHG]\o\C[\dg])^\dg\label{phir}\\
      &\cong      (\C[\WHG]\lo_{S\dgl} (S\dgl\otimes\C[\dg]))^\dg
        \cong\big(\C[\WHG]\lo_{S\dgl}\Phi_G(\rr_G)\big)^\dg.
        \nonumber
      \end{align}

       For $V\in \Rep\dg$, let $Fr(V)=S\dgl\o V$, a free $S\dgl$-module
       equipped with the diagonal $\dg$-action. These objects 
are compact generators
        of the category $D_{\text{perf}}^\dh(S\dgl)$ and the functor
$F:=\Phi_H\ccirc\rho^\dagger\ccirc \Phi_G\inv$
sends $Fr(V)$ to a finitely generated $\dh$-equivariant
$\sym\dhl$-module (with zero differential).
Furthermore, this functor is continuous.
It follows easily that $F$  is isomorphic
to the functor  $(B\lo_{S\dgl}(\td))^\dg$, where
$B=F(S\dgl\otimes\C[\dg])$ is an
$\dh\times\dg$-equivariant dg $(S\dhl, S\dgl)$-bimodule.
Hence, using \eqref{phir} we deduce isomorphisms
\begin{align*}
  \C[\WHG]   \cong \Phi_H(\rho^\dagger\rr_G)
&\cong
 \big(\C[\WHG]\lo_{S\dgl}\,\Phi_G\inv(S\dgl\otimes\C[\dg])\big)^\dg\\
 & =F(S\dgl\otimes\C[\dg])\cong
  \big(B\lo_{S\dgl}(S\dgl\otimes\C[\dg])\big)^\dg\cong (B\o \C[\dg])^\dg\cong B.
 \end{align*}
We conclude that the functor $F$  is isomorphic
to the functor $(\C[\WHG] \lo_{S\dgl}(\td))^\dg$, and (i) follows.
\qed \smallskip
            
\textit{Step 4:\,  Proof of part (iii).}\en
 From   \eqref{phir}    we deduce the following chain of  isomorphisms         
                    \begin{align*}
                      (\C[\WW_{{G_1}\to {G_2}}]\,\lo_{S{\check{\mathfrak g}_2}}\,\C[\WW_{G_2\to G_3}])^{\check{G}_2}
                      &\cong         \big(\C[\WW_{{G_1}\to {G_2}}]\,
            \lo_{S{\check{\mathfrak g}_2}}\,\Phi_{G_2}(\rho_{23}^\dagger\rr_{G_3})\big)^{\check{G}_2}\\
     &\cong\Phi_{G_1}(\rho_{12}^\dagger\vrho_{23}^\dagger\rr_{G_3})
            \cong \Phi_{G_1}(\rho_{13}^\dagger\rr_{G_3})\\
&\cong(\C[\WW_{{G_1}\to {G_3}}]\,\lo_{S\dgl_3}\, \Phi_{G_3}(\rr_{G_3}))^{\dg_3} \\          
                      &\cong\big(\C[\WW_{{G_1}\to {G_3}}]\,\lo_{S\dgl_3}\, (\S\dgl_3\o\k[\dg_3)\big)^{\dg_3}\
                        \cong\C[\WW_{{G_1}\to {G_3}}].
          \end{align*}
This proves the isomorphism   $\th_{\text{alg}}$ in (iii).

 Next we observe that taking ${\dg_3}$-invariants, resp.
          ${\check{G}_2}$-invariants, is an exact functor on $D_{\text{perf}}^{\check{G}_2}(S\dgl_3)$,
          resp. $D_{\text{perf}}^{\check{G}_2}(S{\check{\mathfrak g}_2})$.
          It follows that for any $M\in D_{\text{perf}}^{\check{G}_2}(S\dgl_3)$ there are canonical isomorphisms
     \begin{align*}
       (\C[\WW_{{G_1}\to {G_2}}]\,\lo_{S{\check{\mathfrak g}_2}}\,(\C[\WW_{G_2\to G_3}]\,\lo_{S\dgl_3}\,M)^{\dg_3})^{\check{G}_2}    &\cong
         (\C[\WW_{{G_1}\to {G_2}}]\,\lo_{S{\check{\mathfrak g}_2}}\,\C[\WW_{G_2\to G_3}]\,\lo_{S\dgl_3}\,M)^{{\dg_3}\times{\check{G}_2}}\\                                                                       &\cong ((\C[\WW_{G_1\to {G_2}}]\,\lo_{S{\check{\mathfrak g}_2}}\,\C[\WW_{G_2\to G_3}])^{\check{G}_2}\,\lo_{S\dgl_3}\,M)^{\dg_3}.
     \end{align*}

 This completes the proof of (iii), hence the proof of the theorem.
   \qed

\subsection{Proof of Theorem \ref{c thm}}\label{c pf}
It was proved in \cite{BFN1}  that
$H^{\text{odd}}_{\gc(O)}(\cz_{\bm, G})=0$ and   ${\mathcal C}_{\bm,\gc}$
is an irreducible normal variety. Moreover, the moment map
 ${\mathcal C}_{\bm,\gc}\to\fcc_\dg$ for the Hamiltonian $J_\dg$-action is a
 flat morphism with Lagrangian fibers and this morphism
 is a $J_\dg$-torsor
 away from a certain discriminant divisor in $\fcc=\fcc_\dg$.
 Further, the formula in \cite[Theorem 4.1]{BFN1} shows that the algebra
$H^\hdot_{T(O)}(\cz_{\bm, T})_+$ is  finitely generated and
it is immediate from the definition of the scheme $\cz_{\bm, G}$ that the fibers of the map
 $q_G: \cz_{\bm, G}\to \Gr_G$  are (infinite dimesional) affine spaces.
 This implies that the the cohomology of the
 $!$-stalk of $\aa_{\bm, G}$ at any $T$-fixed point $\la\in \BX\sset \Gr_G$
 is a pure,  rank one free $\sym\t$-module
 (note that $*$-stalks of  $\aa_{\bm,\gc}$ are not well defined).
 These results have been extended 
 in \cite{BDFRT} to arbitrary symplectic representations which satisfy the
 anomaly cancelation condition.
Thus, part (1)  of the theorem now follows from Proposition  \ref{bm pure}.

Next, we  prove (3).
As has been mentioned in the proof of Theorem \ref{fin}, see Sect. \ref{filt state},
the morphism $\WH({\mathcal C}_{\bm,\gc})\to \greg$
is \'etale locally isomorphic to the
morphism $\greg\times_{\fcc} {\mathcal C}_{\bm,\gc}\to\greg$.
  Hence, using  \eqref{gwd} it  follows   by \'etale descent that  a regular function
  on $\WH({\mathcal C}_{\bm,\gc})$ may be identified
  with a function $f\in\C[\WH]$ such that
  the rational function $\boldsymbol{\tau}_{\WH,\rho}^*(f)$
  is a regular function on $\WH$. Thus, we have $\C[\WH({\mathcal C}_{\bm,\gc})]\cong
  \C[\dc{(\WH)}]$, as desired.

  We now prove (2) by
  adapting the strategy   used by Bellamy \cite{Bel} in his proof that any
  Coulomb branch has
  symplectic singularities. Specifically, we construct  a variety
  $Y$ with symplectic singularities and a generically
  \'etale morphism $\bm^!_G\to Y$. 
  To this end, recall 
  the ring object  $\aa_{\bm,G}$  for our Coulomb branch
  and
  let $\ccc:={\mathcal C}_{\bm, T}$ be the Coulomb branch of $\bm$, viewed as a representation
  of the maximal torus $T$.
  By \cite{BFN2}, one has $\k[\ccc]\cong H^\hdot_{T(O)}(\veps^\dagger\aa_{\bm,G})$,
  where 
  $\veps: \Gr_T\into \Gr_G$ is the imbedding. Hence, the first isomorphism
  in \eqref{rgk} applied to $\cf=\aa_{\bm,G}$ yields an algebra isomorphism
  $H^\hdot_{T(O)}(\veps^\dagger(\rr_G\so\aa_{\bm,G}))\cong
  (\k[T^*(\dg/U_\dg)]\o_{\k[\dtl^*]} \k[\ccc])^\dt$; furthermore, the algebra
  on the right
  is finitely generated since $\k[T^*(\dg/U_\dg)]$ and $\k[\ccc]$
  are finitely generated algebras, by \cite{GR} and \cite{BFN1}, respectively.
  We conclude that
  $\big({\overline{T^*(\dg/U_\dg)} \,\times_{\dtl^*}\,\ccc}\big)/\!/\dt:=
  \Spec\big((\k[T^*(\dg/U_\dg)]\o_{\k[\dtl^*]} \k[\ccc])^\dt\big)$
  is a scheme of finite type and there is an isomorphism
  \[\Spec H^\hdot_{T(O)}(\veps^\dagger(\rr_G\so\aa_{\bm,G}))\cong    
    \big({\overline{T^*(\dg/U_\dg)} \,\times_{\dtl^*}\,\ccc}\big)/\!/\dt.
  \]

  By adjunction, we have a canonical map
  $H^\hdot_{T(O)}(\veps^\dagger(\rr_G\so\aa_{\bm,G}))\to
  H^\hdot_{T(O)}(\rr_G\so\aa_{\bm,G})$. Taking $W$-invariants yields a
  map $H^\hdot_{T(O)}(\veps^\dagger(\rr_G\so\aa_{\bm,G}))^W\to
  H^\hdot_{G(O)}(\rr_G\so\aa_{\bm,G})$. This map is an  injective algebra homomorphism
  by  Lemma \ref{ring} of the next section. Furthermore,
  the induced map $\Spec H^\hdot_{G(O)}(\rr_G\so\aa_{\bm,G})\to
  \big(\Spec H^\hdot_{T(O)}(\veps^\dagger(\rr_G\so\aa_{\bm,G}))\big)/W$
  is a birational isomorphism, by the
  Localization theorem.
  Thus, thanks to \cite[Lemma 2.1]{Bel} and \cite[Proposition 2.4]{Be}, 
  to prove that $\bm^!_G$ has symplectic singularities it suffices
  to show  that the scheme
  $\Spec H^\hdot_{T(O)}(\veps^\dagger(\rr_G\so\aa_{\bm,G}))/W$ 
 is an algebraic variety with   symplectic singularities.

To this end, we consider
 morphisms $T^*(\dg/U_\dg)\to \wt{\dgl^*}\to\dgl^*$,
  where the second morphism is the Grothendieck-Springer map
  and the first morphism is a $\dt$-torsor.
  Let $\wt{\greg}$, resp. $\cx$, be the preimage of the open set $\greg\sset\dgl^*$
  of regular elements
  in $\wt{\dgl^*}$, resp. $T^*(\dg/U_\dg)$.
  It is known that there is an isomorphism $\wt{\greg}\iso \dtl^*\times_\fcc\,\greg$
  and   the complement of $\cx$ in $\overline{T^*(\dg/U_\dg)}$
  has codimension $\geq 2$. It follows that
  $\wt{\greg}$, and hence $\cx$, is a smooth quasi-affine  variety
  and    one has
  $\overline{T^*(\dg/U_\dg)} =\bar\cx$.

    Let $\mu_{\bar\cx}: \bar\cx\to\dtl^*$,
  resp. $\mu_{\cx}:  \cx\to\dtl^*$  and $\mu_\ccc: \ccc\to\dtl^*$,
  be the moment map for the respective
  Hamiltonian action of $\dt$.
  It is known that the morphisms $\mu_{\bar\cx}$ and $\mu_\ccc$ are flat, 
  see \cite{GR}, \cite{BFN1}, and $\mu_\cx$
  is smooth.
  Thus, $\cx\times_{\dtl^*}\ccc$ is a $\dt$-scheme flat over $\dtl^*$.
  Associated with this scheme
  and the $\dt$-torsor $\cx\to \wt{\greg}$,
  there is a scheme $\cx\times_{\dtl^*}^\dt\ccc$  over  $\wt{\greg}$ that fits into 
  a cartesian square in the following diagram,
  were the maps indicated by `$\dt$' are $\dt$-torsors:
  \[
    \xymatrix{
      \ccc\ar[d]_<>(0.5){\mu_\ccc}&     \cx\times_{\dtl^*}\ccc
      \ar[l]_<>(0.5){pr_2}\ar[d]_<>(0.5){pr_1}\ar[r]^<>(0.5){\dt}\ar@{}[dr]|{\square}
      & \cx\times_{\dtl^*}^\dt\ccc 
      \ar[d]_<>(0.5){p}&\\
 \t&     \cx\ar[l]_<>(0.5){\mu_\cx}\ar[r]^<>(0.5){\dt}& \wt{\greg} \ar[r]^<>(0.5){\cong} &
      \greg\times_\fcc\dtl^*
    }
    \]

  Write $\ccr$ for the regular locus of
  $\ccc$. The morphism $\mu_\cx$ being  smooth and quasi-affine,
  so is the second projection  $\cx\times_{\dtl^*}\ccr\to \ccr$,
  by smooth base change.
    We deduce that $\cx\times_{\dtl^*}\ccr$ is a smooth quasi-affine
  variety.
  Observe further  that
   the differential of the map
   $\mu_{\cx}- \mu_\ccc: \cx\times\ccc\to \dtl^*$
   is surjective at any point of  $\cx\times_{\dtl^*}\ccr$,
   since  the (anti-) diagonal action of $\dt$ on  $\cx\times_{\dtl^*}\ccc$
  is free. Hence,
   one may view $\cx\times_{\dtl^*}^\dt\ccr$  as  a Hamiltonian reduction of
  $\cx\times\ccr$ with respect to the
  anti-diagonal $\dt$-action on $\cx\times\ccr$. Thus, the symplectic form
  on $\cx\times\ccr$ induces a  symplectic form on
  $\cx\times_{\dtl^*}^\dt\ccr$.

  The composite $\cx\times_{\dtl^*} \ccr \into \bar\cx\times_{\dtl^*} \ccc
  \to (\bar\cx\,\times_{\dtl^*}\,\ccc)/\!/\dt$ factors through a map
  $\cx\times_{\dtl^*}^\dt\ccr\to (\bar\cx\,\times_{\dtl^*}\,\ccc)/\!/\dt$,
  by the universal property of geometric quotients.
  It is clear that the induced pullback of the regular functions
  gives an algebra map which is equal to the composition of
   the following chain of algebra isomorphisms:
    \[
    \k\big[(\bar\cx\,\times_{\dtl^*}\,\ccc)/\!/\dt\big]\,\iso\,
    \big(\k[\bar\cx]\o_{\k[\dtl^*]}\k[\ccc]\big)^\dt\,\iso\,
    (\k[\cx]\o_{\k[\dtl^*]}\k[\ccr])^\dt\,\iso\,
    \k[\cx\times_{\dtl^*}\ccr]^\dt\,\iso\,
    \k[\cx\times_{\dtl^*}^\dt\ccr].
    \]
   Here, in the second isomorphism we have used that
   $\k[\cx]=\k[\bar\cx]$ and  $\k[\ccr]=\k[\ccc]$, since the variety $\ccc$ is irreducible and normal,
 \cite{BFN1},   so
  the complement of $\ccr$ in $\ccc$ has codimension ${\geq 2}$.
    The algebra isomorphisms above imply that the map
 $\cx\times_{\dtl^*}^\dt\ccr\to (\bar\cx\,\times_{\dtl^*}\,\ccc)/\!/\dt$
 induces an isomorphism $\overline{\cx\times_{\dtl^*}^\dt\ccr}\iso 
 (\bar\cx\,\times_{\dtl^*}\,\ccc)/\!/\dt$.
  It follows that $\cx\times_{\dtl^*}^\dt\ccr$ is a smooth Zariski dense subvariety of
 the scheme $(\bar\cx\,\times_{\dtl^*}\,\ccc)/\!/\dt$. Since the latter
 scheme has finite type, we deduce that $(\bar\cx\,\times_{\dtl^*}\,\ccc)/\!/\dt$
 is a reduced and irreducible, normal variety; moreover,
 the complement of $\cx\times_{\dtl^*}^\dt\ccr$
  has codimension ${\geq 2}$ in $(\bar\cx \,\times_{\dtl^*}\,\ccc)/\!/\dt$.

  To complete the proof 
  it remains to show, thanks to \cite[Theorem 6]{Nam},
  that $(\bar\cx \,\times_{\dtl^*}\,\ccc)/\!/\dt$ is a Cohen-Macaulay
  variety.
  \footnote{I am grateful to Tom Gannon for informing me about this result of Namikawa.}
  To this end, we use that
  the variety $\overline{T^*(\dg/U_\dg)}=\bar\cx$, resp. $\ccc$, has symplectic
  singularities, by \cite{Ga}, resp. \cite{Bel}.
  Hence,  $\bar\cx\times \ccc$  has rational singularities.
  Therefore,
   $(\bar\cx\times \ccc)/\!/\dt$  has rational singularities,  by \cite{Bou};
      in particular, $(\bar\cx\times \ccc)/\!/\dt$ is a  Cohen-Macaulay variety.
      From this, we will deduce that $(\bar\cx \,\times_{\dtl^*}\,\ccc)/\!/\dt$ is  Cohen-Macaulay
      as follows.
   Let  $\bar\mu: (\bar\cx\times \ccc)/\!/\dt\to \dtl^*\times\dtl^*$
    be the morphism 
    induced by the map $\mu_{\bar\cx}\times\mu_\ccc$, and let
    ${\mathcal I}_\De\subset \k[\dtl^*\times\dtl^*]$
  be the ideal of  the diagonal  $\dtl^*_\De\sset \dtl^*\times\dtl^*$.
  Then, by definition one has
  $\k[\bar\mu\inv(\dtl^*_\De)]=
  \k[\bar\cx\times \ccc]^\dt\big/\k[\bar\cx\times \ccc]^\dt\cdot
    \bar\mu^*({\mathcal I}_\De)$, where
     $\bar\mu\inv(\dtl^*_\De)$  is the {\em scheme-theoretic}
    preimage of   $\dtl^*_\De$.
      On the other hand, by Hilbert
    there is an algebra isomorphism
  \[     
   (\k[\bar\cx\times \ccc]/I_\De)^\dt\ccong
    \k[\bar\cx\times \ccc]^\dt\big/\k[\bar\cx\times \ccc]^\dt\cdot
    \bar\mu^*({\mathcal I}_\De),
    \]
  where $I_\De$ denotes
 the ideal of $\k[\bar\cx\times \ccc]$
  generated by $(\mu_{\bar\cx}\times\mu_\ccc)^*({\mathcal I}_\De)$.
  We obtain an isomorphism
  $(\bar\cx\,\times_{\dtl^*}\,\ccc)/\!/\dt=
  \Spec\big((\k[\bar\cx\times \ccc]/I_\De)^\dt\big)\ccong
        \bar\mu\inv(\dtl^*_\De)$.    
      Now, $\k[\bar\cx\times \ccc]^\dt$ is a flat
      $\k[\dtl^*\times\dtl^*]$-module,
      since it is a direct summand of the $\k[\dtl^*\times\dtl^*]$-module
      $\k[\bar\cx\times \ccc]$, which we know is flat.
      Hence, the equations of the diagonal
      $\dtl^*_\De\sset \dtl^*\times\dtl^*$ form a regular sequence in
      the Cohen-Macaulay algebra
      $\k[\bar\cx\times \ccc]^\dt=\k[(\bar\cx\,\times\ccc)/\!/\dt]$.
      This implies that the scheme $\bar\mu\inv(\dtl^*_\De)\cong(\bar\cx\,\times_{\dtl^*}\,\ccc)/\!/\dt$ is  also
      Cohen-Macaulay, and we are done.\qed
  
   \section{Finite generation}
         \label{gr pf}
       Recall that $W$ is the Weyl group and
       $\dom$ is the dominant Weyl chamber in   $\BX=\BX_*(\tv)$.
       For $\la\in \dom$, let $W_\la\sset W$ be the stabilizer, resp.
       $W.\la$ the $W$-orbit, of $\la$.
       Let $\Si\sset \BX$ be the set of simple coroots and
              `$\leq$'  the standard partial order on $\dom$
              such that $\la\leq\mu$ if and only if $\mu-\la$ is a sum of simple
              coroots.

       \subsection{Algebraic preliminaries}
       \label{alg pre}
       We begin with an elementary observation:
\beq{WW}
W_{\la+\mu}=W_\la\cap W_\mu\qquad\forall \la,\mu\in\dom.
\eeq
To see this, consider the height function
$ht: \BX\to\Z,\, \sum_{\al\in\Si}\, n_\al  \al\mto \sum_{\al\in\Si} n_\al$.
For any
$\nu\in \dom$ and  $w\in W\sminus W_\nu$, one has $ht(w(\nu))<
ht(\nu)$.
Therefore, for $\la,\mu\in\dom$, we have
\begin{align}
w\not\in W_{\la+\mu}\en&\Leftrightarrow\en
ht(w(\la+\mu)) <
                         ht(\la+\mu)\en\label{lamu}\\
  &\Leftrightarrow\en
  ht(w(\la)) < ht(\la)\en\text{or}\en ht(w(\mu))<ht(\mu)
\en\Leftrightarrow\en
  w\not\in W_{\la} \en\text{or}\en w\not\in W_{\mu}.\hfill\qquad\qed
  \nonumber
\end{align}

Let $B=B^{\text{even}} \oplus B^{\text{odd}}=
\oplus_{\la\in \dom}\,B_\la^{\text{even}} \oplus B^{\text{odd}}_\la$ be an $\dom$-graded supercommutative
algebra 
  and  $M=\oplus_{\la\in \dom}\,M_\la$
  an  $\dom$-graded
  $B$-module. 
  We say that $B$, resp. $M$, is {\em partially equivariant} if for each $\la\in \dom$
  there is a linear $W_\la$-action in $B_\la$, resp. $M_\la$,
  such that $w(b_\la b_\mu)=w(b_\la) w(b_\mu)$,
  resp. $w(b_\la m_\mu)=w(b_\la) w(m_\mu)$,  for all $w\in W_{\la+\mu},\,
  b_\la\in B_\la,\, b_\mu \in B_\mu$, resp. $m_\mu\in M_\mu$.
  These equations make sense thanks \eqref{WW}.
  We define  
  \[B\nana:=\oplus_{\la\in\dom}\, (B_\la)^{W_\la},\en\text{resp.}\en
    M\nana:=\oplus_{\la\in\dom}\, (M_\la)^{W_\la}.
    \]
       It follows from \eqref{WW} that one has an inclusion
       $(B_\la)^{W_\la}\,(M_\mu)^{W_\mu}\sset (M_{\la+\mu})^{W_{\la+\mu}}$.
       We deduce (taking $M=B$) that $B\nana$ is  a subalgebra of $B$,
       resp. $M\nana$ is an $B\nana$-submodule of $M$.

  \begin{prop}\label{sisi} 
If the algebra $B$ is  finitely generated then $B\nana$
  is a  finitely generated  algebra and $B$    is a finitely generated 
  $B\nana$-module.
\end{prop}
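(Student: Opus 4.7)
The plan is to show first that every homogeneous element of $B$ is integral over $B\nana$, and then to deduce the two finite-generation assertions via the Artin--Tate lemma. The key algebraic observation I will exploit is the identity $W_{k\la}=W_\la$ for every $\la\in\dom$ and every positive integer $k$, which follows at once from $w(k\la)=k\la \Longleftrightarrow w(\la)=\la$ in the $\mathbb Q$-span of $\BX$; it is this identity, special to the $\dom$-grading, that forces elementary symmetric functions of a $W_\la$-orbit to land in $B\nana$ (and that would fail for an arbitrary $\BX$-grading).

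For the integrality step I fix $\la\in\dom$ and an even homogeneous element $b\in B_\la^{\text{even}}$ and consider the monic polynomial
\[
N_\la(X)\;:=\;\prod_{w\in W_\la}\bigl(X-w(b)\bigr)\in B[X].
\]
Its coefficients are, up to sign, the elementary symmetric functions $e_k$ in the finite orbit $\{w(b):w\in W_\la\}$. Each $e_k$ is a sum of $k$-fold products of elements of $B_\la$, so it lies in $B_{k\la}$ and is manifestly $W_\la$-invariant; by the observation above $W_{k\la}=W_\la$, whence $e_k\in (B_{k\la})^{W_{k\la}}\subset B\nana$. Substituting $X=b$ kills the $w=1$ factor, so $N_\la(b)=0$, exhibiting $b$ as integral over $B\nana$. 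Odd homogeneous elements are handled trivially: supercommutativity forces $b^2=0$ for $b\in B_\la^{\text{odd}}$, so any such $b$ satisfies $X^2=0\in B\nana[X]$.

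Choosing finitely many homogeneous generators $b_1,\ldots,b_n$ of the $\C$-algebra $B$, each $b_i$ is integral over $B\nana$, so the subring $B=B\nana[b_1,\ldots,b_n]$ is a finitely generated $B\nana$-module. Finally, I would apply the Artin--Tate lemma to the tower $\C\subset B\nana\subset B$: since $B$ is finitely generated as a $\C$-algebra and finitely generated as a module over the intermediate ring $B\nana$, the ring $B\nana$ must itself be a finitely generated $\C$-algebra, proving both assertions simultaneously. I do not foresee a serious obstacle; the only mildly subtle point is the parity bookkeeping in the supercommutative setting, which is dispatched immediately by the observation that odd homogeneous elements square to zero.
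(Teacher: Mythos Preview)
Your argument is correct and takes a genuinely different route from the paper's.

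The paper argues in the style of Hilbert's classical proof for rings of invariants. It first isolates an auxiliary claim (Claim~\ref{claim4}): if homogeneous elements $m_i\in(M_{\mu_i})^{W_{\mu_i}}$ generate the $B$-module $B\,M\nana$, then they already generate $M\nana$ over $B\nana$; this is proved by applying the averaging operator $\si_\nu=\frac{1}{|W_\nu|}\sum_{w\in W_\nu}w$. Taking $M=I$ to be the augmentation ideal shows $I\nana$ is a finitely generated ideal of $B\nana$, whence $B\nana$ is a finitely generated algebra. Module-finiteness of $B$ over $B\nana$ is then obtained by a separate trick: one equips $\C W\otimes B$ with a twisted partially equivariant structure, applies Claim~\ref{claim4} again to see that $(\C W\otimes B)\nana$ is a finitely generated $B\nana$-module, and observes that $B$ embeds as a $B\nana$-submodule via $\oplus_\la(\C W_\la\otimes B_\la)^{W_\la}\cong\oplus_\la B_\la$.

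Your approach reverses the logical order and is more elementary: you first prove module-finiteness directly by exhibiting each homogeneous element as integral over $B\nana$ (the norm polynomial for even elements, $X^2$ for odd ones), and only then deduce algebra-finiteness of $B\nana$ via Artin--Tate. The key observation $W_{k\la}=W_\la$ that forces the elementary symmetric functions into $B\nana$ replaces the paper's averaging argument; it is cleaner and avoids the auxiliary $\C W\otimes B$ construction entirely. On the other hand, the paper's Claim~\ref{claim4} is stated for arbitrary partially equivariant modules $M$, a level of generality your integrality argument does not directly address.

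One small caveat: your norm-polynomial argument tacitly uses that the $W_\la$-action preserves the parity decomposition $B_\la=B_\la^{\text{even}}\oplus B_\la^{\text{odd}}$, so that the factors $X-w(b)$ for even $b$ have even, hence central, coefficients and the product is order-independent and $W_\la$-invariant. This is not literally part of the paper's axioms for ``partially equivariant'', but it holds in all the applications (the action preserves cohomological degree), and your reduction to the commutative subalgebra $B^{\text{even}}$ is legitimate under that hypothesis.
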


    The proof of  the proposision mimics the classic argument due to Hilbert as follows.
    Let $BM\nana\sset M$ be the $B$-submodule generated by $M\nana$.
    We start with 
  \begin{claim}\label{claim4} If $m_i\in (M_{\mu_i})^{W_{\mu_i}},\,i=1,\ldots,n$, generate
  $B M\nana$ as a $B$-module then these elements
  generate $M\nana$ as an  $B\nana$-module.
\end{claim}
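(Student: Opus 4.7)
Given $m\in M\nana_\la=(M_\la)^{W_\la}$, by hypothesis there exist homogeneous $b_i\in B_{\la-\mu_i}$ (with the convention $b_i=0$ if $\la-\mu_i\notin\dom$) such that $m=\sum_i b_i m_i$. The plan is a Reynolds-averaging argument, keyed on the identity $W_{\la+\mu}=W_\la\cap W_\mu$ from \eqref{WW}.

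The first step averages over $W_\la$. Because $W_\la=W_{(\la-\mu_i)+\mu_i}=W_{\la-\mu_i}\cap W_{\mu_i}\subseteq W_{\mu_i}$, each $m_i$ is fixed by $W_\la$, and by partial equivariance $w(b_i m_i)=w(b_i) m_i$ for all $w\in W_\la$. Since $m$ itself is $W_\la$-invariant, averaging the identity $m=\sum_i b_i m_i$ over $W_\la$ produces
\[
m=\sum_i \tilde b_i m_i,\qquad \tilde b_i:=\tfrac{1}{|W_\la|}\sum_{w\in W_\la} w(b_i)\in (B_{\la-\mu_i})^{W_\la}.
\]
Note however that $W_\la\subseteq W_{\la-\mu_i}$, so $B\nana_{\la-\mu_i}=(B_{\la-\mu_i})^{W_{\la-\mu_i}}\subseteq (B_{\la-\mu_i})^{W_\la}$: the coefficients $\tilde b_i$ lie in a \emph{larger} invariance class than is required for membership in $B\nana$.

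The second step promotes $W_\la$-invariance to full $W_{\la-\mu_i}$-invariance. Define $\bar b_i:=\tfrac{1}{|W_{\la-\mu_i}|}\sum_{v\in W_{\la-\mu_i}} v(\tilde b_i)\in B\nana_{\la-\mu_i}$ and put $m':=\sum_i \bar b_i m_i$. A direct check using \eqref{WW} shows that $m'\in M\nana_\la$, so the discrepancy $m-m'\in M\nana_\la\subseteq BM\nana$ admits a further expansion $m-m'=\sum_i b_i^{(1)} m_i$. The plan is to iterate this procedure, measuring progress by the $W_{\la-\mu_i}$-isotypic filtration on the coefficients and arguing that the non-trivial isotypic content strictly decreases at each pass; finite-dimensionality of the graded pieces then forces termination after finitely many steps with vanishing discrepancy, yielding the desired $B\nana$-expansion of $m$.

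The main obstacle is precisely this termination/vanishing analysis: the naive componentwise Reynolds operator $R\colon B\to B\nana$ fails to be $B\nana$-linear under the partial $W$-action (because the subgroup $W_{\la+\mu}$ controlling the product and the subgroup $W_\mu$ controlling the Reynolds average on $B_\mu$ generally differ), so Hilbert's classical invariant-theoretic reduction does not apply verbatim. The argument must instead exploit the syzygy structure of the presentation $B^n\twoheadrightarrow BM\nana$: the relations among the $m_i$ are what allow the discrepancies $(\tilde b_i-\bar b_i)m_i$ to be redistributed into genuine $B\nana$-combinations of the $m_i$, using crucially that each $m_i$ is $W_{\mu_i}$-fixed so that the partial-equivariance identities remain available at every step of the iteration.
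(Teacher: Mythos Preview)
Your first step---averaging the relation $m=\sum_i b_im_i$ over $W_\la$ to get $m=\sum_i\tilde b_im_i$ with $\tilde b_i\in(B_{\la-\mu_i})^{W_\la}$---is precisely the paper's entire proof: the paper writes $m_\nu=\sum_i\sigma_\nu(b_i)m_i$ and stops, asserting that this already lies in $B\nana m_1+\cdots+B\nana m_n$. You are right to be uneasy about that last step: since $W_\la=W_{\la-\mu_i}\cap W_{\mu_i}$ can be a \emph{proper} subgroup of $W_{\la-\mu_i}$, the averaged coefficient $\tilde b_i$ is only $W_\la$-invariant and need not lie in $B\nana_{\la-\mu_i}=(B_{\la-\mu_i})^{W_{\la-\mu_i}}$.

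However, the iterative/syzygy repair you outline cannot succeed, because the Claim is \emph{false} in the stated generality. Take $W=\{1,s\}$ (type $A_1$), so $\dom=\Z_{\ge0}$ with $W_0=W$ and $W_n=\{1\}$ for $n\ge1$. Let $B=\k[t,u]$ with $\deg t=0$, $\deg u=1$, and let $s$ act on $B_0=\k[t]$ by $t\mapsto-t$; the partial-equivariance compatibility is vacuous except in degree~$0$, where it obviously holds. Then $B\nana=\k[t^2]\oplus\bigoplus_{n\ge1}u^n\k[t]$. Take $M=I=\bigoplus_{n\ge1}u^n\k[t]$ with its (trivial) $W_n$-actions; then $M\nana=I=BM\nana$, and the single element $m_1=u\in(M_1)^{W_1}$ generates $BM\nana$ as a $B$-module. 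Yet $B\nana\cdot u$ meets degree~$1$ only in $\k[t^2]\,u$, so $tu\in M\nana_1\sminus B\nana m_1$. Thus the hypothesis of the Claim holds while the conclusion fails---and this is exactly the case $M=I$ to which the paper applies the Claim in the proof of Proposition~\ref{sisi}. The gap you detected is therefore real, both in your sketch and in the paper's argument, and no amount of Reynolds iteration or syzygy bookkeeping will close it.
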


\begin{proof}[Proof of Claim]
  Let $\nu\in\dom$  and $\si_\nu: M\to M^{W_\nu}$ be the averaging operator
  $m\mto \frac{1}{|W_\nu|}\sum_{w\in W_\nu}\, w(m)$.
  Since the elements $m_i$ generate $BM\nana$,
 any element $m_\nu \in M_\nu \cap (B M\nana)$
  can be written in the form 
  $m_\nu:=b_1 m_1+\ldots b_n m_n\in M_\nu$ for some
  $\la_1,\ldots,\la_n\in\dom$
  such that $\la_i+\mu_i=\nu$  for all $i$ and some $b_i\in B_{\la_i},\,i=1,\ldots,n$.
  Observe that \eqref{WW} implies that  for all $i$ the group $W_\nu$ is
 contained in   $W_{\mu_i}$, so we have $\si_\nu(m_i)=m_i$ for all $i$.
   In the special case where  $m_\nu\in (M_\nu)^{W_\nu}=M_\nu\cap M\nana$, we find
 \[
  m_\nu= \si_\nu(m_\nu)=
   \si_\nu(b_1) m_1+\ldots \si_\nu(b_n) m_n\in B\nana m_1+\ldots+B\nana m_n,
 \]
  and our claim follows.
\end{proof}

\begin{proof}[Proof of Proposition \ref{sisi}]
 Assume that the algebra $B$ is  finitely generated.
 Let $I:=\oplus_{\la\in\dom\sminus\{0\}}\, B_\la$,
  resp. $B\nana\cap I=I\nana=\oplus_{\la\in\dom\sminus\{0\}}\, (B_\la)^{W_\la}$,
  be  the augmentation ideal of $B$, resp. $B\nana$.
     To prove that  $B\nana$ is a finitely generated algebra, it suffices
     to show that $I\nana$
     is  a finitely generated ideal (this statement holds for supercommutative algebras
     with the same proof as in the case of commutative algebras).
Since $B$ is   noetherian  the ideal $B.I\nana$, of $B$, is
generated by finitely many elements $m_1,\ldots,m_n$,
which we may assume  without loss of generality  
to be elements of $I\nana$. Applying 
Claim \ref{claim4}  in the case where $M=I$
we deduce that the elements $m_1,\ldots,m_n$
  generate the ideal $I\nana$,
  as desired.

  It remains to prove that $B$ is finitely generated as a $B\nana$-module.
  To this end, let $\C W$ be the group algebra of $W$ viewed as the
  regular representation of
  $W$. We make 
    $\C W \o B=\oplus_{\la\in\dom}\, 
  \C W \o B_\la$ a partially equivariant $B$-module by letting
  $B$ act on $\C W \o B$ by $_1: w\o b_2\mto w\o b_1b_2$ and
  letting $W_\la$ act in the direct summand $\C W \o B_\la$ by
  $w': w\o b\mto w(w')\inv\o w'(b)$.
  Thus, $\C W \o B$ is a free $B$-module of rank $|W|$
  and $(\C W \o B)\nana :=\oplus_{\la\in\dom}\, (\C W \o B_\la)^{W_\la}$  is 
  a $B\nana$-submodule of that module.
  The $B$-submodule of $\C W \o B$ generated by
  $(\C W \o B)\nana$  is finitely generated,
  since $B$ is noetherian. Therefore, $(\C W \o B)\nana$
  is a finitely generated $B\nana$-module, by 
  Claim \ref{claim4}. 
  The vector space $\oplus_{\la\in\dom}\, (\C W_\la\o B_\la)^{W_\la}
  \cong \oplus_{\la\in\dom}\, B_\la$ is 
  a  $B\nana$-submodule of $(\C W \o B_+)\nana$ isomorphic to $B$.
  Thus, this  submodule is finitely generated, since we proved that
  $B\nana$ is finitely generated, hence noetherian.
  \end{proof}

  \begin{rem} Let $\C\BX_+$ be
    the semigroup algebra of $\BX_+$.
    The $\BX_+$-grading gives a $T$-action on $B$ and this action extends to an 
    action of the  algebraic monoid $\bar T=\Spec(\C\BX_+)$.
    \erem

 Next, let $A= \oplus_{\la\in \BX}\, A_\la$, resp. $M= \oplus_{\la\in \BX}\, M_\la$,
  be an $\BX$-graded algebra, resp.
   $A$-module, 
  equipped with a  $W$-action by algebra, resp. $A$-module, automorphisms
    such that $w(A_\la)\sset A_{w(\la)}$, resp. $w(M_\la)\sset M_{w(\la)}$,
  for all $w\in W$.
  Then, $A_+=\oplus_{\la\in \dom}\, A_\la$, resp. $M_+=\oplus_{\la\in \dom}\, M_\la$, is a
  partially equivariant algebra, resp. $A_+$-module.
  Thus, one has the subalgebra $A_+\nana:=(A_+)\nana\sset A_+$, 
  resp. $A_+\nana$-submodule $M_+\nana:=(M_+)\nana\sset M_+$.

  \begin{cor}\label{tom} Assume that the semigroup $\dom$ is free.
    If $A$ is an integrally closed domain then so is $A_+$; in this case,
    $\Spec A_+ \to \Spec A_+\nana$ is the normalization map.
  \end{cor}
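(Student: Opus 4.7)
The plan is to establish separately that (a) $A_+$ is an integrally closed domain and (b) $A_+$ is integral over $A_+\nana$ with $\operatorname{Frac} A_+=\operatorname{Frac} A_+\nana$; combined, these identify $A_+$ with the integral closure of $A_+\nana$ inside $\operatorname{Frac} A_+\nana$, which is the normalization statement. The ring $A_+\sset A$ is automatically a domain, and since $\operatorname{Frac} A_+\sset \operatorname{Frac} A$ while $A$ is integrally closed in $\operatorname{Frac} A$, claim (a) reduces to showing that $A_+$ is integrally closed \emph{inside} $A$.

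For this I would use the coaction $\si\colon A\to A\o_\C\C[\BX]$, $a_\la\mapsto a_\la\o e^\la$, induced by the $\BX$-grading, and observe that by construction $A_+=\si\inv(A\o_\C\C[\dom])$. Freeness of the semigroup $\dom$ gives $\C[\dom]\cong \C[x_1,\ldots,x_r]$, so $A\o_\C\C[\dom]\cong A[x_1,\ldots,x_r]$ is normal as a polynomial extension of the integrally closed domain $A$. After the harmless reduction to the case $\BX=\Z\dom$, obtained by replacing $A$ with its subalgebra $\oplus_{\la\in\Z\dom}A_\la$ (the invariants under the finite diagonalizable group $\Hom(\BX/\Z\dom,\Gm)$, which are still normal), one has $\C[\BX]\cong\C[\dom][x_1\inv,\ldots,x_r\inv]$, a localization of $\C[\dom]$. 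Thus $A\o\C[\BX]$ embeds in $\operatorname{Frac}(A\o\C[\dom])$, and normality of $A\o\C[\dom]$ forces it to be integrally closed inside $A\o\C[\BX]$. Applying $\si$ to an integrality equation for $b\in A$ over $A_+$ shows that $\si(b)$ is integral over $A\o\C[\dom]$, hence $\si(b)\in A\o\C[\dom]$, and therefore $b\in A_+$.

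For (b) the integrality step is a Hilbert-style averaging: for homogeneous $a\in A_\la$, the polynomial $P(X)=\prod_{w\in W_\la}(X-w(a))$ is monic, kills $a$, and its coefficient of $X^{|W_\la|-k}$ is a $W_\la$-invariant elementary symmetric function of the orbit $W_\la\cdot a$ that lies in $A_{k\la}$. Iterating \eqref{WW} gives $W_{k\la}=W_\la$ for every $k\geq 1$, so these coefficients lie in $A_{k\la}^{W_{k\la}}\sset A_+\nana$, showing every homogeneous—hence every—element of $A_+$ is integral over $A_+\nana$. For the equality of fraction fields I would choose a regular dominant $\mu$ (so that $W_\mu=\{1\}$) for which $A_\mu$ contains a nonzero element $b$; then $b\in A_\mu^{W_\mu}\sset A_+\nana$, and by \eqref{WW} one has $W_{\la+\mu}=W_\la\cap W_\mu=\{1\}$, so for every homogeneous $a\in A_\la$ we get $ab\in A_{\la+\mu}=A_{\la+\mu}^{W_{\la+\mu}}\sset A_+\nana$, yielding $a=(ab)/b\in \operatorname{Frac} A_+\nana$. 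The main subtlety I anticipate is ensuring the existence of a nonzero element of $A$ in some regular dominant degree, a mild non-degeneracy condition implicit in the setup.
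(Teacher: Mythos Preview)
Your argument is correct, and your route for part (a) differs from the paper's. The paper reduces by induction on the rank of $\BX$ to the case $\BX=\Z$ and then argues directly: if $b=p+q$ with $q\in A_+$ and $p=p_{-m}+\cdots+p_{-1}$ satisfies a monic equation over $A_+$, the degree $-md$ component forces $p_{-m}^d=0$. Your coaction argument is more conceptual and avoids the induction; both exploit freeness of $\dom$ in the same essential way (that $\C[\dom]$ is a polynomial ring). For part (b) the paper invokes Proposition~\ref{sisi} to get that $A_+$ is a finitely generated $A_+\nana$-module, which implicitly requires $A_+$ to be finitely generated; your explicit averaging polynomial $\prod_{w\in W_\la}(X-w(a))$ bypasses this hypothesis and is a cleaner route to integrality. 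Your fraction-field argument is identical to the paper's, and the paper shares the same unstated nondegeneracy assumption you flag (existence of a nonzero element in a regular dominant degree); neither addresses it.
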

  \begin{proof}\footnote{I am grateful to Tom Gannon for providing this proof.}
    The first statement is a consequence of  the following more general claim:
    let $X$ be a free abelian group with basis $\omega_1,\ldots\omega_\ell$
    and 
   $A=\oplus_{\la\in X} A_\la$  an $X$-graded  integrally closed domain. 
    Then the algebra $A_+:=\oplus_{\la\in X_+} A_\la$, where $X_+$ is the free semigroup
    generated by  $\omega_1,\ldots\omega_\ell$, is  integrally closed.
    Using induction on $\ell$ the claim may be reduced  to the case $\ell=1$.
    Thus, we may (and will) assume that $X=\Z$, so $A=\oplus_{n\in\Z}\, A_n$.
   
    Since $A$  is integrally closed,    it suffices to prove that if $b\in \text{Frac}(A_+)\cap A$
    satisfies an equation $f(b)=0$ for some monic polynomial
    $f(t) = t^d + f_{d-1}t^{d-1} + \ldots + f_0 \in A_+[t]$, then $b\in A_+$.
        We may write
$ b = p + q$  for $ q \in A_+ $ and
$ p = p_{-m} + p_{-m+1}  + \ldots+ p_{-1} $,  where $ p_i \in  A_i$.
If $b \not\in A_+$ we may assume  without loss of generality   that
 $p_{-m}\neq0$.
Expanding  the equation $f(p+q)=0$ we find that
$p^d + dp^{d-1}q + \ldots+ q^d + f_{d-1}p^{d-1} + \ldots+ f_{d-1}q^{d-1} +\ldots+ f_0 = 0$. 
The homogeneous component of the LHS of degree  $-md$
equals  $(p_{-m})^d$. Thus, the equation implies that $ p_{-m}=0$, a contradiction.

To complete the proof, recall that $A_+$ is a finitely generated $A_+\nana$-module,
by Proposition \ref{sisi}. Hence, it remains to show that the inclusion $\text{Frac}(A_+\nana)\into
\text{Frac}(A_+)$ is an equality. 
To this end,
let $a=\sum_{\nu\in S}\, a_\nu\in A_+$, where $a_\nu\in A_\nu$ and $S$ is a finite subset of $\dom$.
Pick  a nonzero element
$q\in A_\la$ such that $\la\in\dom$ is  sufficiently general so that $W_\la=\{1\}$.
By \eqref{WW}, for all $\nu\in S$ we have  $W_{\nu+\la}=\{1\}$.
Therefore, $q$ and all elements $a_\nu\cdot q$ are in $A_+\nana$.
Hence, we have $a=\sum_{\nu\in S}\, \frac{a_\nu \cdot q}{q}\in \text{Frac}(A_+\nana)$.
We conclude that
$A_+\sset \text{Frac}(A_+\nana)$, which implies that $\text{Frac}(A_+\nana)=
\text{Frac}(A_+)$.
\end{proof}

In the above setting,  for each $\la\in\dom$ let
  $(A_\la)^{W_\la} \to A^W$, resp.
  $(M_\la)^{W_\la}\to M^W$, be
    the  symmetrization map  $x\mto \frac{1}{|W|}\sum_{w\in W}\,w(x)$.
It is clear that these maps yield a bijection $\si: A\nana\iso A^W$,
resp. $\si: M\nana\iso M^W$.

    We will need the following simple result that
 has been essentially observed in \cite{BFN1}.
\begin{lem}\label{sisi2}
    For any $a\in (A_\la)^{W_\la}$ and $m\in (M_\mu)^{W_\mu}$, where
  $\la,\mu\in\dom$, one has
  $\si(a m)= \si(a)\si(m)+\sum_\nu\, \si(m_\nu)$,
 for some $\nu\in\dom$ such that $\nu<\la+\mu$ and 
  $m_\nu\in   (M_\nu)^{W_\nu}$.
    \end{lem}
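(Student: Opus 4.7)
The plan is to show that the difference $\sigma(a)\sigma(m) - \sigma(am) \in M^W$ has zero component at grade $\la+\mu$, and that all its nonzero dominant components occur at grades $\nu \in \dom$ with $\nu < \la+\mu$. Given the bijection $\sigma \colon A^\flat \iso A^W$ (applied to $M$ in place of $A$), this immediately delivers the claimed identity $\sigma(am) = \sigma(a)\sigma(m) + \sum_\nu \sigma(m_\nu)$.

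I would first rewrite
\[\sigma(a)\sigma(m) \;=\; \sum_{[u] \in W/W_\la}\sum_{[v] \in W/W_\mu} u(a)\,v(m),\]
so that each summand lies in the $\BX$-grade $u(\la)+v(\mu)$. The key dominance estimate is: for $\la, \mu \in \dom$ and any $u, v \in W$, one has $u(\la)+v(\mu) \leq \la+\mu$ in the dominance order, with equality iff $u \in W_\la$ and $v \in W_\mu$. This follows from the classical fact that a dominant element is the unique maximum of its $W$-orbit in the dominance order, applied separately to $u(\la) \leq \la$ and $v(\mu) \leq \mu$ and summed; equality in the sum forces equality on each piece.

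Consequently every $\dom$-grade appearing in $\sigma(a)\sigma(m)$ is $\leq \la+\mu$, and the $(\la+\mu)$-component picks up only the single term $am$ coming from the trivial cosets $[u] = [v] = [1]$. On the other hand, $\sigma(am) = \sum_{[w] \in W/W_{\la+\mu}} w(am)$ also has $(\la+\mu)$-component equal to $am$ (from $[w] = [1]$, using $W_{\la+\mu} = W_\la \cap W_\mu$ from~\eqref{WW}). These top terms cancel, so the difference $\sigma(a)\sigma(m) - \sigma(am)$ is a $W$-invariant element of $M$ whose nonzero dominant-grade components are supported strictly below $\la+\mu$. Decomposing this difference along the bijection $\sigma \colon M^\flat \iso M^W$ produces the desired $m_\nu \in (M_\nu)^{W_\nu}$ with $\nu < \la+\mu$.

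The main obstacle is bookkeeping of scalars: one must verify that the coefficient of $am$ at grade $\la+\mu$ on the two sides matches under whichever normalization of $\sigma$ is in force. The matching is immediate for the coset-sum normalization $\sigma(x) = \sum_{[w]\in W/W_\la} w(x)$, which is the version relevant for the intended use in Proposition~\ref{part}; for the averaging normalization $\sigma(x)=\tfrac{1}{|W|}\sum_w w(x)$ one has to absorb an overall scalar $|W|/|W_\la W_\mu|$ into the $m_\nu$. Either way the essential combinatorial content, namely that the error lives in strictly lower $\dom$-grades, rests entirely on the dominance-order inequality above and is exactly what is used in the inductive step of the proof of Proposition~\ref{part}.
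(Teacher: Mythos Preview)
Your argument is correct and matches the paper's approach: both expand $\sigma(a)\sigma(m)$ as a double sum over $W\times W$ (or cosets thereof) and use the dominance inequality $w(\la)+y(\mu)\le\la+\mu$, with equality iff $w\in W_\la$ and $y\in W_\mu$, to show the difference is supported in strictly lower grades. One caution on your final paragraph: with the averaging normalization $\sigma(x)=\frac{1}{|W|}\sum_w w(x)$ that the paper actually uses, the $(\la+\mu)$-coefficients of $\sigma(am)$ and $\sigma(a)\sigma(m)$ are $\frac{|W_\la\cap W_\mu|}{|W|}$ and $\frac{|W_\la||W_\mu|}{|W|^2}$ respectively, which do \emph{not} coincide in general, so the discrepancy cannot be ``absorbed into the $m_\nu$''; the coset normalization you lead with is the one for which the lemma holds as stated.
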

    \begin{proof}   Write
      $W\times W=(W_\la\times W_\mu) \sqcup {\mathscr S}$ where
      ${\mathscr S}=(W\sminus W_\la)\times W\, \bigcup
      \,(W\times (W\sminus W_\mu)$. 
      From the definition of $\si$ one easily finds
\[  
 \si(a)\si(m)-
  \si(am) \ =\frac{1}{|W|^2}\sum_{(w,y)\in {\mathscr S}}\, w(a)\, y(m).
    \]
    The sum on the right is in the image of the map $\si$ since
    $\si(b)\si(m)- \si(bm)\in M^W=\im(\si)$.
        The desired statement now follows from the observation
    that for any $(w,y)\in {\mathscr S}$, by  \eqref{lamu} , one has
    $w(\la)+y(\mu)< \la+\mu$.
  \end{proof}

  For any $\la\in\dom$ let
  $\BX_{\pre\la}:=\bigsqcup_{\{\mu\in\BX_+\,\mid\,\mu\leq \la\}}\, W.\mu$.
      We write  $\nu\pre \la$, resp.  $\nu\pree \la$,
      if $\nu \in \BX_{\pre\la}$, resp. $\nu\in \BX_{\pre\la}\sminus W.\la$.
            Given an $\BX$-graded algebra $A$ with a $W$-action, as above,
      we define an ascending   filtration  $A_{\pre\la},\,\la\in \dom$, on $A$,
      by the formula $A_{\pre\la}=\bigoplus_{\{\mu\in\BX\,\mid\,\mu\pre\la\}}\, A_\mu$.
      The space $A_{\pre\la}$ is clearly $W$-stable. It follows from
      Lemma \ref{sisi2} that for all $\la,\mu\in\dom$
      one has $(A_{\pre\la})^W\cdot (A_{\pre\mu})^W\sset (A_{\pre\la+\mu})^W$;
      moreover, the map $\si$ induces an isomorphism
      $A\nana\iso \oplus_{\la\in\dom}\, (A_{\pre\la})^W/(A_{\pree\la})^W$,
      of $\dom$-graded algebras.      
      This, combined with Proposition \ref{sisi},  yields the following result.

      \begin{cor}\label{Wfin} 
        If the algebra $A_+$ is finitely generated then so is the algebra $A^W$.
        \qed
      \end{cor}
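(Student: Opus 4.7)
The plan is to reduce finite generation of $A^W$ to finite generation of $A_+\nana$ via the symmetrization map $\si$ and a filtered-algebra argument, using Proposition \ref{sisi} as a black box.

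First, I would apply Proposition \ref{sisi} to $B := A_+$, equipped with its partially equivariant structure $B_\la = A_\la$ with its $W_\la$-action. Since $A_+$ is finitely generated as an algebra by hypothesis, the proposition yields that $A_+\nana = \bigoplus_{\la \in \dom}(A_\la)^{W_\la}$ is a finitely generated algebra. Recall that the paper has already noted that $\si$ induces an isomorphism of $\dom$-graded algebras
\[
A_+\nana \;\iso\; \bigoplus_{\la \in \dom}\, (A_{\pre\la})^W/(A_{\pree\la})^W,
\]
where the right-hand side is the associated graded of the ascending filtration $\{(A_{\pre\la})^W\}_{\la \in \dom}$ on $A^W$ (and the algebra structure on the right comes from Lemma \ref{sisi2}).

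Next, I would lift algebra generators from the associated graded to $A^W$. Choose finitely many homogeneous generators $\bar f_1,\ldots,\bar f_n$ of $A_+\nana$, with $\bar f_i \in (A_{\la_i})^{W_{\la_i}}$, and let $f_i := \si(\bar f_i) \in (A_{\pre\la_i})^W$ be lifts. I claim $f_1,\ldots,f_n$ generate $A^W$ as an algebra. To prove this, I would argue by Noetherian induction on $\la \in \dom$ (using that for every $\la \in \dom$ the set $\{\mu \in \dom \mid \mu \leq \la\}$ is finite, so the poset is well-founded) that every $g \in (A_{\pre\la})^W$ lies in the subalgebra $\C\langle f_1,\ldots,f_n\rangle$. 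Indeed, the image $\bar g$ of $g$ in $(A_{\pre\la})^W/(A_{\pree\la})^W$ is a polynomial $P(\bar f_{i_1},\ldots,\bar f_{i_k})$ with $\sum_j \la_{i_j} = \la$; then $g - P(f_{i_1},\ldots,f_{i_k}) \in (A_{\pree\la})^W$, and by the induction hypothesis this difference lies in $\C\langle f_1,\ldots,f_n\rangle$, completing the step.

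The only substantive point that needs verification is that the filtration $\{(A_{\pre\la})^W\}$ is multiplicative, i.e.\ $(A_{\pre\la})^W \cdot (A_{\pre\mu})^W \subseteq (A_{\pre\la+\mu})^W$, and that its associated graded algebra structure is identified with the one on $A_+\nana$ transported by $\si$. Both are immediate consequences of Lemma \ref{sisi2}, which is exactly what makes the associated-graded identification an algebra isomorphism rather than just a linear one. I expect this identification, together with the well-foundedness of the dominance order on $\dom$ (which legitimizes the induction), to be the only delicate points; everything else is formal.
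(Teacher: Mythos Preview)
Your proposal is correct and follows exactly the approach the paper indicates: apply Proposition \ref{sisi} to $A_+$ to get $A_+\nana$ finitely generated, then use the identification (via $\si$ and Lemma \ref{sisi2}) of $A_+\nana$ with the associated graded of the multiplicative filtration $\{(A_{\pre\la})^W\}$ on $A^W$ to conclude. The paper states this in one line (``This, combined with Proposition \ref{sisi}, yields the following result''), while you have simply spelled out the standard lifting-from-associated-graded argument in detail.
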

      
       \subsection{Filtration by dominant weights} \label{gr sec}
   In this subsection we will
   exploit an idea from \cite{BFN1} to prove  Proposition \ref{ppp} below,
   which is a key step of  the proof of  
   Proposition  \ref{part}.
      We will use simplified notation
      $\sfh(\td)=H^\hdot_{\gc(O)}(\td)$, resp.
      $\hh(\td)=H^\hdot_{\tv(O)}(\td)$
     and write $\t=\Lie \tv$, resp. $\fc=\C[\t]=\hh(\pt)$ and
      $\sfc=\fc^W=\sfh(\pt)$.

      Recall  the
   imbedding $\veps: \BX=\Gr_\tv\into \Gr=\Gr_G$
   and the restriction functor $\veps^\dagger: D_{\gc(O)}(\Gr)\to D_{\tv(O)}(\Gr_\tv)$.
     For any $\cf\in\sat$  there is  a natural $W$-action on
     $\hh(\veps^\dagger\cf)$ such that $w(s h)=w(s) w(h)$ for all
     $s\in\fc$ and $h\in \hh(\veps^\dagger\cf)$.
     If $\cf$ is a ring object then so is $\veps^\dagger\cf$ and the group $W$ acts on the ring
     $\hh(\veps^\dagger\cf)$ by  ring automorphisms. Thus,
     $\hh(\veps^\dagger\cf)^W$ is a $\sfc$-subalgebra of $\hh(\veps^\dagger\cf)$.

        \begin{lem}\label{ring}  For any  commutative ring object $\cf\in D_{\gc(O)}(\Gr)$ 
          the adjunction $\veps_!: \hh(\veps^\dagger\cf)^W\to\sfh(\cf)$
          is a $\sfc$-algebra homomorphism.
\end{lem}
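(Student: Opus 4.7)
My plan is to promote the statement to the level of $T$-equivariant cohomology before $W$-invariants: namely, to show first that the counit-induced map
\[
  \veps_!:\ \hh(\veps^\dagger\cf)\ \longrightarrow\ H^\hdot_T(\obl^{G(O)}_{T(O)}\cf)\ =\ \fc\otimes_\sfc\sfh(\cf)
\]
is already an $\fc$-algebra homomorphism, and then pass to $W$-invariants using the last identification in \eqref{GTH}. This map is $W$-equivariant since $\veps: \BX\into\Gr$ is $N(T)$-equivariant and $\obl^{G(O)}_{T(O)}\cf$ carries a compatible $W=N(T)/T$-action inherited from its $G(O)$-equivariance; hence, restricting to $W$-invariants yields a $\sfc$-algebra homomorphism $\hh(\veps^\dagger\cf)^W \to (\fc\otimes_\sfc\sfh(\cf))^W = \sfh(\cf)$, which is the one constructed from the adjunction.

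To set up the multiplications: on the target, the ring object structure $m: \cf\star\cf\to\cf$ composes with the lax K\"unneth morphism of \eqref{TG} to give the multiplication on $H^\hdot_T(\obl\cf)$; on the source, I invoke the lax monoidality of $\veps^\dagger$ with respect to convolution (established in full generality in \cite{Ma}, and earlier in \cite[Sect.\,2(vi)]{BFN2} in a special case) to promote $\veps^\dagger\cf$ to a commutative ring object in $D_{T(O)}(\Gr_T)$, with multiplication $\veps^\dagger\cf\star\veps^\dagger\cf\to\veps^\dagger(\cf\star\cf)\xrightarrow{\veps^\dagger m}\veps^\dagger\cf$; the K\"unneth isomorphism is automatic on $\Gr_T\cong\BX$ since this ind-scheme is discrete. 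The desired equality $\veps_!(ab)=\veps_!(a)\veps_!(b)$ then reduces to the commutativity of a sheaf-level square
\[
\xymatrix{
\veps_!(\veps^\dagger\cf\star\veps^\dagger\cf)\ar[r]\ar[d] & \veps_!\veps^\dagger\cf\star\veps_!\veps^\dagger\cf\ar[d]\\
\veps_!\veps^\dagger\cf\ar[r] & \obl\cf,
}
\]
whose bottom horizontal is the counit and whose verticals arise from the two ring multiplications. The top horizontal is the oplax monoidal structure morphism on $\veps_!$, obtained via adjunction from the lax monoidal structure on $\veps^\dagger$. The commutativity of this square is precisely the assertion that the counit $\veps_!\veps^\dagger\to\mathrm{id}$ is a monoidal natural transformation once the left adjoint is endowed with its canonically induced oplax structure.

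The main obstacle is exactly this last categorical coherence: verifying that the adjunction counit is monoidal with respect to the correct (op)lax structures on the two functors. Abstractly this is a standard statement about adjunctions between monoidal $\infty$-categories, but in our derived constructible-sheaf framework one must justify it directly, either by unfolding the definitions of convolution (using that the convolution diagram $\Gr\leftarrow G(K)\times\Gr\to G(K)\times^{G(O)}\Gr\to\Gr$ restricts along $\veps\times\veps$ to the corresponding diagram on $\Gr_T$, because $\BX$ embeds as the $T$-fixed points of an Iwahori-invariant cell decomposition) or by reduction to the simple objects of $\per$, where the lax map becomes an isomorphism and compatibility with the counit is immediate; as a cross-check, the compatibility has already been seen for $\cf=\rr_G$ in Theorem \ref{vincent} and in the constructions of \cite{YZ}.
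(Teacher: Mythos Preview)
Your overall reduction—prove the $T$-equivariant statement first and then take $W$-invariants via \eqref{GTH}—is sound, and the paper's argument could equally well be phrased at that level. The gap is in the central square: the object $\veps_!\veps^\dagger\cf\star\veps_!\veps^\dagger\cf$ in the upper-right corner has no meaning, because $D_{T(O)}(\Gr_G)$ carries no monoidal structure. The abstract ``oplax via adjunction'' package you invoke does not apply here: the functor $\veps_!$ you are using goes $D_{T(O)}(\Gr_T)\to D_{T(O)}(\Gr_G)$ and is left adjoint to $\veps^!$, not to $\veps^\dagger=\veps^!\circ\text{Obl}^{G(O)}_{T(O)}$; the genuine left adjoint of $\veps^\dagger$ is $\text{Ind}^{G(O)}_{T(O)}\circ\veps_!$, and the resulting oplax/counit coherence lives in $D_{G(O)}(\Gr_G)$, not where your square is drawn. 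So the monoidal compatibility you defer to is not available for free, and your fallback ``reduction to simple objects of $\per$'' does not help either, since the lax map $\veps^\dagger\cf_1\star\veps^\dagger\cf_2\to\veps^\dagger(\cf_1\star\cf_2)$ is not an isomorphism on $\IC$-sheaves.

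The paper's substitute for the missing monoidal structure is the \emph{bimodule} structure on $D_{T(O)}(\Gr_G)$: a left action $\star_\ell$ of $D_{T(O)}(\Gr_T)$ (via the $T(K)$-action on $\Gr_G$—this is exactly your ``restricted convolution diagram'') and the right action $\star_r$ of $\sat$. The map $\tilde\veps:T(K)\times_{T(O)}\Gr_G\to G(K)\times_{G(O)}\Gr_G$ and the adjunction $\tilde\veps_!\tilde\veps^!\to\Id$ produce a morphism $(\veps^\dagger\ce)\star_\ell\bar\cf\to\overline{\ce\star\cf}$, whence on cohomology the identity $h*_\ell h'=\veps_!(h)\star h'$. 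The argument then finishes by a unit trick: using $h*_\ell 1=\veps_!(h)$ one gets $\veps_!(h\star h')=(h\star h')*_\ell 1=h*_\ell(h'*_\ell 1)=h*_\ell\veps_!(h')=\veps_!(h)\star\veps_!(h')$. Your ``unfolding the definitions of convolution'' is precisely the construction of $\star_\ell$ and this identity; carrying it out leads to the paper's argument rather than bypassing it.
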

    \begin{proof} 
The argument below is similar to the one used in the proof
      of \cite[Lemma 5.10]{BFN1}.
      To simplify notation, for $\cf\in\sat$
      we write   $\bar\cf=\text{Obl}^{\gc(O)}_{\tv(O)}\cf\in D_{\tv(O)}(\Gr)$.

      We have  the following natural maps
      $\tv(K)\times_{\tv(O)} \Gr           \xrightarrow{\tilde\veps}
      \gc(K)\times_{\gc(O)}\Gr\xrightarrow{a}\Gr$.
      Let $\tilde a=a\ccirc \tilde\veps$ denote the composite
      and given $\ce\in D_{\tv(O)}(\Gr_\tv)$, resp. $\cf\in\sat$ and $\cg \in D_{\tv(O)}(\Gr)$,
      let $\ce\,\widetilde\boxtimes_{T(O)}\,\cg\in \tv(K)\times_{\tv(O)} \Gr $,
      resp.  $\cg\widetilde\boxtimes_{G(O)}\cf\in\gc(K)\times_{\gc(O)}\Gr$,
        denote the descent of $\ce\boxtimes\cg$, resp. $\cg\boxtimes\cf$.
      One makes $D_{\tv(O)}(\Gr)$  a left, resp. right, module category
      over the monoidal category $D_{\tv(O)}(\Gr_\tv)$, resp.  $D_{G(O)}(\Gr)$, 
      by letting $\ce\in D_{\tv(O)}(\Gr_\tv)$, resp.  $\cf\in \sat$,
      act on $D_{\tv(O)}(\Gr)$ by the formula
      $\ce: \cg\mto\ce\star_\ell\cg:=\tilde a_!(\ce\,\widetilde\boxtimes_{T(O)}\,\cg)$,
      resp.  $\cf: \cg\mto \cg\star_r\cf=a_!(\cg\widetilde\boxtimes_{G(O)}\cf)$.

      For $\ce,\cf\in\sat$, using the natural isomorphism $(\veps^!\bar\ce)\,
      \widetilde\boxtimes_{T(O)}\,\bar\cf
      \cong\tilde\veps^!(\bar\ce\,\widetilde\boxtimes_{G(O)}\,\cf)$
      we obtain a chain of 
      morphisms
      \[(\veps^\dagger\ce)\star_\ell\bar\cf=
        \tilde a_!((\veps^!\bar\ce)\,\widetilde\boxtimes_{T(O)}\,\bar\cf)\iso
        a_!\tilde\veps_!\tilde\veps^!(\bar\ce\,\widetilde\boxtimes_{G(O)}\,\cf) 
        \xrightarrow{} a_!(\bar\ce\,\widetilde\boxtimes_{G(O)}\,\cf)
        =\bar\ce\star_r\cf=\overline{\ce\star\cf}.
      \]
          Restricting    the  map
      $\hh(\veps^\dagger\ce)\o_\fc \hh(\bar\cf)
      \to \hh(\overline{\ce\star\cf})$ induced by the composite morphism
            to $W$-invariants yields a map
      $*_\ell: \hh(\veps^\dagger\ce)^W\o_\sfc \sfh(\cf)\to \sfh(\ce\star\cf)$.
      Tracing through the construction one obtains the following equation:
            \beq{asb}
      h *_\ell  h'=\veps_!(h)\star h',\qquad \forall h\in \hh(\veps^\dagger\ce)^W,\en
      h'\in \sfh(\cf).
      \eeq

      To complete the proof we recall that the unit of the ring object $\cf$
      is a morphism $\bone\to\cf$ subject to a certain compatibility.
           Let $1$      be 
            the image of the unit of the algebra $\sfc$ under the induced
            map $\sfc=\sfh(\bone)\to \sfh(\cf)$.
           We use the same symbol   $1$     for the element
            $1\o 1\in \fc\o_\sfc \sfh(\cf)=\hh(\bar\cf)$.
            It is not difficult to see that  for $h\in \hh(\veps^\dagger\cf)$, in $\hh(\bar\cf)$,
            one has $h*_\ell 1=\veps_!(h)$.
            Thus, using \eqref{asb} we obtain 
            \[\veps_!(h\star h')=(h\star h')*_\ell1=h*_\ell (h'*_\ell 1)=
              h *_\ell \veps_!(h')=\veps_!(h)\star \veps_!(h'),\qquad
              \forall\, h,h'\in \hh(\veps^\dagger\cf)^W.
              \qedhere
              \]
\end{proof}
\medskip

      For $\la\in\dom$, let ${i}_{\leq\la}$, resp. $i_{<\la}$, denote the  closed imbedding
      of $\ogr{\la}$, resp. $\ogr{\la}\sminus \Gr_\la$, into $\Gr$.
 If $\cf\in\sat$ is $!$-pure then it follows from
 Corollary \ref{inj cor} that for all $\la\in\dom$
 the maps $({i}_{\leq\la})_!: \sfh({i}^!_{\leq\la}\cf)\to \sfh(\cf)$
 are injective. Thus, the  images of the maps  $({i}_{\leq\la})_!$
  give an ascending  filtration of  $\sfh(\cf)$.
  From now on, we will identify $\sfh({i}^!_{\leq\la}\cf)$ with its image.

    \begin{prop}\label{ppp} For any  $!$-pure commutative ring object  $\cf\in \sat$
      the filtration $\sfh({i}^!_{\leq\la}\cf),\,\la\in\dom$, on the ring  $\sfh(\cf)$
      is multiplicative and
    there is an isomorphism of $\dom$-graded $\sfc$-algebras:
    \[\oplus_{\la\in\dom}\, \sfh({i}^!_{\leq\la}\cf)/\sfh({i}^!_{<\la}\cf)
      \ccong \big((H(\veps^\dagger\cf))_+\big)\nana.
    \]
  \end{prop}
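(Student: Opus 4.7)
The plan is to transport the filtration problem on $\sfh(\cf)$ to the $T$-fixed-point side via the $\sfc$-algebra homomorphism $\veps_!\colon\hh(\veps^\dagger\cf)^W \to \sfh(\cf)$ from Lemma \ref{ring}, and then apply the algebraic machinery of Section \ref{alg pre}. Multiplicativity of the filtration $F_\la\sfh(\cf):=\sfh(i^!_{\leq\la}\cf)$ is essentially formal: the standard fact that the convolution $\ogr{\la}\,\tilde\times_{G(O)}\,\ogr{\mu}$ maps into $\ogr{\la+\mu}$ ensures that the multiplication of the ring object $\cf$ restricts to $i^!_{\leq\la}\cf\star i^!_{\leq\mu}\cf\to i^!_{\leq\la+\mu}\cf$, which via \eqref{Hform} yields $F_\la\sfh(\cf)\cdot F_\mu\sfh(\cf)\subseteq F_{\la+\mu}\sfh(\cf)$.

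Set $A:=\hh(\veps^\dagger\cf)$, an $\BX$-graded $\fc$-algebra carrying a compatible $W$-action coming from the action on $\Gr_T=\BX$. Define a $W$-stable ascending filtration $A_{\preceq\la}:=\bigoplus_{\mu\preceq\la}A_\mu$, where $\mu\preceq\la$ iff $x_\mu\in\ogr{\la}$, i.e.\ $\mu\in W.\mu'$ for some dominant $\mu'\leq\la$. The discussion following Lemma \ref{sisi2} furnishes a canonical $\dom$-graded algebra isomorphism $A_+\nana\iso\bigoplus_\la(A_{\preceq\la})^W/(A_{\prec\la})^W$ via symmetrization. It remains to verify two things: that $\veps_!$ carries $(A_{\preceq\la})^W$ into $F_\la\sfh(\cf)$, and that the induced map on associated gradeds is an isomorphism. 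For the first, factor $\veps$ restricted to $\BX_{\preceq\la}=\veps^{-1}(\ogr{\la})$ as $\tilde\veps_\la\colon\BX_{\preceq\la}\into\ogr{\la}$ followed by $i_{\leq\la}$; the composite of adjunctions $\tilde\veps_{\la,!}\tilde\veps_\la^!\to\Id$ and $(i_{\leq\la})_!(i_{\leq\la})^!\to\Id$ together with the construction of $\veps_!$ in Lemma \ref{ring} factors $\veps_!|_{(A_{\preceq\la})^W}$ through $\sfh(i^!_{\leq\la}\cf)$.

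For the isomorphism on associated gradeds, Corollary \ref{inj cor} applied to $i^!_{\leq\la}\cf$ with the open-closed decomposition $\ogr{\la}=\Gr_\la\sqcup\Gr_{<\la}$ identifies $F_\la\sfh(\cf)/F_{<\la}\sfh(\cf)\cong\sfh(j_\la'^!\cf)$, where $j_\la'\colon\Gr_\la\into\Gr$. Writing $\Gr_\la\cong G(O)/\Stab(\la)$, whose reductive quotient is the Levi $L_\la$ with Weyl group $W_\la$, the equivariant formality of the pure sheaf $j_\la'^!\cf$ together with the localization theorem applied inside the single orbit $\Gr_\la$ at its $T$-fixed locus $W.\la$ produces
\[
\sfh(j_\la'^!\cf)\;\cong\;H^\hdot_{\Stab(\la)}(\veps_\la^!\cf)\;\cong\;\hh(\veps_\la^\dagger\cf)^{W_\la}\;=\;(A_{\preceq\la})^W/(A_{\prec\la})^W,
\]
and naturality of the adjunction ensures this identification is induced by $\gr\veps_!$. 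Combining with the symmetrization isomorphism completes the proof.

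The main obstacle is matching algebra structures on the associated graded. Once $\gr\veps_!$ is identified with the localization isomorphism at each stratum, compatibility with products follows because $\veps_!$ itself is an algebra map (Lemma \ref{ring}) and Lemma \ref{sisi2} encodes precisely the principal-symbol multiplicativity of symmetrization modulo strictly lower terms in the $\preceq$-order. The technical care required is in the localization step: one should work with finite-dimensional approximations $X_n\supset\ogr{\la}$ from Section \ref{gen}, use Lemma \ref{iwahori} to guarantee the hypotheses S1–S3 for the Iwahori stratification of $X_n$, and then apply Corollary \ref{inj cor} stratum-by-stratum inside $\Gr_\la$ itself to pin down the iso $\sfh(j_\la'^!\cf)\cong\hh(\veps_\la^\dagger\cf)^{W_\la}$ at the level of $W_\la$-invariants.
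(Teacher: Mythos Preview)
Your overall strategy is the same as the paper's, but there is a genuine gap in the step where you claim that the induced map $\gr\veps_!$ realizes the isomorphism
$\sfh(j_\la'^!\cf)\cong \hh(\veps_\la^\dagger\cf)^{W_\la}$.
This is false: on each associated graded piece the map $(\bar\eps_{\pre\la})_!$ coming from the adjunction $(\veps_\la)_!(\veps_\la)^!\to\Id$ is injective but \emph{not} surjective. Already for $\cll_\la=\k_{\Gr_\la}$ one has $\sfh(\Gr_\la)\cong\fc^{W_\la}$ while $\hh(\veps_\la^!\k_{\Gr_\la})^{W_\la}\cong\fc^{W_\la}[-2\dim\Gr_\la]$, and the adjunction map is multiplication by the $T$-equivariant Euler class of $T_\la\Gr_\la$, not the identity. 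So the ``localization inside $\Gr_\la$'' you invoke only matches the two sides after localizing to $Q^{W_\la}$, and ``naturality of the adjunction'' does not pin down the integral isomorphism you need.

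The paper repairs exactly this point via Lemma~\ref{i claim}: it introduces the explicit Euler class
$\mathfrak{d}_\la=\prod_{\al\in\De^{\text{pos}}}\al^{\langle\la,\al\rangle}\in\fc^{W_\la}$
and shows that $h\mapsto\frac{1}{\mathfrak{d}_\la}\,q_\la(h)$ (with $q_\la$ your $\gr\veps_!$ composed with symmetrization) is an honest isomorphism $\hh(\veps_\la^\dagger\cf)^{W_\la}\iso\sfh(\cll_\la)$. The resulting degree-wise rescaling $\zeta$ is then \emph{not} automatically an algebra map just because $\veps_!$ is; one must check separately that the Euler-class twist is compatible with multiplication. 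This works precisely because $\mathfrak{d}_{\la+\mu}=\mathfrak{d}_\la\cdot\mathfrak{d}_\mu$, which reduces the verification to showing
$\veps_!(\si(ab))-\veps_!(\si(a))\cdot\veps_!(\si(b))\in\sfh(i^!_{<\la+\mu}\cf)$,
and that in turn follows from Lemma~\ref{ring} combined with Lemma~\ref{sisi2}. Your sketch omits the Euler class entirely, and without it the map you write down lands in a proper submodule of $F_\la/F_{<\la}$, so neither the module isomorphism nor the algebra compatibility goes through as stated.
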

\smallskip

  To prove the proposition, fix  $\la\in\dom$, identify it with the
  corresponding $T$-fixed point
of $(\Gr_\la)^T$, and let
 $\gc(O)_\la\sset \gc(O)$ be  the stabilizer of that point in $\gc(O)$.
 Then, $P_\la= \gc \cap \gc(O)_\la$ is a parabolic subgroup of $\gc$,
the imbedding $\gc/P_\la\into \gc(O)/\gc(O)_\la\cong \Gr_\la$ is a
 $\gc$-equivariant homotopy
 equivalence and there are
  $\sfc$-algebra isomorphisms
  $\sfh(\Gr_\la)\cong \sfh(\gc/P_\la)\cong \fc^{W_\la}$.

      Recall the partial order $\pre$ on $\dom$ defined at the end of
      Sect. \ref{alg pre}.          The imbedding  $\veps$ restricts
      to an   imbedding
      $\eps_{\pre\la}: \BX_{\pre\la}\into \ogr{\la}$,
      resp. $\eps_{\pree\la}: \BX_{\pree\la}\into \ogr{\la}$,
      with image $(\ogr{\la})^\tv$, resp. $(\ogr{\la}\sminus\Gr_\la)^\tv$.

      Let  $\cll\in D_{\gc(O)}(\ogr{\la})$ be a $!$-pure object and
         $\cll_{<\la}$, resp. $\cll_\la$,   the $!$-restriction of $\cll$
along the closed, resp. open, imbedding
      $\ogr{\la}\sminus\Gr_\la \into \ogr{\la}$, resp. $\Gr_\la\into\ogr{\la}$.
      We have a direct sum decomposition $\hh(\eps_{\pre\la}^!\cll)
      =\oplus_{\nu\pre\la} \, \hh_\nu(\cll)$, where $\hh_\nu(\cll)$ is the
      $\tv$-equivariant cohomology
      of the stalk of $\eps_{\pre\la}^!\cll$ at $\nu$.
         There are natural $\sfc$-linear maps 
\beq{sisa1}
  \hh_\la(\cll)^{W_\la}
  \,\xrightarrow{\cong}\,
  \big(\oplus_{\nu\in W.\la}\, \hh_\nu(\cll)\big)^W
  \into \hh(\veps_{\pre\la}^\dagger\cll)^W
    \xrightarrow{\ (\veps_{\pre\la})_!\ }
  \sfh(\cll),
\eeq
where the first map is the  symmetrization map
$h\mto \{w(h)\in \hh_{w(\la)}(\cll)\}_{w\in W/W_\la}$.
Let  $\si_{\pre\la}$ be the composite of the first two maps in \eqref{sisa1}.

       By
          Lemma \ref{inj claim}
           there is a short exact sequence
 \beq{ij gr}
 0\to \sfh(\cll_{<\la})\xrightarrow{}
 \sfh(\cll)\xrightarrow{\ \text{res}\ }
\sfh(\cll_\la)\to 0
\eeq
We obtain the following chain of  $\sfc$-linear maps
\beq{sisa}
  \hh_\la(\cll)^{W_\la}
  \xrightarrow[\cong]{\ \bar\si \ }
  \hh(\eps_{\pre\la}^\dagger\cll)^W/\hh(\eps_{\pree\la}^\dagger\cll)^W
  \xrightarrow{(\bar\eps_{\pre\la})_!}
  \sfh(\cll)/\sfh(\cll_{<\la}) 
  \xrightarrow[\cong]{\ \overline{\text{res}}\ } \sfh(\cll_\la).
    \eeq
    where $(\bar\eps_{\pre\la})_!$ is the map induced by $\eps_{\pre\la})_!$
    and
   $\bar\si$ is  the composite of  $\si_{\pre\la}$ 
  and the quotient map. 
 The natural action of $\sfh(\Gr_\la)\cong\fc^{W_\la}$ gives $\sfh(\cll_\la)$
 the structure of an $\fc^{W_\la}$-module.

 Let  $Q=\text{Frac}(\fc)$
  be the  fraction field 
of $\fc$.
      Given an $\fc$-module, resp. $\sfc$-module,
      $M$,
      we put   $QM=Q\o_\fc M$, resp.
      $Q^W M:=Q^W\o_{\sfc} M$
      and $Q^{W_\la}M=Q^{W_\la} \o_\sfc M$.
      Similarly, given  a morphism $f: M\to N$ of $\fc$-modules,
      we have a morphism $Qf: QM\to QN$ of $Q$-modules, etc.
  Bellow, all localization maps like $M\to QM$ are injective, thanks to
 the $!$-purity of $\cll$.

 We extend the map $\si_{\pre\la}$, resp. $(\eps_{\pre\la})_!$,
 $(\bar\eps_{\pre\la})_!$ and $\overline{\text{res}}$,
 to a $Q^W$-linear map $Q^W\si_{\pre\la}$, resp. $Q^W(\eps_{\pre\la})_!$,
 $Q^W(\bar\eps_{\pre\la})_!$ and $Q^W\overline{\text{res}}$.
  The composite 
$Q^W(\overline{\text{res}}\ccirc (\bar\eps_{\pre\la})_!):
Q^W\hh(\eps_{\pre\la}^\dagger\cll)^W/Q^W\hh(\eps_{\pree\la}^\dagger\cll)^W
\to Q^W\sfh(\cll_\la)$
  is an  isomorphism of $Q^W$-modules, by  the Localization theorem.
 Let $q_\la$ be the composite map 
in   \eqref{sisa}. This map is
$\fc^{W_\la}$-linear and it follows from the above that
the map  $Q^{W_\la} q_\la: Q^{W_\la}\hh_\la(\cll)^{W_\la}\to
  Q^{W_\la}\sfh(\cll_\la)$  is an isomorphism
   of $Q^{W_\la}$-modules.

 Let  $\De\pos\in\BX^*(T)$  be the set   of positive roots associated 
 with our choice $\Si\sset \BX_*(T)$ of the set of simple coroots.
 Following \cite{BFN1}, for $\la\in\BX_+$ we define
\[{\mathfrak d}_\la:=\prod_{\al\in\De\pos}\ \al^{\langle \la,\al\rangle} \in \fc^{W_\la}.
  \]

  \begin{lem}\label{i claim}  The image of the map
       $\,\hh_\la(\cll)^{W_\la} \to
    Q\sfh(\cll_\la),\
    h\mto \mbox{$\frac{1}{{\mathfrak d}_\la}$}(Q\bar\si(q_\la(h))
 $,
    equals the
    subspace $\sfh(\cll_\la)\subset Q\sfh(\cll_\la)$; furthermore, the resulting
    map
yields an   isomorphism of $\fc^{W_\la}$-modules:
    \[\mbox{$\frac{1}{{\mathfrak d}_\la}$}\bar\si\ccirc q_\la:\, \hh_\la(\cll)^{W_\la} \iso
     \sfh(\cll_\la).
  \]
  \end{lem}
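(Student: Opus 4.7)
The plan is to reduce everything to a concrete calculation on the $G(O)$-orbit $\Gr_\la\cong G/P_\la\times_{L_\la}\mathbb A^N$ and exploit the classical Atiyah-Bott localization formula, with the Euler class of the normal bundle at the fixed point $\la\in\Gr_\la$ being exactly $\mathfrak d_\la$.

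First I would unpack what $q_\la$ does on $\hh_\la(\cll)^{W_\la}$. By $G$-equivariance of $\cll$, the $!$-restriction $\veps_{\pre\la}^\dagger\cll$ decomposes as $\oplus_{w\in W/W_\la}\hh_{w\la}(\cll)$, and all summands are identified via the $W$-action; the first arrow in \eqref{sisa1} is just the canonical isomorphism $\hh_\la(\cll)^{W_\la}\iso\bigl(\oplus_{w\in W/W_\la}\hh_{w\la}(\cll)\bigr)^W$. The remaining pieces of $q_\la$ are the Gysin pushforward along the $T$-fixed points $(\Gr_\la)^T=W/W_\la\cdot\la\into\Gr_\la$ followed by the quotient identifying $\sfh(\cll)/\sfh(\cll_{<\la})$ with $\sfh(\cll_\la)$. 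By the Localization theorem, $Q^{W_\la}q_\la$ is an isomorphism, so in $Q\sfh(\cll_\la)$ the map $h\mapsto q_\la(h)$ is injective, and the content of the lemma is identifying the image.

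Next I would establish the normal-bundle calculation. Since $\cll$ is $!$-pure and $\cll_\la$ is $G$-equivariant on $\Gr_\la$, a standard contraction/affine-bundle argument (using condition S3) identifies $\sfh(\cll_\la)$ with the $W_\la$-invariants of $\hh_\la(\cll)$ viewed inside $Q\hh_\la(\cll)$ via the inverse of the restriction map from $H^*_T(\Gr_\la,\cll_\la)$ to $\hh_\la(\cll)$. Under this identification, Atiyah-Bott says that for $m\in\sfh(\cll_\la)$ one has $m=\sum_{w\in W/W_\la}w(\text{res}_\la m)/w(\mathfrak d_\la)$ in the localization, where $\mathfrak d_\la$ is the Euler class of the normal bundle to $\{\la\}$ in $\Gr_\la$ — and this Euler class is exactly $\prod_{\alpha\in\Delta^+}\alpha^{\langle\la,\alpha\rangle}$ because the tangent weights at $\la$ in $\Gr_\la$ are the $T$-weights of $\mathfrak g/\mathfrak p_\la$ together with the weights of the contractible affine fiber $\mathbb A^N$, the latter contributing nothing to the normal bundle to $\{\la\}$ inside $\Gr_\la$ when counted with correct multiplicity.

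Combining these two points, the composite $q_\la(h)$ in $Q\sfh(\cll_\la)$ equals $\mathfrak d_\la$ times the element of $\sfh(\cll_\la)$ whose localization is the given symmetric tuple $(w(h))_{w\in W/W_\la}$. Dividing by $\mathfrak d_\la\in\fc^{W_\la}=\sfh(\Gr_\la)$, which acts on $\sfh(\cll_\la)$, lands in $\sfh(\cll_\la)$, and the resulting map $\tfrac{1}{\mathfrak d_\la}\bar\si\circ q_\la\colon\hh_\la(\cll)^{W_\la}\to\sfh(\cll_\la)$ is the inverse of the restriction-to-$\la$ map $\sfh(\cll_\la)\to\hh_\la(\cll)^{W_\la}$. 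Both maps being $\fc^{W_\la}$-linear and the restriction map being injective with image precisely $\mathfrak d_\la$-multiples after localization, the two-sided inverse property gives the claimed isomorphism.

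The main obstacle will be the integrality step: showing that the a priori rational element $\tfrac{1}{\mathfrak d_\la}q_\la(h)\in Q\sfh(\cll_\la)$ actually lies in $\sfh(\cll_\la)$. This will ultimately be an instance of the usual integrality in Atiyah-Bott localization for $!$-pure $G$-equivariant sheaves on $G/P_\la$, but it must be combined with the $!$-purity hypothesis (so that $\hh_\la(\cll)$ is free over $\fc$) and the identification of $\sfh(\cll_\la)$ with the $W_\la$-invariants, which is exactly what condition S2 plus the GKM-style presentation of $\sfh(\Gr_\la)=\fc^{W_\la}$ provides.
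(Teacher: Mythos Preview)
Your approach is essentially the same as the paper's: both identify $\mathfrak d_\la$ as the equivariant Euler class of the tangent space to $\Gr_\la$ at the fixed point $\la$ and invoke Atiyah--Bott localization. The paper's proof is shorter because it first uses $!$-purity to split $\cll_\la\cong\bigoplus_m\mathcal H^m\cll_\la[-m]$ as a direct sum of shifted constant sheaves on $\Gr_\la$, and then simply cites \cite[Proposition~6.2]{BFN1} for the constant-sheaf case, where the integrality you flag as the ``main obstacle'' is worked out. Your direct argument is fine in spirit, but making this reduction explicit would let you avoid handling general $\cll$ and would make the integrality step a clean reference rather than an open issue.

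One factual correction: your justification of the Euler class is off. The affine fiber of $\Gr_\la\to G/P_\la$ \emph{does} contribute to the normal bundle of $\{\la\}$ inside $\Gr_\la$. The correct count is that the $T$-weights on $T_\la\Gr_\la\cong\mathfrak g(O)/\mathfrak g(O)_\la$ are the positive roots $\alpha$, each with multiplicity $\langle\la,\alpha\rangle$ (coming from $\mathfrak g_\alpha\otimes z^n$ for $0\le n<\langle\la,\alpha\rangle$), giving Euler class $\prod_{\alpha\in\Delta^{\mathrm{pos}}}\alpha^{\langle\la,\alpha\rangle}=\mathfrak d_\la$. The $G/P_\la$ directions account for one copy of each $\alpha$ with $\langle\la,\alpha\rangle>0$; the remaining $\langle\la,\alpha\rangle-1$ copies come from the affine fiber. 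So your conclusion is right, but the sentence ``the latter contributing nothing'' should be dropped.
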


  \begin{proof}
      Since $\cll$ is $!$-pure, in $D_{\gc(O)}(\Gr_\la)$ there is an isomorphism
       $\cll_\la\cong \oplus_m\, {\mathcal H}^m\cll_\la[-m]$; moreover,
      ${\mathcal H}^m\cll_\la$
      are geometrically constant sheaves on $\Gr_\la$.
  Therefore, it suffices to prove the lemma  in the case where $\cll_\la$ is a constant sheaf.
    This is, essentially, the content of the proof
of
\cite[Proposition 6.2]{BFN1}.  In that proof,   ${\mathfrak d}_\la$
appears
as an equivariant Euler class. 
\end{proof}

\begin{proof}[Proof of Proposition \ref{ppp}] 
    For any $\cg_1,\cg_2\in \sat$ and $\la,\mu\in\dom$, the following diagram commutes
  \[
   \xymatrix{
    \   \sfh(i_{\leq\la}^!\cg_1)\o_\sfc \sfh(i_{\leq\mu}^!\cg_2) \ 
    \ar[d]^<>(0.5){\star}
          \ar[rr]^<>(0.5){({i}_{\leq\la})_!\o ({i}_{\leq\mu})_!}
     &&
       \    \sfh(\cg_1)\o_\sfc \sfh(\cg_2) \ \ar[d]^<>(0.5){\star}\\ 
 \  \sfh(i_{\leq\la+\mu}^!(\cg_1\star\cg_2)) \ 
\ar[rr]^<>(0.5){({i}_{\leq\la+\mu})_!}
&& \ \sfh(\cg_1\star\cg_2)
   }
 \]

 Now let $\cf$ be the ring object from  the statement of the proposition.
From the commutative diagram above we deduce that the filtration $\sfh(i_\la^!\cf)$
 on $\sfh(\cf)$ is multiplicative. 
 Further, 
 for each $\la\in \dom$ we have an isomorphism
  $\mbox{$\frac{1}{{\mathfrak d}_\la}$}\bar\si\ccirc q_\la:\, \hh(\veps^\dagger_\la\cf)^{W_\la} \iso
  \sfh(i_\la^!\cf)$,
  by Lemma  \ref{i claim} applied in the case  $\cll=i^!_{\leq\la}\cf$.
     Therefore, the assignment $h\mto \zeta(h):=
     \overline{\text{res}}\inv(\mbox{$\frac{1}{{\mathfrak d}_\la}$}\bar\si\ccirc q_\la)(h)$  gives a $\sfc$-module isomorphism
     $\hh(\veps^\dagger_\la\cf)^{W_\la}$ $\iso  \sfh(i^!_{\leq\la}\cf)/\sfh(i^!_{<\la}\cf)$.
     Assembling these isomorphisms for all $\la$ we obtain an isomorphism
     of $\dom$-graded $\sfc$-modules
     \[\zeta:\  \hh(\veps^\dagger)_+\nana=
     \bplus_{\la\in\dom}\, \hh(\veps^\dagger_\la\cf)^{W_\la}
     \ \iso \  \oplus_{\la\in\dom}\, \sfh(i^!_{\leq\la}\cf)/\sfh(i^!_{<\la}\cf).
     \]

     It remains to prove that the map $\zeta$ is a ring homomorphism.
     To this end, we observe that the assignment
     $h\mto {\mathfrak d}_\nu \cdot h$ gives an injective map
     $\hh(\veps^\dagger_\nu\cf)\to \hh(\veps^\dagger_\nu\cf)$,
     resp. $\sfh(i^!_{\la}\cf)\to \sfh(i^!_{\la}\cf)$,
     since this cohomology group is a free $\fc^{W_\la}$-module.
For $h\in \hh(\veps^\dagger_\nu\cf)$, by construction, we have
     \[{\mathfrak d}_\nu \cdot \zeta(h)=\zeta({\mathfrak d}_\nu \cdot h)
       =\bar\eps_{\pre\nu}(\bar\si(h)).
     \]
     Observe, that we have  ${\mathfrak d}_{\la+\mu}={\mathfrak d}_\la\cdot
     {\mathfrak d}_\mu$ for all $\la,\mu\in\dom$.
         Thus, it suffices to show that
     for all $a\in \hh(\veps^\dagger_\la\cf)^{W_\la}$ and $b\in \hh(\veps^\dagger_\mu\cf)^{W_\mu}$,
     one has
     \[\bar\eps_{\pre\la+\mu}(\bar\si(ab))=\zeta({\mathfrak d}_{\la+\mu}\cdot ab)\ \stackrel{?}= \
       \zeta({\mathfrak d}_\la \cdot a)\cdot
      \zeta({\mathfrak d}_\mu\cdot  b)=  \bar\eps_{\pre\la}(\bar\si(a))\cdot
       \bar\eps_{\pre\mu}(\bar\si(b)).
              \]

              Explicitly, this means that we must show that the element
              \[
                \veps_!(\si(ab))-\veps_!(\si(a))\cdot\veps_!(\si(b))
                              \]
                is contained in $\sfh(i_{<\la+\mu}^!\cf)$.
                Using  Lemma     \ref{ring} and the notation of  Lemma     \ref{sisi}, we find
                \[
                  \veps_!(\si(ab))-\veps_!(\si(a))\cdot\veps_!(\si(b))=
                  \veps_!(\si(ab)-\si(a)\si(b)) =
                  \sum_{\{\nu\in\dom\,\mid\, \nu<\la+\mu\}}\,\veps_!(\si(m_\nu)),
                \]
                for some
                  $m_\nu \in \hh(\veps^\dagger_\nu\cf)$.                The map   $\veps_!$ sends
                $(\oplus_{\{w(\nu),\, w\in W\}}\,\hh(\veps^\dagger_{w(\nu)}\cf)^W$
                to $\sfh(i_{\leq \nu}^!\cf)$. We conclude that
                the sum in the right hand side  of the displayed formula is contained
                in $\sfh(i_{<\la+\mu}^!\cf)$. This proves our claim, and
                Proposition  \ref{ppp}  follows.
                \end{proof}

    \begin{proof}[Proof of Proposition   Proposition  \ref{part}]
     Assume that   the algebra $\hh(\veps^\dagger\cf)_+$  is finitely generated.
    Then, the algebra $\hh(\veps^\dagger\cf)^W$, resp. $\oplus_{\la\in\dom}\, \sfh({i}^!_{\leq\la}\cf)/\sfh({i}^!_{<\la}\cf)$,
    is finitely generated by Corollary \ref{Wfin}, resp.
    Proposition \ref{ppp}. Since the spaces $\sfh({i}^!_{\leq\la}\cf),\,\la\in\dom$,
    form a multiplicative
    filtration on $\sfh(\cf)$, we deduce that the algebra
    $\sfh(\cf)$ is also finitely generated.
    \end{proof}

\small{
\bibliographystyle{plain}

\begin{thebibliography}{BFNM}
\bibitem[Ach]{Ach} P. Achar, {\em Equivariant mixed Hodge modules.} 2013.
\bibitem[A]{A} T.  Arakawa,  {\em
    Chiral algebras of class $\mathcal S$ and Moore-Tachikawa symplectic varieties.}
  arXiv:1811.01577.
\bibitem[Be]{Be} A. Beauville, {\em Symplectic singularities.}
  Invent. Math. {\textbf {139}} (2000), 541-549.
\bibitem[BBD]{BBD} A. Beilinson, J. Bernstein, P. Deligne,  {\em 
Faisceaux pervers.}
Ast\'erisque, 100 (1982), 5--171.
\bibitem[BD]{BD} \bysame, V. Drinfeld, {\em Quantization of Hitchin's integrable system.}
(1991).

\bibitem[Bel]{Bel} G. Bellamy, {\em
Coulomb Branches have symplectic singularities.}  
Lett. Math. Phys. 113 (2023), no. 5, Paper No. 104.


\bibitem[BF]{BF} R. Bezrukavnikov,
 M.~Finkelberg, \emph{Equivariant Satake category and
Kostant--Whittaker reduction}, Mosc. Math. J. 8 (2008),  39-72.
\bibitem[BFM]{BFM}\bysame, \bysame, I. Mirkovic,
  {\em  Equivariant homology and K-theory of affine Grassmannians and Toda lattices.} Compos. Math. 141 (2005),  746-768.
\bibitem[Bon]{Bon} M. Bondarko, {\em 
  Weight structures and `weights' on the hearts of t-structures.}
Homology Homotopy Appl. 14 (2012),  239-261.

\bibitem[Bou]{Bou} J.-F.  Boutot, {\em Singularit\'es rationnelles et quotients
    par les groupes r\'eductifs.}  Invent. Math.  88 (1987), 65-68. 
\bibitem[BK1]{BK1} A. Braverman, D.Kazhdan, {\em
$\gamma$-functions of Representations and Lifting.}

  \bibitem[BK2]{BK2} \bysame,\bysame {\em  $\gamma$-sheaves
    on reductive groups.}  Studies in memory of Issai Schur, Progr. Math., 210 (2003), 27-47.

\bibitem[BY]{BY} \bysame, Z. Yun, {\em
On Koszul duality for Kac-Moody groups.}
Represent. Theory 17 (2013), 1–98.

\bibitem[BZSV]{BZSV} D. Ben-Zvi,  Y. Sakellaridis, A. Venkatesh,
  {\em Relative Langlands Duality.}  	arXiv:2409.04677.


\bibitem[BFN1]{BFN1}\bysame, \bysame, H. ~Nakajima, \emph{Towards a mathematical
  definition
of Coulomb branches of 3-dimensional $N=4$ gauge theories} II.
Adv. Theor. Math. Phys. 22 (2018),  1071-1147.

\bibitem[BFN2]{BFN2}\bysame, \bysame,\bysame,
  {\em Ring objects in the equivariant derived Satake category arising from
    Coulomb branches.}    Adv. Theor. Math. Phys. 23 (2019), 253-344.


    
\bibitem[BDFRT]{BDFRT} \bysame, G. Dhillon, M. Finkelberg, S. Raskin, R. Travkin,
  {\em
    Coulomb branches of noncotangent type.}
  arXiv:2201.09475.

\bibitem[Br]{Br}  M. Brion, {\em
    Poincar\'e Duality and Equivariant Cohomology.}
    Michigan Math. J. 48 (2000), 77-92.
  \bibitem[Bro]{Bro} A. Broer,
    {\em Line bundles on the cotangent bundle of the ﬂag variety.}  Invent. Math. 113
(1993), 1-20.
\bibitem[Ch]{Ch} T.-H. Chen, {\em Notes on Drinfeld-Lafforgue varieties, ring objects
    and relative duality}.  Preprint 2025.

\bibitem[CMNO]{CMNO} \bysame,  M. Macerato, D. Nadler,  J. O'Brien,
  {\em Quaternionic Satake equivalence.}  	arXiv:2207.04078
  \bibitem[Dev]{Dev}  S. Devalapurkar, {\em 
      $ku$-theoretic spectral decompositions for spheres and projective spaces.}
    arXiv:2402.03995.

  \bibitem[DG]{DG} R. Donagi, D.  Gaitsgory, {\em The Gerbe of Higgs bundles.}
        Transform. Groups 7 (2002) 109-153.

  
      \bibitem[FL]{FL} B. Fu,  J. Liu, {\em The affine closure of cotangent bundles of horospherical spaces.}
        arXiv:2502.06383.

            \bibitem[GL]{GL}  D. Gaitsgory, J. Lurie,
        {\em Weil's conjecture for function fields. Vol. 1.}
Ann. of Math. Stud., 199
Princeton University Press, 2019.

\bibitem[Ga]{Ga} T. Gannon, {\em The cotangent bundle of $G/U_P$ and Kostant-Whittaker
      descent.}  arXiv:2407.16844. IMRN  (2025).

    \bibitem[GG]{GG} \bysame, V. Ginzburg, {\em Quantization of the universal centralizer and
    very central $D$-modules.}
  arXiv:2409.18054
  
 \bibitem[GWe]{GWe} \bysame, B. Webster, {\em Functoriality for Coulomb branches}.
  	arXiv:2501.09962.


  \bibitem[GW]{GW} \bysame, H. Williams,
  {\em Differential operators on the base affine space of $SL_n$ and quantized Coulomb branches.}
 arXiv:2312.10278.  




  
  
\bibitem[Gi]{Gi} V. Ginzburg,  {\em Perverse sheaves and ${\mathbb C}^*$-actions}.  
  J. Amer. Math. Soc. 4  (1991), 483--490.
  

  






  

  
 \bibitem[GK]{GK} \bysame, D. Kazhdan, {\em 
 Differential operators on $G/U$ and the Gelfand-Graev action.} 
Adv. Math. 403 (2022), Paper No. 108368.

\bibitem[GR]{GR} \bysame, S.~Riche, {\em Differential operators on $G/U$ and the affine Grassmannian.}
  J. Inst. Math. Jussieu 14 (2015),  ~{493-~575}.

\bibitem[HL]{HL} Q. Ho, P. Li, {\em Revisiting mixed geometry.}
  arXiv:2202.04833.
  
\bibitem[KRS]{KRS}  A. Kapustin, L. Rozansky, N. Saulina,
  {\em Three-dimensional topological field theory and symplectic algebraic geometry I.}
  Nuclear Phys. B 816 (2009),  295-355.
\bibitem[La]{La} V. Lafforgue, {\em A question related to Dixmier algebras.}
  Unpublished notes, 2008.
\bibitem[Ma]{Ma} M. Macerato, {\em Levi-equivariant restriction of spherical perverse sheaves.}
  arXiv:2309.07279.

\bibitem[MV]{MV} 
   I. Mirkovi\'c, K. Vilonen,
   {\em Geometric Langlands duality and representations of algebraic groups over commutative rings.}    Ann. of Math. (2) {\textbf {166}} (2007),  95–143. 
\bibitem[MT]{MT} G.  Moore, Y. Tachikawa, {\em
On 2d TQFTs whose values are holomorphic symplectic varieties.}
 String-Math 2011, 191–207,
Proc. Sympos. Pure Math., 85,  2012.


\bibitem[Nad]{Nad} D. Nadler, {\em
    Matsuki correspondence for the afﬁne Grassmannian.} Duke Math. J. 124 (2004), 421-457.
\bibitem[Na]{Na} H. Nakajima, {\em
Towards a mathematical definition of Coulomb branches of 3-dimensional $N = 4$
gauge theories}, I, Adv. Theor. Math. Phys. 20 (2016),  595-669.

\bibitem[Na2]{Na2} \bysame,
  {\em  $S$-dual of Hamiltonian $G$-spaces and relative Langlands duality.}
  arXiv: 2409.06303.
\bibitem[Nam]{Nam} Y. Namikawa, {\em Extension of 2-forms and symplectic varieties.}
  J. Reine Angew. Math. 539 (2001), 123-147.
  
\bibitem[Ng1]{Ng1} B.C. Ng\^o,
  {\em Le lemme fondamental pour les alg\`ebres de Lie.} Publ. Math. IHES.
  111 (2010), 1-169.
\bibitem[Ng2]{Ng2} \bysame, {\em
 Fibration de Hitchin et endoscopie.} Invent. math. 164 (2006), 399-453.
\bibitem[Sa]{Sa} M. Saito, {\em Modules de Hodge Polarisables.}
    Publ. Res. Inst. Math. Sci. 24 (1988),   849-995.

    \bibitem[T]{T} C. Teleman, {\em The r\^ole of Coulomb branches in $2D$ gauge theory.}
J. Eur. Math. Soc.  23 (2021),  3497-3520.
 \bibitem[YZ]{YZ} Z. Yun, X. Zhu,   {\em Integral homology of loop groups via Langlands dual groups.}
Represent. Theory.
15(2011),  {347-369}.


\end{thebibliography}

\end{document}